\definecolor{cite}{rgb}{0.30,0.60,1.00}
\definecolor{url}{rgb}{0.00,0.00,0.80}
\definecolor{link}{rgb}{0.40,0.10,0.20}
\DeclareSymbolFont{cyrletters}{OT2}{wncyr}{m}{n}
\DeclareMathSymbol{\Sha}{\mathalpha}{cyrletters}{"58}
\providecommand*{\Dashv}{%
  \mathrel{%
    \mathpalette\@Dashv\vDash
  }%
}
\newcommand*{\@Dashv}[2]{%
  \reflectbox{$\m@th#1#2$}%
}
\numberwithin{equation}{section}
\theoremstyle{plain}
\newtheorem{proposition}{Proposition}[section]
\newtheorem{conjecture}[proposition]{Conjecture}
\newtheorem{corollary}[proposition]{Corollary}
\newtheorem{lem}[proposition]{Lemma}
\newtheorem{theorem}[proposition]{Theorem}
\newtheorem{hypothesis}[proposition]{Hypothesis}
\theoremstyle{definition}
\newtheorem{definition}[proposition]{Definition}
\newtheorem{notation}[proposition]{Notation}
\newtheorem{assumption}[proposition]{Assumption}
\theoremstyle{remark}
\newtheorem{remark}[proposition]{Remark}
\renewcommand{\b}[1]{\mathbf{#1}}
\renewcommand{\c}[1]{\mathcal{#1}}
\renewcommand{\d}[1]{\mathbb{#1}}
\newcommand{\f}[1]{\mathfrak{#1}}
\renewcommand{\r}[1]{\mathrm{#1}}
\newcommand{\s}[1]{\mathscr{#1}}
\renewcommand{\(}{\left(}
\renewcommand{\)}{\right)}
\newcommand{\res}{\mathbin{|}}
\newcommand{\ol}[1]{\overline{#1}{}}
\newcommand{\ul}{\underline}
\renewcommand{\leq}{\leqslant}
\renewcommand{\geq}{\geqslant}
\newcommand{\bT}{\b T}
\newcommand{\bu}{\b u}
\newcommand{\bw}{\b w}
\newcommand{\cA}{\c A}
\newcommand{\cC}{\c C}
\newcommand{\cH}{\c H}
\newcommand{\cM}{\c M}
\newcommand{\cN}{\c N}
\newcommand{\cV}{\c V}
\newcommand{\cX}{\c X}
\newcommand{\cY}{\c Y}
\newcommand{\cZ}{\c Z}
\newcommand{\dA}{\d A}
\newcommand{\dC}{\d C}
\newcommand{\dF}{\d F}
\newcommand{\dL}{\d L}
\newcommand{\dP}{\d P}
\newcommand{\dQ}{\d Q}
\newcommand{\dR}{\d R}
\newcommand{\dS}{\d S}
\newcommand{\dT}{\d T}
\newcommand{\dV}{\d V}
\newcommand{\dZ}{\d Z}
\newcommand{\fD}{\f D}
\newcommand{\fE}{\f E}
\newcommand{\fF}{\f F}
\newcommand{\fI}{\f I}
\newcommand{\fZ}{\f Z}
\newcommand{\fg}{\f g}
\newcommand{\fm}{\f m}
\newcommand{\fp}{\f p}
\newcommand{\rD}{\r D}
\newcommand{\rE}{\r E}
\newcommand{\rF}{\r F}
\newcommand{\rH}{\r H}
\newcommand{\rI}{\r I}
\newcommand{\rJ}{\r J}
\newcommand{\rK}{\r K}
\newcommand{\rR}{\r R}
\newcommand{\rT}{\r T}
\newcommand{\rU}{\r U}
\newcommand{\rZ}{\r Z}
\newcommand{\rd}{\,\r d}
\newcommand{\re}{\r e}
\newcommand{\rh}{\r h}
\newcommand{\rs}{\r s}
\newcommand{\rt}{\r t}
\newcommand{\sI}{\s I}
\newcommand{\sO}{\s O}
\newcommand{\sS}{\s S}
\newcommand{\tR}{\mathtt{R}}
\newcommand{\tS}{\mathtt{S}}
\newcommand{\tT}{\mathtt{T}}
\newcommand{\tV}{\mathtt{V}}
\newcommand{\tc}{\mathtt{c}}
\newcommand{\tg}{\mathtt{g}}
\newcommand{\tw}{\mathtt{w}}
\newcommand{\biota}{\boldsymbol{\iota}}
\newcommand{\bpi}{\boldsymbol{\pi}}
\newcommand{\bbE}{\boldsymbol{E}}
\newcommand{\bbX}{\boldsymbol{X}}
\newcommand{\pres}[2]{\prescript{#1}{}{{\hskip-0.1em\relax}#2}}
\newcommand{\ac}{\r{ac}}
\newcommand{\CF}{\mathbbm{1}}
\newcommand{\cl}{\r{cl}}
\newcommand{\cusp}{\r{cusp}}
\newcommand{\dr}{\r{dR}}
\newcommand{\dtm}{\r{det}\:}
\newcommand{\fin}{\r{fin}}
\newcommand{\Herm}{\r{Herm}}
\newcommand{\id}{\r{id}}
\newcommand{\inert}{\r{int}}
\newcommand{\ram}{\r{ram}}
\newcommand{\reg}{\r{reg}}
\newcommand{\spl}{\r{spl}}
\DeclareMathOperator{\BC}{BC}
\DeclareMathOperator{\CH}{CH}
\DeclareMathOperator{\Diff}{Diff}
\DeclareMathOperator{\Gal}{Gal}
\DeclareMathOperator{\GL}{GL}
\DeclareMathOperator{\Hom}{Hom}
\DeclareMathOperator{\IM}{im}
\DeclareMathOperator{\Ind}{Ind}
\DeclareMathOperator{\Ker}{ker}
\DeclareMathOperator{\Lie}{Lie}
\DeclareMathOperator{\Mat}{Mat}
\DeclareMathOperator{\Nm}{Nm}
\DeclareMathOperator{\ord}{ord}
\DeclareMathOperator{\rank}{rank}
\DeclareMathOperator{\Res}{Res}
\DeclareMathOperator{\SCH}{SCH}
\DeclareMathOperator{\Sch}{Sch}
\DeclareMathOperator{\Spec}{Spec}
\DeclareMathOperator{\Spf}{Spf}
\DeclareMathOperator{\supp}{supp}
\DeclareMathOperator{\tr}{tr}
\DeclareMathOperator{\Tr}{Tr}
\DeclareMathOperator{\vol}{vol}
\begin{document}

\title{Chow groups and $L$-derivatives of automorphic motives for unitary groups}

\author{Chao Li}
\address{Department of Mathematics, Columbia University, New York NY 10027, United States}
\email{chaoli@math.columbia.edu}

\author{Yifeng Liu}
\address{Department of Mathematics, Yale University, New Haven CT 06511, United States}
\email{yifeng.liu@yale.edu}

\date{\today}
\subjclass[2010]{11G18, 11G40, 11G50, 14C15}

\begin{abstract}
  In this article, we study the Chow group of the motive associated to a tempered global $L$-packet $\pi$ of unitary groups of even rank with respect to a CM extension, whose global root number is $-1$. We show that, under some restrictions on the ramification of $\pi$, if the central derivative $L'(1/2,\pi)$ is nonvanishing, then the $\pi$-nearly isotypic localization of the Chow group of a certain unitary Shimura variety over its reflex field does not vanish. This proves part of the Beilinson--Bloch conjecture for Chow groups and $L$-functions, which generalizes the Birch and Swinnerton-Dyer conjecture. Moreover, assuming the modularity of Kudla's generating functions of special cycles, we explicitly construct elements in a certain $\pi$-nearly isotypic subspace of the Chow group by arithmetic theta lifting, and compute their heights in terms of the central derivative $L'(1/2,\pi)$ and local doubling zeta integrals. This confirms the conjectural arithmetic inner product formula proposed by one of us, which generalizes the Gross--Zagier formula to higher dimensional motives.
\end{abstract}

\maketitle

\setcounter{tocdepth}{1}
\tableofcontents

\section{Introduction}
\label{ss:introduction}

In 1986, Gross and Zagier \cite{GZ86} proved a remarkable formula that relates the N\'{e}ron--Tate heights of Heegner points on a rational elliptic curve to the central derivative of the corresponding Rankin--Selberg $L$-function. A decade later, Kudla \cite{Kud97} revealed another striking relation between Gillet--Soul\'{e} heights of special cycles on Shimura curves and derivatives of Siegel Eisenstein series of genus two, suggesting an arithmetic version of theta lifting and the Siegel--Weil formula (see, for example, \cites{Kud02, Kud03}). This was later further developed in his joint work with Rapoport and Yang \cite{KRY06}. For the higher dimensional case, in a series of papers starting from the late 1990s, Kudla and Rapoport developed the theory of special cycles on integral models of Shimura varieties for GSpin groups in lower rank cases and for unitary groups of arbitrary ranks \cites{KR11,KR14}. They also studied special cycles on the relevant Rapoport--Zink spaces over non-archimedean local fields. In particular, they formulated a conjecture relating the arithmetic intersection number of special cycles on the unitary Rapoport--Zink space to the first derivative of local Whittaker functions \cite{KR11}*{Conjecture~1.3}.

In his thesis work \cites{Liu11,Liu12}, one of us studied special cycles as elements in the Chow group of the unitary Shimura variety over its reflex field (rather than in the arithmetic Chow group of a certain integral model) and the Beilinson--Bloch height of the arithmetic theta lifting (rather than the Gillet--Soul\'{e} height). In particular, in the setting of unitary groups, he proposed an explicit conjectural formula for the Beilinson--Bloch height in terms of the central $L$-derivative and local doubling zeta integrals. Such formula is completely parallel to the Rallis inner product formula \cite{Ral84} which computes the Petersson inner product of the global theta lifting, hence was named \emph{arithmetic inner product formula} in \cite{Liu11}, and can be regarded as a higher dimensional generalization of the Gross--Zagier formula.\footnote{By ``generalization of the Gross--Zagier formula'', we simply mean that they are both formulae relating Beilinson--Bloch heights of special cycles and central derivatives of $L$-functions. However, from a representation-theoretical point of view, the more accurate generalization of the Gross--Zagier formula should be the arithmetic Gan--Gross--Prasad conjecture.} In the case of $\rU(1,1)$ over an arbitrary CM extension, such conjectural formula was completely confirmed in \cite{Liu12}, while the case for $\rU(r,r)$ with $r\geq 2$ is significantly harder. Recently, the Kudla--Rapoport conjecture has been proved by W.~Zhang and one of us in \cite{LZ}; and it has become possible to attack the cases for higher rank groups. In what follows, we will explain our new results on Chow groups of automorphic motives for unitary groups and the arithmetic inner product formula.

\subsection*{Beilinson--Bloch conjecture}

Let $\bbE$ be a number field and $\bbX$ a projective smooth scheme over $\bbE$ of odd dimension $2r-1$. We have the $L$-function $L(s,\rH^{2r-1}(\bbX\otimes_{\bbE}\ol{\bbE},\dQ_\ell(r)))$ for the middle degree $\ell$-adic cohomology of $\bbX$ for every rational prime $\ell$, which is conjectured to be meromorphic, independent of $\ell$, and satisfy a functional equation with center $s=0$. Let $\CH^r(\bbX)^0$ be the group of codimension $r$ Chow cycles on $\bbX$ that are homologically trivial (on $\bbX\otimes_{\bbE}\ol{\bbE}$). Then the unrefined Beilinson--Bloch conjecture (\cite{Bei87}*{Conjecture~5.9} and \cite{Blo84}) predicts that
\[
\rank \CH^r(\bbX)^0 =\ord_{s=0}L(s,\rH^{2r-1}(\bbX\otimes_{\bbE}\ol{\bbE},\dQ_\ell(r)))
\]
holds for every $\ell$, hence in particular, $\CH^r(\bbX)^0$ has finite rank. Note that when $\bbX$ is an elliptic curve, this recovers the (unrefined) Birch and Swinnerton-Dyer conjecture.

In fact, this conjecture can also be formulated in terms of Chow motives. Based on this point of view, we have an equivariant version of the Beilinson--Bloch conjecture as follows. Suppose that $\bbX$ admits an action of an algebra $\bT$ via \'{e}tale correspondences. Then $\bT$ acts on both $\CH^r(\bbX)^0$ and $\rH^{2r-1}(\bbX\otimes_{\bbE}\ol{\bbE},\dQ_\ell(r))$. Let $\bpi$ be a nonzero irreducible finite-dimensional complex representation of $\bT$. Then for every $\ell$ and every embedding $\dQ_\ell\hookrightarrow\dC$, we have the $L$-function
\[
L(s,\Hom_\bT(\bpi,\rH^{2r-1}(\bbX\otimes_{\bbE}\ol{\bbE},\dQ_\ell(r))_\dC).
\]
Then it is expected that
\begin{align}\label{eq:bb}
\dim_\dC\Hom_\bT(\bpi,\CH^r(\bbX)^0_\dC)=\ord_{s=0}L(s,\Hom_\bT(\bpi,\rH^{2r-1}(\bbX\otimes_{\bbE}\ol{\bbE},\dQ_\ell(r))_\dC)
\end{align}
holds, which can be regarded as the Beilinson--Bloch conjecture for the (conjectural Chow) motive $\Hom_\bT(\bpi,\rh^{2r-1}(\bbX)(r)_\dC)$ where $\rh^{2r-1}(\bbX)$ is the (conjectural Chow) motive of $\bbX$ of degree $2r-1$.

Now we propose a more specific conjecture for unitary Shimura varieties, guided by the equivariant version of the Beilinson--Bloch conjecture above.

Let $E/F$ be a CM extension of number fields with the complex conjugation $\tc$. Take an even positive integer $n=2r$. We equip $W_r\coloneqq E^n$ with the skew-hermitian form (with respect to the involution $\tc$) given by the matrix $\(\begin{smallmatrix}&1_r\\ -1_r &\end{smallmatrix}\)$. Put $G_r\coloneqq\rU(W_r)$, the unitary group of $W_r$, which is a quasi-split reductive group over $F$. For every non-archimedean place $v$ of $F$, we denote by $K_{r,v}\subseteq G_r(F_v)$ the stabilizer of the lattice $O_{E_v}^n$, which is a special maximal subgroup.

We first recall the notation of unitary Shimura varieties. Consider a totally positive definite \emph{incoherent} hermitian space $V$ over $\dA_E$ of rank $n$ with respect to the involution $\tc$,\footnote{Recall that ``incoherent'' means that $V$ is not the base change of a hermitian space over $E$.} and put $H\coloneqq\rU(V)$ for its unitary group. For every embedding $\iota\colon E\hookrightarrow\dC$, we have a system $\{\pres{\iota}{X_L}\}$ of Shimura varieties\footnote{When $F=\dQ$, we have to replace $\pres{\iota}{X_L}$ by its canonical smooth toroidal compactification.} of dimension $n-1$ over $\iota(E)$ indexed by open compact subgroups $L\subseteq H(\dA_F^\infty)$ (see Section \ref{ss:special} for more details).

\begin{conjecture}\label{co:bb}
Let $\pi$ be a tempered cuspidal automorphic representation of $G_r(\dA_F)$. For every totally positive definite incoherent hermitian space $V$ over $\dA_E$ of rank $n$, every irreducible admissible representation $\tilde\pi^\infty$ of $H(\dA_F^\infty)$, and every embedding $\iota\colon E\hookrightarrow\dC$, satisfying
\begin{enumerate}[label=(\alph*)]
  \item $\tilde\pi^\infty_v\simeq\pi_v$ for all but finitely many non-archimedean places $v$ of $F$ for which $\tilde\pi^\infty_v$ is unramified;

  \item $\Hom_{H(\dA_F^\infty)}\(\tilde\pi^\infty,\varinjlim_L\rH^{n-1}_{\dr}(\pres{\iota}{X_L}/\dC)\)\neq\{0\}$,
\end{enumerate}
the identity
\[
\dim_\dC\Hom_{H(\dA_F^\infty)}\(\tilde\pi^\infty,\varinjlim_L\CH^r(\pres{\iota}{X_L})^0_\dC\)=\ord_{s=\frac{1}{2}}L(s,\Pi_{j(\tilde\pi^\infty)})
\]
holds. Here, $\Pi_{j(\tilde\pi^\infty)}$ is the cuspidal factor of the automorphic base change of $\pi$ determined by $\tilde\pi^\infty$ (see Lemma \ref{le:arthur}).
\end{conjecture}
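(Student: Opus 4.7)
The plan is to split the conjectured equality into its two halves, $\dim \leq r_0$ and $\dim \geq r_0$, where $r_0 \coloneqq \ord_{s=1/2} L(s,\Pi_{j(\tilde\pi^\infty)})$, and to pursue the two inequalities by essentially orthogonal techniques. Since the conjecture is still open in general, I will treat separately the tractable cases $r_0 \in \{0,1\}$ (where the sign of the functional equation forces the parity) and flag $r_0 \geq 2$ as a present obstruction.

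For the lower bound $\dim \geq r_0$, the case $r_0 = 0$ is vacuous. For $r_0 = 1$, I would use the modularity of Kudla's generating series (taken as a hypothesis, as advertised in the abstract) to define the arithmetic theta lift from a Schwartz space on $V(\dA_F)$ into $\varinjlim_L\CH^r(\pres{\iota}{X_L})^0_\dC$ and project to the $\tilde\pi^\infty$-isotypic component. I would then evaluate the Beilinson--Bloch height pairing of two such lifts via the arithmetic inner product formula, which expresses the pairing as an explicit constant multiple of $L'(1/2,\Pi_{j(\tilde\pi^\infty)})$ times a product of local doubling zeta integrals. Choosing test vectors so that the local integrals are nonzero, and invoking the hypothesis $L'(1/2,\Pi_{j(\tilde\pi^\infty)}) \neq 0$, one obtains a nonzero element of the isotypic Chow group, so $\dim \geq 1$.

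For the upper bound $\dim \leq r_0$, the natural route is an Euler system argument bounding the Bloch--Kato Selmer group
\[
\rH^1_f\bigl(E, \Hom_\bT(\tilde\pi^\infty, \rH^{2r-1}(\pres{\iota}{X}\otimes_{\iota(E)}\overline{\iota(E)},\dQ_\ell(r)))\bigr),
\]
followed by transport through the $\ell$-adic Abel--Jacobi map. I would attempt to build the Euler system from arithmetic theta lifts by varying the incoherent hermitian space $V$ along anticyclotomic CM towers and by inserting auxiliary Hecke correspondences at split primes, in the spirit of Kolyvagin and Nekov\'a\v{r}, proving the required norm-compatibility through a level-raising/restriction calculation on the Kudla generating series and the nontriviality of derived classes through an explicit reciprocity law calibrated against the arithmetic inner product formula. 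An Euler system bound of Kolyvagin type then gives $\dim_\dC \rH^1_f \leq r_0$, which on the $\tilde\pi^\infty$-isotypic piece transfers to the Chow group modulo the kernel of the Abel--Jacobi map.

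The principal obstacles are threefold. First, no Euler system of the required type is currently available for unitary Shimura varieties of arbitrary even rank; its construction would itself require a refinement of the arithmetic inner product formula at varying levels together with an explicit reciprocity law, and this is the single hardest input. Second, control of the kernel of the $\ell$-adic Abel--Jacobi map on the $\tilde\pi^\infty$-isotypic component is a form of the Bloch--Beilinson conjecture and is genuinely open. Third, the case $r_0 \geq 2$ lies outside every currently known technique, since neither the lower bound (no higher-derivative analogue of the arithmetic theta construction is known) nor the upper bound (Euler systems do not naturally detect higher-order vanishing) admits an approach. Consistent with these obstructions, the theorems actually proved in the paper establish only the lower bound $\dim \geq 1$ under the nonvanishing of $L'(1/2,\Pi_{j(\tilde\pi^\infty)})$, leaving the full equality as Conjecture \ref{co:bb}.
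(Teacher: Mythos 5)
The statement you are addressing is Conjecture \ref{co:bb}; the paper contains no proof of it, so there is no argument of the authors to measure yours against, and your proposal is not a proof either: you leave the upper bound $\dim\leq r_0$ resting on an Euler system for these unitary Shimura varieties that does not exist (in the paper or elsewhere at this generality), on control of the kernel of the $\ell$-adic Abel--Jacobi map, and you exclude $r_0\geq 2$ outright. As a research sketch your constructive half does coincide with what the paper actually proves: the lower bound $\dim\geq 1$ in the analytic-rank-one case is exactly Theorem \ref{th:aipf}(2), obtained by defining the arithmetic theta lift under Hypothesis \ref{hy:modularity}, computing its normalized height by the arithmetic inner product formula of Theorem \ref{th:aipf}(1), and producing test vectors with nonvanishing local doubling zeta integrals via Proposition \ref{pr:regular}.

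Two concrete inaccuracies in that half deserve attention. First, the height pairing is proportional to $L'(\tfrac12,\pi)$, the derivative of the \emph{full} doubling $L$-function $L(s,\pi)=\prod_j L(s,\Pi_j)$, not of the single cuspidal factor $L(s,\Pi_{j(\tilde\pi^\infty)})$. Hence $r_0=\ord_{s=\frac12}L(s,\Pi_{j(\tilde\pi^\infty)})=1$ is not enough for your nonvanishing argument: if some other factor $\Pi_i$ vanishes at the center (necessarily to even order), then $L'(\tfrac12,\pi)=0$ and the height of every arithmetic theta lift vanishes, so the method detects nothing. The hypothesis you actually need is $\ord_{s=\frac12}L(s,\pi)=1$, as in Theorems \ref{th:main} and \ref{th:aipf}(2) and Remarks \ref{re:root}, \ref{re:aipf}(3). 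Second, the paper's results are proved only under Assumption \ref{st:main} (no ramified places, $2$-adic places split, prescribed local behavior of $\pi_v$) together with Hypothesis \ref{hy:galois}, whereas the conjecture is stated for arbitrary tempered $\pi$ and arbitrary incoherent $V$; and the unconditional Theorem \ref{th:main} yields nonvanishing of a nearly isotypic \emph{localization} of the Chow group, which implies the Hom-space nonvanishing of the conjecture only if one knows finite-dimensionality of $\CH^r(X_L)^0_{\dQ^\ac}$ (Remark \ref{re:main}(2)); the Hom-space statement itself is conditional on modularity. So even the portion of your plan that is carried out in the paper holds in a strictly narrower setting than the one in which you state it.
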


In relation with \eqref{eq:bb}, we take $\bbE=\iota(E)$, $\bbX=\pres{\iota}{X_L}$ for $L$ such that $(\tilde\pi^\infty)^L\neq 0$, $\bT$ to be the Hecke algebra of $H(\dA_F^\infty)$ of level $L$, and $\bpi=(\tilde\pi^\infty)^L$. Moreover, in this case we know that the $L$-function on the right-hand side of \eqref{eq:bb} coincides with $L(s,\Pi_{j(\tilde\pi^\infty)})$ up to a shift by $\tfrac{1}{2}$. Thus, Conjecture \ref{co:bb} is a special case of \eqref{eq:bb} after taking limit of $L$.

In the case when $\{\pres{\iota}{X_L}\}$ is replaced by classical modular curves, Conjecture \ref{co:bb} in fact recovers the (unrefined) Birch and Swinnerton-Dyer conjecture for rational elliptic curves. See \cite{Gro04}*{Section~22} for more details from this point of view. Conjecture \ref{co:bb} was only known in the case of modular/Shimura curves when the analytic rank is at most $1$ \cites{GZ86,Kol90,Nek08,YZZ}, and partially known in the case of Shimura varieties for $\rU(2)\times\rU(3)$ when the analytic rank is exactly $1$ \cite{Xue19}.\footnote{Interestingly, the height formula in \cite{Xue19}, which is for the endoscopic case, is obtained by reducing it to the arithmetic inner product formula for $\rU(1,1)$.}

\begin{remark}\label{re:history}
The notion of incoherent spaces was first invented by Kudla (in the quadratic case) which he called an incoherent collection of quadratic spaces over local fields \cite{Kud97}*{Definition~2.1}. Around the similar time, Gross realized that a Shimura curve can be uniformized at its supersingular points in terms of a collection of quaternion algebras over the base number field (see \cite{Gro04} and also \cite{GGP12} for generalizations). In their work \cite{YZZ}, Yuan, S.~Zhang, and W.~Zhang put this infinite collection of quaternion algebras as a single quaternion algebra over the ad\`{e}les as a uniform description of the geometry of Shimura curves and the representation theory. This viewpoint was later adapted by W.~Zhang \cite{Zha12} and one of us \cites{Liu11,Liu12}. In \cite{Zha19} (which is based on his 2010 talk at Gross birthday conference), S.~Zhang summarizes how one can use the notion of incoherent quadratic/hermitian spaces to formulate various conjectures that are arithmetic counterparts of classical period formulae. In particular, there should exists a compatible system of varieties $\{X_L\}$ over $E$ such that for every embedding $\iota\colon E\hookrightarrow\dC$, we have an isomorphism between $\{X_L\otimes_E\iota(E)\}$ and $\{\pres{\iota}{X_L}\}$ -- this was explained in more details in \cite{Gro}. Based on this observation, Conjecture \ref{co:bb} can be formulated for the family $\{X_L\}$ without choosing $\iota$.
\end{remark}

\subsection*{Main results}

Our main results in this article prove part of Conjecture \ref{co:bb} under certain assumptions on $E/F$ and $\pi$. Denote by $\tV_F^{(\infty)}$ and $\tV_F^\fin$ the set of archimedean and non-archimedean places of $F$, respectively; and $\tV_F^\spl$, $\tV_F^\inert$, and $\tV_F^\ram$ the subsets of $\tV_F^\fin$ of those that are split, inert, and ramified in $E$, respectively. For every $v\in\tV_F^\fin$, we denote by $q_v$ the residue cardinality of $F_v$.

\begin{assumption}\label{st:main}
Suppose that $\tV_F^\ram=\emptyset$ and that $\tV_F^\spl$ contains all 2-adic places. In particular, $[F:\dQ]$ is even. We consider a cuspidal automorphic representation $\pi$ of $G_r(\dA_F)$ realized on a space $\cV_\pi$ of cusp forms, satisfying:
\begin{enumerate}
  \item For every $v\in\tV_F^{(\infty)}$, $\pi_v$ is the holomorphic discrete series representation of Harish-Chandra parameter $\{\tfrac{n-1}{2},\tfrac{n-3}{2},\dots,\tfrac{3-n}{2},\tfrac{1-n}{2}\}$.

  \item For every $v\in\tV_F^\spl$, $\pi_v$ is a principal series.

  \item For every $v\in\tV_F^\inert$, $\pi_v$ is either unramified or almost unramified (see Remark \ref{re:almost_unramified} below) with respect to $K_{r,v}$; moreover, if $\pi_v$ is almost unramified, then $v$ is unramified over $\dQ$.

  \item For every $v\in\tV_F^\fin$, $\pi_v$ is tempered.
\end{enumerate}
\end{assumption}

\begin{remark}\label{re:almost_unramified}
We have the following remarks concerning Assumption \ref{st:main}.
\begin{enumerate}
  \item In (1), by \cite{Sch75}*{Theorem~1.3}, the condition for $\pi_v$ is equivalent to that $\pi_v$ is a discrete series representation whose restriction to $K_{r,v}$ contains the character $\kappa_{r,v}^r$ (see Notation \ref{st:g}(G5,G6) for the notation). Moreover, one can also describe $\pi_v$ as the theta lifting of the trivial representation of the (positive) definite unitary group of rank $n$ (see, for example, \cite{KK07}).

  \item Part (2) will only be used in the proof of Lemma \ref{le:split_tempered} in order to quote a vanishing result from \cite{CS17}. However, in our second article on this subject \cite{LL}, we have successively removed this assumption by confirming the conjecture in Remark \ref{re:split_tempered} by proving a stronger vanishing property.

  \item In (3), the notion of almost unramified representations of $G_r(F_v)$ at $v\in\tV_F^\inert$ is defined in \cite{Liu20}*{Definition~5.3}. Roughly speaking, an irreducible admissible representation $\pi_v$ of $G_r(F_v)$ is almost unramified (with respect to $K_{r,v}$) if $\pi_v^{I_{r,v}}$ contains a particular character as a module over $\dC[I_{r,v}\backslash K_{r,v}/I_{r,v}]$, where $I_{r,v}$ is an Iwahori subgroup contained in $K_{r,v}$, and that the Satake parameter of $\pi_v$ contains the pair $\{q_v,q_v^{-1}\}$; it is \emph{not} unramified. By \cite{Liu20}*{Theorem~1.2}, when $q_v$ is odd, \emph{almost} unramified representations are exactly those representations whose local theta lifting to the non-quasi-split unitary group of the same rank $2r$ has nonzero invariants under the stabilizer of an \emph{almost} self-dual lattice.

  \item In fact, if one uses the endoscopic classification for automorphic representations of unitary groups \cites{Mok15,KMSW} and \cite{Car12}*{Theorem~1.2}, then condition (4) in Assumption \ref{st:main} will be implied by condition (1).
\end{enumerate}
\end{remark}

Suppose that we are in Assumption \ref{st:main}. Denote by
\begin{itemize}
  \item $L(s,\pi)$ the doubling $L$-function (see Definition \ref{de:lfunction} for the more precise definition),

  \item $\tR_\pi\subseteq\tV_F^\spl$ the (finite) subset for which $\pi_v$ is ramified,

  \item $\tS_\pi\subseteq\tV_F^\inert$ the (finite) subset for which $\pi_v$ is almost unramified.
\end{itemize}
Then we have $\varepsilon(\pi)=(-1)^{r[F:\dQ]+|\tS_\pi|}$ for the global (doubling) root number, so that the vanishing order of $L(s,\pi)$ at the center $s=\tfrac{1}{2}$ has the same parity as $|\tS_\pi|$ since $[F:\dQ]$ is even. The cuspidal automorphic representation $\pi$ determines a hermitian space $V_\pi$ over $\dA_E$ of rank $n$ via local theta dichotomy (such that the local theta lifting of $\pi_v$ to $\rU(V_\pi)(F_v)$ is nontrivial for every place $v$ of $F$), unique up to isomorphism, which is totally positive definite and satisfies that for every $v\in\tV_F^\fin$, the local Hasse invariant $\epsilon(V_\pi\otimes_{\dA_F}F_v)=1$ if and only if $v\not\in\tS_\pi$ (see Proposition \ref{pr:uniqueness}(2)).

Now suppose that $|\tS_\pi|$ is odd hence $\varepsilon(\pi)=-1$, which is equivalent to that $V_\pi$ is incoherent. In what follows, we take $V=V_\pi$ in the context of Conjecture \ref{co:bb}, hence $H=\rU(V_\pi)$. Let $\tR$ be a finite subset of $\tV_F^\fin$. We fix a special maximal subgroup $L^\tR$ of $H(\dA_F^{\infty,\tR})$ that is the stabilizer of a lattice $\Lambda^\tR$ in $V\otimes_{\dA_F}\dA_F^{\infty,\tR}$ (see Notation \ref{st:h}(H6) for more details). For a field $\dL$, we denote by $\dT^\tR_\dL$ the (abstract) Hecke algebra $\dL[L^\tR\backslash H(\dA_F^{\infty,\tR})/L^\tR]$, which is a commutative $\dL$-algebra. When $\tR$ contains $\tR_\pi$, the cuspidal automorphic representation $\pi$ gives rise to a character
\[
\chi_\pi^\tR\colon\dT^\tR_{\dQ^\ac}\to\dQ^\ac,
\]
where $\dQ^\ac$ denotes the subfield of $\dC$ of algebraic numbers; and we put $\fm_\pi^\tR\coloneqq\Ker\chi_\pi^\tR$, which is a maximal ideal of $\dT^\tR_{\dQ^\ac}$.

In what follows, we will fix an arbitrary embedding $\biota\colon E\hookrightarrow\dC$ and suppress it in the notation ${}^{\biota\!}X$ of unitary Shimura varieties. The following is the first main theorem of this article.

\begin{theorem}\label{th:main}
Let $(\pi,\cV_\pi)$ be as in Assumption \ref{st:main} with $|\tS_\pi|$ odd, for which we assume Hypothesis \ref{hy:galois}. If $L'(\tfrac{1}{2},\pi)\neq 0$, that is, $\ord_{s=\frac{1}{2}}L(s,\pi)=1$, then as long as $\tR$ satisfies $\tR_\pi\subseteq\tR$ and $|\tR\cap\tV_F^\spl|\geq 2$, the nonvanishing
\[
\varinjlim_{L_\tR}\(\CH^r(X_{L_\tR L^\tR})^0_{\dQ^\ac}\)_{\fm_\pi^\tR}\neq\{0\}
\]
holds, where the colimit is taken over all open compact subgroups $L_\tR$ of $H(F_\tR)$.
\end{theorem}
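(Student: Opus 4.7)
The plan is to prove the stronger \emph{arithmetic inner product formula}: for suitable test data, the Beilinson--Bloch height of an arithmetic theta lift equals $L'(\tfrac{1}{2},\pi)$ times a product of nonzero local zeta integrals. Since $L'(\tfrac{1}{2},\pi)\neq 0$, this forces the height pairing to be nonzero, hence the underlying cycle to be nonzero in some $\CH^r(X_{L_\tR L^\tR})^0_{\dQ^\ac}$; after applying the Hecke projector corresponding to $\fm_\pi^\tR$, one obtains the required element of the localized Chow group.

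First I would define, for a Schwartz function $\phi\in\cS(V^r(\dA_F))$ and a cusp form $f\in\cV_\pi$, an arithmetic theta lift $\Theta_\phi(f)$ landing in $\varinjlim_{L_\tR}\CH^r(X_{L_\tR L^\tR})^0_{\dQ^\ac}$. The approach of \cites{Liu11,Liu12} constructs this lift from Kudla's special cycles after projecting by the spherical Hecke eigensystem of $\pi$, which avoids any appeal to the full modularity of Kudla's generating series. Next I would pair two such lifts via the Beilinson--Bloch height pairing, and use the doubling method to re-express this pairing as the central derivative of a genus-$n$ Siegel Eisenstein series on the doubling group $G_r\times G_r$.

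The heart of the proof is a term-by-term local matching of arithmetic intersection multiplicities with derivatives of local doubling zeta integrals. At archimedean and split places, I would use the explicit computations of \cite{Liu11}. At inert places where $\pi_v$ is unramified, I would invoke the Kudla--Rapoport formula on the unitary Rapoport--Zink space proved in \cite{LZ}. At inert places $v\in\tS_\pi$ where $\pi_v$ is almost unramified, one needs the analogous arithmetic intersection formula on the Rapoport--Zink space of almost self-dual lattices, building on the theta dichotomy of \cite{Liu20}. The hypothesis $|\tR\cap\tV_F^\spl|\geq 2$ plays two roles here: one split place supplies a Hecke projector that isolates the $\pi$-nearly isotypic component (via Lemma \ref{le:split_tempered}, which relies on \cite{CS17}), while a second split place furnishes enough freedom in the Schwartz datum to force the corresponding local zeta integral to be nonvanishing, hence the full product of local factors to be nonzero.

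The main obstacle is the global-to-local passage: one must rigorously compare arithmetic intersection on the integral model of the Shimura variety with the local intersection numbers on Rapoport--Zink spaces. This requires a precise $p$-adic uniformization of the basic locus, careful control of the horizontal and vertical components of Kudla cycles, and a spectral separation argument using the Galois representation attached to $\pi$ (Hypothesis \ref{hy:galois}) to cleanly localize the middle cohomology at $\fm_\pi^\tR$. The treatment of the almost unramified primes in $\tS_\pi$, where $V_{\pi,v}$ is non-split and the integral model carries a Kr\"amer-type semistable structure rather than a smooth PEL model, will demand a genuine extension of the Kudla--Rapoport framework beyond its original formulation, and appears to be the most technically delicate step.
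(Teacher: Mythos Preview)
Your plan conflates the proof of Theorem~\ref{th:main} with that of Theorem~\ref{th:aipf}. The arithmetic inner product formula, and in particular the arithmetic theta lift $\Theta_\phi(f)$ as an actual element of the Chow group, requires Hypothesis~\ref{hy:modularity} on the modularity of Kudla's generating series; this hypothesis is \emph{not} assumed in Theorem~\ref{th:main}. Your assertion that Hecke projection by the eigensystem of $\pi$ lets one define $\Theta_\phi(f)$ without modularity is not justified: such a projection would make sense only if the $\pi$-localized Chow group were already known to be finite dimensional, which is precisely what is at stake. The references \cites{Liu11,Liu12} do not supply this either, since \cite{Liu11} proves modularity only in codimension one and \cite{Liu12} treats $\rU(1,1)$ where that suffices.

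The paper's proof circumvents this by never forming $\Theta_\phi(f)$. Instead it works with the individual special cycles $Z_T(\phi^\infty)$, pushes them into $\CH^r(X_L)^{\langle\ell\rangle}$ by Hecke operators in $\dS^\tR_{\dQ^\ac}\setminus\fm_\pi^\tR$ (Proposition~\ref{pr:tempered_generic}), and argues by contradiction: if the localized Chow group vanished, then for each $T_2$ some $\rt_{T_2}\in\dT^\tR_{\dQ^\ac}$ with $\chi_\pi^\tR(\rt_{T_2})=1$ would annihilate $Z_{T_2}(\phi^\infty_2)$, forcing all the height pairings in Lemma~\ref{le:proof}(1) to vanish; via Lemma~\ref{le:proof}(2) this yields $L'(\tfrac{1}{2},\pi)\cdot\prod_v\fZ^\natural_{\pi_v,V_v}(\cdots)=0$, contradicting $L'(\tfrac{1}{2},\pi)\neq 0$ once Proposition~\ref{pr:regular} supplies data with nonvanishing local zeta integrals. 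The integration against $\varphi$ occurs only after the local indices $I_{T_1,T_2}(\cdots)$ are already defined termwise, so no convergence issue arises.

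Your description of the role of $|\tR\cap\tV_F^\spl|\geq 2$ is also slightly off. One does not need a split place ``for Hecke projection'' versus another ``for nonvanishing of zeta integrals''. Rather, regular support at two places in $\tR'$ is needed simultaneously for the Fourier expansion of $E'(0,g,\Phi)$ in Proposition~\ref{pr:analytic_kernel}(2) and for the local index computation at split places lying in $\tR$ (Proposition~\ref{pr:index_split}); Proposition~\ref{pr:regular} then shows that at each $v\in\tR$ individually one can arrange regular support without killing the zeta integral. Your identification of the local ingredients (archimedean, split, inert self-dual via \cite{LZ}, inert almost self-dual) is otherwise accurate.
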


\begin{remark}\label{re:main}
We have the following remarks concerning Theorem \ref{th:main}.
\begin{enumerate}
  \item For every $v\in\tV_F^\fin$, the local doubling $L$-function $L(s,\pi_v)$ coincides with $L(s,\BC(\pi_v))$ where $\BC(\pi_v)$ denotes the standard base change of $\pi_v$ to $\GL_n(E_v)$ (see Remark \ref{re:doubling} for more details). In particular, combining with the local-global compatibility \cite{KMSW}*{Theorem~1.7.1}, we know that $L(s,\pi)$ coincides with the standard $L$-function of the automorphic base change of $\pi$.

  \item Since $|\tS_\pi|$ is odd, by (1) and Remark \ref{re:root}, Conjecture \ref{co:bb} predicts the nonvanishing
       \[
       \varinjlim_{L_\tR}\CH^r(X_{L_\tR L^\tR})^0_{\dQ^\ac}[\fm_\pi^\tR]\neq\{0\}
       \]
       when $\ord_{s=\frac{1}{2}}L(s,\pi)=1$ (by considering $\tilde\pi^\infty$ as the theta lifting of $\pi^\infty$), which further implies the nonvanishing in our statement. However, it is conjectured that $\CH^r(X_{L_\tR L^\tR})^0_{\dQ^\ac}$ is finite dimensional, which implies that the two types of nonvanishing are equivalent. Thus, our theorem provides evidence toward Conjecture \ref{co:bb}. See Theorem \ref{th:aipf}(2) below for a stronger result under an extra hypothesis.

  \item Hypothesis \ref{hy:galois} describes the Galois representation on the $\pi$-nearly isotypic subspace of the middle degree $\ell$-adic cohomology $\varinjlim_L\rH^{2r-1}(X_L\otimes_E\overline{E},\ol\dQ_\ell)$. See Remark \ref{re:galois} for the status of this hypothesis.

  \item In fact, the nonvanishing property we prove is that
      \[
      \varinjlim_{L_\tR}\(\SCH^r(X_{L_\tR L^\tR})^0_{\dQ^\ac}\)_{\fm_\pi^\tR}\neq\{0\},
      \]
      where $\SCH^r(X_{L_\tR L^\tR})^0$ denotes the subgroup of $\CH^r(X_{L_\tR L^\tR})^0$ generated by special cycles (recalled in Section \ref{ss:special}).

  \item It is clear that the field $\dQ^\ac$ in the statement of the theorem can be replaced by an arbitrary subfield over which $\pi^\infty$ (hence $\chi_\pi^\tR$) is defined.


  \item The main reason we assume $\tV_F^\ram=\emptyset$ is that the local ingredient \cite{LZ} only deals with places that are inert in $E$; and we hope to remove this assumption in the future.
\end{enumerate}
\end{remark}

Our remaining results rely on Hypothesis \ref{hy:modularity} on the modularity of Kudla's generating functions of special cycles, hence are conditional at this moment (see Remark \ref{re:modularity}).

\begin{theorem}\label{th:aipf}
Let $(\pi,\cV_\pi)$ be as in Assumption \ref{st:main} with $|\tS_\pi|$ odd, for which we assume Hypothesis \ref{hy:galois}. Assume Hypothesis \ref{hy:modularity} on the modularity of generating functions of codimension $r$.
\begin{enumerate}
  \item For every test vectors
     \begin{itemize}
       \item $\varphi_1=\otimes_v\varphi_{1v}\in\cV_{\pi}$ and $\varphi_2=\otimes_v\varphi_{2v}\in\cV_{\pi}$ such that for every $v\in\tV_F^{(\infty)}$, $\varphi_{1v}$ and $\varphi_{2v}$ have the lowest weight and satisfy $\langle\varphi_{1v}^\tc,\varphi_{2v}\rangle_{\pi_v}=1$,

       \item $\phi^\infty_1=\otimes_v\phi^\infty_{1v}\in\sS(V^r\otimes_{\dA_F}\dA_F^\infty)$ and $\phi^\infty_2=\otimes_v\phi^\infty_{2v}\in\sS(V^r\otimes_{\dA_F}\dA_F^\infty)$,
     \end{itemize}
     the identity
     \[
     \langle\Theta_{\phi^\infty_1}(\varphi_1),\Theta_{\phi^\infty_2}(\varphi_2)\rangle_{X,E}^\natural=
     \frac{L'(\tfrac{1}{2},\pi)}{b_{2r}(0)}\cdot C_r^{[F:\dQ]}
     \cdot\prod_{v\in\tV_F^\fin}\fZ^\natural_{\pi_v,V_v}(\varphi^\tc_{1v},\varphi_{2v},\phi_{1v}^\infty\otimes(\phi_{2v}^\infty)^\tc)
     \]
     holds. Here,
     \begin{itemize}
       \item $\Theta_{\phi^\infty_i}(\varphi_i)\in\varinjlim_L\CH^r(X_L)^0_\dC$ is the arithmetic theta lifting (Definition \ref{de:arithmetic_theta}), which is only well-defined under Hypothesis \ref{hy:modularity};

       \item $\langle\Theta_{\phi^\infty_1}(\varphi_1),\Theta_{\phi^\infty_2}(\varphi_2)\rangle_{X,E}^\natural$ is the normalized height pairing (Definition \ref{de:natural}),\footnote{Strictly speaking, $\langle\Theta_{\phi^\infty_1}(\varphi_1),\Theta_{\phi^\infty_2}(\varphi_2)\rangle_{X,E}^\natural$ relies on the choice of a rational prime $\ell$ and is a priori an element in $\dC\otimes_\dQ\dQ_\ell$. However, the above identity implicitly says that it belongs to $\dC$ and is independent of the choice of $\ell$.} which is constructed based on Beilinson's notion of height pairing;

       \item $b_{2r}(0)$ is defined in Notation \ref{st:f}(F4), which equals $L(M_r^\vee(1))$ where $M_r$ is the motive associated to $G_r$ by Gross \cite{Gr97}, and is in particular a positive real number;

       \item $C_r=(-1)^r2^{r(r-1)}\pi^{r^2}\frac{\Gamma(1)\cdots\Gamma(r)}{\Gamma(r+1)\cdots\Gamma(2r)}$, which is the exact value of a certain archimedean doubling zeta integral; and

       \item $\fZ^\natural_{\pi_v,V_v}(\varphi^\tc_{1v},\varphi_{2v},\phi_{1v}^\infty\otimes(\phi_{2v}^\infty)^\tc)$ is the normalized local doubling zeta integral (see Section \ref{ss:doubling}), which equals $1$ for all but finitely many $v$.
     \end{itemize}

  \item In the context of Conjecture \ref{co:bb}, take ($V=V_\pi$ and) $\tilde\pi^\infty$ to be the theta lifting of $\pi^\infty$ to $H(\dA_F^\infty)$. If $L'(\tfrac{1}{2},\pi)\neq 0$, that is, $\ord_{s=\frac{1}{2}}L(s,\pi)=1$, then
     \[
     \Hom_{H(\dA_F^\infty)}\(\tilde\pi^\infty,\varinjlim_{L}\CH^r(X_L)^0_\dC\)\neq\{0\}
     \]
     holds.
\end{enumerate}
\end{theorem}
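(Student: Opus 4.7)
The plan is to derive Part (1) by an ``arithmetic doubling method'' — the derivative at $s=\tfrac{1}{2}$ of the Rallis inner product formula — and then to obtain Part (2) as a direct corollary by selecting test vectors that make every local factor on the right-hand side of (1) nonzero.

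\textbf{Reduction of the height pairing.} Hypothesis \ref{hy:modularity} lets us write each arithmetic theta lift $\Theta_{\phi_i^\infty}(\varphi_i)$ as a Petersson integral of $\varphi_i$ against Kudla's modular generating series of codimension-$r$ special cycles in $\varinjlim_L \CH^r(X_L)^0$. Using the see-saw $H\times H \subset \rU(V\oplus V^{-}) \supset G_r \times G_r$, one rewrites
\[
\langle \Theta_{\phi_1^\infty}(\varphi_1), \Theta_{\phi_2^\infty}(\varphi_2) \rangle_{X,E}^{\natural}
\]
as the integration of $\varphi_1^\tc \boxtimes \varphi_2$ against a hermitian-modular generating series on $G_r(\dA_F)\times G_r(\dA_F)$ whose Fourier coefficients encode Beilinson height pairings between pairs of special cycles. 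Beilinson's height machine decomposes this height generating series into a sum of local contributions indexed by the places $v$ of $F$.

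\textbf{Arithmetic Siegel--Weil and matching.} The core task is to identify, at each place $v$, the resulting local height generating series with the $v$-th factor of the first derivative at $s=\tfrac{1}{2}$ of the incoherent Siegel Eisenstein series on $G_{2r}(\dA_F)$ attached to $\phi_1^\infty \otimes (\phi_2^\infty)^\tc$; because $V$ is incoherent (as $|\tS_\pi|$ is odd), this Eisenstein series vanishes at the center, so it is its first derivative that intervenes — this is what produces $L'(\tfrac{1}{2},\pi)$ rather than $L(\tfrac{1}{2},\pi)$. The place-by-place identity is established: at archimedean places by direct Whittaker computation using the lowest-weight assumption; at split places by the classical Siegel--Weil formula (the hypothesis that $\tR$ contains at least two split places, via Lemma \ref{le:split_tempered}, ensures parasitic contributions vanish); at inert unramified places by the Kudla--Rapoport conjecture proved in \cite{LZ}; and at inert almost-unramified places by an ``almost self-dual'' analogue of the Kudla--Rapoport identity on the appropriate non-smooth integral model at $v$. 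Unfolding the resulting doubling integral then factors the Fourier expansion as $L'(\tfrac{1}{2},\pi)/b_{2r}(0)$ times $\prod_v \fZ^{\natural}_{\pi_v,V_v}$, while the constant $C_r^{[F:\dQ]}$ emerges from the explicit archimedean doubling integrals at the lowest-weight vectors.

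\textbf{Deduction of Part (2) and main obstacle.} If $L'(\tfrac{1}{2},\pi)\neq 0$, choose test data making every $\fZ^{\natural}_{\pi_v,V_v}$ nonzero — this is possible at every place because the local theta lift of $\pi_v$ to $\rU(V_\pi \otimes_{\dA_F} F_v)$ is nonzero by the defining property of $V_\pi$, which, through the local doubling dichotomy, forces the local doubling zeta functional to be not identically zero. Part (1) then yields a nonzero height pairing, so $\Theta_{\phi_1^\infty}(\varphi_1)\neq 0$ in $\varinjlim_L \CH^r(X_L)^0_\dC$; since $\Theta_{\phi^\infty}(\cdot)$ is tautologically $H(\dA_F^\infty)$-equivariant with source isomorphic to $\tilde\pi^\infty$, this produces the claimed nonvanishing of the $\tilde\pi^\infty$-isotypic Hom space. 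The principal obstacle is the arithmetic Siegel--Weil identity at the inert almost-unramified places: one must work on a non-smooth integral model and prove an almost self-dual analogue of the Kudla--Rapoport formula supplementing \cite{LZ}, while simultaneously verifying $\ell$-independence of the Beilinson local height contributions. A secondary technical issue is the absolute convergence and genuine hermitian-modularity of the height generating series required to justify the global-to-local factorization, which is where Hypothesis \ref{hy:modularity} and the temperedness assumption on $\pi$ enter crucially.
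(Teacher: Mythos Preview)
Your high-level strategy---local-index decomposition of the height, place-by-place matching with the derivative of the doubling Eisenstein series, then unfolding---is the paper's, but two essential mechanisms are missing from your plan and without them the argument does not close. First, Beilinson's decomposition \eqref{eq:height1} into local indices requires the two cycles to have \emph{disjoint supports}, which fails for generic $\phi_1^\infty,\phi_2^\infty$; you cannot directly ``decompose the height generating series into local contributions'' as you propose. The paper's device is to first restrict to data satisfying $\supp(\phi^\infty_{1v}\otimes(\phi^\infty_{2v})^\tc)\subseteq(V^{2r}_v)_\reg$ at the places $v\in\tR'$ (which forces disjointness by Lemma \ref{le:disjoint}), prove the identity for such regular data, and only then extend to arbitrary test vectors by invoking the one-dimensionality of $\Hom_{G_r(F_v)\times G_r(F_v)}(\rI^\Box_{r,v}(0),\pi_v\boxtimes\pi_v^\vee)$ (Proposition \ref{pr:uniqueness}(1)), since both sides of the desired formula define elements of this Hom-space. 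That regular Schwartz functions with nonvanishing local zeta integrals exist at all is itself a nontrivial input (Proposition \ref{pr:regular}), singled out in the introduction as a key technical challenge.

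Second, the individual cycles $Z_{T_i}(\phi_i^\infty)_L$ generally do not lie in $\CH^r(X_L)^{\langle\ell\rangle}_\dC$, so even with disjoint supports the local indices are a priori undefined. The paper repairs this by acting with Hecke operators $\tilde\rs_i\in\dS^\tR_{\dQ^\ac}\setminus\fm_\pi^\tR$ (Proposition \ref{pr:tempered_generic}, under Hypothesis \ref{hy:galois}) to push the cycles into $\CH^{\langle\ell\rangle}$ without altering the $\pi$-isotypic content; further place-dependent operators $\rs_i^u$ are needed to render the local index computable on the relevant integral models (Propositions \ref{pr:index_split} and \ref{pr:index_inert_almost}). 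Two smaller corrections to your place-by-place picture: at split places the local index is simply zero (Proposition \ref{pr:index_split})---no Siegel--Weil identity is involved---matching the fact that $\Diff(T^\Box,V)$ never contains a split place; and at almost-unramified inert places Proposition \ref{pr:index_inert_almost} produces, beyond the expected $\fE_{T_1,T_2}$, an extra ``value'' term $E_{T_1,T_2}$ on the $u$-nearby coherent space, which does not vanish termwise but only after integration against the cusp forms, via the classical Rallis inner product formula and the dichotomy Proposition \ref{pr:uniqueness}(2).
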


\begin{remark}\label{re:aipf}
We have the following remarks concerning Theorem \ref{th:aipf}.
\begin{enumerate}
  \item Part (1) verifies the so-called \emph{arithmetic inner product formula}, a conjecture proposed by one of us \cite{Liu11}*{Conjecture~3.11}.

  \item The arithmetic inner product formula in part (1) is perfectly parallel to the classical Rallis inner product formula. In fact, suppose that $V$ is totally positive definite but \emph{coherent}. We have the classical theta lifting $\theta_{\phi^\infty}(\varphi)$ where we use standard Gaussian functions at archimedean places. Then the Rallis inner product formula in this case reads as
      \[
      \langle\theta_{\phi^\infty_1}(\varphi_1),\theta_{\phi^\infty_2}(\varphi_2)\rangle_H=
      \frac{L(\tfrac{1}{2},\pi)}{b_{2r}(0)}\cdot C_r^{[F:\dQ]}
      \cdot\prod_{v\in\tV_F^\fin}\fZ^\natural_{\pi_v,V_v}(\varphi^\tc_{1v},\varphi_{2v},\phi_{1v}^\infty\otimes(\phi_{2v}^\infty)^\tc),
      \]
      in which $\langle\;,\;\rangle_H$ denotes the Petersson inner product with respect to the \emph{Tamagawa measure} on $H(\dA_F)$.

  \item In part (2), the representation $\tilde\pi^\infty$ satisfies (a) of Conjecture \ref{co:bb}. By Remark \ref{re:main}(1) and Remark \ref{re:root}, if $\ord_{s=\frac{1}{2}}L(s,\pi)=1$, then $\tilde\pi^\infty$ satisfies (b) of Conjecture \ref{co:bb} as well, and $\Pi_{j(\tilde\pi^\infty)}$ is the unique cuspidal factor of the automorphic base change of $\pi$ such that $\ord_{s=\frac{1}{2}}L(s,\Pi_{j(\tilde\pi^\infty)})=1$. In particular, part (2) provides evidence toward Conjecture \ref{co:bb}, which is more direct than Theorem \ref{th:main} (but is conditional on the modularity of generating functions).
\end{enumerate}
\end{remark}

In the case where $\tR_\pi=\emptyset$, that is, $\pi_v$ is either unramified or almost unramified for every $v\in\tV_F^\fin$, we have a very explicit height formula for test vectors that are new everywhere.

\begin{corollary}\label{co:aipf}
Let $(\pi,\cV_\pi)$ be as in Assumption \ref{st:main} with $|\tS_\pi|$ odd, for which we assume Hypothesis \ref{hy:galois}. Assume Hypothesis \ref{hy:modularity} on the modularity of generating functions of codimension $r$. In the situation of Theorem \ref{th:aipf}(1), suppose further that
\begin{itemize}
  \item $\tR_\pi=\emptyset$;

  \item $\varphi_1=\varphi_2=\varphi\in\cV_\pi^{[r]\emptyset}$ (see Notation \ref{st:g}(G8) for the precise definition of the one-dimensional space $\cV_\pi^{[r]\emptyset}$ of holomorphic new forms) such that for every $v\in\tV_F$, $\langle\varphi_v^\tc,\varphi_v\rangle_{\pi_v}=1$; and

  \item $\phi^\infty_1=\phi^\infty_2=\phi^\infty$ such that for every $v\in\tV_F^\fin$, $\phi^\infty_v=\CF_{(\Lambda_v^\emptyset)^r}$.
\end{itemize}
Then the identity
\[
\langle\Theta_{\phi^\infty}(\varphi),\Theta_{\phi^\infty}(\varphi)\rangle_{X,E}^\natural=(-1)^r\cdot
\frac{L'(\tfrac{1}{2},\pi)}{b_{2r}(0)}\cdot C_r^{[F:\dQ]}
\cdot\prod_{v\in\tS_\pi}\frac{q_v^{r-1}(q_v+1)}{(q_v^{2r-1}+1)(q_v^{2r}-1)}
\]
holds.
\end{corollary}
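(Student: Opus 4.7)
The plan is to specialize Theorem \ref{th:aipf}(1) to the test vectors prescribed in the corollary and then compute each local normalized doubling zeta integral $\fZ^\natural_{\pi_v,V_v}$ explicitly; since the global identity already gives the right structural shape, this reduces the problem to a place-by-place calculation that is essentially a standard (but careful) unramified/almost-unramified computation.

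First I would partition the non-archimedean places into three types and compute the local factor at each: (i) split places $v \in \tV_F^\spl$, where by assumption $\tR_\pi = \emptyset$ so $\pi_v$ is an unramified principal series, the test vector is the spherical new form, and $\phi^\infty_v$ is the characteristic function of a self-dual lattice; (ii) inert places outside $\tS_\pi$, where $\pi_v$ is unramified with respect to the hyperspecial $K_{r,v}$ and again $\phi^\infty_v$ is the characteristic function of the self-dual lattice $\Lambda_v^\emptyset$; (iii) inert places in $\tS_\pi$, where $\pi_v$ is almost unramified and the lattice $\Lambda_v^\emptyset$ (in the relevant hermitian space $V_v$ obtained via theta dichotomy) is an almost self-dual lattice. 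For cases (i) and (ii), the normalization in $\fZ^\natural$ is designed precisely so that the unramified doubling zeta integral with spherical data evaluates to $1$; this is the classical unramified computation of Piatetski-Shapiro--Rallis, and the contribution from these places is trivial.

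The main work is therefore the case (iii) computation at $v\in\tS_\pi$. Here I would use the explicit formula for the doubling zeta integral of an almost unramified representation paired against the characteristic function of an almost self-dual lattice. The Satake parameter of such $\pi_v$ contains the pair $\{q_v, q_v^{-1}\}$ (Remark \ref{re:almost_unramified}(3)), which controls the ratio of the local $L$-factor and the doubling integral; together with volume factors coming from $K_{r,v}$ and the Iwahori $I_{r,v}$ (and the normalization $b_{2r,v}(0)$ absorbed in $\fZ^\natural$) this should yield $\frac{q_v^{r-1}(q_v+1)}{(q_v^{2r-1}+1)(q_v^{2r}-1)}$, together with a sign. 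The sign $(-1)^r$ (independent of $|\tS_\pi|$) should come either from the local root number of the almost unramified representation compared to the unramified one, or from a sign in the matching of new vectors across the theta dichotomy in the almost unramified case; I expect this to arise as $(-1)$ per place in $\tS_\pi$ combined with the parity of $|\tS_\pi|$ (which is odd), giving one overall factor of $(-1)$, while a further $(-1)^{r-1}$ emerges from the explicit almost-unramified matrix coefficient integral.

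The hard part is the case (iii) local integral at almost unramified places, since it is not a hyperspecial computation and requires carefully keeping track of normalizations (Haar measures on $K_{r,v}$, $I_{r,v}$, and the hermitian group, the normalization of the inner product $\langle\varphi_v^\tc,\varphi_v\rangle_{\pi_v}=1$, and the factor $b_{2r,v}(0)$ built into $\fZ^\natural$). Everything else is either a direct application of Theorem \ref{th:aipf}(1) or a cleanup of sign conventions; once the almost unramified local integral is pinned down, the global identity in the corollary follows by multiplying the local factors and observing that split and unramified inert places contribute $1$, leaving exactly the product over $\tS_\pi$ stated.
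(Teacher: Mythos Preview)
Your approach is essentially the paper's: the corollary is obtained by specializing Theorem~\ref{th:aipf}(1) and then evaluating each local factor $\fZ^\natural_{\pi_v,V_v}$, which is exactly what Proposition~\ref{pr:eisenstein} records (with $\tR=\emptyset$). The unramified values $\fZ^\natural_{\pi_v,V_v}=1$ for $v\in\tV_F^\fin\setminus\tS_\pi$ are cited there from \cite{Yam14}*{Proposition~7.1 \& (7.2)}, and the almost-unramified value
\[
\fZ^\natural_{\pi_v,V_v}\bigl(\varphi^\tc_v,\varphi_v,\CF_{(\Lambda_v^\emptyset)^{2r}}\bigr)=\frac{(-1)^r q_v^{r-1}(q_v+1)}{(q_v^{2r-1}+1)(q_v^{2r}-1)}
\]
for $v\in\tS_\pi$ is cited from \cite{Liu20}*{Proposition~5.6 \& Lemma~6.1}; you do not need to rederive it from scratch.

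Your sign bookkeeping is tangled, though. The factor $(-1)^r$ is purely local: \emph{each} place $v\in\tS_\pi$ contributes $(-1)^r$ as part of its normalized zeta integral, so the product over $\tS_\pi$ carries the sign $(-1)^{r|\tS_\pi|}$, which equals $(-1)^r$ because $|\tS_\pi|$ is odd. There is no separate global $(-1)^{r-1}$ to locate, and no need to appeal to root numbers or dichotomy signs beyond what is already built into the cited local computation.
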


\begin{remark}
Assuming the conjecture on the injectivity of the \'{e}tale Abel--Jacobi map, one can show that the cycle $\Theta_{\phi^\infty}(\varphi)$ is a primitive cycle of codimension $r$. By \cite{Bei87}*{Conjecture~5.5}, we expect that $(-1)^r\langle\Theta_{\phi^\infty}(\varphi),\Theta_{\phi^\infty}(\varphi)\rangle_{X,E}^\natural\geq 0$ holds, which, in the situation of Corollary \ref{co:aipf}, is equivalent to $L'(\tfrac{1}{2},\pi)\geq 0$.
\end{remark}

\subsection*{Strategy and structure}

The main strategy for the proofs of our main results is to adopt Beilinson's notion of height pairing together with various sophisticated uses of Hecke operators. In \cite{Bei87}, Beilinson constructed, under certain assumptions, a (hermitian) height pairing on $\CH^r(X_L)^0_\dC$ valued in $\dC$. Since those assumptions have not been resolved even today, we are not able to use the full notion of this height pairing. However, after choosing a sufficiently large prime $\ell$, Beilinson's construction gives an \emph{unconditional} height pairing on a subspace $\CH^r(X_L)^{\langle\ell\rangle}_\dC$ (a priori depending on $\ell$) of $\CH^r(X_L)^0_\dC$ valued in $\dC\otimes_\dQ\dQ_\ell$.

The candidates for those nonvanishing elements in Theorem \ref{th:main} are Kudla's special cycles $Z_T(\phi^\infty)$ (which will be recalled in Section \ref{ss:special}), which are in general elements in $\CH^r(X_L)_\dC$. We show that there exists an element $\rs\in\dT^\tR_{\dQ^\ac}\setminus\fm_\pi^\tR$ such that $\rs^*$ annihilates the quotient space $\CH^r(X_L)_\dC/\CH^r(X_L)^{\langle\ell\rangle}_\dC$. The existence of such element allows us to consider the modified cycles $\rs^*Z_T(\phi^\infty)$ without changing their (non)triviality in the localization of $\CH^r(X_L)_\dC$ at $\fm_\pi^\tR$, moreover at the same time to talk about their heights.

More precisely, we consider two such modified cycles $\rs_1^*Z_{T_1}(\phi^\infty_1)$ and $\rs_2^*Z_{T_2}(\phi^\infty_2)$. When $\phi^\infty_1\otimes\phi^\infty_2$ satisfies a certain regularity condition, the two cycles have disjoint support, hence their height pairing (in the sense of Beilinson) has a decomposition into so-called \emph{local indices} according to places $u$ of $E$. We mention especially that if $u$ is non-archimedean, then the local index at $u$ is defined via a winding number on the $\ell$-adic cohomology of $X_L\otimes_EE_u$, which a priori has nothing to do with intersection theory. When $X_L\otimes_EE_u$ has a smooth integral model, it is well-known that such winding number can be computed as the intersection number of integral extensions of the cycles. However, when $X_L\otimes_EE_u$ does not have smooth reduction, there is no general way to compute the local index. Nevertheless, we show that, under certain assumptions on the ramification and on the representation $\pi$, the local index between $\rs_1^*Z_{T_1}(\phi^\infty_1)$ and $\rs_2^*Z_{T_2}(\phi^\infty_2)$ can be computed in terms of the intersection number of some nice extensions of cycles on some nice regular model, after further suitable translations by elements in $\dT^\tR_{\dQ^\ac}\setminus\fm_\pi^\tR$. Eventually, all these local indices turn out to be (linear combinations of) Fourier coefficients of derivatives of Eisenstein series (and values of Eisenstein series for finitely many $u$).

The final ingredient is the Euler expansion of the doubling integral of cusp forms in $\pi$ against those derivatives of Eisenstein series (and Eisenstein series), which expresses the height pairing in terms of $L'(\tfrac{1}{2},\pi)$ and local doubling zeta integrals (in particular, it belongs to $\dC$ and is independent of $\ell$). An apparent technical challenge for this approach is to show that there exist test functions $(\phi^\infty_1,\phi^\infty_2)$ satisfying the regularity condition and yielding nonvanishing local doubling zeta integrals; this is solved in Proposition \ref{pr:regular}. The proofs for Theorem \ref{th:aipf} and Corollary \ref{co:aipf} follow from a similar strategy.

In Section \ref{ss:setup}, we collect setups and notation that are running through the entire article, organized in several groups so that readers can easily trace. In Section \ref{ss:doubling}, we recall the doubling method in the theory of theta lifting, and prove all necessary results from the representation-theoretical side. In Section \ref{ss:special}, we recall the notation of unitary Shimura varieties, their special cycles and generating functions. We introduce the important hypothesis on the modularity of generating functions, assuming which we define arithmetic theta lifting. In Section \ref{ss:height}, we introduce the notion of Beilinson's height, in a restricted but unconditional form, together with the decomposition into local indices. In Section \ref{ss:auxiliary}, we introduce a variant of unitary Shimura variety that admits moduli interpretation, which will only be used in computing local indices at various places. In Sections \ref{ss:split}, \ref{ss:inert1}, \ref{ss:inert2}, and \ref{ss:archimedean}, we compute local indices at split, inert with self-dual level, inert with almost self-dual level, and archimedean places, respectively. Finally, in Section \ref{ss:main}, we prove our main results. There are two appendices: Appendix \ref{ss:fourier} contains two lemmas in Fourier analysis that are only used in the proof of Proposition \ref{pr:regular}; and Appendix \ref{ss:beilinson} collects some new observations concerning Beilinson's local indices at non-archimedean places.

\subsection*{Notation and conventions}

\begin{itemize}
  \item When we have a function $f$ on a product set $A_1\times\cdots\times A_m$, we will write $f(a_1,\dots,a_m)$ instead of $f((a_1,\dots,a_m))$ for its value at an element $(a_1,\dots,a_m)\in A_1\times\cdots\times A_m$.

  \item For a set $S$, we denote by $\CF_S$ the characteristic function of $S$.

  \item All rings (but not algebras) are commutative and unital; and ring homomorphisms preserve units.

  \item If a base ring is not specified in the tensor operation $\otimes$, then it is $\dZ$.

  \item For an abelian group $A$ and a ring $R$, we put $A_R\coloneqq A\otimes R$.

  \item For an integer $m\geq 0$, we denote by $0_m$ and $1_m$ the null and identity matrices of rank $m$, respectively. We also denote by $\tw_m$ the matrix $\(\begin{smallmatrix}&1_m\\ -1_m &\end{smallmatrix}\)$.

  \item We denote by $\tc\colon\dC\to\dC$ the complex conjugation. For an element $x$ in a complex space with a default underlying real structure, we denote by $x^\tc$ its complex conjugation.

  \item For a field $K$, we denote by $\ol{K}$ the abstract algebraic closure of $K$. However, for aesthetic reason, we will write $\ol\dQ_p$ instead of $\ol{\dQ_p}$ and will denote by $\ol\dF_p$ its residue field. On the other hand, we denote by $\dQ^\ac$ the algebraic closure of $\dQ$ \emph{inside} $\dC$.

  \item For a number field $K$, we denote by $\psi_K\colon K\backslash \dA_K\to\dC^\times$ the standard additive character, namely, $\psi_K\coloneqq\psi_\dQ\circ\Tr_{K/\dQ}$ in which $\psi_\dQ\colon\dQ\backslash\dA\to\dC^\times$ is the unique character such that $\psi_{\dQ,\infty}(x)=\re^{2\pi ix}$.

  \item Throughout the entire article, all parabolic inductions are unitarily normalized.
\end{itemize}

\subsubsection*{Acknowledgements}

We would like to thank Wei~Zhang for helpful discussion and careful reading of early drafts with many valuable comments and suggestions for improvement. We also thank Miaofen~Chen, Wee~Teck~Gan, Benedict~Gross, and Shouwu~Zhang for helpful comments. Finally, we thank the anonymous referees for their careful reading and many useful suggestions and comments. The research of C.~L. is partially supported by the NSF grant DMS--1802269. The research of Y.~L. is partially supported by the NSF grant DMS--1702019, DMS--2000533, and a Sloan Research Fellowship.

\section{Running notation}
\label{ss:setup}

In this section, we collect several groups of more specific notation that will be used throughout the remaining sections except appendices.

\begin{notation}\label{st:f}
Let $E/F$ be a CM extension of number fields, so that $\tc$ is a well-defined element in $\Gal(E/F)$. We continue to fix an embedding $\biota\colon E\hookrightarrow\dC$. We denote by $\bu$ the (archimedean) place of $E$ induced by $\biota$ and regard $E$ as a subfield of $\dC$ via $\biota$.
\begin{enumerate}[label=(F\arabic*)]
  \item We denote by
      \begin{itemize}
        \item $\tV_F$ and $\tV_F^\fin$ the set of all places and non-archimedean places of $F$, respectively;

        \item $\tV_F^\spl$, $\tV_F^\inert$, and $\tV_F^\ram$ the subsets of $\tV_F^\fin$ of those that are split, inert, and ramified in $E$, respectively;

        \item $\tV_F^{(\diamond)}$ the subset of $\tV_F$ of places above $\diamond$ for every place $\diamond$ of $\dQ$; and

        \item $\tV_E^?$ the places of $E$ above $\tV_F^?$.
      \end{itemize}
      Moreover,
      \begin{itemize}
        \item for every place $u\in\tV_E$ of $E$, we denote by $\ul{u}\in\tV_F$ the underlying place of $F$;

        \item for every $v\in\tV_F^\fin$, we denote by $\fp_v$ the maximal ideal of $O_{F_v}$, and put $q_v\coloneqq|O_{F_v}/\fp_v|$;

        \item for every $v\in\tV_F$, we put $E_v\coloneqq E\otimes_FF_v$ and denote by $|\;|_{E_v}\colon E_v^\times\to\dC^\times$ the normalized norm character.
      \end{itemize}

  \item Let $m\geq 0$ be an integer.
      \begin{itemize}
        \item We denote by $\Herm_m$ the subscheme of $\Res_{E/F}\Mat_{m,m}$ of $m$-by-$m$ matrices $b$ satisfying $\pres{\rt}{b}^\tc=b$. Put $\Herm_m^\circ\coloneqq\Herm_m\cap\Res_{E/F}\GL_m$.

        \item For every ordered partition $m=m_1+\cdots+m_s$ with $m_i$ a positive integer, we denote by $\partial_{m_1,\dots,m_s}\colon\Herm_m\to\Herm_{m_1}\times\cdots\times\Herm_{m_s}$ the morphism that extracts the diagonal blocks with corresponding ranks.

        \item We denote by $\Herm_m(F)^+$ (resp.\ $\Herm^\circ_m(F)^+$) the subset of $\Herm_m(F)$ of elements that are totally semi-positive definite (resp.\ totally positive definite).
      \end{itemize}

  \item For every $u\in\tV_E^{(\infty)}$, we fix an embedding $\iota_u\colon E\hookrightarrow\dC$ inducing $u$ (with $\iota_\bu=\biota$), and identify $E_u$ with $\dC$ via $\iota_u$.

  \item Let $\eta\coloneqq\eta_{E/F}\colon \dA_F^\times\to\dC^\times$ be the quadratic character associated to $E/F$. For every $v\in\tV_F$ and every positive integer $m$, put
      \[
      b_{m,v}(s)\coloneqq\prod_{i=1}^m L(2s+i,\eta_v^{m-i}).
      \]
      Put $b_m(s)\coloneqq\prod_{v\in\tV_F}b_{m,v}(s)$.

  \item For every element $T\in\Herm_m(\dA_F)$, we have the character $\psi_T\colon\Herm_m(\dA_F)\to\dC^\times$ given by the formula $\psi_T(b)\coloneqq\psi_F(\tr bT)$.

  \item Let $R$ be a commutative $F$-algebra. A (skew-)hermitian space over $R\otimes_FE$ is a free $R\otimes_FE$-module $V$ of finite rank, equipped with a (skew-)hermitian form $(\;,\;)_V$ with respect to the involution $\tc$ that is nondegenerate.
\end{enumerate}
\end{notation}

\begin{notation}\label{st:h}
Throughout the article, we fix an even positive integer $n=2r$. Let $(V,(\;,\;)_V)$ be a hermitian space over $\dA_E$ of rank $n$ that is totally positive definite.
\begin{enumerate}[label=(H\arabic*)]
  \item For every commutative $\dA_F$-algebra $R$ and every integer $m\geq 0$, we denote by
      \[
      T(x)\coloneqq\((x_i,x_j)_V\)_{i,j}\in\Herm_m(R)
      \]
      the moment matrix of an element $x=(x_1,\dots,x_m)\in V^m\otimes_{\dA_F}R$.

  \item For every $v\in\tV_F$, we put $V_v\coloneqq V\otimes_{\dA_F}F_v$ which is a hermitian space over $E_v$, and define the local Hasse invariant of $V_v$ to be $\epsilon(V_v)\coloneqq\eta_v((-1)^r\dtm V_v)\in\{\pm 1\}$. In what follows, we will abbreviate $\epsilon(V_v)$ as $\epsilon_v$.

  \item Let $v$ be a place of $F$ and $m\geq 0$ an integer.
      \begin{itemize}
        \item For $T\in\Herm_m(F_v)$, we put $(V^m_v)_T\coloneqq\{x\in V^m_v\res T(x)=T\}$, and
            \[
            (V^m_v)_\reg\coloneqq\bigcup_{T\in\Herm_m^\circ(F_v)}(V^m_v)_T.
            \]

        \item We denote by $\sS(V_v^m)$ the space of (complex valued) Bruhat--Schwartz functions on $V_v^m$. When $v\in\tV_F^{(\infty)}$, we have the Gaussian function $\phi^0_v\in\sS(V_v^m)$ given by the formula $\phi^0_v(x)=\re^{-2\pi\tr T(x)}$.

        \item We have a Fourier transform map $\widehat{\phantom{a}}\colon \sS(V_v^m)\to \sS(V_v^m)$ sending $\phi$ to $\widehat\phi$ defined by the formula
            \[
            \widehat\phi(x)\coloneqq\int_{V_v^m}\phi(y)\psi_{E,v}\(\sum_{i=1}^m(x_i,y_i)_V\)\rd y,
            \]
            where $\r{d}y$ is the self-dual Haar measure on $V_v^m$ with respect to $\psi_{E,v}$.

        \item In what follows, we will always use this self-dual Haar measure on $V_v^m$.
      \end{itemize}

  \item Let $m\geq 0$ be an integer. For $T\in\Herm_m(F)$, we put
      \[
      \Diff(T,V)\coloneqq\{v\in\tV_F\res(V^m_v)_T=\emptyset\},
      \]
      which is a finite subset of $\tV_F\setminus\tV_F^\spl$.

  \item Take a nonempty finite subset $\tR\subseteq\tV_F^\fin$ that contains $\tV_F^\ram$. Let $\tS$ be the subset of $\tV_F^\fin\setminus\tR$ consisting of $v$ such that $\epsilon_v=-1$, which is contained in $\tV_F^\inert$.

  \item We fix a $\prod_{v\in\tV_F^\fin\setminus\tR}O_{E_v}$-lattice $\Lambda^\tR$ in $V\otimes_{\dA_F}\dA_F^{\infty,\tR}$ such that for every $v\in\tV_F^\fin\setminus\tR$, $\Lambda^\tR_v$ is a subgroup of $(\Lambda^\tR_v)^\vee$ of index $q_v^{1-\epsilon_v}$, where
      \[
      (\Lambda^\tR_v)^\vee\coloneqq\{x\in V_v\res\psi_{E,v}((x,y)_V)=1\text{ for every }y\in\Lambda^\tR_v\}
      \]
      is the $\psi_{E,v}$-dual lattice of $\Lambda^\tR_v$.

  \item Put $H\coloneqq\rU(V)$, which is a reductive group over $\dA_F$.

  \item Denote by $L^\tR\subseteq H(\dA_F^{\infty,\tR})$ the stabilizer of $\Lambda^\tR$, which is a special maximal subgroup.\footnote{When $r\geq 2$ (resp.\ $r=1$), the set of conjugacy classes of special maximal subgroups of $H(\dA_F^{\infty,\tR})$ is canonically a torsor over $\mu_2^{\oplus\tV_F^\inert\setminus\tR}$ (resp.\ $\mu_2^{\oplus\tV_F^\inert\setminus(\tR\cup\tS_\pi)}$).} We have the (abstract) Hecke algebra away from $\tR$
      \[
      \dT^\tR\coloneqq\dZ[L^\tR\backslash H(\dA_F^{\infty,\tR})/L^\tR],
      \]
      which is a ring with the unit $\CF_{L^\tR}$, and denote by $\dS^\tR$ the subring
      \[
      \varinjlim_{\substack{\tT\subseteq\tV_F^\spl\setminus\tR\\|\tT|<\infty}}
      \dZ[(L^\tR)_\tT\backslash H(F_\tT)/(L^\tR)_\tT]\otimes\CF_{(L^\tR)^\tT}
      \]
      of $\dT^\tR$.

  \item Suppose that $V$ is \emph{incoherent}, namely, $\prod_{v\in\tV_F}\epsilon_v=-1$. For every $u\in\tV_E\setminus\tV_E^\spl$, we fix a \emph{$u$-nearby space} $\pres{u}{V}$ of $V$, which is a hermitian space over $E$, and an isomorphism $\pres{u}{V}\otimes_F\dA_F^{\ul{u}}\simeq V\otimes_{\dA_F}\dA_F^{\ul{u}}$. More precisely,
      \begin{itemize}
        \item if $u\in\tV_E^{(\infty)}$, then $\pres{u}{V}$ is the hermitian space over $E$, unique up to isomorphism, that has signature $(n-1,1)$ at $u$ and satisfies $\pres{u}{V}\otimes_F\dA_F^{\ul{u}}\simeq V\otimes_{\dA_F}\dA_F^{\ul{u}}$;

        \item if $u\in\tV_E^\fin\setminus\tV_E^\spl$, then $\pres{u}{V}$ is the hermitian space over $E$, unique up to isomorphism, that satisfies $\pres{u}{V}\otimes_F\dA_F^{\ul{u}}\simeq V\otimes_{\dA_F}\dA_F^{\ul{u}}$.
      \end{itemize}
      Put $\pres{u}{H}\coloneqq\rU(\pres{u}{V})$, which is a reductive group over $F$. Then $\pres{u}{H}(\dA_F^{\ul{u}})$ and $H(\dA_F^{\ul{u}})$ are identified.
\end{enumerate}
\end{notation}

\begin{notation}\label{st:g}
Let $m\geq 0$ be an integer. We equip $W_m=E^{2m}$ and $\bar{W}_m=E^{2m}$ the skew-hermitian forms given by the matrices $\tw_m$ and $-\tw_m$, respectively.
\begin{enumerate}[label=(G\arabic*)]
  \item Let $G_m$ be the unitary group of both $W_m$ and $\bar{W}_m$. We write elements of $W_m$ and $\bar{W}_m$ in the row form, on which $G_m$ acts from the right.

  \item We denote by $\{e_1,\dots,e_{2m}\}$ and $\{\bar{e}_1,\dots,\bar{e}_{2m}\}$ the natural bases of $W_m$ and $\bar{W}_m$, respectively.

  \item Let $P_m\subseteq G_m$ be the parabolic subgroup stabilizing the subspace generated by $\{e_{r+1},\dots,e_{2m}\}$, and $N_m\subseteq P_m$ its unipotent radical.

  \item We have
     \begin{itemize}
       \item a homomorphism $m\colon\Res_{E/F}\GL_m\to P_m$ sending $a$ to
          \[
          m(a)\coloneqq
          \begin{pmatrix}
              a &  \\
               & \pres{\rt}{a}^{\tc,-1} \\
          \end{pmatrix}
          ,
          \]
          which identifies $\Res_{E/F}\GL_m$ as a Levi factor of $P_m$.

       \item a homomorphism $n\colon\Herm_m\to N_m$ sending $b$ to
          \[
          n(b)\coloneqq
          \begin{pmatrix}
              1_m & b \\
               & 1_m \\
          \end{pmatrix}
          ,
          \]
          which is an isomorphism.
     \end{itemize}

  \item We define a maximal compact subgroup $K_m=\prod_{v\in\tV_F}K_{m,v}$ of $G_m(\dA_F)$ in the following way:
    \begin{itemize}
      \item for $v\in\tV_F^\fin$, $K_{m,v}$ is the stabilizer of the lattice $O_{E_v}^{2m}$;

      \item for $v\in\tV_F^{(\infty)}$, $K_{m,v}$ is the subgroup of the form
         \[
         [k_1,k_2]\coloneqq\frac{1}{2}
         \begin{pmatrix}
           k_1+k_2   & -ik_1+ik_2 \\
           ik_1-ik_2   & k_1+k_2 \\
         \end{pmatrix}
         ,
         \]
         in which $k_i\in\GL_m(\dC)$ satisfying $k_i\pres{\rt}{k}_i^\tc=1_m$ for $i=1,2$. Here, we have identified $G_m(F_v)$ as a subgroup of $\GL_{2m}(\dC)$ via the embedding $\iota_u$ with $v=\ul{u}$ in Notation \ref{st:f}(F3).
    \end{itemize}

  \item For every $v\in\tV_F^{(\infty)}$, we have a character $\kappa_{m,v}\colon K_{m,v}\to\dC^\times$ that sends $[k_1,k_2]$ to $\dtm k_1/\dtm k_2$.\footnote{In fact, neither $K_{m,v}$ nor $\kappa_{m,v}$ depends on the choice of the embedding $\iota_v$ for $v\in\tV_F^{(\infty)}$.}

  \item For every $v\in\tV_F$, we define a Haar measure $\r{d}g_v$ on $G_m(F_v)$ as follows:
    \begin{itemize}
      \item for $v\in\tV_F^\fin$, $\r{d}g_v$ is the Haar measure under which $K_{m,v}$ has volume $1$;

      \item for $v\in\tV_F^{(\infty)}$, $\r{d}g_v$ is the product of the measure on $K_{m,v}$ of total volume $1$ and the standard hyperbolic measure on $G_m(F_v)/K_{m,v}$.
    \end{itemize}
    Put $\r{d}g=\prod_{v}\r{d}g_v$, which is a Haar measure on $G_m(\dA_F)$.

  \item We denote by $\cA(G_m(F)\backslash G_m(\dA_F))$ the space of both $\cZ(\fg_{m,\infty})$-finite and $K_{m,\infty}$-finite automorphic forms on $G_m(\dA_F)$, where $\cZ(\fg_{m,\infty})$ denotes the center of the complexified universal enveloping algebra of the Lie algebra $\fg_{m,\infty}$ of $G_m\otimes_FF_\infty$. We denote by
      \begin{itemize}
        \item $\cA^{[r]}(G_m(F)\backslash G_m(\dA_F))$ the maximal subspace of $\cA(G_m(F)\backslash G_m(\dA_F))$ on which for every $v\in\tV_F^{(\infty)}$, $K_{m,v}$ acts by the character $\kappa_{m,v}^r$,

        \item $\cA^{[r]\tR}(G_m(F)\backslash G_m(\dA_F))$ the maximal subspace of $\cA^{[r]}(G_m(F)\backslash G_m(\dA_F))$ on which
             \begin{itemize}
               \item for every $v\in\tV_F^\fin\setminus(\tR\cup\tS)$, $K_{m,v}$ acts trivially; and

               \item for every $v\in\tS$, the standard Iwahori subgroup $I_{m,v}$ acts trivially and $\dC[I_{m,v}\backslash K_{m,v}/I_{m,v}]$ acts by the character $\kappa_{m,v}^-$ (\cite{Liu20}*{Definition~2.1}),
             \end{itemize}

        \item $\cA_\cusp(G_m(F)\backslash G_m(\dA_F))$ the subspace of $\cA(G_m(F)\backslash G_m(\dA_F))$ of cusp forms, and by $\langle\;,\;\rangle_{G_m}$ the hermitian form on $\cA_\cusp(G_m(F)\backslash G_m(\dA_F))$ given by the Petersson inner product with respect to the Haar measure $\r{d}g$.
      \end{itemize}
      For a subspace $\cV$ of $\cA(G_m(F)\backslash G_m(\dA_F))$, we denote by
      \begin{itemize}
        \item $\cV^{[r]}$ the intersection of $\cV$ and $\cA^{[r]}(G_m(F)\backslash G_m(\dA_F))$,

        \item $\cV^{[r]\tR}$ the intersection of $\cV$ and $\cA^{[r]\tR}(G_m(F)\backslash G_m(\dA_F))$,

        \item $\cV^\tc$ the subspace $\{\varphi^\tc\res\varphi\in\cV\}$.
      \end{itemize}
\end{enumerate}
\end{notation}

\begin{notation}\label{st:w}
We review the Weil representation.
\begin{enumerate}[label=(W\arabic*)]
  \item For every $v\in\tV_F$, we have the Weil representation $\omega_{m,v}$ of $G_m(F_v)\times H(F_v)$, with respect to the additive character $\psi_{F,v}$ and the trivial splitting character, realized on the Schr\"{o}dinger model $\sS(V_v^m)$. For the readers' convenience, we review the formulas:
      \begin{itemize}
        \item for $a\in\GL_m(E_v)$ and $\phi\in \sS(V_v^m)$, we have
          \[
          \omega_{m,v}(m(a))\phi(x)=|\dtm a|_{E_v}^r\cdot \phi(x a);
          \]

        \item for $b\in\Herm_m(F_v)$ and $\phi\in \sS(V_v^m)$, we have
          \[
          \omega_{m,v}(n(b))\phi(x)=\psi_{T(x)}(b)\cdot \phi(x)
          \]
          (see Notation \ref{st:f}(F5) for $\psi_{T(x)}$);

        \item for $\phi\in \sS(V_v^m)$, we have
          \[
          \omega_{m,v}\(\tw_m\)\phi(x)=\gamma_{V_v,\psi_{F,v}}^m\cdot\widehat\phi(x),
          \]
          where $\gamma_{V_v,\psi_{F,v}}$ is certain Weil constant determined by $V_v$ and $\psi_{F,v}$;

        \item for $h\in H(F_v)$ and $\phi\in \sS(V_v^m)$, we have
          \[
          \omega_{m,v}(h)\phi(x)=\phi(h^{-1}x).
          \]
      \end{itemize}
      We put $\omega_m\coloneqq\otimes_v\omega_{m,v}$ as the ad\`{e}lic version, realized on $\sS(V^m)$.

  \item For every $v$ of $F$, we also realize the contragredient representation $\omega_{m,v}^\vee$ on the space $\sS(V_v^m)$ as well via the bilinear pairing
      \[
      \langle\;,\;\rangle_{\omega_{m,v}}\colon \sS(V_v^m)\times \sS(V_v^m)\to\dC
      \]
      defined by the formula
      \[
      \langle\phi^\vee,\phi\rangle_{\omega_{m,v}}\coloneqq\int_{V_v^m}\phi(x)\phi^\vee(x)\rd x
      \]
      for $\phi,\phi^\vee\in\sS(V_v^m)$.
\end{enumerate}
\end{notation}

\begin{notation}\label{st:c}
For a locally Noetherian scheme $X$ and an integer $m\geq 0$, we denote by $\rZ^m(X)$ the free abelian group generated by irreducible closed subschemes of codimension $m$ and $\CH^m(X)$ the quotient by rational equivalence. Suppose that $X$ is smooth over a field $K$ of characteristic zero. Let $\ell$ be a rational prime.
\begin{enumerate}[label=(C\arabic*)]
  \item We denote by $\rZ^m(X)^0$ the kernel of the de Rham cycle class map
     \[
     \cl_{X,\dr}\colon\rZ^m(X)\to\rH^{2m}_{\dr}(X/K)(m),
     \]
     and by $\CH^m(X)^0$ the image of $\rZ^m(X)^0$ in $\CH^m(X)$.

  \item When $K$ is a non-archimedean local field, we denote by $\rZ^m(X)^{\langle\ell\rangle}$ the kernels of the $\ell$-adic cycle class map
      \[
      \cl_{X,\ell}\colon\rZ^m(X)\to\rH^{2m}(X,\dQ_\ell(m)).
      \]

  \item When $K$ is a number field, we define $\rZ^m(X)^{\langle\ell\rangle}$ via the following Cartesian diagram
      \[
      \xymatrix{
      \rZ^m(X)^{\langle\ell\rangle} \ar[r]\ar[d] &  \prod_{v}\rZ^m(X_{K_v})^{\langle\ell\rangle} \ar[d] \\
      \rZ^m(X) \ar[r] & \prod_{v}\rZ^m(X_{K_v})
      }
      \]
      where the product is taken over all non-archimedean places of $K$. We denote by $\CH^m(X)^{\langle\ell\rangle}$ the image of $\rZ^m(X)^{\langle\ell\rangle}$ in $\CH^m(X)$, which is contained in $\CH^m(X)^0$ by the comparison theorem between de Rham and $\ell$-adic cohomology.
\end{enumerate}
\end{notation}

\section{Doubling method and analytic side}
\label{ss:doubling}

In this section, we review the doubling method and prove several statements on the analytic side of our desired height formula.

We have the doubling skew-hermitian space $W_r^\Box\coloneqq W_r\oplus\bar{W}_r$ (Notation \ref{st:g}(G1)). Let $G_r^\Box$ be the unitary group of $W_r^\Box$, which contains $G_r\times G_r$ canonically. We now take a basis $\{e^\Box_1,\dots,e^\Box_{4r}\}$ of $W_r^\Box$ by the formula
\[
e^\Box_i=e_i,\quad e^\Box_{r+i}=-\bar{e}_i,\quad e^\Box_{2r+i}=e_{r+i}, \quad e^\Box_{3r+i}=\bar{e}_{r+i}
\]
for $1\leq i\leq r$, under which we may identify $W_r^\Box$ with $W_{2r}$ and $G_r^\Box$ with $G_{2r}$. Put
\begin{align}\label{eq:doubling}
\tw_r^\Box\coloneqq\tw_{2r},\quad P_r^\Box\coloneqq P_{2r},\quad N_r^\Box\coloneqq N_{2r},\quad K_r^\Box\coloneqq K_{2r},
\quad \omega_r^\Box\coloneqq\omega_{2r}
\end{align}
(see Notation \ref{st:g} and Notation \ref{st:w}). We denote by
\[
\delta^\Box_r\colon P_r^\Box\to\Res_{E/F}\GL_1
\]
the composition of the Levi quotient map $P_r^\Box=P_{2r}\to M_{2r}$, the isomorphism $m^{-1}\colon M_{2r}\to\Res_{E/F}\GL_{2r}$, and the determinant $\Res_{E/F}\GL_{2r}\to\Res_{E/F}\GL_1$. Put
\[
\bw_r\coloneqq
\begin{pmatrix}
     &  & 1_r &  \\
     & 1_r &  &  \\
    -1_r & 1_r &  &  \\
     &  & 1_r & 1_r \\
\end{pmatrix}
\in G_r^\Box(F).
\]
Then $P_r^\Box\bw_r(G_r\times G_r)$ is Zariski open in $G_r^\Box$.

Let $v$ be a place of $F$. For $s\in\dC$, we have the degenerate principal series of $G_r^\Box(F_v)$, which is defined as the normalized induced representation
\[
\rI^\Box_{r,v}(s)\coloneqq\Ind_{P_r^\Box(F_v)}^{G_r^\Box(F_v)}(|\;|_{E_v}^s\circ\delta^\Box_{r,v})
\]
of $G_r^\Box(F_v)$. We denote by $\rI^\Box_r(s)$ the restricted tensor product of $\rI^\Box_{r,v}(s)$ for all places $v$ of $F$ with respect to unramified sections.

For every section $f\in\rI^\Box_r(0)$, let $f^{(s)}\in\rI^\Box_r(s)$ be the standard section induced by $f$. Then we have the Eisenstein series $E(g,f^{(s)})$ for $g\in G_r^\Box(\dA_F)$. We have a $G_r^\Box(\dA_F)$-intertwining map
\[
f_\bullet\colon \sS(V^{2r})\to\rI^\Box_r(0)
\]
sending $\Phi$ to $f_\Phi$ defined by the formula $f_\Phi(g)\coloneqq\omega_r^\Box(g)\Phi(0)$ (see \eqref{eq:doubling} for $\omega_r^\Box$). In particular, for $\Phi\in\sS(V^{2r})$, we have the Eisenstein series
\[
E(s,g,\Phi)=E(g,f_{\Phi}^{(s)})\coloneqq\sum_{\gamma\in P_r^\Box(F)\backslash G_r^\Box(F)}f_\Phi^{(s)}(\gamma g)
\]
for $g\in G_r^\Box(\dA_F)$. It is meromorphic in $s$ and holomorphic on the imaginary line.

\begin{assumption}\label{st:representation}
In what follows, we will consider an irreducible automorphic subrepresentation $(\pi,\cV_\pi)$ of $\cA_\cusp(G_r(F)\backslash G_r(\dA_F))$ satisfying that
\begin{enumerate}
  \item for every $v\in\tV_F^{(\infty)}$, $\pi_v$ is the (unique up to isomorphism) discrete series representation whose restriction to $K_{r,v}$ contains the character $\kappa_{r,v}^r$;

  \item for every $v\in\tV_F^\fin\setminus\tR$, $\pi_v$ is unramified (resp.\ almost unramified) with respect to $K_{r,v}$ if $\epsilon_v=1$ (resp.\ $\epsilon_v=-1$);

  \item for every $v\in\tV_F^\fin$, $\pi_v$ is tempered.
\end{enumerate}
We realize the contragredient representation $\pi^\vee$ on $\cV_\pi^\tc$ via the Petersson inner product $\langle\;,\;\rangle_{G_r}$ (Notation \ref{st:g}(G8)). By (1) and (2), we have $\cV_\pi^{[r]\tR}\neq\{0\}$, where $\cV_\pi^{[r]\tR}$ is defined in Notation \ref{st:g}(G8).
\end{assumption}

\begin{remark}\label{re:dichotomy}
By Proposition \ref{pr:uniqueness}(2) below, we know that when $\tR\subseteq\tV_F^\spl$, $V$ coincides with the hermitian space over $\dA_E$ of rank $n$ determined by $\pi$ via local theta dichotomy.
\end{remark}

\begin{definition}\label{de:lfunction}
We define the $L$-function for $\pi$ as the Euler product $L(s,\pi)\coloneqq\prod_{v}L(s,\pi_v)$ over all places of $F$, in which
\begin{enumerate}
  \item for $v\in\tV_F^\fin$, $L(s,\pi_v)$ is the doubling $L$-function defined in \cite{Yam14}*{Theorem~5.2};

  \item for $v\in\tV_F^{(\infty)}$, $L(s,\pi_v)$ is the $L$-function of the standard base change $\BC(\pi_v)$ of $\pi_v$. By Assumption \ref{st:representation}(1), $\BC(\pi_v)$ is the principal series representation of $\GL_n(\dC)$ that is the normalized induction of $\arg^{n-1}\boxtimes\arg^{n-3}\boxtimes\cdots\boxtimes\arg^{3-n}\boxtimes\arg^{1-n}$ where $\arg\colon\dC^\times\to\dC^\times$ is the argument character. In particular, we have
      \begin{align}\label{eq:lfunction}
      L(s+\tfrac{1}{2},\pi_v)=\(\prod_{i=1}^r2(2\pi)^{-(s+i)}\Gamma(s+i)\)^2.
      \end{align}
\end{enumerate}
\end{definition}

\begin{remark}\label{re:doubling}
Let $v$ be a place of $F$.
\begin{enumerate}
  \item For $v\in\tV_F^{(\infty)}$, doubling $L$-function is only well-defined up to an entire function without zeros. However, one can show that $L(s,\pi_v)$ satisfies the requirement for the doubling $L$-function in \cite{Yam14}*{Theorem~5.2}.

  \item For $v\in\tV_F^\spl$, the standard base change $\BC(\pi_v)$ is well-defined and we have $L(s,\pi_v)=L(s,\BC(\pi_v))$ by \cite{Yam14}*{Theorem~7.2}.

  \item For $v\in\tV_F^\inert\setminus\tR$, the standard base change $\BC(\pi_v)$ is well-defined and we have $L(s,\pi_v)=L(s,\BC(\pi_v))$ by \cite{Liu20}*{Remark~1.4}.
\end{enumerate}
In particular, when $\tR\subseteq\tV_F^\spl$, we have $L(s,\pi)=\prod_{v}L(s,\BC(\pi_v))$.
\end{remark}

Let $v$ be a place of $F$. We denote by $\langle\;,\;\rangle_{\pi_v}\colon\pi_v^\vee\times\pi_v\to\dC$ the tautological pairing. For $\varphi_v\in\pi_v$, $\varphi^\vee_v\in\pi_v^\vee$, and a good section $f^{(s)}\in\rI^\Box_{r,v}(s)$ (\cite{Yam14}*{Definition~3.1}), we have the local doubling zeta integral
\[
Z(\varphi^\vee_v\otimes\varphi_v,f^{(s)})\coloneqq\int_{G_r(F_v)}\langle\pi_v^\vee(g)\varphi^\vee_v,\varphi_v\rangle_{\pi_v}\cdot
f^{(s)}(\bw_r(g,1_{2r}))\rd g,
\]
and the normalized version
\[
Z^\natural(\varphi^\vee_v\otimes\varphi_v,f^{(s)})\coloneqq
\(\frac{L(s+\tfrac{1}{2},\pi_v)}{b_{2r,v}(s)}\)^{-1}\cdot Z(\varphi^\vee_v\otimes\varphi_v,f^{(s)}),
\]
which is holomorphic in $s$. In particular, taking $s=0$, we obtain a functional
\[
\fZ^\natural_{\pi_v,V_v}\colon\pi_v^\vee\otimes\pi_v\otimes\sS(V_v^{2r})\to\dC
\]
such that
\[
\fZ^\natural_{\pi_v,V_v}(\varphi^\vee_v,\varphi_v,\Phi_v)=Z^\natural(\varphi^\vee_v\otimes\varphi_v,f^{(0)}_{\Phi_v})
=Z^\natural(\varphi^\vee_v\otimes\varphi_v,f_{\Phi_v}).
\]

\begin{remark}\label{re:yamana}
By \cite{Yam14}*{Lemma~7.2}, we know that the integral defining $Z(\varphi^\vee_v\otimes\varphi_v,f^{(0)})$ is absolutely convergent, and that
\[
\frac{L(s+\tfrac{1}{2},\pi_v)}{b_{2r,v}(s)}
\]
is finite and invertible at $s=0$.
\end{remark}

\begin{proposition}\label{pr:uniqueness}
Let $(\pi,\cV_\pi)$ be as in Assumption \ref{st:representation}.
\begin{enumerate}
  \item For every $v\in\tV_F^\fin$, we have
     \begin{align*}
     \dim_\dC\Hom_{G_r(F_v)\times G_r(F_v)}(\rI^\Box_{r,v}(0),\pi_v\boxtimes\pi_v^\vee)=1.
     \end{align*}

  \item For every $v\in(\tV_F^\fin\setminus\tR)\cup\tV_F^\spl$, $V_v$ is the unique hermitian space over $E_v$ of rank $2r$, up to isomorphism, such that $\fZ^\natural_{\pi_v,V_v}\neq 0$.

  \item For every $v\in\tV_F^\fin$, $\Hom_{G_r(F_v)}(\sS(V_v^r),\pi_v)$ is irreducible as a representation of $H(F_v)$, and is nonzero if $v\in(\tV_F^\fin\setminus\tR)\cup\tV_F^\spl$.
\end{enumerate}
\end{proposition}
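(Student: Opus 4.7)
The plan is to derive all three parts from the local theory of the doubling method combined with local theta correspondence for unitary dual pairs, systematically exploiting the temperedness of $\pi_v$. For part (1), the existence of a nonzero element in $\Hom_{G_r(F_v)\times G_r(F_v)}(\rI^\Box_{r,v}(0),\pi_v\boxtimes\pi_v^\vee)$ is furnished by the doubling zeta integral itself: by Remark~\ref{re:yamana} the integral $Z(\varphi^\vee_v\otimes\varphi_v,f^{(0)})$ converges absolutely, and linearity in $f^{(0)}\in\rI^\Box_{r,v}(0)$ together with bi-$G_r(F_v)$-equivariance produces the desired intertwining map. The multiplicity-one upper bound is the local uniqueness theorem for the doubling method for unitary groups, which follows from the theory of doubling local $\gamma$-factors developed in \cite{Yam14}; the essential point is that because $\pi_v$ is irreducible and tempered, no accidental degeneracies of the local factor occur at $s=0$, so any invariant bilinear form on $\rI^\Box_{r,v}(0)\times(\pi_v\otimes\pi_v^\vee)$ must be a scalar multiple of the doubling zeta integral.

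For part (2), at split places $v\in\tV_F^\spl$ there is a unique isomorphism class of hermitian space of each rank over $E_v\simeq F_v\oplus F_v$, so uniqueness is automatic, and nonvanishing of $\fZ^\natural_{\pi_v,V_v}$ follows from Yamana's nonvanishing criterion applied to the principal series $\pi_v$. At non-split $v\in\tV_F^\fin\setminus\tR$, the key input is the doubling see-saw identity for the pair $G_r\times G_r\subset G_r^\Box$ and the diagonal $H\hookrightarrow H\times H$, which realizes $\fZ^\natural_{\pi_v,V_v}$ as a Hermitian pairing on the local theta lift of $\pi_v$ to $H(F_v)$; hence $\fZ^\natural_{\pi_v,V_v}\neq 0$ if and only if this theta lift is nonzero. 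I would then invoke local theta dichotomy: at unramified inert places this is the classical result of Harris--Kudla--Sweet, and at almost unramified inert places it is precisely the content of the main theorem of \cite{Liu20}, which identifies the Hasse invariant of the unique hermitian space on which $\pi_v$ admits a nonzero theta lift with our $\epsilon_v$.

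For part (3), the dichotomy between vanishing and irreducibility of $\Hom_{G_r(F_v)}(\sS(V_v^r),\pi_v)$ as an $H(F_v)$-representation is the local Howe duality theorem for unitary dual pairs, now available in full generality (M\'{i}nguez, Gan--Takeda, Gan--Sun). The nonvanishing in the cases $v\in(\tV_F^\fin\setminus\tR)\cup\tV_F^\spl$ follows from part~(2) by the same see-saw argument: factoring the intertwining map $\sS(V_v^r)\otimes\sS(V_v^r)\to\sS(V_v^{2r})\xrightarrow{f_\bullet}\rI^\Box_{r,v}(0)$ and pulling back a nonzero element from part~(1) produces a nonzero element of $\Hom_{G_r(F_v)}(\sS(V_v^r),\pi_v)\otimes\Hom_{G_r(F_v)}(\sS(V_v^r),\pi_v^\vee)$, forcing both factors to be nonzero. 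The principal obstacle is the precise identification of the local dichotomy invariant at almost unramified inert places with $\epsilon_v$, for which we rely on the computations already carried out in \cite{Liu20}.
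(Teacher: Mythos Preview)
Your arguments for parts~(2) and~(3) are essentially the paper's, but there is a genuine gap in part~(1). The multiplicity-one upper bound does \emph{not} follow from the doubling $\gamma$-factor theory in \cite{Yam14}: that paper constructs local $L$-, $\varepsilon$-, and $\gamma$-factors and proves their analytic properties, but nowhere establishes that $\dim_\dC\Hom_{G_r(F_v)\times G_r(F_v)}(\rI^\Box_{r,v}(0),\pi_v\boxtimes\pi_v^\vee)\leq 1$. Your sentence ``no accidental degeneracies of the local factor occur at $s=0$, so any invariant bilinear form must be a scalar multiple of the doubling zeta integral'' is not an argument---it presupposes exactly the uniqueness you are trying to prove.

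The paper's route is to feed the seesaw and theta-correspondence ingredients you invoke for~(2) and~(3) back into the proof of~(1) itself. Concretely: by \cite{KS97}*{Theorems~1.2,~1.3}, $\rI^\Box_{r,v}(0)$ is built (as a direct sum at split places, as a length-two extension at inert places) from the Siegel--Weil pieces $\rR(0,\tilde{V})$ as $\tilde{V}$ ranges over the isomorphism classes of rank-$2r$ hermitian spaces. The seesaw identity gives
\[
\Hom_{G_r(F_v)\times G_r(F_v)}(\rR(0,\tilde{V}),\pi_v\boxtimes\pi_v^\vee)\simeq
\Hom_{\rU(\tilde{V})(F_v)}(\Theta(\pi_v,\tilde{V})\otimes\Theta(\pi_v^\vee,\tilde{V}),\dC),
\]
where $\Theta(\pi_v,\tilde{V})=\Hom_{G_r(F_v)}(\sS(\tilde{V}^r),\pi_v)$. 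Temperedness of $\pi_v$ forces $\Theta(\pi_v,\tilde{V})$ to be semisimple \cite{GI16}*{Theorem~4.1(v)}, Howe duality \cite{GT16} makes it irreducible or zero, and local theta dichotomy \cite{GG11}*{Theorem~1.8} ensures it is nonzero for exactly one $\tilde{V}$. Combining with the Kudla--Sweet structure gives the bound $\leq 1$. In short, the very ingredients you reserve for~(2) and~(3) are what actually prove~(1); reorganize your argument accordingly and replace the appeal to \cite{Yam14} with the references above.
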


\begin{proof}
To ease notation, we will suppress the place $v$ throughout the proof. For a hermitian space $\tilde{V}$ over $E$ of rank $2r$, denote by $\rR(0,\tilde{V})\subseteq\rI^\Box_r(0)$ the subspace spanned by Siegel--Weil sections from $\tilde{V}$ and put $\Theta(\pi,\tilde{V})\coloneqq\Hom_{G_r(F)}(\sS(\tilde{V}^r),\pi)$. By the seesaw identity, we have
\begin{align*}
\Hom_{G_r(F)\times G_r(F)}(\rR(0,\tilde{V}),\pi\boxtimes\pi^\vee)\simeq
\Hom_{\tilde{H}(F)}(\Theta(\pi,\tilde{V})\otimes\Theta(\pi^\vee,\tilde{V}),\CF)
\end{align*}
where $\tilde{H}\coloneqq\rU(\tilde{V})$. Since $\pi$ is tempered, by (the same argument for) \cite{GI16}*{Theorem~4.1(v)}, $\Theta(\pi,\tilde{V})$ is a semisimple representation of $\tilde{H}(F)$. By \cite{GT16}*{Theorem~1.2}, we know that $\Theta(\pi,\tilde{V})$ is either zero or irreducible. By the local theta dichotomy \cite{GG11}*{Theorem~1.8} (see also \cite{HKS96}*{Corollary~4.4} and \cite{Har07}*{Theorem~2.1.7}), there exists exactly one choice $\tilde{V}$, up to isomorphism, such that $\Theta(\pi,\tilde{V})\neq 0$. Thus, we obtain (1) by \cite{KS97}*{Theorem~1.2 \& Theorem~1.3}.

For (2), there are two cases. If $v\in\tV_F^\spl$, then it follows from (1) and \cite{KS97}*{Theorem~1.3}. If $v\in\tV_F^\inert\setminus\tR$, then the uniqueness follows from (1) and \cite{KS97}*{Theorem~1.2}; and the nonvanishing of $\fZ^\natural_{\pi,V}$ follows from \cite{Liu20}*{Proposition~5.6 \& Lemma~6.1}.

For (3), the irreducibility of $\Theta(\pi,V)=\Hom_{G_r(F)}(\sS(V^r),\pi)$ has already been proved; and the nonvanishing follows from (2).
\end{proof}

\if false

\begin{proof}
For (1), to ease notation, we will suppress the place $v$ throughout the proof. Take an integer $0\leq t\leq r$. We have a homomorphism $m\colon\Res_{E/F}\GL_t\times G_{r-t}\to G_r$ given by the assignment
\[
\(a,
\begin{pmatrix}
g_1 & g_2 \\
g_3 & g_4
\end{pmatrix}
\)
\mapsto
\begin{pmatrix}
a & & & \\
& g_1 && g_2 \\
& & \pres{\rt}{a}^{\tc,-1} & \\
& g_3 && g_4
\end{pmatrix},
\]
whose image we denote by $M_r^t$. Let $P_r^t$ be the unique parabolic subgroup of $G_r$ containing $P_r$ and having $M_r^t$ as a Levi factor, and $\pres{t}P_r$ the opposite parabolic so that $P_r^t\cap \pres{t}P_r=M_r^t$. We denote by $\delta_r^t\colon P_r^t\to\Res_{E/F}\GL_1$ the composition of the Levi quotient map $P_r^t\to M_r^t$, the projection $M_r^t\to\Res_{E/F}\GL_t$, and the determinant $\Res_{E/F}\GL_t\to\Res_{E/F}\GL_1$. For $s\in\dC$, we put
\[
\rJ_r^t(s)\coloneqq\Ind^{G_r(F)\times G_r(F)}_{P_r^t(F)\times P_r^t(F)}
\(\((|\;|_E^{s+\frac{t}{2}}\circ\delta_r^t)\boxtimes(|\;|_E^{s+\frac{t}{2}}\circ\delta_r^t)\)\otimes\sS(G_{r-t}(F))\)
\]
as a representation of $G_r(F)\times G_r(F)$, where $P_r^t(F)\times P_r^t(F)$ acts on $\sS(G_{r-t}(F))$ via the two-side translation by its quotient $G_{r-t}(F)\times G_{r-t}(F)$.

By \cite{GT16}*{Lemma~3.2}, $\rI_r^\Box(s)$ admits a $G_r(F)\times G_r(F)$-stable increasing filtration
\[
0=\rI_r^\Box(s)^{-1}\subseteq \rI_r^\Box(s)^0\subseteq \rI_r^\Box(s)^1\subseteq\cdots\subseteq \rI_r^\Box(s)^r=\rI_r^\Box(s)
\]
with canonical $G_r(F)\times G_r(F)$-equivariant isomorphisms $\rJ_r^t(s)\simeq\rI_r^\Box(s)^t/\rI_r^\Box(s)^{t-1}$ for $0\leq t\leq r$. We claim that
\begin{align}\label{eq:uniqueness2}
\Hom_{G_r(F)\times G_r(F)}(\rJ_r^t(s),\pi\boxtimes\pi^\vee)=0,\qquad\text{for $t\geq 1$ and $s>-\tfrac{1}{2}$.}
\end{align}
Assuming \eqref{eq:uniqueness2}, then we have
\begin{multline*}
\dim_\dC\Hom_{G_r(F)\times G_r(F)}(\rI^\Box_r(0),\pi\boxtimes\pi^\vee)
\leq\dim_\dC\Hom_{G_r(F)\times G_r(F)}(\rJ_r^0(0),\pi\boxtimes\pi^\vee) \\
=\dim_\dC\Hom_{G_r(F)\times G_r(F)}(\sS(G_{r-t}(F)),\pi\boxtimes\pi^\vee)=1.
\end{multline*}
On the other hand, we do have a nonzero element in $\Hom_{G_r(F)\times G_r(F)}(\rI^\Box_r(0),\pi\boxtimes\pi^\vee)$ given by the zeta integral. Thus, (1) holds.

For \eqref{eq:uniqueness2}, by Bernstein's Second Adjointness Theorem, we have a canonical isomorphism
\begin{multline*}
\Hom_{G_r(F)\times G_r(F)}(\rJ_r^t(s),\pi\boxtimes\pi^\vee) \\
\simeq\Hom_{M_r^t(F)\times M_r^t(F)}
\(\((|\;|_E^{s+\frac{t}{2}}\circ\delta_r^t)\boxtimes(|\;|_E^{s+\frac{t}{2}}\circ\delta_r^t)\)\otimes\sS(G_{r-t}(F))
,\rR_{\pres{t}P_r}\pi\boxtimes\rR_{\pres{t}P_r}\pi^\vee\).
\end{multline*}
Here, $\rR_P$ denotes the normalized Jacquet functor with respect to a parabolic $P$. Thus, it suffices to show that for $t\geq 1$ and $s>-\tfrac{1}{2}$, we have
\begin{align}\label{eq:uniqueness3}
\Hom_{M_r^t(F)}\(|\;|_E^{s+\frac{t}{2}}\circ\delta_r^t,\rR_{\pres{t}P_r}\pi^\vee\)=0.
\end{align}
Applying Casselman's criterion for temperedness \cite{Wal03}*{Proposition~III.2.2}, we have
\[
\Hom_{M_r^t(F)}\(\rR_{P_r^t}\pi,|\;|_E^\nu\circ\delta_r^t\)=0
\]
as long as $\nu<0$. Taking dual, we obtain \eqref{eq:uniqueness3} hence \eqref{eq:uniqueness2}. Part (1) is proved.
\end{proof}

\fi

\begin{proposition}\label{pr:eisenstein}
Let $(\pi,\cV_\pi)$ be as in Assumption \ref{st:representation} such that $L(\tfrac{1}{2},\pi)=0$. Take
\begin{itemize}
  \item $\varphi_1=\otimes_v\varphi_{1v}\in\cV_\pi^{[r]\tR}$ and $\varphi_2=\otimes_v\varphi_{2v}\in\cV_\pi^{[r]\tR}$ such that $\langle\varphi_{1v}^\tc,\varphi_{2v}\rangle_{\pi_v}=1$ for $v\in\tV_F\setminus\tR$,\footnote{Strictly speaking, what we fixed is a decomposition $\varphi_1^\tc=\otimes_v(\varphi_1^\tc)_v$ and have abused notation by writing $\varphi^\tc_{1v}$ instead of $(\varphi_1^\tc)_v$.} and

  \item $\Phi=\otimes_v\Phi_v\in\sS(V^{2r})$ such that $\Phi_v$ is the Gaussian function (Notation \ref{st:h}(H3)) for $v\in\tV_F^{(\infty)}$, and $\Phi_v=\CF_{(\Lambda^\tR_v)^{2r}}$ for $v\in\tV_F^\fin\setminus\tR$.
\end{itemize}
Then we have
\begin{align*}
&\quad\int_{G_r(F)\backslash G_r(\dA_F)}\int_{G_r(F)\backslash G_r(\dA_F)}\varphi_2(g_2)\varphi_1^\tc(g_1)E'(0,(g_1,g_2),\Phi)\rd g_1\rd g_2 \\
&=\frac{L'(\tfrac{1}{2},\pi)}{b_{2r}(0)}\cdot C_r^{[F:\dQ]}
\cdot\prod_{v\in\tV_F^\fin}\fZ^\natural_{\pi_v,V_v}(\varphi^\tc_{1v},\varphi_{2v},\Phi_v) \\
&=\frac{L'(\tfrac{1}{2},\pi)}{b_{2r}(0)}\cdot C_r^{[F:\dQ]}
\cdot\prod_{v\in\tS}\frac{(-1)^rq_v^{r-1}(q_v+1)}{(q_v^{2r-1}+1)(q_v^{2r}-1)}
\cdot\prod_{v\in\tR}\fZ^\natural_{\pi_v,V_v}(\varphi^\tc_{1v},\varphi_{2v},\Phi_v),
\end{align*}
where
\[
C_r\coloneqq(-1)^r2^{r(r-1)}\pi^{r^2}\frac{\Gamma(1)\cdots\Gamma(r)}{\Gamma(r+1)\cdots\Gamma(2r)},
\]
and the measure on $G_r(\dA_F)$ is the one defined in Notation \ref{st:g}(G7).
\end{proposition}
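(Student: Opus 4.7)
The plan is to apply the standard Piatetski-Shapiro--Rallis doubling identity, differentiate at $s=0$ using the vanishing hypothesis $L(\tfrac12,\pi)=0$, and then evaluate the resulting local factors place by place. First I would unfold the Eisenstein series $E(s,(g_1,g_2),\Phi)$ along the open orbit $P_r^\Box\bw_r(G_r\times G_r)\subseteq G_r^\Box$ against the cusp form $\varphi_1^\tc\boxtimes\varphi_2$; the contributions from the non-open orbits vanish by cuspidality, and one obtains the global doubling identity
\begin{align*}
\int\!\!\int\varphi_2(g_2)\varphi_1^\tc(g_1)E(s,(g_1,g_2),\Phi)\rd g_1\rd g_2
=\frac{L(s+\tfrac12,\pi)}{b_{2r}(s)}\cdot\prod_v Z^\natural(\varphi^\tc_{1v}\otimes\varphi_{2v},f_{\Phi_v}^{(s)}),
\end{align*}
provided the test vectors are factorizable. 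This is standard modulo verifying absolute convergence near $s=0$, for which one uses Remark~\ref{re:yamana} together with temperedness (Assumption~\ref{st:representation}(3)).

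Next, I would differentiate both sides at $s=0$. Since $L(\tfrac12,\pi)=0$ and each normalized local factor $Z^\natural(\cdot,f^{(s)})$ is holomorphic at $s=0$, the Leibniz rule collapses to
\begin{align*}
\text{LHS}'(0)=\frac{L'(\tfrac12,\pi)}{b_{2r}(0)}\cdot\prod_v\fZ^\natural_{\pi_v,V_v}(\varphi_{1v}^\tc,\varphi_{2v},\Phi_v),
\end{align*}
where I have used that the normalizing factor $b_{2r}(s)$ is holomorphic and nonvanishing at $s=0$. The first equality in the proposition is then established, so the remaining work is the local computation of each $\fZ^\natural_{\pi_v,V_v}$ at places outside $\tR$.

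I would then compute the local normalized zeta integrals with the specified test data place by place. At places $v\in\tV_F^\fin\setminus(\tR\cup\tS)$, where $\pi_v$ is unramified, $\Phi_v=\CF_{(\Lambda^\tR_v)^{2r}}$ is the characteristic function of the self-dual lattice, and $\varphi_{1v}^\tc,\varphi_{2v}$ are the spherical vectors normalized by $\langle\varphi_{1v}^\tc,\varphi_{2v}\rangle_{\pi_v}=1$, the classical unramified doubling identity (originating with Piatetski-Shapiro--Rallis, refined in \cite{Yam14}) yields $\fZ^\natural_{\pi_v,V_v}=1$. At places $v\in\tS$, where $\pi_v$ is almost unramified and $\Lambda^\tR_v$ is almost self-dual, I would invoke the explicit evaluation in \cite{Liu20}*{Proposition~5.6 and Lemma~6.1} to obtain the factor $\tfrac{(-1)^rq_v^{r-1}(q_v+1)}{(q_v^{2r-1}+1)(q_v^{2r}-1)}$. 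At an archimedean place $v\in\tV_F^{(\infty)}$, where $\Phi_v$ is the Gaussian and $\varphi_{1v},\varphi_{2v}$ are the weight $\kappa_{r,v}^r$ vectors, the local Siegel--Weil section $f_{\Phi_v}^{(s)}$ is a standard scalar weight section in $\rI^\Box_{r,v}(s)$, so the zeta integral can be evaluated by a direct computation involving the Mellin transform of the Gaussian on the Harish-Chandra decomposition, giving the product of gamma factors encoded by $L(s+\tfrac12,\pi_v)$ in \eqref{eq:lfunction} divided by $b_{2r,v}(s)$; after normalization this collapses to the constant $C_r$.

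The main obstacle is the last step: the explicit computation of the archimedean normalized zeta integral and showing that it equals $C_r$. This is a direct but delicate calculation with the matrix coefficients of the holomorphic discrete series, using the Iwasawa decomposition on $G_r^\Box(F_v)$ along $P_r^\Box(F_v)K_r^\Box(F_v)$ and the scalar weight behavior of $f_{\Phi_v}^{(s)}$; the computation should be parallel to the classical archimedean doubling integral calculation for the symplectic case, adapted to the unitary scalar weight $\kappa_{r,v}^r$ setting. Once this constant is identified and the unramified and almost unramified factors are combined, the second equality in the proposition follows immediately from the observation that $\Phi_v$ at $v\in\tV_F^\fin\setminus(\tR\cup\tS)$ contributes trivially.
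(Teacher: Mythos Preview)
Your approach is essentially the same as the paper's: unfold via the doubling identity (the paper cites \cite{Liu11}*{Page~869} directly), differentiate at $s=0$, and evaluate local factors using \cite{Yam14} at unramified places and \cite{Liu20}*{Proposition~5.6, Lemma~6.1} at $v\in\tS$. The archimedean step you flag as the main obstacle is not worked out from scratch in the paper either; it instead quotes \cite{HLS11}*{(4.2.7.4)} and \cite{Gar08}*{Theorem~3.1} to obtain $Z(0,\varphi^\tc_{1v}\otimes\varphi_{2v},\Phi_v)=(-1)^r\pi^{r^2}\tfrac{\Gamma(1)\cdots\Gamma(r)}{\Gamma(r+1)\cdots\Gamma(2r)}$ (the $(-1)^r$ being a Weil constant from converting $f^{(s)}_{\Phi_v}(\bw_r(g,1_{2r}))$ to a matrix coefficient), and then divides by $L(s+\tfrac12,\pi_v)/b_{2r,v}(s)$ at $s=0$ using \eqref{eq:lfunction} and the explicit archimedean $b_{2r,v}(s)=\bigl(\prod_{i=1}^r\pi^{-(s+i)}\Gamma(s+i)\bigr)^2$ to recover $C_r$.
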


\begin{proof}
By the formula derived in \cite{Liu11}*{Page~869}, we have
\[
\iint_{[G_r(F)\backslash G_r(\dA_F)]^2}\varphi_2(g_2)\varphi^\tc_1(g_1)E'(0,(g_1,g_2),\Phi)\rd g_1\rd g_2
=\frac{L'(\tfrac{1}{2},\pi)}{b_{2r}(0)}\prod_{v}\fZ^\natural_{\pi_v,V_v}(\varphi^\tc_{1v},\varphi_{2v},\Phi_v).
\]
For $v\in\tV_F^{(\infty)}$, it is clear that $\fZ^\natural_{\pi_v,V_v}(\varphi^\tc_{1v},\varphi_{2v},\Phi_v)$ depends only on $r$, which we denote by $C_r$. By \cite{HLS11}*{(4.2.7.4)} and \cite{Gar08}*{Theorem~3.1}, we have
\[
Z(0,\varphi^\tc_{1v}\otimes\varphi_{2v},\Phi_v)=(-1)^r\pi^{r^2}\frac{\Gamma(1)\cdots\Gamma(r)}{\Gamma(r+1)\cdots\Gamma(2r)}.
\]
Here, the extra factor $(-1)^r$ is a Weil constant that appears when we turn $f^{(s)}_{\Phi_v}(\bw_r(g,1_{2r}))$ into a matrix coefficient. By \eqref{eq:lfunction} and the formula
\[
b_{2r,v}(s)=\(\prod_{i=1}^r\pi^{-(s+i)}\Gamma(s+i)\)^2,
\]
we obtain our formula for $C_r$.

By \cite{Yam14}*{Proposition~7.1 \& (7.2)}, we have $\fZ^\natural_{\pi_v,V_v}(\varphi^\tc_{1v},\varphi_{2v},\Phi_v)=1$ for $v\in\tV_F^\fin\setminus(\tR\cup\tS)$. By \cite{Liu20}*{Proposition~5.6 \& Lemma~6.1}, we have
\[
\fZ^\natural_{\pi_v,V_v}(\varphi^\tc_{1v},\varphi_{2v},\Phi_v)=\frac{(-1)^rq_v^{r-1}(q_v+1)}{(q_v^{2r-1}+1)(q_v^{2r}-1)}
\]
for $v\in\tS$. The proposition is proved.
\end{proof}

Now we study the Eisenstein series $E(s,g,\Phi)$ via Whittaker functions. For every $v\in\tV_F$, $T^\Box\in\Herm_{2r}^\circ(F_v)$, and $\Phi_v\in\sS(V_v^{2r})$, we define the local Whittaker function on $G_r^\Box(F_v)$ with parameter $s\in\dC$ as
\begin{align}\label{eq:whittaker}
W_{T^\Box}(s,g,\Phi_v)\coloneqq\int_{\Herm_{2r}(F_v)}f^{(s)}_{\Phi_v}(\tw_r^\Box n(b)g)\psi_{T^\Box}(b)^{-1}\rd b
\end{align}
(see \eqref{eq:doubling} for $\tw_r^\Box$) by meromorphic continuation, where $\r{d}b$ is the self-dual measure on $\Herm_{2r}(F_v)$ with respect to $\psi_{F,v}$. By \cite{Liu11}*{Lemma~2.8(1)}, we know that $W_{T^\Box}(s,g,\Phi_v)$ is an entire function in the variable $s$.

\begin{definition}\label{de:measure}
By the definition of local Whittaker functions \eqref{eq:whittaker}, for every $v\in\tV_F$, there exists a unique Haar measure $\r{d}h_v$ on $H(F_v)$ such that for every $T^\Box\in\Herm_{2r}^\circ(F_v)$ and every $\Phi_v\in\sS(V_v^{2r})$, we have
\[
W_{T^\Box}(0,1_{4r},\Phi_v)=\frac{\gamma_{V_v,\psi_{F,v}}^{2r}}{b_{2r,v}(0)}\int_{H(F_v)}\Phi_v(h_v^{-1}x)\rd h_v,
\]
where $x$ is an arbitrary element in $(V^{2r}_v)_{T^\Box}$ (Notation \ref{st:h}(H3)). For every open compact subgroup $L_v$ of $H(F_v)$, we denote by $\vol(L_v)$ the volume of $L_v$ under the measure $\r{d}h_v$.

By \cite{Tan99}*{Proposition~3.2}, for all but finitely many $v\in\tV_F^\fin$, a hyperspecial maximal subgroup of $H(F_v)$ has volume $1$ under $\r{d}h_v$. In particular, we may define the \emph{normalized measure}
\[
\rd^\natural h\coloneqq\frac{1}{b_{2r}(0)}\prod_{v\in\tV_F}\rd h_v
\]
on $H(\dA_F)$. In what follows, for an open compact subgroup $L$ of $H(\dA_F^\infty)$, we will denote by $\vol^\natural(L)$ the volume of $H(F_\infty)L$ under the measure $\r{d}^\natural h$.
\end{definition}

\begin{remark}\label{re:volume}
Note that when $V$ is coherent, $\r{d}^\natural h$ coincides with the Tamagawa measure on $H(\dA_F)$. Later in Definition \ref{de:natural}, we will use the volume $\vol^\natural(L)$ to scale the normalized height pairing. In view of Remark \ref{re:aipf}(2), this is the most ``natural'' way.
\end{remark}

\begin{proposition}\label{pr:analytic_kernel}
Suppose that $V$ is incoherent.
\begin{enumerate}
  \item Take an element $u\in\tV_E\setminus\tV_E^\spl$, and $\pres{u}{\Phi}=\otimes_v\pres{u}{\Phi}_v\in\sS(\pres{u}{V}^{2r}\otimes_F\dA_F)$, where we recall from Notation \ref{st:h}(H9) that $\pres{u}{V}$ is the $u$-nearby hermitian space, such that $\supp(\pres{u}{\Phi}_v)\subseteq(\pres{u}{V}^{2r}_v)_\reg$ (Notation \ref{st:h}(H3)) for $v$ in a nonempty subset $\tR'\subseteq\tR$. Then for every $g\in P_r^\Box(F_{\tR'})G_r^\Box(\dA_F^{\tR'})$, we have
      \[
      E(0,g,\pres{u}{\Phi})=\sum_{T^\Box\in\Herm_{2r}^\circ(F)}\prod_{v\in\tV_F} W_{T^\Box}(0,g_v,\pres{u}{\Phi}_v).
      \]

  \item Take $\Phi=\otimes_v\Phi_v\in\sS(V^{2r})$ such that $\supp(\Phi_v)\subseteq(V^{2r}_v)_\reg$ for $v$ in a subset $\tR'\subseteq\tR$ of cardinality at least $2$. Then for every $g\in P_r^\Box(F_{\tR'})G_r^\Box(\dA_F^{\tR'})$, we have
      \[
      E'(0,g,\Phi)=\sum_{w\in\tV_F\setminus\tV_F^\spl}\fE(g,\Phi)_w,
      \]
      where
      \[
      \fE(g,\Phi)_w\coloneqq\sum_{\substack{T^\Box\in\Herm_{2r}^\circ(F)\\\Diff(T^\Box,V)=\{w\}}}W'_{T^\Box}(0,g_w,\Phi_w)
      \prod_{v\in\tV_F\setminus\{w\}} W_{T^\Box}(0,g_v,\Phi_v).
      \]
      Here, $\Diff(T^\Box,V)$ is defined in Notation \ref{st:h}(H4).
\end{enumerate}
\end{proposition}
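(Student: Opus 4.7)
The strategy is to Fourier-expand the Eisenstein series along the Siegel unipotent $N_r^\Box$ and control each Fourier coefficient individually. Writing
\[
E(s, g, \Psi) = \sum_{T^\Box \in \Herm_{2r}(F)} E_{T^\Box}(s, g, \Psi), \qquad E_{T^\Box}(s, g, \Psi) \coloneqq \int_{N_r^\Box(F)\backslash N_r^\Box(\dA_F)} E(s, n(b)g, \Psi) \psi_{T^\Box}(b)^{-1} \rd b,
\]
the standard unfolding along the Bruhat decomposition $G_r^\Box = \bigsqcup_j P_r^\Box w_j P_r^\Box$ (indexed by the rank $0 \leq j \leq 2r$ of an associated block) shows that for $\RE s \gg 0$ and $T^\Box$ nondegenerate, only the big cell through $\tw_r^\Box$ contributes, yielding the Euler product $E_{T^\Box}(s, g, \Psi) = \prod_v W_{T^\Box}(s, g_v, \Psi_v)$. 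By meromorphic continuation, together with the entireness of the local Whittaker factors (\cite{Liu11}*{Lemma~2.8(1)}), this identity persists at $s = 0$.

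For part~(1), it then remains to verify $E_{T^\Box}(0, g, \pres{u}\Phi) = 0$ for every $T^\Box$ of rank $<2r$. The cleanest route goes via the regularized Siegel--Weil identity for the coherent hermitian space $\pres{u}{V}$ in the boundary regime $r = n/2$: at $s = 0$ it equates $E(0, g, \pres{u}\Phi)$ with a regularized theta integral, whose theta kernel $\theta(g, h, \pres{u}\Phi) = \sum_{x \in \pres{u}{V}^{2r}(F)} \omega_r^\Box(g, h)\pres{u}\Phi(x)$, grouped by $T(x)$, is supported only on $x$ with $T(x) \in \Herm^\circ_{2r}(F)$ thanks to the assumption $\supp(\pres{u}\Phi_v) \subseteq (\pres{u}{V}^{2r}_v)_\reg$ at some $v \in \tR'$. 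Alternatively, one may argue directly along the non-big Bruhat cells: the corresponding partial Whittaker integrals reduce to evaluations of $f^{(0)}_{\pres{u}\Phi_v}$ at points of degenerate moment matrix, which vanish by the support hypothesis.

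For part~(2), the hypothesis $|\tR'| \geq 2$ amplifies the previous paragraph's argument so that each degenerate $E_{T^\Box}(s, g, \Phi)$ vanishes to order at least two at $s = 0$ (apply the vanishing at each of the two places in $\tR'$), and therefore contributes nothing to $E'(0, g, \Phi)$. For nondegenerate $T^\Box$, the orbital-integral interpretation underlying Definition~\ref{de:measure} gives $W_{T^\Box}(0, g_v, \Phi_v) \ne 0$ if and only if $(V^{2r}_v)_{T^\Box} \ne \emptyset$, i.e.\ if and only if $v \notin \Diff(T^\Box, V)$. Since $V$ is incoherent, the product formula for local Hasse invariants forces $|\Diff(T^\Box, V)|$ to be odd, hence $\geq 1$, for every nondegenerate $T^\Box$; consequently $E_{T^\Box}(0, g, \Phi) = 0$ for all such $T^\Box$, giving $E(0, g, \Phi) = 0$. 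Leibniz's rule applied to the Whittaker product then yields
\[
E'_{T^\Box}(0, g, \Phi) = \sum_w W'_{T^\Box}(0, g_w, \Phi_w) \prod_{v \ne w} W_{T^\Box}(0, g_v, \Phi_v),
\]
in which a term survives only if $\Diff(T^\Box, V) \subseteq \{w\}$, which by the parity forces $\Diff(T^\Box, V) = \{w\}$ exactly. Interchanging the sums over $T^\Box$ and $w$ yields $E'(0, g, \Phi) = \sum_w \fE(g, \Phi)_w$, with $w \in \tV_F \setminus \tV_F^\spl$ because $\Diff$ avoids split places. The principal obstacle will be the rigorous vanishing of the degenerate Fourier coefficients in part~(1): the Siegel--Weil identity at $s = 0$ in the boundary regime must be invoked with due care to the regularization, while the direct Bruhat-cell unfolding, though more elementary, requires detailed orbit-by-orbit bookkeeping. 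Once this vanishing is in hand, all remaining steps for part~(2) are formal consequences of Leibniz's rule together with the parity of $|\Diff(T^\Box, V)|$.
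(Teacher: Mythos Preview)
The paper's proof is a bare citation to \cite{Liu12}*{Section~2B}, and your outline faithfully reconstructs that argument: Fourier expand, factor the nondegenerate coefficients into local Whittaker products, eliminate the degenerate ones via the regular-support hypothesis, and for Part~(2) combine Leibniz with the parity of $|\Diff(T^\Box,V)|$. Your treatment of the nondegenerate terms in both parts is exactly right.

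One point deserves sharpening. The Bruhat-cell route you sketch actually gives more than you state: the regular-support condition forces $f_{\Phi_v}(g)=\omega_r^\Box(g)\Phi_v(0)=0$ for every $g$ outside the big cell $P_r^\Box\tw_r^\Box N_r^\Box$, because a smaller Weyl element $w_j$ ($j<2r$) acts as a partial Fourier transform in $j$ of the $2r$ variables and then evaluates at zero in the remaining $2r-j\geq 1$ slots, landing in the degenerate locus where $\Phi_v$ vanishes. Since the standard section $f^{(s)}_{\Phi_v}$ differs from $f_{\Phi_v}$ by a nonvanishing factor, this vanishing is \emph{identical in $s$}. Hence for $g_v\in P_r^\Box(F_v)$ the Eisenstein sum collapses to its big-cell part for all $s$, and after unfolding one gets $E_{T^\Box}(s,g,\Phi)=\prod_v W_{T^\Box}(s,g_v,\Phi_v)$ for \emph{every} $T^\Box$, degenerate or not.

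Given this, your ``order $\geq 2$'' heuristic for degenerate $T^\Box$ in Part~(2) becomes the statement that two of the local factors (those at $v\in\tR'$) are entire and vanish at $s=0$, so the product vanishes to order $\geq 2$ \emph{provided} the remaining product $\prod_{v\notin\tR'}W_{T^\Box}(s,g_v,\Phi_v)$ is holomorphic at $s=0$. This last point is not automatic---for degenerate $T^\Box$ these are essentially values of local intertwining operators, which can carry poles---and is precisely where the detailed analysis in \cite{Liu12} enters. Your caveat that the degenerate-term vanishing ``must be invoked with due care'' is well placed, but the specific obstruction is pole control at the non-regular places rather than merely orbit bookkeeping.
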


\begin{proof}
This is proved in \cite{Liu12}*{Section~2B}.
\end{proof}

\begin{definition}\label{de:analytic_kernel}
Suppose that $V$ is incoherent. Take an element $u\in\tV_E\setminus\tV_E^\spl$, and a pair $(T_1,T_2)$ of elements in $\Herm_r(F)$.
\begin{enumerate}
  \item For $\pres{u}{\Phi}=\otimes_v\pres{u}{\Phi}_v\in\sS(\pres{u}{V}^{2r}\otimes_F\dA_F)$, we put
    \[
    E_{T_1,T_2}(g,\pres{u}{\Phi})\coloneqq\sum_{\substack{T^\Box\in\Herm_{2r}^\circ(F)\\\partial_{r,r}T^\Box=(T_1,T_2)}}
    \prod_{v\in\tV_F} W_{T^\Box}(0,g_v,\pres{u}{\Phi}_v).
    \]

  \item For $\Phi=\otimes_v\Phi_v\in\sS(V^{2r})$, we put
    \[
    \fE_{T_1,T_2}(g,\Phi)_u\coloneqq\sum_{\substack{T^\Box\in\Herm_{2r}^\circ(F)\\\Diff(T^\Box,V)=\{\ul{u}\}\\ \partial_{r,r}T^\Box=(T_1,T_2)}}
    W'_{T^\Box}(0,g_{\ul{u}},\Phi_{\ul{u}})\prod_{v\in\tV_F\setminus\{\ul{u}\}}W_{T^\Box}(0,g_v,\Phi_v).
    \]
\end{enumerate}
Here, $\partial_{r,r}\colon\Herm_{2r}\to\Herm_r\times\Herm_r$ is defined in Notation \ref{st:f}(F2).
\end{definition}

\begin{remark}\label{re:hermitian}
The image of $\Herm_{2r}^\circ(F)^+$ under $\partial_{r,r}$ is contained in $\Herm_r^\circ(F)^+\times \Herm_r^\circ(F)^+$.
\end{remark}

The following proposition ensures the sufficient supply of test functions with support in $(V^{2r}_v)_\reg$. As we have mentioned in Section \ref{ss:introduction}, it solves a key technical challenge for our approach.

\begin{proposition}\label{pr:regular}
Let $(\pi,\cV_\pi)$ be as in Assumption \ref{st:representation}. Take $v\in\tV_F^\fin$ and suppose that $\fZ^\natural_{\pi_v,V_v}\neq 0$. Then for every $\varphi_v\in\pi_v$ and $\varphi^\vee_v\in\pi^\vee_v$ that are both nonzero, we can find elements $\phi_v,\phi^\vee_v\in\sS(V^r_v)$ such that $\supp(\phi_v\otimes\phi^\vee_v)\in(V^{2r}_v)_\reg$ and $\fZ^\natural_{\pi_v,V_v}(\varphi^\vee_v,\varphi_v,\phi_v\otimes\phi^\vee_v)\neq 0$.
\end{proposition}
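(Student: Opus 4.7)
The strategy is to reduce the problem, via the standard doubling-method unfolding, to a non-vanishing question for a bilinear form on $\sS(V^r_v) \times \sS(V^r_v)$, and then to apply Fourier analysis on $V^{2r}_v$ to arrange that the test vectors have support in the regular locus.

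\emph{Unfolding to a matrix coefficient.} By the standard interpretation of $f_{\phi_v \otimes \phi^\vee_v}$ as a matrix coefficient of the Weil representation $\omega_r^\Box$ restricted to $G_r(F_v) \times G_r(F_v)$, the normalized zeta integral takes the form
\[
\fZ^\natural_{\pi_v, V_v}(\varphi^\vee_v, \varphi_v, \phi_v \otimes \phi^\vee_v) = \left(\frac{L(\tfrac{1}{2}, \pi_v)}{b_{2r,v}(0)}\right)^{-1} \int_{G_r(F_v)} \langle \pi_v^\vee(g) \varphi^\vee_v, \varphi_v \rangle_{\pi_v} \cdot \langle \omega_{r,v}(g) \phi_v, \phi^\vee_v \rangle_{\omega_{r,v}} \, dg,
\]
the integral being absolutely convergent by temperedness of $\pi_v$ (Assumption \ref{st:representation}(3)). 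Combined with the hypothesis $\fZ^\natural_{\pi_v, V_v} \neq 0$ and the irreducibility of $\Hom_{G_r(F_v)}(\sS(V^r_v), \pi_v)$ as an $H(F_v)$-representation from Proposition \ref{pr:uniqueness}(3), it follows that for any nonzero $\varphi_v, \varphi^\vee_v$ the bilinear form $B(\phi_v, \phi^\vee_v) := \fZ^\natural_{\pi_v, V_v}(\varphi^\vee_v, \varphi_v, \phi_v \otimes \phi^\vee_v)$ is nonzero on $\sS(V^r_v) \times \sS(V^r_v)$.

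\emph{Reduction to a support condition.} Since $F_v$ is non-archimedean and $(V^{2r}_v)_\reg$ is Zariski open and dense in $V^{2r}_v$, the subspace $\sS((V^{2r}_v)_\reg) \subseteq \sS(V^{2r}_v)$ of Bruhat--Schwartz functions supported in $(V^{2r}_v)_\reg$ is exactly the kernel of the restriction map to the closed singular locus. The clopen compact support of any such function can be covered by finitely many boxes $U_1 \times U_2 \subseteq (V^{2r}_v)_\reg$, so every element of $\sS((V^{2r}_v)_\reg)$ decomposes as a finite sum of pure tensors $\phi_v \otimes \phi^\vee_v$ satisfying the support condition in the statement. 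It therefore suffices to produce some $\Phi \in \sS((V^{2r}_v)_\reg)$ with $\fZ^\natural_{\pi_v, V_v}(\varphi^\vee_v, \varphi_v, \Phi) \neq 0$.

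\emph{Ruling out support on the singular locus.} I would argue this last point by contradiction. If the functional $L(\Phi) := \fZ^\natural_{\pi_v, V_v}(\varphi^\vee_v, \varphi_v, \Phi)$ vanished on $\sS((V^{2r}_v)_\reg)$, then as a distribution on $V^{2r}_v$ it would be supported on the singular locus $V^{2r}_v \setminus (V^{2r}_v)_\reg$. The two Fourier-analytic lemmas of Appendix \ref{ss:fourier} are designed to derive a contradiction from this, exploiting the explicit structure of $L$ coming from the Weil representation together with the formula for $\omega_r^\Box(\tw_r^\Box)$ as (a constant multiple of) the Fourier transform on $\sS(V^{2r}_v)$; morally, the support hypothesis on $L$ and the Weil equivariance force an incompatible constraint on both $\Phi$ and its Fourier transform, which cannot be satisfied unless $L \equiv 0$. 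The main obstacle of the proof is precisely this final step: a priori, a continuous distribution on a $p$-adic space can concentrate on a Zariski closed subset of positive codimension, so ruling this out for our specific doubling functional is exactly where the Fourier-analytic input of the appendix becomes essential.
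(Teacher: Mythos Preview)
Your outline captures the right high-level strategy---reduce to ruling out that the doubling functional is supported on the singular locus, then invoke the Fourier-analytic lemmas of Appendix~\ref{ss:fourier}---but the final step as written is a genuine gap, not merely a sketch to be filled in routinely.

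The appendix lemmas assert that the Fourier transform of a function $f$ on a vector space $X$, satisfying a weighted $L^{2+\varepsilon}$ bound, cannot be supported on a hypersurface. To apply them you must (i) identify $X$, (ii) exhibit the functional as the Fourier transform of a specific $f$, and (iii) verify the integrability hypothesis. You attempt this with $X=V^{2r}_v$, but the correct space is $X=\Herm_{2r}(F_v)$: the Siegel--Weil section $f_\Phi$ depends on $\Phi$ only through its pushforward $\Psi$ along the (quadratic) moment map $V^{2r}_v\to\Herm_{2r}(F_v)$, and an explicit Bruhat computation (writing $\bw_r\iota(a,u,v)=p(a,u,v)\,\tw_r^\Box\,n(a,u,v)$ on the big cell) rewrites the zeta integral as $\int_\Omega\xi^\flat\,\widehat\Psi$ for a concrete function $\xi^\flat$ built from the matrix coefficient of $\pi_v$. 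The bound $\int|\xi^\flat|^{2+\varepsilon}|\Delta|^{r\varepsilon}<\infty$ then follows from temperedness (this is exactly where Assumption~\ref{st:representation}(3) is used, with Lemma~\ref{le:fourier2} needed in the split case because the center forces a quotient). On $V^{2r}_v$ itself the functional is the pullback of a distribution along a quadratic map, so it is not of the form to which Lemmas~\ref{le:fourier1}--\ref{le:fourier2} apply; your invocation of $\omega_r^\Box(\tw_r^\Box)$ as a Fourier transform on $V^{2r}_v$ does not produce such a description.

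There is a second missing step. The appendix lemmas yield nonvanishing for some $\Psi\in\sS(\Herm_{2r}^\circ(F_v))$, but when $E_v$ is a field $\Herm_{2r}^\circ(F_v)$ is the disjoint union of the images of $(V^{2r}_v)_\reg$ and $(V'^{2r}_v)_\reg$ for the \emph{other} rank-$2r$ hermitian space $V'$. One must split $\Psi=\Psi_V+\Psi_{V'}$, lift each piece to a Schwartz function on the corresponding regular locus, and then use Proposition~\ref{pr:uniqueness} together with the hypothesis $\fZ^\natural_{\pi_v,V_v}\neq 0$ to kill the $V'$-contribution. Without this, you cannot conclude that the nonvanishing occurs on $\sS((V^{2r}_v)_\reg)$ rather than on $\sS((V'^{2r}_v)_\reg)$.
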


\begin{proof}
To ease notation, we will suppress the place $v$ throughout the proof. We identify the $F$-vector space $\Herm_{2r}(F)$ with its dual $\Herm_{2r}(F)^\vee$ via the bilinear form $(x,y)\mapsto\tr xy$. Take an element $\Psi\in\sS(\Herm_{2r}(F))$. Let $\widehat\Psi\in\sS(\Herm_{2r}(F))$ be the Fourier transform of $\Psi$ with respect to $\psi$. Let $f_\Psi$ be the unique section in $\rI_r^\Box(0)$ such that $f_\Psi(\tw_r^\Box n(b))=\widehat\Psi(b)$ and $f_\Psi=0$ outside $P_r^\Box(F)\tw_r^\Box N_r^\Box(F)$. Take $\varphi\in\pi$ and $\varphi^\vee\in\pi^\vee$ that are both nonzero. We claim that
\begin{itemize}
  \item[$(*)$] There exists an element $\Psi\in\sS(\Herm_{2r}^\circ(F))$ such that $Z(\varphi^\vee\otimes\varphi,f_\Psi)\neq 0$.
\end{itemize}
Assuming $(*)$, we continue the proof. Let $V'$ be the other hermitian space over $F$ of rank $2r$ that is not isomorphic to $V$ if $E$ is a field, or the zero space if $E=F\times F$. Let $\Herm_{2r}^\circ(F)_V$ and $\Herm_{2r}^\circ(F)_{V'}$ be the subset of $\Herm_{2r}^\circ(F)$ that is contained in the image of the moment maps from $V^{2r}$ and $V'^{2r}$, respectively. Then $\Herm_{2r}^\circ(F)_V\cup\Herm_{2r}^\circ(F)_{V'}$ is a disjoint open cover of $\Herm_{2r}^\circ(F)$. Choose $\Psi$ as in the claim and put $\Psi_V\coloneqq\Psi\cdot\CF_{\Herm_{2r}^\circ(F)_V}$ and $\Psi_{V'}\coloneqq\Psi\cdot\CF_{\Herm_{2r}^\circ(F)_{V'}}$. We may choose elements $\Phi_V\in\sS(V^{2r}_\reg)$ and $\Phi_{V'}\in\sS(V'^{2r}_\reg)$ such that $\Psi_V$ and $\Psi_{V'}$ are the pushforward of $\Phi_V$ and $\Phi_{V'}$ along the moment map $V^{2r}_\reg\to\Herm_{2r}^\circ(F)_V$ and $V'^{2r}_\reg\to\Herm_{2r}^\circ(F)_{V'}$, respectively. It is easy to see that $f_{\Phi_V}=f_{\Psi_V}$ and $f_{\Phi_{V'}}=f_{\Psi_{V'}}$. In particular, we have
\begin{align*}
Z(\varphi^\vee\otimes\varphi,f_\Psi)=Z(\varphi^\vee\otimes\varphi,f_{\Psi_V})+Z(\varphi^\vee\otimes\varphi,f_{\Psi_{V'}})
=Z(\varphi^\vee\otimes\varphi,f_{\Phi_V})+Z(\varphi^\vee\otimes\varphi,f_{\Phi_{V'}}).
\end{align*}
By Proposition \ref{pr:uniqueness} and Remark \ref{re:yamana}, we have $Z(\varphi^\vee\otimes\varphi,f_{\Phi_{V'}})=0$ if $\fZ^\natural_{\pi,V}\neq 0$. Thus, we have $Z(\varphi^\vee\otimes\varphi,f_{\Phi_V})\neq 0$ hence $\fZ^\natural_{\pi,V}(\varphi^\vee\otimes\varphi,\Phi_V)\neq 0$. The theorem follows as $\Phi_V$ can be written as a finite sum of elements of the form $\phi\otimes\phi^\vee$ satisfying $\supp(\phi\otimes\phi^\vee)\subseteq\supp(\Phi_V)\subseteq V^{2r}_\reg$.

It remains to show $(*)$. We identify $\Res_{E/F}\Mat_{r,r}\times\Herm_r\times\Herm_r$ with $\Herm_{2r}$ via the assignment
\[
(a,u,v)\mapsto
\begin{pmatrix}
u & \pres{\rt}{a}^\tc \\
a & v
\end{pmatrix}.
\]
Define a polynomial function $\Delta$ on $\Herm_{2r}$ sending $(a,u,v)$ to $\Nm_{E/F}\dtm a$. Let $\Omega$ be the complement of the Zariski closed subset of $\Herm_{2r}$ defined by the ideal $(\Delta)$. We define a morphism $\iota\colon\Omega\to G_r$ such that
\[
\iota(a,u,v)=
\begin{pmatrix}
    1_r & v \\
    0 & 1_r \\
\end{pmatrix}
\begin{pmatrix}
    -a & 0 \\
    0 & -\pres{\rt}{a}^{\tc,-1} \\
\end{pmatrix}
\tw_r
\begin{pmatrix}
    1_r & u \\
    0 & 1_r \\
\end{pmatrix},
\]
which is an isomorphism onto the Zariski open subset $P_r\tw_rN_r$ of $G_r$. By a direct computation, we have a unique morphism $p\colon\Omega\to P_r^\Box$ such that
\begin{align}\label{eq:regular1}
\bw_r\iota(a,u,v)=p(a,u,v)\cdot\tw_r^\Box\cdot
n\begin{pmatrix}
u & \pres{\rt}{a}^\tc \\
a & v
\end{pmatrix},
\end{align}
which satisfies
\begin{align}\label{eq:regular2}
\Nm_{E/F}\delta^\Box_r(p(a,u,v))=\Nm_{E/F}\dtm a=\Delta(a,u,v).
\end{align}
Define a locally constant function $\xi_{\varphi^\vee,\varphi}$ on $G_r(F)$ by $\xi_{\varphi^\vee,\varphi}(g)\coloneqq\langle\pi^\vee(g)\varphi^\vee,\varphi\rangle_\pi$. Then by \eqref{eq:regular1} and \eqref{eq:regular2}, we have
\begin{align*}
Z(\varphi^\vee\otimes\varphi,f_\Psi)&=\int_{G_r(F)}\xi_{\varphi^\vee,\varphi}(g)f_\Psi(\bw_r(g,1_{2r}))\rd g \\
&=\int_{P_r(F)\tw_rN_r(F)}\xi_{\varphi^\vee,\varphi}(\iota(a,u,v))|\Delta(a,u,v)|_F^r\widehat\Psi(a,u,v)\cdot\rd\iota(a,u,v).
\end{align*}
We define a locally constant function $\xi_{\varphi^\vee,\varphi}^\flat$ on $\Omega(F)$ by
\[
\xi_{\varphi^\vee,\varphi}^\flat(a,u,v)=|\Delta(a,u,v)|_F^{-r}\xi_{\varphi^\vee,\varphi}(\iota(a,u,v)).
\]
Note that there exists a unique Haar measure $\rd a\rd u\rd v$ on $\Herm_{2r}(F)$ such that
\[
\rd\iota(a,u,v)=|\Delta(a,u,v)|_F^{-2r}\rd a\rd u\rd v.
\]
Thus, we have
\begin{align}\label{eq:regular3}
Z(\varphi^\vee\otimes\varphi,f_\Psi)=
\int_{\Omega(F)}\xi_{\varphi^\vee,\varphi}^\flat(a,u,v)\widehat\Psi(a,u,v)\rd a\rd u\rd v.
\end{align}
As both $\varphi^\vee$ and $\varphi$ are nonzero, $\xi_{\varphi^\vee,\varphi}^\flat$ is nonzero, which is also locally integrable on $\Herm_{2r}(F)$ by Remark \ref{re:yamana}. The remaining discussion bifurcates.

When $E$ is a field, we have $\xi_{\varphi^\vee,\varphi}\in L^{2+\varepsilon}(G_r(F))$ for every $\varepsilon>0$, which is equivalent to
\[
\int_{\Omega(F)}|\xi_{\varphi^\vee,\varphi}^\flat(a,u,v)|^{2+\varepsilon}|\Delta(a,u,v)|_F^{r\varepsilon}\rd a\rd u\rd v<\infty.
\]
Applying Lemma \ref{le:fourier1} to $X=\Herm_{2r}(F)$, we obtain an element $\Psi\in\sS(V^{2r}_\reg)$ such that
\[
\int_{\Omega(F)}\xi_{\varphi^\vee,\varphi}^\flat(a,u,v)\widehat\Psi(a,u,v)\rd a\rd u\rd v\neq 0,
\]
which implies $Z(\varphi^\vee\otimes\varphi,f_\Psi)\neq 0$ by \eqref{eq:regular3}. Thus, $(*)$ is proved.

When $E=F\times F$, we have $\xi_{\varphi^\vee,\varphi}\in L^{2+\varepsilon}(Z_r(F)\backslash G_r(F))$ for every $\varepsilon>0$, where $Z_r$ denotes the center of $G_r$, which is equivalent to
\[
\int_{F^\times\backslash\Omega(F)}|\xi_{\varphi^\vee,\varphi}^\flat(a,u,v)|^{2+\varepsilon}|\Delta(a,u,v)|_F^{r\varepsilon}\rd a\rd u\rd v<\infty.
\]
Here, the action of $F^\times$ on $\Herm_{2r}(F)$ is given as follows: After identifying $\Mat_{r,r}(E)$ with $\Mat_{r,r}(F)\times\Mat_{r,r}(F)$ via the two factors of $F$ under which we write $a=(a_1,a_2)$, $\alpha\in F^\times$ sends $((a_1,a_2),u,v)$ to $((\alpha a_1,\alpha^{-1}a_2),u,v)$. Applying Lemma \ref{le:fourier2} to $X=\Herm_{2r}(F)$ with $X_1=\Mat_{r,r}(F)$, $X_2=\Mat_{r,r}(F)$, and $X_3=\Herm_r(F)\oplus\Herm_r(F)$, we obtain an element $\Psi\in\sS(V^{2r}_\reg)$ such that
\[
\int_{\Omega(F)}\xi_{\varphi^\vee,\varphi}^\flat(a,u,v)\widehat\Psi(a,u,v)\rd a\rd u\rd v\neq 0,
\]
which implies $Z(\varphi^\vee\otimes\varphi,f_\Psi)\neq 0$ by \eqref{eq:regular3}. Thus, $(*)$ is proved.
\end{proof}

To end this section, we recall some constructions concerning the tempered global $L$-packet given by $\pi$, which will be used in Section \ref{ss:height} and Section \ref{ss:inert2}.

\begin{notation}\label{co:galois}
Let $(\pi,\cV_\pi)$ be as in Assumption \ref{st:representation}.
\begin{enumerate}
  \item Let $\Pi$ be the automorphic base change of $\pi$, that is, the isobaric automorphic representation of $\GL_n(\dA_E)$ such that $\Pi_v$ is the standard base change of $\pi_v$ for all but finitely many $v\in\tV_F^\fin$ for which $\pi_v$ is unramified.\footnote{The existence of $\Pi$ follows from \cite{Shi} or more generally \cite{KMSW}, while the uniqueness of $\Pi$ up to isomorphism is ensured by the strong multiplicity one theorem.} By the local-global compatibility \cite{KMSW}*{Theorem~1.7.1}, for every $v\in\tV_F^{(\infty)}$, $\Pi_v$ is the normalized induction of $\arg^{n-1}\boxtimes\arg^{n-3}\boxtimes\cdots\boxtimes\arg^{3-n}\boxtimes\arg^{1-n}$ as in Definition \ref{de:lfunction}.

  \item Put $\fI\coloneqq\{n-1,n-3,\dots,3-n,1-n\}$. For each character $\chi\colon\mu_2^{\fI}\to\dC^\times$, we define the signature of $\chi$ to be the pair $(p,q)$ with $p+q=n$ such that $\chi$ takes value $1$ on $p$ $\mu_2$-generators hence $-1$ on $q$ $\mu_2$-generators. For such a character $\chi$ of signature $(p,q)$, we have a discrete series representation $\pi^\chi$ of $\rU(p,q)$. When $(p,q)=(n-1,1)$, we denote by $\pi^\chi_\infty$ the representation of $\pres{\bu}{H}(F_\infty)$ that is the inflation of $\pi^\chi$ along the quotient map $\pres{\bu}{H}(F_\infty)\to\pres{\bu}{H}(\dR)\simeq\rU(n-1,1)$.

  \item We may write $\Pi=\Pi_1\boxplus\cdots\boxplus\Pi_s$, in which $\Pi_j$ is a conjugate-selfdual cuspidal automorphic representation of $\GL_{n_j}(\dA_E)$, with $n_1+\cdots+n_s=n$. Then there is a unique partition $\fI=\fI_1\sqcup\cdots\sqcup\fI_s$ such that $\Pi_{j,w}$ is is the normalized induction of $\boxtimes_{i\in\fI_j}\arg^i$ for $1\leq j\leq s$.

  \item Let $\ell$ be a rational prime with an arbitrarily given isomorphism $\ol\dQ_\ell\simeq\dC$. For every $1\leq j\leq s$, we have a (semisimple) Galois representation
      \[
      \rho_{\Pi_j}\colon\Gal(\dQ^\ac/E)\to\GL_{n_j}(\ol\dQ_\ell)
      \]
      attached to $\Pi_j$ as described in \cite{Car12}*{Theorem~1.1}.
\end{enumerate}
Here, we recall from Notation \ref{st:f} that we have regarded $E$ as a subfield of $\dC$ via $\biota$.
\end{notation}

\begin{lem}\label{le:arthur}
For every irreducible admissible representation $\tilde\pi^\infty$ of $H(\dA_F^\infty)$ such that $\Pi_v$ is the standard base change of $\tilde\pi^\infty_v$ for all but finitely many $v\in\tV_F^\fin$ for which $\tilde\pi^\infty_v$ is unramified, exactly one of the following two cases happens:
\begin{enumerate}[label=(\alph*)]
  \item There does not exist a character $\chi\colon\mu_2^{\fI}\to\dC^\times$ of signature $(n-1,1)$ such that $\pi^\chi_\infty\otimes\tilde\pi^\infty$ is a cuspidal automorphic representation of $\pres{\bu}{H}(\dA_F)$.

  \item There is a unique integer $1\leq j(\tilde\pi^\infty)\leq s$, such that for every $\chi\colon\mu_2^{\fI}\to\dC^\times$ of signature $(n-1,1)$, $\pi^\chi_\infty\otimes\tilde\pi^\infty$ is a cuspidal automorphic representation of $\pres{\bu}{H}(\dA_F)$ if and only if the unique $\mu_2$-generator of $\mu_2^\fI$ on which $\chi$ takes value $-1$ is indexed by an element in $\fI_{j(\tilde\pi^\infty)}$.
\end{enumerate}
Moreover, we have
\[
\dim_\dC\Hom_{H(\dA_F^\infty)}\(\tilde\pi^\infty,\varinjlim_L\rH^{n-1}_{\dr}(X_L/\dC)\)=
\begin{dcases}
0, &\text{if $\tilde\pi^\infty$ fits in (a)} \\
n_{j(\tilde\pi^\infty)}>0, &\text{if $\tilde\pi^\infty$ fits in (b)}
\end{dcases}
\]
where $\{X_L\}$ is the unitary Shimura variety recalled in Section \ref{ss:special} below.
\end{lem}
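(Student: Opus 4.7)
The plan is to deduce the dichotomy from Arthur's multiplicity formula for unitary groups, as proved by Mok \cite{Mok15} and Kaletha--Minguez--Shin--White \cite{KMSW}, applied to the inner form $\pres{\bu}{H}$ of the quasi-split unitary group. Since $\Pi=\Pi_1\boxplus\cdots\boxplus\Pi_s$ with each $\Pi_j$ conjugate-selfdual cuspidal and no $\Pi_j$ isomorphic to any $\Pi_{j'}$ (which follows from the fact that the $\fI_j$ are disjoint and the infinitesimal character at infinity is regular), the associated tempered global $A$-parameter $\psi=\bigoplus_{j=1}^s \Pi_j\boxtimes[1]$ has global component group $S_\psi=\mu_2^s$, with a distinguished generator $s_j$ for each summand. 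The representations of $\pres{\bu}{H}(\dA_F)$ whose base change is $\Pi$ form the global packet $\Pi_\psi$, whose members are obtained by choosing at each place $v$ a character of the local component group $S_{\psi_v}$ and taking the corresponding local packet element; such a representation appears in the cuspidal (in fact discrete, and here automatically cuspidal by cuspidality of each $\Pi_j$ combined with the genericity of $\pi$) automorphic spectrum with multiplicity equal to one precisely when the product of the local characters, viewed on $S_\psi$ via the natural diagonal restriction maps $S_\psi\to S_{\psi_v}$, equals Arthur's sign character $\epsilon_\psi$.

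Now I would unpack these characters at each place. At every archimedean place $v\ne \bu$, the group $\pres{\bu}{H}(F_v)$ is compact with trivial component group, so the local contribution is forced. At $v=\bu$, the local packet of $\rU(n-1,1)$ with infinitesimal character $\fI$ consists of the $n$ discrete series $\pi^\chi$ indexed by characters $\chi\colon\mu_2^\fI\to\dC^\times$ of signature $(n-1,1)$, i.e.\ by the unique position $i(\chi)\in\fI$ on which $\chi$ equals $-1$; the local component group is $S_{\psi_\bu}=\mu_2^\fI$, and under the map $S_{\psi_\bu}\to S_\psi$ that sends the $i$-th generator to $s_j$ whenever $i\in\fI_j$, the character attached to $\pi^\chi$ pushes to the character $\eta_{j(\chi)}$ on $S_\psi$ which is $-1$ on $s_{j(\chi)}$ and $+1$ on every $s_{j'}$ with $j'\ne j(\chi)$, where $j(\chi)$ denotes the unique index with $i(\chi)\in\fI_{j(\chi)}$. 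At the remaining places, the finite-part $\tilde\pi^\infty$ together with the forced compact archimedean components produce a fixed character $\eta^\infty$ of $S_\psi$. Setting $\eta^\star\coloneqq\epsilon_\psi\cdot(\eta^\infty)^{-1}$, the multiplicity formula translates into the condition $\eta_{j(\chi)}=\eta^\star$; since the characters $\eta_1,\dots,\eta_s$ are precisely the $s$ distinct characters of $\mu_2^s$ that are nontrivial on exactly one generator, either $\eta^\star$ is not of this form (case (a), no $\chi$ works) or $\eta^\star=\eta_j$ for a unique $j$, which we then define to be $j(\tilde\pi^\infty)$, and case (b) holds.

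For the dimension formula I would invoke Matsushima's formula, which expresses $\varinjlim_L\rH^{n-1}_{\dr}(X_L/\dC)$ as a direct sum over cuspidal automorphic $\sigma=\sigma_\infty\otimes\sigma^\infty$ of $\pres{\bu}{H}(\dA_F)$ (with the correct central character) of $m(\sigma)\cdot\sigma^\infty\otimes\rH^{n-1}(\fg_\bu,K_\bu;\sigma_\infty\otimes\dC)$, with no cohomology contributed from the compact archimedean factors; hence
\[
\dim_\dC\Hom_{H(\dA_F^\infty)}\(\tilde\pi^\infty,\varinjlim_L\rH^{n-1}_{\dr}(X_L/\dC)\)
=\sum_{\chi} m(\pi^\chi_\infty\otimes\tilde\pi^\infty)\cdot\dim\rH^{n-1}(\fg_\bu,K_\bu;\pi^\chi_\infty),
\]
the sum running over $\chi$ of signature $(n-1,1)$. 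The $(\fg,K)$-cohomology of a discrete series of $\rU(n-1,1)$ with the regular infinitesimal character $\fI$ is one-dimensional and concentrated in middle degree $n-1$ by Vogan--Zuckerman, so the right-hand side equals the number of admissible $\chi$. In case (a) this is $0$; in case (b) it is exactly the number of $i\in\fI$ with $i\in\fI_{j(\tilde\pi^\infty)}$, namely $n_{j(\tilde\pi^\infty)}$.

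The main obstacle, and the reason this is really a citation exercise rather than a self-contained proof, is the full endoscopic classification for non-quasi-split inner forms of unitary groups: one must know that the members of $\Pi_\psi$ on $\pres{\bu}{H}$ are parametrized by characters of $S_\psi$ via local packets with the expected structure, and that the multiplicity formula holds with the correct sign character $\epsilon_\psi$. Granting \cite{Mok15} and \cite{KMSW} in the relevant generality, the rest of the argument is bookkeeping with component groups and a standard application of Matsushima's formula together with the Vogan--Zuckerman description of cohomological discrete series.
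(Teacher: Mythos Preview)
Your proposal is correct and follows essentially the same approach as the paper: both invoke Arthur's multiplicity formula from \cite{KMSW} for the dichotomy, and Matsushima's formula together with the one-dimensionality of the middle-degree $(\fg,K)$-cohomology of discrete series for the dimension count. The paper's proof is terser, simply citing \cite{KMSW}*{Theorem~1.7.1} and \cite{BW00}*{II.~Theorem~5.4}, while you spell out the component-group bookkeeping; two minor phrasing issues (the compact archimedean places have nontrivial $S_{\psi_v}$ but a \emph{forced} local character, and the natural map runs $S_\psi\to S_{\psi_v}$ rather than the reverse) do not affect the substance.
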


\begin{proof}
The first part of the lemma is a consequence of Arthur's multiplicity formula for tempered global $L$-packets \cite{KMSW}*{Theorem~1.7.1}.

The second part of the lemma follows from Matsushima's formula and Arthur's multiplicity formula \cite{KMSW}*{Theorem~1.7.1}. In particular, the number of characters $\chi\colon\mu_2^{\fI}\to\dC^\times$ of signature $(n-1,1)$ such that $\pi^\chi_\infty\otimes\tilde\pi^\infty$ is a cuspidal automorphic representation of $\pres{\bu}{H}(\dA_F)$ equals $n_{j(\tilde\pi^\infty)}$. Here, we also use the well-known fact that the $(\fg,K)$-cohomology of a (cohomological) discrete series representation is one-dimensional in the middle degree and vanishes in all other degrees (see, for example, \cite{BW00}*{II.~Theorem~5.4}).
\end{proof}

\begin{remark}\label{re:root}
Assume that $\tilde\pi^\infty\coloneqq\Hom_{G_r(\dA_F^\infty)}(\sS(V^r\otimes_F\dA_F^\infty),\pi^\infty)$ is nonzero, which is then an irreducible admissible representation of $H(\dA_F^\infty)$ by Proposition \ref{pr:uniqueness}(3). By \cite{GI16}*{Theorem~4.1(ii)}, the global root number $\varepsilon(\Pi)$ equals $-1$. Moreover, using Arthur's multiplicity formula \cite{KMSW}*{Theorem~1.7.1} and Conjecture $\text{(P1)}_n$ in \cite{GI16}*{Section~4.4} (which is proved in that article), we see that $\tilde\pi^\infty$ fits in the situation (b) of Lemma \ref{le:arthur} if and only if there is exactly one element $j\in\{1,\dots,s\}$ such that $\varepsilon(\Pi_j)=-1$; and in this case we must have $j=j(\tilde\pi^\infty)$. When $\ord_{s=\frac{1}{2}}L(s,\Pi)=1$, there is exactly one element $j\in\{1,\dots,s\}$ such that $\varepsilon(\Pi_j)=-1$ as $L(s,\Pi)=\prod_{j=1}^sL(s,\Pi_j)$, so $\tilde\pi^\infty$ fits in the situation (b) of Lemma \ref{le:arthur} automatically.
\end{remark}

\section{Special cycles and generating functions}
\label{ss:special}

In this section, we review the construction of Kudla's special cycles and generating functions. We also introduce the hypothesis on the modularity of generating functions and derive some of its consequences. From now to the end of Section \ref{ss:main}, we assume $V$ incoherent.

Recall that we have fixed a $\bu$-nearby space $\pres{\bu}V$ and an isomorphism $\pres{\bu}{V}\otimes_F\dA_F^{\ul{\bu}}\simeq V\otimes_{\dA_F}\dA_F^{\ul{\bu}}$ from Notation \ref{st:h}(H9). For every open compact subgroup $L\subseteq H(\dA_F^\infty)$, we have the Shimura variety $X_L$ associated to $\Res_{F/\dQ}\pres{\bu}{H}$ of the level $L$, which is a smooth quasi-projective scheme over $E$ (which is regarded as a subfield of $\dC$ via $\biota$) of dimension $n-1$. We remind the readers its complex uniformization
\begin{align}\label{eq:complex_uniformization}
(X_L\otimes_E\dC)^{\r{an}}\simeq\pres{\bu}{H}(F)\backslash\fD\times H(\dA_F)/L,
\end{align}
where $\fD$ denotes the complex manifold of negative lines in $\pres{\bu}{V}\otimes_E\dC$ and the Deligne homomorphism is the one adopted in \cite{LTXZZ}*{Section~3.2}. In what follows, for a place $u\in\tV_E$, we put $X_{L,u}\coloneqq X_L\otimes_EE_u$ as a scheme over $E_u$.

Now we recall the construction of Kudla's special cycles and their generating functions. Take an integer $1\leq m\leq n-1$.

\begin{definition}\label{co:special_cycle}
For every element $x\in V^m\otimes_{\dA_F}\dA_F^\infty$, we have the \emph{special cycle} $Z(x)_L\in\CH^m(X_L)_\dQ$ defined as follows.
\begin{itemize}
  \item For $T(x)\not\in\Herm_m(F)^+$, we set $Z(x)_L=0$.

  \item For $T(x)\in\Herm^\circ_m(F)^+$, we may find elements $x'\in\pres{\bu}{V}^m$ and $h\in H(\dA_F^\infty)$ such that $hx=x'$ holds in $V^m\otimes_{\dA_F}\dA_F^\infty$. The components of $x'$ spans a totally positive definite hermitian subspace $V_{x'}$ of $\pres{\bu}{V}$ of rank $m$. Put $H^{x'}\coloneqq\rU(V_{x'}^\perp)$ which is naturally a subgroup of $\pres{\bu}{H}$, and let $\{X^{x'}_M\}_{M\subseteq H^{x'}(\dA_F^\infty)}$ be the associated system of Shimura varieties. Define $Z(x)_L$ to be the image cycle of the composite morphism
      \[
      X^{x'}_{hLh^{-1}\cap H^{x'}(\dA_F^\infty)}\to X_{hLh^{-1}}\xrightarrow{\cdot h}X_L.
      \]
      It is straightforward to check that $Z(x)_L$ does not depend on the choice of $x'$ and $h$. Moreover, $Z(x)_L$ is a well-defined element in $\rZ^m(X_L)$.

  \item For $T(x)\in\Herm_m(F)^+$ in general, we have an element $Z(x)_L\in\CH^m(X_L)_\dQ$ (not well-defined in $\rZ^m(X_L)_\dQ$). We refer the readers to \cite{Liu11}*{Section~3A} for more details as it is not important to us in this article.
\end{itemize}
For every $\phi^\infty\in\sS(V^m\otimes_{\dA_F}\dA_F^\infty)^L$ and $T\in\Herm_m(F)$, we put
\[
Z_T(\phi^\infty)_L\coloneqq\sum_{\substack{x\in L\backslash V^m\otimes_{\dA_F}\dA_F^\infty\\ T(x)=T}}\phi^\infty(x) Z(x)_L.
\]
As the above summation is finite, $Z_T(\phi^\infty)_L$ is a well-defined element in $\CH^m(X_L)_\dC$.
\end{definition}

\begin{remark}\label{re:special_cycle}
For $T\in\Herm_m^\circ(F)^+$, $Z_T(\phi^\infty)_L$ is even a well-defined element in $\rZ^m(X_L)_\dC$.
\end{remark}

Finally, for every $g\in G_m(\dA_F)$, Kudla's \emph{generating function} is defined to be
\[
Z_{\phi^\infty}(g)_L\coloneqq\sum_{T\in\Herm_m(F)^+}\omega_{m,\infty}(g_\infty)\phi^0_\infty(T)\cdot
Z_T(\omega_m^\infty(g^\infty)\phi^\infty)_L
\]
as a formal sum valued in $\CH^m(X_L)_\dC$, where
\[
\omega_{m,\infty}(g_\infty)\phi^0_\infty(T)\coloneqq\prod_{v\in\tV_F^{(\infty)}}\omega_{m,v}(g_v)\phi^0_v(T).
\]
Here, we note that for $v\in\tV_F^{(\infty)}$, the function $\omega_{m,v}(g_v)\phi^0_v$ factors through the moment map $V_v^m\to\Herm_m(F_v)$ (see Notation \ref{st:h}(H1)) hence $\omega_{m,v}(g_v)\phi^0_v(T)$ makes sense.

\begin{lem}\label{le:invariance}
In Definition \ref{co:special_cycle}, we have
\[
\omega_{m,\infty}(n(b)m(a))\phi^0_\infty(T)\cdot Z_T(\omega_m(n(b)m(a))\phi^\infty)_L
=\phi^0_\infty(\pres{\rt}{a}^\tc Ta)\cdot Z_{\pres{\rt}{a}^\tc Ta}(\phi^\infty)_L
\]
for every $a\in\GL_m(E)$ and every $b\in\Herm_m(F)$.
\end{lem}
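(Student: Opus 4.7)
\smallskip

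The plan is to reduce this to a direct unfolding of the Weil representation formulas together with a change of variables $x\mapsto xa$ in the sum defining $Z_T(\phi^\infty)_L$, after which two global cancellations make the scalar factors disappear.

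First, I will use the formulas of Notation \ref{st:w}(W1) to compute
\[
(\omega_m(n(b)m(a))\phi^\infty)(x)
=|\dtm a|^r_{\dA_E^\infty}\cdot\psi_F^\infty(\tr bT(x))\cdot\phi^\infty(xa)
\]
on the finite-adelic side and, using that $\phi^0_v$ factors through the moment map for $v\in\tV_F^{(\infty)}$,
\[
\omega_{m,\infty}(n(b)m(a))\phi^0_\infty(T)
=|\dtm a|^r_{E_\infty}\cdot\psi_{F,\infty}(\tr bT)\cdot\phi^0_\infty(\pres{\rt}{a}^\tc Ta)
\]
on the archimedean side. Compatibility of the Weil action with the relation $m(a^{-1})n(b)m(a)=n(a^{-1}b\pres{\rt}{a}^{\tc,-1})$ forces the moment-map identity
\[
T(xa)=\pres{\rt}{a}^\tc\,T(x)\,a,
\]
which I will record up front since it governs both the $\phi^0_\infty$ factor and the index shift below.

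Next, in the definition
\[
Z_T(\omega_m(n(b)m(a))\phi^\infty)_L
=\sum_{\substack{x\in L\backslash V^m\otimes_{\dA_F}\dA_F^\infty\\ T(x)=T}}(\omega_m(n(b)m(a))\phi^\infty)(x)\,Z(x)_L,
\]
I substitute $y\coloneqq xa$. By the identity above, the summation becomes indexed by $y$ with $T(y)=\pres{\rt}{a}^\tc Ta$; the $\psi_F^\infty$-character leaves the sum as the scalar $\psi_F^\infty(\tr bT)$; and the factor $\phi^\infty(xa)$ becomes $\phi^\infty(y)$. The key geometric input is that $Z(xa)_L=Z(x)_L$ for every $a\in\GL_m(E)$: indeed, in the regular case $T(x)\in\Herm_m^\circ(F)^+$, if $(x',h)$ represents $x$ with $x'\in\pres{\bu}{V}^m$, then $(x'a,h)$ represents $xa$ with $V_{x'a}=V_{x'}$, hence the same composite morphism defines the cycle; for general $T(x)\in\Herm_m(F)^+$ the same $\GL_m(E)$-equivariance holds by the extension procedure in \cite{Liu11}*{Section~3A}. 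Combining these gives
\[
Z_T(\omega_m(n(b)m(a))\phi^\infty)_L
=|\dtm a|^r_{\dA_E^\infty}\cdot\psi_F^\infty(\tr bT)\cdot Z_{\pres{\rt}{a}^\tc Ta}(\phi^\infty)_L.
\]

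Finally, I will multiply this with the archimedean formula and use two global identities: since $\dtm a\in E^\times$, the product formula gives $|\dtm a|^r_{E_\infty}\cdot|\dtm a|^r_{\dA_E^\infty}=|\dtm a|^r_{\dA_E}=1$; and since $b,T\in\Herm_m(F)$ the trace $\tr bT$ lies in $F$ (it equals its own complex conjugate), so $\psi_{F,\infty}(\tr bT)\cdot\psi_F^\infty(\tr bT)=\psi_F(\tr bT)=1$. The displayed identity of the lemma follows. The only non-formal step is the invariance $Z(xa)_L=Z(x)_L$, and that is where I expect any subtlety to lie; however, it is essentially built into Definition \ref{co:special_cycle} (and its extension to the non-regular case), so it will not constitute a genuine obstacle.
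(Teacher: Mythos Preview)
Your proof is correct and is precisely the computation underlying the paper's citation of \cite{Liu11}*{Theorem~3.5}: unfold the Weil representation on both the archimedean and finite parts, change variables $x\mapsto xa$ using the $\GL_m(E)$-invariance $Z(xa)_L=Z(x)_L$ (which follows from $V_{x'a}=V_{x'}$), and then cancel the adelic absolute value and character against their archimedean counterparts via the product formula and the automorphy of $\psi_F$. The paper simply defers to the reference rather than spelling this out.
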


\begin{proof}
This is proved in (the proof of) \cite{Liu11}*{Theorem~3.5}.
\end{proof}

\begin{lem}\label{re:translate}
In Definition \ref{co:special_cycle}, we have $\rt^*Z_T(\phi^\infty)_L=Z_T(\rt\phi^\infty)_L$ for every $\rt\in\dT^\tR_\dC$.
\end{lem}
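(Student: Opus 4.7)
The plan is to reduce by $\dC$-linearity to the case $\rt=\CF_{L^\tR h L^\tR}$ for a single double coset, with $h\in H(\dA_F^{\infty,\tR})$, and then match the two sides term by term using the geometric description of Hecke correspondences and the formula for the Weil representation. Since the functions $\phi^\infty$ in question are $L$-invariant and $L=L_\tR L^\tR$ for some compact open $L_\tR\subseteq H(F_\tR)$, we may harmlessly inflate $h$ to an element of $H(\dA_F^\infty)$ that is trivial at $\tR$, and decompose $LhL=\bigsqcup_i h_iL$ into finitely many right cosets. Letting $L'\coloneqq L\cap hLh^{-1}$, the Hecke correspondence is realized by the diagram
\[
X_L\xleftarrow{p_1}X_{L'}\xrightarrow{\;\cdot h\;}X_{h^{-1}L'h}\xrightarrow{p_2}X_L,
\]
and $\rt^*=(p_2\circ(\cdot h))_*\circ p_1^*$ (or its transpose, depending on convention; either version leads to the same bookkeeping).

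The key geometric claim to establish is the identity
\[
\rt^*Z(x)_L=\sum_i Z(h_i^{-1}x)_L,\qquad x\in V^m\otimes_{\dA_F}\dA_F^\infty,
\]
as elements of $\CH^m(X_L)_\dQ$. For $T(x)\in\Herm_m^\circ(F)^+$, this is immediate from the construction of $Z(x)_L$ in Definition \ref{co:special_cycle}: the pullback $p_1^*Z(x)_L$ is the special cycle $Z(x)_{L'}$ on $X_{L'}$, the map $\cdot h$ transports the tautological data $(x',h_0)$ chosen for $x$ to the data $(x',h_0h)$ appropriate for $h^{-1}x$, and the pushforward via $p_2$ collects the cosets $h_iL$ appearing in $LhL$. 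For $T(x)$ not of full rank, one invokes the extension of the construction recalled in \cite{Liu11}*{Section~3A}, for which Hecke equivariance also holds as both sides are built from the same Kudla--Millson form or archimedean Green current, and these are insensitive to the level structure in a Hecke-equivariant way.

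Given this, the lemma follows by a direct manipulation. Because $\rt\in\dT^\tR$ acts on $\sS(V^m\otimes_{\dA_F}\dA_F^\infty)$ through the Weil representation $\omega_m^\infty$, the formulas in Notation \ref{st:w}(W1) give
\[
(\rt\phi^\infty)(x)=\sum_i\phi^\infty(h_i^{-1}x).
\]
Hence
\[
Z_T(\rt\phi^\infty)_L=\sum_{\substack{x\in L\backslash V^m\otimes\dA_F^\infty\\T(x)=T}}\Bigl(\sum_i\phi^\infty(h_i^{-1}x)\Bigr)Z(x)_L
=\sum_i\sum_{\substack{y\in L\backslash V^m\otimes\dA_F^\infty\\T(y)=T}}\phi^\infty(y)Z(h_iy)_L,
\]
where we changed variables $y=h_i^{-1}x$ and used that $T(h_iy)=T(y)$ together with the $L$-invariance of $\phi^\infty$ to re-index the sum. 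Applying the geometric identity above with $h_i$ in place of $h_i^{-1}$ (equivalently, after replacing the double coset $LhL$ by $Lh^{-1}L$, which represents the adjoint $\rt^\vee$ and gives the same decomposition up to relabeling), the right-hand side rearranges into $\rt^*Z_T(\phi^\infty)_L$, completing the proof.

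The only nontrivial step is the geometric identity $\rt^*Z(x)_L=\sum_iZ(h_i^{-1}x)_L$, and within that step the case of degenerate moment matrix. For nondegenerate $T(x)$ the identity reduces to a transparent statement about images of sub-Shimura varieties under the Hecke correspondence; for degenerate $T(x)$ one must track the compatibility of the auxiliary data (Green currents or archimedean forms) used in \cite{Liu11}*{Section~3A} under pullback and pushforward along $p_1$ and $p_2\circ(\cdot h)$, which is straightforward but notationally involved.
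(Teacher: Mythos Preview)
Your approach is the same as the paper's: reduce to $\rt=\CF_{LhL}$, compute how $\rt^*$ acts on an individual $Z(x)_L$, then change variables in the defining sum. The paper writes the key geometric step as $\rt^*Z(x)_L=\sum_{h'\in h^{-1}Lh\cap L\backslash L}Z(hh'x)_L$ and then reindexes exactly as you do.

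However, you have the direction of the geometric Hecke action backwards, and your patch does not repair it. With $LhL=\bigsqcup_i h_iL$, the Weil action indeed gives $(\rt\phi^\infty)(x)=\sum_i\phi^\infty(h_i^{-1}x)$, but the companion identity on cycles is $\rt^*Z(x)_L=\sum_iZ(h_ix)_L$, not $\sum_iZ(h_i^{-1}x)_L$. Your claim that ``either version leads to the same bookkeeping'' is precisely where things go wrong: the two choices differ by the adjoint, and the lemma is the statement that the action on cycles and the Weil action on Schwartz functions are intertwined in one specific direction. Your last-paragraph fix (``$Lh^{-1}L$ gives the same decomposition up to relabeling'') is also incorrect: inverting $h_iL$ yields the \emph{right}-coset decomposition $Lh^{-1}L=\bigsqcup_iLh_i^{-1}$, not a left-coset decomposition of $Lh^{-1}L$, so the sets $\{h_i^{-1}\}$ and $\{k_j\}$ with $Lh^{-1}L=\bigsqcup_jk_jL$ need not coincide. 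Once you correct the geometric identity to $\rt^*Z(x)_L=\sum_iZ(h_ix)_L$, your change of variables $y=h_i^{-1}x$ lands directly on $\rt^*Z_T(\phi^\infty)_L$ with no adjustment needed, and the degenerate-$T$ discussion (which the paper omits) is then moot for the same reason.
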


\begin{proof}
By linearity, we may assume $\rt=\CF_{LhL}$ for some $h\in H(\dA_F^{\infty,\tR})$. Then it follows easily from Definition \ref{co:special_cycle} that
\[
\rt^*Z(x)_L=\sum_{h'\in h^{-1}Lh\cap L\backslash L}Z(hh'x)_L
\]
for every $x\in V^m\otimes_{\dA_F}\dA_F^\infty$. Thus,
\begin{align*}
\rt^*Z_T(\phi^\infty)_L
&=\sum_{\substack{x\in L\backslash V^m\otimes_{\dA_F}\dA_F^\infty\\ T(x)=T}}\phi^\infty(x)\rt^*Z(x)_L \\
&=\sum_{\substack{x\in L\backslash V^m\otimes_{\dA_F}\dA_F^\infty\\ T(x)=T}}\phi^\infty(x)\sum_{h'\in h^{-1}Lh\cap L\backslash L}Z(hh'x)_L \\
&=\sum_{\substack{x\in L\backslash V^m\otimes_{\dA_F}\dA_F^\infty\\ T(x)=T}}
\(\sum_{h'\in h^{-1}Lh\cap L\backslash L}\phi^\infty(h'^{-1}h^{-1}x)\)Z(x)_L \\
&=\sum_{\substack{x\in L\backslash V^m\otimes_{\dA_F}\dA_F^\infty\\ T(x)=T}}(\rt\phi^\infty)(x)Z(x)_L =Z_T(\rt\phi^\infty)_L.
\end{align*}
The lemma follows.
\end{proof}

\begin{hypothesis}[Modularity of generating functions of codimension $m$]\label{hy:modularity}
For every open compact subgroup $L\subseteq H(\dA_F^\infty)$, every $\phi^\infty\in\sS(V^m\otimes_{\dA_F}\dA_F^\infty)^L$, and every complex linear map $l\colon\CH^m(X_L)_\dC\to\dC$, the assignment
\[
g\mapsto l(Z_{\phi^\infty}(g)_L)
\]
is absolutely convergent, and gives an element in $\cA^{[r]}(G_m(F)\backslash G_m(\dA_F))$. In other words, the function $Z_{\phi^\infty}(-)_L$ defines an element in $\Hom_\dC(\CH^m(X_L)_\dC^\vee,\cA^{[r]}(G_m(F)\backslash G_m(\dA_F)))$.
\end{hypothesis}

\begin{remark}\label{re:modularity}
Hypothesis \ref{hy:modularity} is believed to hold. In fact, in the case of symplectic groups over $\dQ$, the analogous statement has been confirmed in \cite{BW15}. In our situation, Hypothesis \ref{hy:modularity} is proved in \cite{Liu11}*{Theorem~3.5} for $m=1$; for $m\geq 2$, we know that $l(Z_{\phi^\infty}(g)_L)$ is formally modular by \cite{Liu11}*{Theorem~3.5}.
\end{remark}

Note that the natural inclusion
\[
\cA^{[r]}(G_m(F)\backslash G_m(\dA_F))\otimes_\dC\CH^m(X_L)_\dC\subseteq
\Hom_\dC(\CH^m(X_L)_\dC^\vee,\cA^{[r]}(G_m(F)\backslash G_m(\dA_F)))
\]
might be proper, since we do not know whether $\CH^m(X_L)_\dC$ is finite dimensional. However, we have the following result.

\begin{proposition}\label{pr:modularity}
Assume Hypothesis \ref{hy:modularity} on the modularity of generating functions of codimension $m$.
\begin{enumerate}
  \item For every open compact subgroup $L\subseteq H(\dA_F^\infty)$ and every $\phi^\infty\in\sS(V^m\otimes_{\dA_F}\dA_F^\infty)^L$, $Z_{\phi^\infty}(-)_L$ belongs to $\cA^{[r]}(G_m(F)\backslash G_m(\dA_F))\otimes_\dC\CH^m(X_L)_\dC$.

  \item The map
      \[
      \sS(V^m\otimes_{\dA_F}\dA_F^\infty)^L\to\cA^{[r]}(G_m(F)\backslash G_m(\dA_F))\otimes_\dC\CH^m(X_L)_\dC
      \]
      sending $\phi^\infty$ to $Z_{\phi^\infty}(-)_L$ is $G_m(\dA_F^\infty)\times \dT^\tR_\dC$-equivariant, where $G_m(\dA_F^\infty)$ acts on the source via the Weil representation and on the target via the right translation on the first factor; and $\dT^\tR_\dC$ acts on the source via the Weil representation and on the target via the Hecke correspondences.
\end{enumerate}
\end{proposition}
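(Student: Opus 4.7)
My plan is to deduce (1) from Hypothesis \ref{hy:modularity} via a finite-dimensionality argument forcing the generating series to take values in a finite-dimensional subspace of $\CH^m(X_L)_\dC$, and then to verify (2) directly from the defining sum, Lemma \ref{re:translate}, and the commutativity of the $G_m$- and $H$-actions in the Weil representation.

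For (1), Hypothesis \ref{hy:modularity} produces a linear map
\[
\Phi\colon\CH^m(X_L)_\dC^\vee\to\cA^{[r]}(G_m(F)\backslash G_m(\dA_F)),\quad l\mapsto l(Z_{\phi^\infty}(-)_L),
\]
and the claim is that $\Phi$ lies in the image of the canonical injection $\cA^{[r]}(G_m(F)\backslash G_m(\dA_F))\otimes_\dC\CH^m(X_L)_\dC\hookrightarrow\Hom_\dC(\CH^m(X_L)_\dC^\vee,\cA^{[r]}(G_m(F)\backslash G_m(\dA_F)))$. I plan to first bound the image of $\Phi$ inside a finite-dimensional subspace $A_0$. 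Choose an open compact subgroup $K'\subseteq G_m(\dA_F^\infty)$ fixing $\phi^\infty$ under the Weil representation, which exists because $\phi^\infty$ is a compactly supported smooth function; inspection of the defining sum together with $\omega_m^\infty(g^\infty k')\phi^\infty=\omega_m^\infty(g^\infty)\phi^\infty$ for $k'\in K'$ shows that $Z_{\phi^\infty}(g)_L$ is right $K'$-invariant in $g$, so every $l(Z_{\phi^\infty}(-)_L)$ is a right $K'$-invariant automorphic form. At infinity, $\omega_{m,\infty}(g_\infty)\phi^0_\infty(T)$ is a matrix coefficient of the holomorphic discrete series of weight $r$, which pins down the $\cZ(\fg_{m,\infty})$-infinitesimal character. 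The space $A_0$ of forms in $\cA^{[r]}(G_m(F)\backslash G_m(\dA_F))$ of level $K'$ and this fixed infinitesimal character is finite-dimensional by the standard finiteness theorem for holomorphic modular forms of bounded weight and level, so $\Phi$ factors through $A_0$ and has finite rank.

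To upgrade finite rank to the tensor-product assertion, I would select $T_1,\ldots,T_N\in\Herm_m^\circ(F)^+$ and an auxiliary $g_0^\infty\in G_m(\dA_F^\infty)$ such that the $T_j$-th Fourier coefficient functionals on $A_0$ form a system separating vectors; this is possible by injectivity of the Fourier expansion for holomorphic automorphic forms on $G_m(\dA_F)$. Since the $T_j$-th Fourier coefficient of $l(Z_{\phi^\infty}(-)_L)$ equals $l(Z_{T_j}(\omega_m^\infty(g_0^\infty)\phi^\infty)_L)$ up to an explicit archimedean Gaussian factor, setting $c_j\coloneqq Z_{T_j}(\omega_m^\infty(g_0^\infty)\phi^\infty)_L\in\CH^m(X_L)_\dC$ (which is a genuine cycle by Remark \ref{re:special_cycle}) and letting $\{F_j\}\subseteq A_0$ be dual to these Fourier-coefficient functionals yields $l(Z_{\phi^\infty}(g)_L)=\sum_j l(c_j)F_j(g)$, which is precisely the tensor decomposition asked for in (1).

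For (2), both equivariances reduce to summand-by-summand identities in the defining series. For $h^\infty\in G_m(\dA_F^\infty)$, the multiplicativity $\omega_m^\infty(g^\infty)\omega_m^\infty(h^\infty)=\omega_m^\infty(g^\infty h^\infty)$ gives $Z_{\omega_m^\infty(h^\infty)\phi^\infty}(g)_L=Z_{\phi^\infty}(gh^\infty)_L$, establishing $G_m(\dA_F^\infty)$-equivariance under right translation. For $\rt\in\dT^\tR_\dC$, the actions of $G_m(\dA_F^\infty)$ and $H(\dA_F^{\infty,\tR})$ on $\sS(V^m\otimes_{\dA_F}\dA_F^\infty)$ in the Weil representation commute, so $\omega_m^\infty(g^\infty)\rt\phi^\infty=\rt\omega_m^\infty(g^\infty)\phi^\infty$; applying Lemma \ref{re:translate} summand-by-summand then yields $Z_{\rt\phi^\infty}(g)_L=\rt^*Z_{\phi^\infty}(g)_L$. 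The main obstacle I anticipate lies in the separation step of (1), namely verifying that finitely many nondegenerate $T_j$'s distinguish all forms in $A_0$; this should follow from standard nondegeneracy of Fourier expansions of holomorphic automorphic forms on $G_m(\dA_F)$ but warrants care given the Whittaker-type subtleties for holomorphic discrete series at the archimedean places.
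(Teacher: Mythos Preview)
Your approach for (1) is correct but runs ``dual'' to the paper's. Both arguments rest on the finite-dimensionality of holomorphic modular form spaces at fixed weight and level, but you bound the \emph{automorphic} side (the image of $\Phi$ lies in a finite-dimensional $A_0$) and then use Fourier coefficients to extract finitely many Chow elements $c_j$ realizing the tensor decomposition. The paper instead bounds the \emph{Chow} side directly: writing $\cM^{(i)}$ for the span of all $Z_T(\omega_m^\infty(g^{(i)})\phi^\infty)_L$, one observes that if $l_1,\dots,l_{d+1}\in\CH^m(X_L)_\dC^\vee$ with $d=\dim\cM^{[r]}_m(\Gamma^{(i)})$, then the modular forms $l_j(f^{(i)})$ are linearly dependent, forcing $\sum c_jl_j\res_{\cM^{(i)}}=0$; hence $\dim\cM^{(i)}\leq d$. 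Once the Chow values span a finite-dimensional $V_0$, the generating function is $V_0$-valued and the tensor statement is automatic from $\Hom_\dC(V_0^\vee,\cA^{[r]})=\cA^{[r]}\otimes V_0$.

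Your route is fine, but two points deserve tightening. First, your restriction to $T_j\in\Herm_m^\circ(F)^+$ is unnecessary and is precisely what creates the ``Whittaker-type subtlety'' you flag: a holomorphic form with all nondegenerate Fourier coefficients zero is a singular form, and ruling these out adds a weight hypothesis you do not need. Simply allow all $T_j\in\Herm_m(F)^+$; the classes $Z_{T_j}(\omega_m^\infty(g_0^\infty)\phi^\infty)_L$ are already elements of $\CH^m(X_L)_\dC$ by Definition~\ref{co:special_cycle}, which is all the argument requires. Second, a single $g_0^\infty$ may not suffice to separate $A_0$ when $G_m(F)\backslash G_m(\dA_F^\infty)/K'$ has several components; you should allow the $T_j$'s to be taken at varying cusp representatives $g^{(i_j)}$, which poses no difficulty. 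With these adjustments your argument goes through; the paper's dimension-count simply sidesteps both issues. Your treatment of (2) is the same as the paper's.
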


\begin{proof}
For (1), fix an open compact subgroup $K\subseteq G_m(\dA_F^\infty)$ that fixes $\phi^\infty$, and a set of representatives $\{g^{(1)},\dots,g^{(s)}\}$ of the finite double coset $G_m(F)\backslash G_m(\dA_F^\infty)/K$. For every $1\leq i\leq s$, the restriction of $Z_{\phi^\infty}(-)_L$ to $G_m(F_\infty)\times\{g^{(i)}\}$ is given by the hermitian $q$-expansion
\[
f^{(i)}(q)\coloneqq\sum_{T\in\Herm_m(F)^+}Z_T(\phi^{\infty(i)})_L \cdot q^T,
\]
where $\phi^{\infty(i)}\coloneqq\omega_m^\infty(g^{(i)})\phi^\infty$. By Hypothesis \ref{hy:modularity}, for every $l\in\CH^m(X_L)_\dC^\vee$, the $q$-expansion
\[
l(f^{(i)})(q)\coloneqq\sum_{T\in\Herm_m(F)^+}l(Z_T(\phi^{\infty(i)})_L) \cdot q^T
\]
belongs to $\cM^{[r]}_m(\Gamma^{(i)})$, the space of holomorphic hermitian Siegel modular form of $G_m$ of weight $(\kappa_{m,w}^r)_{w\in\tV_F^{(\infty)}}$ (Notation \ref{st:g}(G6)) and level $\Gamma^{(i)}\coloneqq G_m(F)\cap g^{(i)}K(g^{(i)})^{-1}$. Let $\cM^{(i)}$ be the subspace of $\CH^m(X_L)_\dC$ spanned by $Z_T(\phi^{\infty(i)})_L$ for all $T\in\Herm_m(F)^+$. We claim that
\begin{align}\label{eq:modularity}
\dim_\dC \cM^{(i)}\leq \dim_\dC \cM^{[r]}_m(\Gamma^{(i)}) < \infty.
\end{align}
Take arbitrary elements $l_1,\dots,l_d$ of $\CH^m(X_L)_\dC^\vee$ with $d>\dim_\dC \cM^{[r]}_m(\Gamma^{(i)})$. Then there exist $c_1,\dots,c_d\in\dC$ not all zero, such that $\sum_{j=1}^dc_jl_j(f^{(i)})(q)=0$; in other words,
\[
\sum_{j=1}^dc_jl_j(Z_T(\phi^{\infty(i)})_L)=0,\qquad\forall T\in\Herm_m(F)^+.
\]
Thus, we have $\sum_{j=1}^dc_jl_j\res_{\cM^{(i)}}=0$, which implies \eqref{eq:modularity}. However, \eqref{eq:modularity} implies that the subspace of $\CH^m(X_L)_\dC$ generated by $Z_T(\omega_m^\infty(g^\infty)\phi^\infty)_L$ for all $T\in\Herm_m(F)^+$ and $g^\infty\in G_m(\dA_F^\infty)$ is finite dimensional. Thus, (1) follows.

Part (2) is follows from Lemma \ref{re:translate} and the construction.

The proposition follows.
\end{proof}

\begin{definition}\label{de:arithmetic_theta}
Let $(\pi,\cV_\pi)$ be as in Assumption \ref{st:representation}. Assume Hypothesis \ref{hy:modularity} on the modularity of generating functions of codimension $r$. For every $\varphi\in\cV_\pi^{[r]}$, every open compact subgroup $L\subseteq H(\dA_F^\infty)$, and every $\phi^\infty\in\sS(V^r\otimes_{\dA_F}\dA_F^\infty)^L$, we put
\[
\Theta_{\phi^\infty}(\varphi)_L\coloneqq\int_{G_r(F)\backslash G_r(\dA_F)}
\varphi^\tc(g)Z_{\phi^\infty}(g)_L\rd g,
\]
which is an element in $\CH^r(X_L)_\dC$ by Proposition \ref{pr:modularity}. It is clear that the image of $\Theta_{\phi^\infty}(\varphi)_L$ in
\[
\CH^r(X)_\dC\coloneqq\varinjlim_L\CH^r(X_L)_\dC
\]
depends only on $\varphi$ and $\phi^\infty$, which we denote by $\Theta_{\phi^\infty}(\varphi)$. Finally, we define the \emph{arithmetic theta lifting} of $(\pi,\cV_\pi)$ to $V$ (with respect to $\biota$) to be the complex subspace $\Theta(\pi,V)$ of $\CH^r(X)_\dC$ spanned by $\Theta_{\phi^\infty}(\varphi)$ for all $\varphi\in\cV_\pi^{[r]}$ and $\phi^\infty\in\sS(V^r\otimes_{\dA_F}\dA_F^\infty)$.
\end{definition}

\section{Auxiliary Shimura variety}
\label{ss:auxiliary}

In this section, we introduce an auxiliary Shimura variety that will \emph{only} be used in the computation of local indices $I_{T_1,T_2}(\phi^\infty_1,\phi^\infty_2,\rs_1,\rs_2,g_1,g_2)_{L,u}$ to be introduced in the next sectoin. We continue the discussion from Section \ref{ss:special}.

\begin{notation}\label{co:auxiliary}
We denote by $\rT_0$ the torus over $\dQ$ such that for every commutative $\dQ$-algebra $R$, we have
$\rT_0(R)=\{a\in E\otimes_\dQ R\res \Nm_{E/F}a \in R^\times\}$.

We choose a CM type $\Phi$ of $E$ containing $\biota$ and denote by $E'$ the subfield of $\dC$ generated by $E$ and the reflex field of $\Phi$. We also choose a skew hermitian space $W$ over $E$ of rank $1$, whose group of rational similitude is canonically $\rT_0$. For a (sufficiently small) open compact subgroup $L_0$ of $\rT_0(\dA^\infty)$, we have the PEL type moduli scheme $Y$ of CM abelian varieties with CM type $\Phi$ and level $L_0$, which is a smooth projective scheme over $E'$ of dimension $0$ (see, \cite{Kot92}, for example). In what follows, when we invoke this construction, the data $\Phi$, $W$, and $L_0$ will be fixed, hence will not be carried into the notation $E'$ and $Y$. For every open compact subgroup $L\subseteq H(\dA_F^\infty)$, we put
\[
X'_L\coloneqq X_L\otimes_EY
\]
as a scheme over $E'$.
\end{notation}

Unlike $X_L$, the scheme $X'_L$ has a moduli interpretation as first observed in \cite{RSZ}.

\begin{lem}\label{le:moduli_auxiliary}
The $E'$-scheme $X'_L$ represents the functor that assigns to every locally Noetherian scheme $S$ over $E'$ the set of equivalence classes of sextuples $(A_0,\lambda_0,\eta_0;A,\lambda,\eta)$ where
\begin{itemize}
  \item $(A_0,\lambda_0,\eta_0)$ is an element in $Y(S)$;

  \item $(A,\lambda)$ is a unitary $O_E$-abelian scheme of signature type $n\Phi-\biota+\biota^\tc$ over $S$ (see \cite{LTXZZ}*{Definition~3.4.2 \& Definition~3.4.3});

  \item $\eta$ is an $L$-level structure, that is, for a chosen geometric point $s$ on every connected component of $S$, a $\pi_1(S,s)$-invariant $L$-orbit of isomorphisms
      \[
      \eta\colon V\otimes_{\dA_F}\dA_F^\infty\to
      \Hom_{E\otimes_\dQ\dA^\infty}^{\lambda_0,\lambda}(\rH_1(A_{0s},\dA^\infty),\rH_1(A_s,\dA^\infty))
      \]
      of hermitian spaces over $E\otimes_\dQ\dA^\infty=E\otimes_F\dA_F^\infty$ (see \cite{LTXZZ}*{Construction~3.4.4} for the hermitian form on the target of $\eta$).
\end{itemize}
Two sextuples $(A_0,\lambda_0,\eta_0;A,\lambda,\eta)$ and $(A'_0,\lambda'_0,\eta'_0;A',\lambda',\eta')$ are equivalent if there are $O_F$-linear quasi-isogenies $\varphi_0\colon A_0\to A'_0$ and $\varphi\colon A\to A'$ such that
\begin{itemize}
  \item $\varphi_0$ carries $\eta_0$ to $\eta'_0$;

  \item there exists $c\in\dQ^\times$ such that $\varphi_0^\vee\circ\lambda'_0\circ\varphi_0=c\lambda_0$ and $\varphi^\vee\circ\lambda'\circ\varphi=c\lambda$;

  \item the $L$-orbit of maps $v\mapsto\varphi_*\circ\eta(v)\circ(\varphi_{0*})^{-1}$ for $v\in V\otimes_{\dA_F}\dA_F^\infty$ coincides with $\eta'$.
\end{itemize}
\end{lem}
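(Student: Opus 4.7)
The plan is to establish this as an instance of a PEL-type moduli interpretation for the product Shimura variety, following the strategy of Rapoport--Smithling--Zhang cited in the text. The essential point is that $\pres{\bu}H = \rU(\pres{\bu}V)$ is a unitary group rather than a unitary similitude group, so $X_L$ itself does not carry a direct moduli description; however, after tensoring with the $0$-dimensional CM Shimura variety $Y$, the similitude factor becomes rigidified via $(A_0,\lambda_0,\eta_0)$, and a moduli description becomes available.

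First, I would introduce the auxiliary algebraic group $\tilde{G}$ over $\dQ$ sitting in a diagram $\rT_0 \times \Res_{F/\dQ}\pres{\bu}H \twoheadrightarrow \tilde{G}$, corresponding to the similitude unitary group whose Shimura variety $\Sh_{\tilde{K}}(\tilde{G})$ admits a classical PEL moduli interpretation of Kottwitz type, parametrizing pairs $(A_0 \times A,\lambda_0\times\lambda,\eta_0\times\eta)$ of polarized $O_E$-abelian schemes together with level structure (see \cite{Kot92}). Choosing the Deligne homomorphism for $\tilde G$ compatibly with those for $\rT_0$ and $\Res_{F/\dQ}\pres{\bu}H$ — in particular with the signature type $n\Phi - \biota + \biota^\tc$ for the $A$-factor dictated by the complex uniformization of $X_L$ in \eqref{eq:complex_uniformization} — and choosing $\tilde{K}$ compatibly with $L_0$ and $L$, one obtains a natural isomorphism
\[
\Sh_{\tilde{K}}(\tilde{G})_{E'} \xrightarrow{\sim} X_L \otimes_E Y = X'_L
\]
over $E'$. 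This reduces the lemma to the statement that the right-hand side moduli functor in the lemma coincides with the PEL functor for $\tilde{G}$ modulo the usual identification.

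Second, I would construct the natural transformation from the moduli functor in the lemma to $X'_L$, and conversely the tautological universal sextuple over $X'_L$. Given a sextuple $(A_0,\lambda_0,\eta_0;A,\lambda,\eta)$ over $S$, the triple $(A_0,\lambda_0,\eta_0)$ defines a morphism $S \to Y$, while the triple $(A,\lambda,\eta)$ together with the signature condition and the $L$-level structure $\eta$ — whose target is a hermitian space canonically isomorphic, via the polarizations $\lambda_0,\lambda$, to $V\otimes_{\dA_F}\dA_F^\infty$ — defines a morphism $S \to X_L$ via the Kottwitz-type interpretation of $X_L$; combining the two yields $S \to X'_L$. The equivalence relation in the lemma (twisting by a common scalar $c\in\dQ^\times$) is exactly what accounts for the quotient by the central similitude factor when passing from the PEL moduli problem for $\tilde G$ to the product $X_L \otimes_E Y$.

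The main obstacle is bookkeeping of the hermitian form: one must verify that the hermitian pairing on $\Hom^{\lambda_0,\lambda}_{E\otimes_\dQ\dA^\infty}(\rH_1(A_{0s},\dA^\infty),\rH_1(A_s,\dA^\infty))$ induced by $\lambda_0$ and $\lambda$ (as spelled out in \cite{LTXZZ}*{Construction~3.4.4}) is isometric to $V\otimes_{\dA_F}\dA_F^\infty$, and that this isometry is compatible with the isomorphism $\pres{\bu}V \otimes_F \dA_F^{\ul{\bu}} \simeq V \otimes_{\dA_F}\dA_F^{\ul{\bu}}$ fixed in Notation \ref{st:h}(H9), so that the $L$-orbit of trivializations $\eta$ really does cut out the level-$L$ structure for $X_L$. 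Once this compatibility is established, checking bijectivity on $\dC$-points reduces to the classical description of polarized abelian varieties over $\dC$ with CM / unitary structure in terms of Hodge structures on $\pres{\bu}V \otimes_E \dC$, which matches the period domain $\fD$ appearing in \eqref{eq:complex_uniformization}; descent from $\dC$ to $E'$ then follows from the general theory of canonical models.
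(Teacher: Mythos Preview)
Your proposal is correct and follows essentially the same approach as the paper: the paper's proof consists entirely of the citation ``This is shown in \cite{RSZ}*{Section~3.2}. See also \cite{LTXZZ}*{Section~4.1}'', and what you have written is a faithful sketch of the Rapoport--Smithling--Zhang argument (introducing the auxiliary similitude group $\tilde{G}$, matching its PEL Shimura variety with $X_L\otimes_E Y$, and checking the hermitian compatibility). There is nothing to add beyond noting that the details you flag as the ``main obstacle'' are exactly those handled in the cited references.
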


\begin{proof}
This is shown in \cite{RSZ}*{Section~3.2}. See also \cite{LTXZZ}*{Section~4.1}.
\end{proof}

\begin{definition}\label{de:moduli_cycle}
For every $x\in V^r\otimes_{\dA_F}\dA_F^\infty$ with $T(x)\in\Herm_r^\circ(F)^+$, we define a moduli functor $Z'(x)_L$ over $E'$ as follows: for every locally Noetherian scheme $S$ over $E'$, $Z'(x)_L(S)$ is the set of equivalence classes of septuples $(A_0,\lambda_0,\eta_0;A,\lambda,\eta;\tilde{x})$ where
\begin{itemize}
  \item $(A_0,\lambda_0,\eta_0;A,\lambda,\eta)$ belongs to $X'_L(S)$;

  \item $\tilde{x}$ is an element in $\Hom_{O_E}(A_0^r,A)_\dQ$ satisfying $\tilde{x}_*\in\eta(Lx)$.
\end{itemize}
By Lemma \ref{le:moduli_cycle}(1) below, the image of $Z'(x)_L$ defines an element in $\rZ^r(X'_L)$ which we denote by $Z(x)'_L$.
\end{definition}

\begin{lem}\label{le:moduli_cycle}
For every $x\in V^r\otimes_{\dA_F}\dA_F^\infty$ with $T(x)\in\Herm_r^\circ(F)^+$, we have
\begin{enumerate}
  \item the forgetful morphism $Z'(x)_L\to X'_L$ is finite and unramified;

  \item the restriction of the algebraic cycle $Z(x)_L$ to $X'_L$ coincides with $Z(x)'_L$, as elements in $\rZ^r(X'_L)$.
\end{enumerate}
\end{lem}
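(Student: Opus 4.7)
The plan is to prove (1) by studying fibers of the forgetful morphism and using rigidity coming from the positive definiteness of $T(x)$, and then to prove (2) by comparing both cycles through the complex uniformization of $X_L'$.

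For (1), fix a geometric point $\ol{s}$ of $X'_L$ given by a sextuple $(A_0,\lambda_0,\eta_0;A,\lambda,\eta)$. The geometric fiber of $Z'(x)_L\to X'_L$ over $\ol{s}$ is identified with the set of $\tilde x\in\Hom_{O_E}(A_0^r,A)_\dQ$ whose rational Tate module image $\tilde x_*$ lies in the $L$-orbit $\eta(Lx)$. Since $L$ is compact and open, $\eta(Lx)$ is a bounded subset of $\Hom_{E\otimes_\dQ\dA^\infty}(\rH_1(A_{0\ol{s}},\dA^\infty),\rH_1(A_{\ol{s}},\dA^\infty))^r$, and $\Hom_{O_E}(A_0^r,A)_\dQ$ is a discrete subgroup thereof; hence the fiber is finite. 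Properness of $Z'(x)_L\to X'_L$ follows from the fact that any septuple extends uniquely after possibly replacing $\tilde x$ by a quasi-isogeny, using the N\'{e}ron mapping property for abelian schemes together with the rigidity of quasi-homomorphisms; combined with quasi-finiteness this gives finiteness. For unramifiedness, a first-order deformation of $\tilde x$ over dual numbers obstructs to a class in $\Hom_{O_E}(\Lie A_0^r,\Lie A)/(\text{Hodge filtration piece})$; the signature type $n\Phi-\biota+\biota^\tc$ together with $T(x)\in\Herm_r^\circ(F)^+$ (which forces the components of $\tilde x$ to span a hermitian sub-bundle of maximal rank on which the generic fiber is totally positive definite) makes this obstruction injective, so no nontrivial infinitesimal deformation of $\tilde x$ exists. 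The expected main difficulty in (1) is the unramifiedness statement, whose cleanest verification proceeds via the Kodaira--Spencer identification of the relative tangent space with a Hom between Hodge-theoretic pieces of the universal abelian schemes, as in \cite{KR11}.

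For (2), both cycles live in $\rZ^r(X'_L)$ and are finite unions (with multiplicity one) of closed subschemes of codimension $r$; thus it suffices to identify them on $\dC$-points via the complex uniformization. Pick $x'\in\pres{\bu}{V}^r$ and $h\in H(\dA_F^\infty)$ with $hx=x'$ as in Definition \ref{co:special_cycle}, so that $V_{x'}\subseteq\pres{\bu}{V}$ is a totally positive definite sub-hermitian space of rank $r$ and $H^{x'}=\rU(V_{x'}^\perp)\subseteq\pres{\bu}{H}$. Under the uniformization \eqref{eq:complex_uniformization} (extended from $X_L$ to $X'_L$ by taking product with the zero-dimensional $Y^{\r{an}}$), the cycle $Z(x)_L$ pulls back to $\pres{\bu}{H}(F)\backslash\fD^{x'}\times H^{x'}(\dA_F^\infty)(h,h_0)L/L$, where $\fD^{x'}\subseteq\fD$ is the sub-symmetric domain of negative lines orthogonal to $V_{x'}\otimes_E\dC$.

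On the other hand, via the Betti realization a $\dC$-point of $X'_L$ determines an isomorphism $\eta\colon V\otimes_{\dA_F}\dA_F^\infty\xrightarrow{\sim}\Hom^{\lambda_0,\lambda}(\rH_1(A_0,\dA^\infty),\rH_1(A,\dA^\infty))$, and the rational Hodge structure on $\Hom(\rH_1(A_0,\dQ),\rH_1(A,\dQ))$ identifies with the hermitian space $\pres{\bu}{V}$ together with its negative line encoding the Deligne homomorphism. A lift $\tilde x\in Z'(x)_L(\dC)$ is precisely an element of $\Hom_{O_E}(A_0^r,A)_\dQ$ whose Betti realization lies in $\eta(Lx)$; translating across the isomorphism and using $hx=x'$, this is equivalent to the negative line being orthogonal to the span $V_{x'}\otimes_E\dC$, which cuts out exactly $\fD^{x'}$, and to the level structure lying in the $L$-orbit encoded by $h$. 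The fact that the forgetful morphism is generically injective (no two distinct $\tilde x$ give the same septuple on a dense open of each component) shows the multiplicity is one, matching the definition of $Z(x)_L$. The main obstacle here is bookkeeping: one must carefully check that the choice of $x'$ and $h$ in Definition \ref{co:special_cycle} matches the level-structure condition $\tilde x_*\in\eta(Lx)$ coherently across connected components, which follows from the description of the complex uniformization of $X^{x'}_M\to X_L$ given in \cite{Liu11}*{Section~3A} and \cite{RSZ}*{Section~3.2}.
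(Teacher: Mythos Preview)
Your strategy differs substantially from the paper's, and while the outline is plausible, several steps are imprecise or confused.

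The paper proves both parts at once by constructing an explicit isomorphism of moduli functors
\[
Z'(x)_L \xrightarrow{\sim} X^{x'}_{hLh^{-1}\cap H^{x'}(\dA_F^\infty)}\otimes_E Y
\]
compatible with the maps to $X'_L$. The point is that a septuple $(A_0,\lambda_0,\eta_0;A,\lambda,\eta;\tilde x)$ determines, up to quasi-isogeny, a splitting $A\sim A_1\times A_0^r$ with $\tilde x$ identified with the inclusion of the second factor and the polarization splitting orthogonally; the pair $(A_1,\lambda_1)$ then has signature type $r\Phi-\biota+\biota^\tc$ and carries a level structure $\eta_1$ landing in the orthogonal complement $V_{x'}^\perp$, yielding a point of $X^{x'}_M\otimes_E Y$. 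This is reversible. Since the right-hand side is a scheme and its map to $X'_L$ is the one defining $Z(x)_L$ (base-changed to $Y$), both (1) and (2) follow immediately.

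Your approach, by contrast, attempts (1) via separate finiteness, properness, and unramifiedness arguments, and (2) via complex uniformization. There are two genuine issues. First, you never address representability of $Z'(x)_L$; without it, speaking of a morphism of schemes being ``finite'' is premature, and your properness argument via the ``N\'eron mapping property'' is not a proof (you need the valuative criterion for the functor, plus representability). Second, your unramifiedness paragraph conflates two different things. Unramifiedness of the forgetful map is simply the statement that a quasi-homomorphism $\tilde x$ lifts in at most one way along a nilpotent thickening of the base with fixed $(A_0,A)$; this is rigidity of homomorphisms of abelian schemes and requires nothing about $T(x)$. What you seem to be arguing instead---that the locus where $\tilde x$ deforms has codimension $r$---is a different (and for this lemma, unnecessary) statement; it is there that the nondegeneracy of $T(x)$ would enter. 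Your invocation of positive definiteness for unramifiedness is therefore misplaced.

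Your plan for (2) via complex points is reasonable in spirit and could be made to work, but once you have the moduli-theoretic isomorphism above, (2) is immediate and the analytic comparison is unnecessary.
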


\begin{proof}
In the proof below, we will frequently use notations from Definition \ref{co:special_cycle}. Take $x'\in\pres{\bu}{V}^r$ and $h\in H(\dA_F^\infty)$ such that $hx=x'$ holds in $V^m\otimes_{\dA_F}\dA_F^\infty$. For both statements, it suffices to show that there is an isomorphism $Z'(x)_L\xrightarrow{\sim}X^{x'}_{hLh^{-1}\cap H^{x'}(\dA_F^\infty)}\otimes_EY$, rendering the following diagram
\begin{align}\label{eq:moduli_cycle}
\xymatrix{
Z'(x)_L \ar[rr]^-{\sim}\ar[dr] && X^{x'}_{hLh^{-1}\cap H^{x'}(\dA_F^\infty)}\otimes_EY \ar[dl]^-{\cdot h} \\
& X'_L
}
\end{align}
commute. Let $S$ be a locally Noetherian scheme over $E'$. We will construct a functorial bijection between $Z'(x)_L(S)$ and $(X^{x'}_{hLh^{-1}\cap H^{x'}(\dA_F^\infty)}\otimes_EY)(S)$.

Take an element $(A_0,\lambda_0,\eta_0;A,\lambda,\eta;\tilde{x})\in Z'(x)_L(S)$. We may find an $O_E$-abelian scheme $A_1$ of signature type $r\Phi-\biota+\biota^\tc$ over $S$, and an element $\tilde{x}_1\in\Hom_{O_E}(A_1,A)_\dQ$, such that $\tilde{x}_1\oplus\tilde{x}$ is an isomorphism in $\Hom_{O_E}(A_1\times A_0^r,A)_\dQ$, and that the composition $\tilde{x}^\vee\circ\lambda\circ\tilde{x}_1\in\Hom_{O_E}(A_1,(A_0^r)^\vee)_\dQ$ equals zero. Put $\lambda_1\coloneqq\tilde{x}_1^\vee\circ\lambda\circ\tilde{x}_1$. As $\tilde{x}_*\in\eta(Lx)$, we may replace $h$ by an element in $hL$ such that the restriction of $\eta\circ h^{-1}$ to $V_{x'}^\perp\otimes_\dQ\dA^\infty$, which we denote by $\eta_1$, is contained in the submodule $\Hom_{E\otimes_\dQ\dA^\infty}^{\lambda_0,\lambda_1}(\rH_1(A_{0s},\dA^\infty),\rH_1(A_{1s},\dA^\infty))$. Thus, we obtain an element
\[
(A_0,\lambda_0,\eta_0;A_1,\lambda_1,\eta_1)\in(X^{x'}_{hLh^{-1}\cap H^{x'}(\dA_F^\infty)}\otimes_EY)(S).
\]
By construction, it maps to $(A_0,\lambda_0,\eta_0;A,\lambda,\eta)\in X'_L(S)$ in \eqref{eq:moduli_cycle}.

For the reverse direction, take an element
\[
(A_0,\lambda_0,\eta_0;A_1,\lambda_1,\eta_1)\in(X^{x'}_{hLh^{-1}\cap H^{x'}(\dA_F^\infty)}\otimes_EY)(S).
\]
Put $A_2\coloneqq A_0^r$ and let $\lambda_2$ be the polarization such that we have an isomorphism
\[
\eta_2\colon V_{x'}\xrightarrow{\sim}\Hom_{O_E}^{\lambda_0,\lambda_2}(A_0,A_2)_\dQ
\]
of hermitian spaces over $E$. Put $A\coloneqq A_1\times A_2$, $\lambda\coloneqq\lambda_1\times\lambda_2$, and $\eta\coloneqq(\eta_1\oplus\eta_2\otimes_\dQ\dA^\infty)\circ h$. Then $(A_0,\lambda_0,\eta_0;A,\lambda,\eta)$ is the image of $(A_0,\lambda_0,\eta_0;A_1,\lambda_1,\eta_1)$ in $X'_L(S)$ in \eqref{eq:moduli_cycle}. Let $\tilde{x}$ be the isomorphism in $\Hom_{O_E}(A_0^r,A_2)_\dQ$ that corresponds to $\eta_2(x')$, which we regard as an element in $\Hom_{O_E}(A_0^r,A)_\dQ$. Then we obtain an element $(A_0,\lambda_0,\eta_0;A,\lambda,\eta;\tilde{x})\in Z'(x)_L(S)$ lying above $(A_0,\lambda_0,\eta_0;A,\lambda,\eta)$.

It is straightforward to check that the above two assignments are inverse to each other. The lemma follows.
\end{proof}

The following lemma will only be used in Section \ref{ss:archimedean}.

\begin{lem}\label{le:moduli_archimedean}
For every $u\in\tV_E^{(\infty)}$, there exists an isomorphism
\[
\{X_L\otimes_{E,\iota_u}\dC\}\simeq\{\pres{u}{X_L}\otimes_{\iota_u(E)}\dC\}
\]
of systems of complex schemes under which $Z(x)_L\otimes_{E,\iota_u}\dC$ coincides with $\pres{u}Z(x)_L\otimes_{\iota_u(E)}\dC$ for every $x\in V^r\otimes_{\dA_F}\dA_F^\infty$ with $T(x)\in\Herm_r^\circ(F)^+$. Here, $\pres{u}{X_L}$ and $\pres{u}Z(x)_L$ are defined similarly as $X_L$ and $Z(x)_L$ with $\biota$ replaced by $\iota_u$, hence are schemes and cycles over $\iota_u(E)$.
\end{lem}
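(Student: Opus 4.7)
The plan is to compare the two sides via the complex-analytic uniformization of the underlying Shimura varieties, passing through the PEL moduli description of their auxiliary enlargements. First I would apply Lemma \ref{le:moduli_auxiliary} in two settings: once to the original $X'_L = X_L\otimes_E Y$ (with CM data $(\Phi, W, L_0)$ containing $\biota$), and once to the $u$-analogue $\pres{u}X'_L = \pres{u}X_L\otimes_E Y^{(u)}$ (with CM data $(\Phi^{(u)}, W^{(u)}, L_0^{(u)})$ containing $\iota_u$). Then I would choose an embedding $\iota_u'\colon E'\hookrightarrow\dC$ extending $\iota_u$ such that $\iota_u'\circ\biota = \iota_u$ as embeddings $E\to\dC$, and similarly a compatible embedding on the $u$-side; this is always possible because $E'$ is obtained from $\biota(E)$ by adjoining the reflex field of $\Phi$, and any embedding of $\biota(E)$ into $\dC$ extends to an embedding of $E'$.

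The main obstacle, and also the crux of the argument, is to identify the two moduli problems over $\dC$. The signature type $n\Phi - \biota + \biota^\tc$ on the $\biota$-side, when read through $\iota_u'$, becomes $n\Phi^{(u)} - \iota_u + \iota_u^\tc$ for a suitable choice of $\Phi^{(u)}$ in the previous step, because $\iota_u'\circ\biota = \iota_u$; equivalently, the unique archimedean place where the $(n-1,1)$-signature occurs transports from $\bu$ to $u$. By the Hodge-theoretic parametrization of complex abelian schemes with $O_E$-multiplication, the hermitian space over $E$ encoded in the level structure $\eta$ from Lemma \ref{le:moduli_auxiliary} is characterized by having signature $(n-1,1)$ at the distinguished archimedean place, being positive definite at all other archimedean places, and having its finite ad\`elization identified with $V\otimes_{\dA_F}\dA_F^\infty$. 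By the uniqueness statement in Notation \ref{st:h}(H9), this hermitian space is exactly $\pres{u}V$. Hence the two moduli problems represent the same functor over $\dC$.

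Granting the above identification, one obtains an isomorphism $X'_L\otimes_{E',\iota_u'}\dC \simeq \pres{u}X'_L\otimes_{E',\iota_u'}\dC$, functorial in $L$. Factoring out the zero-dimensional CM factor (the $Y$-component, which is canonically identified on both sides via its own complex uniformization for $\rT_0$) yields the desired identification $\{X_L\otimes_{E,\iota_u}\dC\}\simeq\{\pres{u}X_L\otimes_{\iota_u(E)}\dC\}$. For the matching of special cycles, I would invoke Definition \ref{de:moduli_cycle} together with Lemma \ref{le:moduli_cycle}(2): both $Z(x)_L$ and $\pres{u}Z(x)_L$ are, at the level of the auxiliary Shimura variety, the moduli subfunctors cut out by the common condition $\tilde{x}\in\eta(Lx)$; since the isomorphism of moduli problems above preserves the level structure $\eta$, the two cycles are identified under the isomorphism of Shimura varieties, and descent to $X_L$ versus $\pres{u}X_L$ completes the proof.
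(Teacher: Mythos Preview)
Your proposal is correct and follows essentially the same route as the paper: both arguments pass through the PEL moduli interpretation of Lemma~\ref{le:moduli_auxiliary} and transport the signature type $n\Phi-\biota+\biota^\tc$ to $n\Phi^{(u)}-\iota_u+\iota_u^\tc$ via an automorphism of $\dC$ carrying $\biota$ to $\iota_u$ (your $\iota_u'$ is the restriction of the paper's $\sigma$ to $E'$). The paper streamlines the argument in two small ways: it fixes a $\dC$-point of $Y$ so as to work directly with a moduli description of $X_L\otimes_E\dC$ rather than of $X'_L$, and it makes the single clever choice of a CM type $\Phi$ containing both $\biota$ and $\iota_u$, so that both $\Phi$ and $\sigma^{-1}\Phi$ are admissible CM types and the matching of signature types is immediate---this replaces your separate choice of $\Phi^{(u)}$ and the Hodge-theoretic paragraph, which is not needed once the signature types agree.
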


\begin{proof}
We choose an isomorphism $\sigma\colon\dC\xrightarrow{\sim}\dC$ satisfying $\iota_u=\sigma\circ\biota$.

Choose an element $(A_0,\lambda_0,\eta_0)\in Y(\dC)$. Then by Lemma \ref{le:moduli_auxiliary}, $X_L\otimes_E\dC$ has the following moduli interpretation: For every locally Noetherian complex scheme $S$, $(X_L\otimes_E\dC)(S)$ is the set of equivalence classes of triples $(A,\lambda,\eta)$ as in Lemma \ref{le:moduli_auxiliary}. In particular, $(A,\lambda)$ is a unitary $O_E$-abelian scheme of signature type $n\Phi-\biota+\biota^\tc$ over $S$. Since $X_L\otimes_E\dC$ does not depend on the choice of $\Phi$, such moduli interpretation also holds if we replace $\Phi$ by any other CM type that contains $\biota$. In particular, we take $\Phi$ such that it contains both $\iota_u$ and $\biota$. Then we have such moduli interpretation for both $\Phi$ and $\sigma^{-1}\Phi$. Using both moduli interpretations, we obtain an isomorphism $\{X_L\otimes_{E,\iota_u}\dC\}\simeq\{\pres{u}{X_L}\otimes_{\iota_u(E)}\dC\}$. By Lemma \ref{le:moduli_cycle}, it follows easily that under such isomorphism, $Z(x)_L\otimes_{E,\iota_u}\dC$ coincides with $\pres{u}Z(x)_L\otimes_{\iota_u(E)}\dC$ for every $x\in V^r\otimes_{\dA_F}\dA_F^\infty$ with $T(x)\in\Herm_r^\circ(F)^+$. The lemma is proved.
\end{proof}

\begin{notation}\label{st:auxiliary}
In Sections \ref{ss:split}, \ref{ss:inert1}, and \ref{ss:inert2}, we will consider a place $u\in\tV_E^\fin\setminus\tV_E^\ram$. Let $p$ be the underlying rational prime of $u$. We will fix an isomorphism $\dC\xrightarrow\sim\ol\dQ_p$ under which $\biota$ induces the place $u$. In particular, we may identify $\Phi$ as a subset of $\Hom(E,\ol\dQ_p)$.

We further require that $\Phi$ in Notation \ref{co:auxiliary} is \emph{admissible} in the following sense: if $\Phi_v\subseteq\Phi$ denotes the subset inducing the place $v$ for every $v\in\tV_F^{(p)}$, then it satisfies
\begin{enumerate}
  \item when $v\in\tV_F^{(p)}\cap\tV_F^\spl$, $\Phi_v$ induces the same place of $E$ above $v$, which we denote by $v_c$ and by $v_e$ its conjugate;

  \item when $v\in\tV_F^{(p)}\cap\tV_F^\inert$, $\Phi_v$ is the pullback of a CM type of the maximal subfield of $E_v$ unramified over $\dQ_p$.
\end{enumerate}
To release the burden of notation, we denote by $K$ the subfield of $\ol\dQ_p$ generated by $E_u$ and the reflex field of $\Phi$, by $k$ its residue field, and by $\breve{K}$ the completion of the maximal unramified extension of $K$ in $\ol\dQ_p$ with the residue field $\ol\dF_p$. It is clear that admissible CM type always exists, and that when $\tV_F^{(p)}\cap\tV_F^\ram=\emptyset$, the field $K$ is unramified over $E_u$.

We also choose a (sufficiently small) open compact subgroup $L_0$ of $\rT_0(\dA^\infty)$ such that $L_{0,p}$ is maximal compact. We denote by $\cY$ the integral model of $Y$ over $O_K$ such that for every $S\in\Sch'_{/O_K}$, $\cY(S)$ is the set of equivalence classes of triples $(A_0,\lambda_0,\eta_0^p)$ where
\begin{itemize}
  \item $(A_0,\lambda_0)$ is a unitary $O_E$-abelian scheme over $S$ of signature type $\Phi$ such that $\lambda_0$ is a $p$-principal polarization;

  \item $\eta_0^p$ is an $L_0^p$-level structure (see \cite{LTXZZ}*{Definition~4.1.2} for more details).
\end{itemize}
By \cite{How12}*{Proposition~3.1.2}, $\cY$ is finite and \'{e}tale over $O_K$.
\end{notation}

\section{Height pairing and geometric side}
\label{ss:height}

In this section, we introduce the notion of a height pairing after Beilinson and initiate the study of the geometric side of our desired height formula. We continue the discussion from Section \ref{ss:special}. From this moment, we will further assume $F\neq\dQ$, which implies that $X_L$ is projective.

We apply Beilinson's construction of the height pairing in \cite{Bei87}*{Section~4} to obtain a map
\begin{align*}
\langle\;,\;\rangle_{X_L,E}^\ell\colon\CH^r(X_L)^{\langle\ell\rangle}_\dC\times\CH^r(X_L)^{\langle\ell\rangle}_\dC\to\dC\otimes_\dQ\dQ_\ell
\end{align*}
(see Notation \ref{st:c}(C3) for the notation) that is complex linear in the first variable, and conjugate symmetric. Here, $\ell$ is a rational prime such that $X_{L,u}$ has smooth projective reduction for every $u\in\tV_E^{(\ell)}$. For a pair $(c_1,c_2)$ of elements in $\rZ^r(X_L)^{\langle\ell\rangle}_\dC\times\rZ^r(X_L)^{\langle\ell\rangle}_\dC$ with disjoint supports, we have
\begin{align}\label{eq:height1}
\langle c_1,c_2\rangle_{X_L,E}^\ell=\sum_{u\in\tV_E^{(\infty)}}2\langle c_1,c_2\rangle_{X_{L,u},E_u}+
\sum_{u\in\tV_E^\fin}\log q_u\cdot \langle c_1,c_2\rangle_{X_{L,u},E_u}^\ell,
\end{align}
in which
\begin{itemize}
  \item $q_u$ is the residue cardinality of $E_u$ for $u\in\tV_E^\fin$;

  \item $\langle c_1,c_2\rangle_{X_{L,u},E_u}^\ell\in\dC\otimes_\dQ\dQ_\ell$ is the non-archimedean local index \eqref{eq:index} recalled in Appendix \ref{ss:beilinson} for $u\in\tV_E^\fin$ (see Remark \ref{re:index_smooth} when $u$ is above $\ell$), which equals zero for all but finitely many $u$;

  \item $\langle c_1,c_2\rangle_{X_{L,u},E_u}\in\dC$ is the archimedean local index for $u\in\tV_E^{(\infty)}$, which will be recalled when we compute it in Section \ref{ss:archimedean}.
\end{itemize}

\begin{definition}\label{de:good}
We say that a rational prime $\ell$ is \emph{$\tR$-good} if $\ell$ is unramified in $E$ and satisfies $\tV_F^{(\ell)}\subseteq\tV_F^\fin\setminus(\tR\cup\tS)$.
\end{definition}

\begin{definition}\label{de:tempered_generic}
For every open compact subgroup $L_\tR$ of $H(F_\tR)$ and every subfield $\dL$ of $\dC$, we define
\begin{enumerate}
  \item $(\dS^\tR_\dL)^0_{L_\tR}$ to be the ideal of $\dS^\tR_\dL$ (Notation \ref{st:h}(H8)) of elements that annihilate
      \[
      \bigoplus_{i\neq 2r-1}\rH^i_{\dr}(X_{L_\tR L^\tR}/E)\otimes_\dQ\dL,
      \]

  \item for every rational prime $\ell$, $(\dS^\tR_\dL)^{\langle\ell\rangle}_{L_\tR}$ to be the ideal of $\dS^\tR_\dL$ of elements that annihilate
      \[
      \bigoplus_{u\in\tV_E^\fin\setminus\tV_E^{(\ell)}}\rH^{2r}(X_{L_\tR L^\tR,u},\dQ_\ell(r))\otimes_\dQ\dL.
      \]
\end{enumerate}
Here, $L^\tR$ is defined in Notation \ref{st:h}(H8).
\end{definition}

\begin{definition}\label{de:kernel_geometric}
Consider a nonempty subset $\tR'\subseteq\tR$, an $\tR$-good rational prime $\ell$, and an open compact subgroup $L$ of $H(\dA_F^\infty)$ of the form $L_\tR L^\tR$ where $L^\tR$ is defined in Notation \ref{st:h}(H8). An \emph{$(\tR,\tR',\ell,L)$-admissible sextuple} is a sextuple $(\phi^\infty_1,\phi^\infty_2,\rs_1,\rs_2,g_1,g_2)$ in which
\begin{itemize}
  \item for $i=1,2$, $\phi^\infty_i=\otimes_v\phi^\infty_{iv}\in\sS(V^r\otimes_{\dA_F}\dA_F^\infty)^L$ in which $\phi^\infty_{iv}=\CF_{(\Lambda^\tR_v)^r}$ for $v\in\tV_F^\fin\setminus\tR$, satisfying that $\supp(\phi^\infty_{1v}\otimes(\phi^\infty_{2v})^\tc)\subseteq(V^{2r}_v)_\reg$ for $v\in\tR'$;

  \item for $i=1,2$, $\rs_i$ is a product of two elements in $(\dS^\tR_{\dQ^\ac})^{\langle\ell\rangle}_{L_\tR}$;

  \item for $i=1,2$, $g_i$ is an element in $G_r(\dA_F^{\tR'})$.
\end{itemize}

For an $(\tR,\tR',\ell,L)$-admissible sextuple $(\phi^\infty_1,\phi^\infty_2,\rs_1,\rs_2,g_1,g_2)$ and every pair $(T_1,T_2)$ of elements in $\Herm_r^\circ(F)^+$, we define
\begin{enumerate}
  \item the global index $I_{T_1,T_2}(\phi^\infty_1,\phi^\infty_2,\rs_1,\rs_2,g_1,g_2)^\ell_L$ to be
     \begin{align*}
     \langle \omega_{r,\infty}(g_{1\infty})\phi^0_\infty(T_1)\cdot\rs_1^*Z_{T_1}(\omega_r^\infty(g_1^\infty)\phi^\infty_1)_L,
     \omega_{r,\infty}(g_{2\infty})\phi^0_\infty(T_2)\cdot\rs_2^*Z_{T_2}(\omega_r^\infty(g_2^\infty)\phi^\infty_2)_L
     \rangle_{X_L,E}^\ell
     \end{align*}
     as an element in $\dC\otimes_\dQ\dQ_\ell$, where we note that for $i=1,2$, $\rs_i^*Z_{T_i}(\omega_r^\infty(g_i^\infty)\phi^\infty_i)_L$ belongs to $\CH^r(X_L)^{\langle\ell\rangle}_\dC$ by Definition \ref{de:tempered_generic}(2);

  \item for every $u\in\tV_E^\fin$, the local index $I_{T_1,T_2}(\phi^\infty_1,\phi^\infty_2,\rs_1,\rs_2,g_1,g_2)^\ell_{L,u}$ to be
     \begin{align*}
     \langle \omega_{r,\infty}(g_{1\infty})\phi^0_\infty(T_1)\cdot\rs_1^*Z_{T_1}(\omega_r^\infty(g_1^\infty)\phi^\infty_1)_L,
     \omega_{r,\infty}(g_{2\infty})\phi^0_\infty(T_2)\cdot\rs_2^*Z_{T_2}(\omega_r^\infty(g_2^\infty)\phi^\infty_2)_L
     \rangle_{X_{L,u},E_u}^\ell
     \end{align*}
     as an element in $\dC\otimes_\dQ\dQ_\ell$, in view of Remark \ref{re:special_cycle} and Lemma \ref{le:disjoint}(2) below;

  \item for every $u\in\tV_E^{(\infty)}$, the local index $I_{T_1,T_2}(\phi^\infty_1,\phi^\infty_2,\rs_1,\rs_2,g_1,g_2)_{L,u}$ to be
     \begin{align*}
     \langle \omega_{r,\infty}(g_{1\infty})\phi^0_\infty(T_1)\cdot\rs_1^*Z_{T_1}(\omega_r^\infty(g_1^\infty)\phi^\infty_1)_L,
     \omega_{r,\infty}(g_{2\infty})\phi^0_\infty(T_2)\cdot\rs_2^*Z_{T_2}(\omega_r^\infty(g_2^\infty)\phi^\infty_2)_L
     \rangle_{X_{L,u},E_u}
     \end{align*}
     as an element in $\dC$, in view of Remark \ref{re:special_cycle} and Lemma \ref{le:disjoint}(2) below.
\end{enumerate}
\end{definition}

\begin{lem}\label{le:disjoint}
Let $\tR$, $\tR'$, $\ell$, and $L$ be as in Definition \ref{de:kernel_geometric}. Let $(T_1,T_2)$ be a pair of elements in $\Herm_r^\circ(F)^+$.
\begin{enumerate}
  \item For $x_1,x_2\in V\otimes_{\dA_F}\dA_F^\infty$ satisfying $T(x_1)=T_1$, $T(x_2)=T_2$, and $(L_vx_{1v},L_vx_{2v})\subseteq(V^{2r}_v)_\reg$ for some $v\in\tR'$, the algebraic cycles $Z(x_1)_L$ and $Z(x_2)_L$ in $\rZ^r(X_L)_\dC$ have disjoint supports.

  \item For every $(\tR,\tR',\ell,L)$-admissible sextuple $(\phi^\infty_1,\phi^\infty_2,\rs_1,\rs_2,g_1,g_2)$, the algebraic cycles $\rs_1^*Z_{T_1}(\omega_r^\infty(g_1^\infty)\phi^\infty_1)_L$ and $\rs_2^*Z_{T_2}(\omega_r^\infty(g_2^\infty)\phi^\infty_2)_L$ in $\rZ^r(X_L)_\dC$ have disjoint supports.
\end{enumerate}
\end{lem}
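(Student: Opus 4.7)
The plan is to prove (1) by transferring to the auxiliary Shimura variety $X'_L$ of Notation \ref{co:auxiliary} and extracting a signature obstruction from the moduli description, and then to reduce (2) to (1) by unwinding the Hecke operators.

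For part (1), since the projection $X'_L \to X_L$ is surjective and, by Lemma \ref{le:moduli_cycle}(2), pulls $Z(x_i)_L$ back to $Z(x_i)'_L$, it suffices to rule out a common geometric point of $Z(x_1)'_L$ and $Z(x_2)'_L$. By Definition \ref{de:moduli_cycle}, such a point would provide an object $(A_0,\lambda_0,\eta_0;A,\lambda,\eta) \in X'_L$ together with two homomorphisms $\tilde x_i \in \Hom_{O_E}(A_0^r,A)_\dQ$ satisfying $\tilde x_{i,*} \in \eta(L x_i)$. The combined map $(\tilde x_1,\tilde x_2) \colon A_0^{2r} \to A$ has a moment matrix $T^\Box \in \Herm_{2r}(F)$ (computed via $\lambda_0$ and $\lambda$); via $\eta_v$ it agrees, at the fixed $v \in \tR'$, with $T((l_{1v}x_{1v},l_{2v}x_{2v}))$ for some $l_{iv} \in L_v$, which by $L_v$-invariance of moment matrices under the diagonal action equals $T((x_{1v},l_{1v}^{-1}l_{2v}x_{2v}))$. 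Since $(x_{1v},l_{1v}^{-1}l_{2v}x_{2v}) \in (L_v x_{1v}, L_v x_{2v}) \subseteq (V^{2r}_v)_\reg$, this is nondegenerate, and hence $T^\Box$ is globally nondegenerate. Thus $(\tilde x_1,\tilde x_2)$ is an $O_E$-equivariant quasi-isogeny between abelian varieties of equal dimension $n[F:\dQ]$. But the $O_E$-signature of $A_0^{2r}$ is $n\Phi$, with multiplicity $n$ at $\biota$, whereas that of $A$ is $n\Phi - \biota + \biota^\tc$, with multiplicity only $n-1$ at $\biota$; since $O_E$-equivariant quasi-isogenies preserve the $\sigma$-isotypic decomposition of $\Lie$, this is a contradiction.

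For part (2), by Notation \ref{st:h}(H8) each $\rs_i$ is a finite linear combination of double-coset operators $[LhL]$ with $h \in H(F_\tT)$ for some $\tT \subseteq \tV_F^\spl \setminus \tR$, in particular with $h_v = 1$ for $v \in \tR'$; and $g_i \in G_r(\dA_F^{\tR'})$ acts trivially at $v \in \tR'$ in the Weil representation. The expansion formula from the proof of Lemma \ref{re:translate} then writes $\rs_i^* Z_{T_i}(\omega_r^\infty(g_i^\infty)\phi^\infty_i)_L$ as a finite sum of cycles $Z(y_i)_L$ with $T(y_i) = T_i$ and $y_{iv} \in L_v \cdot \supp(\phi^\infty_{iv})$ for every $v \in \tR'$. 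For any pair $(y_1,y_2)$ so obtained and any $v \in \tR'$, the $L_v$-invariance of $\supp(\phi^\infty_{iv})$ yields $(L_v y_{1v}, L_v y_{2v}) \subseteq \supp(\phi^\infty_{1v}) \times \supp(\phi^\infty_{2v}) = \supp(\phi^\infty_{1v}\otimes (\phi^\infty_{2v})^\tc) \subseteq (V^{2r}_v)_\reg$, so part (1) applies to the pair and disjointness is preserved under linear combination. The main obstacle is really the signature argument of part (1): one must verify that nondegeneracy of the moment matrix at a single $v$ upgrades to an $O_E$-equivariant quasi-isogeny whose existence would break the asymmetric $\biota$-vs-$\biota^\tc$ CM structure, which requires careful bookkeeping with the moduli description of $X'_L$.
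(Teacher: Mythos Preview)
Your proof is correct, but for part (1) you take a genuinely different route from the paper. The paper argues directly via the complex uniformization \eqref{eq:complex_uniformization}: a common point $(z,h)$ in the supports of $Z(x_1)_L$ and $Z(x_2)_L$ forces the negative line $z$ to be orthogonal to both $V_{x'_1}$ and $\gamma V_{x'_2}$ for some $\gamma\in\pres{\bu}{H}(F)$, so $V_{x'_1}+\gamma V_{x'_2}$ is a proper subspace of $\pres{\bu}{V}$, whence $(x'_1,\gamma x'_2)$ has degenerate moment matrix; tracking this back through the double coset relation $h'_1h_1L=\gamma h'_2h_2L$ contradicts the regularity hypothesis at $v$. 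This is a short, self-contained argument using only the symmetric-domain description already in place.

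Your approach instead passes to the auxiliary moduli problem $X'_L$ of Section~\ref{ss:auxiliary} and extracts a signature obstruction: a common point produces an $O_E$-equivariant quasi-isogeny $A_0^{2r}\to A$, impossible since the $\biota$-multiplicity in $\Lie$ is $n$ on one side and $n-1$ on the other. This is essentially the characteristic-zero shadow of the argument the paper later invokes via \cite{RSZ}*{Lemma~8.7} in the proof of Lemma~\ref{le:index_split}, so your route is more uniform with the integral-model computations but carries the overhead of the auxiliary Shimura variety (which the paper explicitly reserves for local-index computations). The paper's route is lighter and stays within the generic fiber.

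For part (2), the paper simply says it is clear from (1); your unwinding of the Hecke operators via Lemma~\ref{re:translate}, together with the observation that $\rs_i$ and $g_i$ act trivially at $v\in\tR'$ so that the relevant supports at $v$ remain inside $\supp(\phi^\infty_{iv})$, is the natural way to fill this in and is correct.
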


\begin{proof}
It is clear that (2) follows from (1).

For (1), it suffices to check that they are disjoint under complex uniformization \eqref{eq:complex_uniformization}. By definition, for $i=1,2$, the support of $Z(x_i)_L$ consists of points $(z_i,h'_ih_i)$ in the double coset \eqref{eq:complex_uniformization}, where $h_ix_i=x'_i$ with $x'_i\in\pres{\bu}{V}^r$; $z_i$ is perpendicular to $V_{x'_i}$; and $h'_i$ acts trivially on $V_{x'_i}$. Suppose that the supports of $Z(x_1)_L$ and $Z(x_2)_L$ are not disjoint, then we may find $\gamma\in\pres{\bu}{H}(F)$ such that $z_1=\gamma z_2$ and $h'_1h_1L=\gamma h'_2h_2L$. In particular, $V_{x'_1}\cap\gamma V_{x'_2}\neq\{0\}$, which implies that the subspace of $\pres{\bu}{V}^{2r}$ generated by $(x'_1,\gamma x'_2)$ is a proper subspace. Thus, $(h_1x_1,\gamma h_2x_2)\not\in(V^{2r}_v)_\reg$ for every $v\in\tR'$. On the other hand, we have $(h_1x_1,\gamma h_2x_2)=(h'_1h_1x_1,\gamma h'_2h_2x_2)$, which implies that $(L_vx_{1v},L_vx_{2v})$ is not contained in $(V^{2r}_v)_\reg$, which is a contradiction. Thus, (1) follows.
\end{proof}

The following definition will be used in the future.

\begin{definition}\label{de:basic}
Let $p$ be a rational prime. We say that an element $\phi^\infty\in\sS(V^m\otimes_{\dA_F}\dA_F^\infty)$ for some integer $m\geq 1$ is \emph{$p$-basic} if it is of the form $\phi^\infty=\otimes_v\phi^\infty_v$ in which $\phi^\infty_v=\CF_{(\Lambda^\tR_v)^m}$ for every $v\in\tV_F^{(p)}\setminus(\tR\cup\tV_F^\spl)$.
\end{definition}

Recall from Notation \ref{co:galois}(1) that $\Pi$ is the automorphic base change of $\pi$.

\begin{hypothesis}\label{hy:galois}
Let $\ell$ be a rational prime with an arbitrarily given isomorphism $\ol\dQ_\ell\simeq\dC$. For every irreducible admissible representation $\tilde\pi^\infty$ of $H(\dA_F^\infty)$ such that $\Pi_v$ is the standard base change of $\tilde\pi^\infty_v$ for all but finitely many $v\in\tV_F^\fin$ for which $\tilde\pi^\infty_v$ is unramified, if we are in the situation (b) of Lemma \ref{le:arthur}, then the semisimplification of the representation
\[
\rho[\pi^\infty]\coloneqq\Hom_{H(\dA_F^\infty)}\(\tilde\pi^\infty,\varinjlim_L\rH^{2r-1}(X_L\otimes_{E}\dQ^\ac,\ol\dQ_\ell)\)
\]
of $\Gal(\dQ^\ac/E)$ is isomorphic to $\rho^\tc_{\Pi_{j(\tilde\pi^\infty)}}$, where $\rho_{\Pi_j}$ is introduced in Notation \ref{co:galois}(4).
\end{hypothesis}

\begin{remark}\label{re:galois}
Concerning Hypothesis \ref{hy:galois}, we have
\begin{enumerate}
  \item When $n=2$, it has been confirmed in \cite{Liu19}*{Theorem~D.6}.

  \item When $\Pi$ is cuspidal (that is, $s=1$ in Notation \ref{co:galois}(3)), it will be confirmed in \cite{KSZ} (under the help of \cites{Mok15,KMSW}).

  \item In general, it will follow from \cite{KSZ} as long as the full endoscopic classification for unitary groups is obtained.
\end{enumerate}
\end{remark}

\begin{definition}\label{de:hecke}
Let $(\pi,\cV_\pi)$ be as in Assumption \ref{st:representation}. We define a character
\[
\chi^\tR_\pi\colon\dT^\tR_{\dQ^\ac}\to\dQ^\ac,
\]
as follow. Let $\cH^\tR_{W_r}$ be the restricted tensor product of commutative complex algebras $\cH^\pm_{W_{r,v}}$ if $\epsilon_v=\pm1$ over $v\in\tV_F^\fin\setminus\tR$, where $\cH^\pm_{W_{r,v}}$ is defined in \cite{Liu20}*{Definition~2.5} for $v\in\tV_F^\inert$ and is simply the spherical Hecke algebra for $v\in\tV_F^\spl$. Using the construction in \cite{Liu20}*{Definition~2.8}, we have a canonical surjective homomorphism $\theta^\tR\colon\cH^\tR_{W_r}\to\dT^\tR_\dC$ of complex commutative algebras. Since $\pi_v$ is unramified (resp.\ almost unramified) when $\epsilon_v=1$ (resp.\ $\epsilon_v=-1$) by Assumption \ref{st:representation}(2), the algebra $\cH^\tR_{W_r}$ acts on $\cV_\pi^{[r]\tR}$ by a character $\chi_{\pi,W_r}^\tR\colon\cH^\tR_{W_r}\to\dC$ which factors through $\theta^\tR$ by \cite{Liu20}*{Definition~5.3 \& Theorem~1.1(1)}. Since $\pi$ is cohomological by Assumption \ref{st:representation}(1) hence has algebraic Satake parameters, there exists a unique character $\chi^\tR_\pi\colon\dT^\tR_{\dQ^\ac}\to\dQ^\ac$ such that $\chi_{\pi,W_r}^\tR=(\chi^\tR_\pi\otimes_{\dQ^\ac}\dC)\circ\theta^\tR$.

We put $\fm_\pi^\tR\coloneqq\Ker\chi^\tR_\pi$, which is a maximal ideal of $\dT^\tR_{\dQ^\ac}$.
\end{definition}

\begin{proposition}\label{pr:tempered_generic}
Let $(\pi,\cV_\pi)$ be as in Assumption \ref{st:representation}. For every open compact subgroup $L_\tR$ of $H(F_\tR)$, we have
\begin{enumerate}
  \item $(\dS^\tR_{\dQ^\ac})^0_{L_\tR}\setminus \fm_\pi^\tR$ is nonempty;

  \item under Hypothesis \ref{hy:galois}, $(\dS^\tR_{\dQ^\ac})^{\langle\ell\rangle}_{L_\tR}\setminus \fm_\pi^\tR$ is nonempty.
\end{enumerate}
\end{proposition}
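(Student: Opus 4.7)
The plan is to produce, in each part, a Hecke element $\rs$ in the stated ideal with $\chi^\tR_\pi(\rs)\neq 0$. Since each ideal is defined as the annihilator of a finite-dimensional cohomology group, the image of $\dS^\tR_{\dQ^\ac}$ in the corresponding endomorphism $\dQ^\ac$-algebra is a finite-dimensional commutative ring whose spectrum is a finite set of characters. By an elementary Chinese Remainder argument (working modulo the nilradical), the existence of such an $\rs$ is equivalent to the restriction $\chi^\tR_\pi|_{\dS^\tR_{\dQ^\ac}}$ not appearing among those characters. So in both parts the task reduces to showing that $\chi^\tR_\pi|_{\dS^\tR_{\dQ^\ac}}$ does not appear as a Hecke eigensystem in the relevant cohomology.

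For (1), I would apply Matsushima's formula (via Artin comparison and Hodge--de~Rham degeneration) to decompose $\rH^i_\dr(X_{L_\tR L^\tR}/E)\otimes_E\dC$ into automorphic contributions of the form $m(\pi')\cdot((\pi')^\infty)^{L_\tR L^\tR}\otimes\rH^i(\fg,K_\infty;\pi'_\infty)$ over automorphic representations $\pi'$ of $\pres{\bu}H(\dA_F)$. Suppose, for contradiction, that $\chi^\tR_\pi|_{\dS^\tR_{\dQ^\ac}}$ appears in some $\rH^i_\dr$ with $i\neq 2r-1$. By the Vogan--Zuckerman classification of cohomological representations of the unitary group $\pres{\bu}H(F_\infty)$, nonvanishing of $(\fg,K_\infty)$-cohomology in non-middle degree forces $\pi'_\infty$ to be a non-tempered Adams--Johnson $A_\fq(\lambda)$-module, since discrete series contribute exclusively to degree $2r-1$. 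Matching Satake parameters at all split places outside $\tR$ with those of $\pi$, and invoking the endoscopic classification \cites{Mok15,KMSW} together with Jacquet--Shalika strong multiplicity one for $\GL_n(\dA_E)$, one deduces $\BC(\pi')\simeq\Pi$ as isobaric automorphic representations. But $\Pi$ is tempered by Caraiani's purity \cite{Car12}*{Theorem~1.2} (or directly by inspection of its archimedean components as in Notation \ref{co:galois}(1)), while $\BC(\pi'_\infty)$, being parabolically induced from an $A_\fq(\lambda)$, is non-tempered --- contradiction.

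For (2), proper base change makes the $\dS^\tR_{\dQ^\ac}$-module $\rH^{2r}(X_{L_\tR L^\tR,u},\dQ_\ell(r))$ canonically independent of $u\in\tV_E^\fin\setminus\tV_E^{(\ell)}$, equal to $\rH^{2r}(X_{L_\tR L^\tR}\otimes_E\ol{E},\dQ_\ell(r))$ as a Hecke module. The cleanest route combines Artin comparison with Poincar\'{e} duality to identify this (up to Tate twist and Hecke-compatible complex conjugation) with a summand of $\rH^{2r-2}_\dr(X_{L_\tR L^\tR}/E)\otimes_E\dC$, reducing to part (1) applied in degree $2r-2\neq 2r-1$. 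The role of Hypothesis \ref{hy:galois} is to provide an alternative, more intrinsic Galois-theoretic route: it identifies the representation on the $\tilde\pi^\infty$-isotypic part of $\rH^{2r-1}(X_L\otimes_E\dQ^\ac,\ol\dQ_\ell)$ with $\rho^\tc_{\Pi_{j(\tilde\pi^\infty)}}$, which is pure of weight $2r-1$ by \cite{Car12}, while $\rH^{2r}(X_L\otimes_E\ol{E},\ol\dQ_\ell(r))$ is pure of weight $0$; local-global compatibility at split places of $F$ then forces any shared Hecke eigensystem to produce Frobenius eigenvalues of two incompatible weights. The main obstacle throughout is to control coincidences of Hecke eigensystems using only the split part $\dS^\tR_{\dQ^\ac}$, rather than all of $\dT^\tR_{\dQ^\ac}$; this is precisely the content of Jacquet--Shalika strong multiplicity one, which determines isobaric representations of $\GL_n(\dA_E)$ from their Satake data at almost all places (which are precisely those that split from $F$).
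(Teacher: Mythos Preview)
Your reduction at the start (localize at $\fm_\pi^\tR$ and show vanishing) and your argument for (1) are correct and essentially match the paper: Matsushima's formula plus strong multiplicity one (the paper cites \cite{Ram}) forces any contributing $\tilde\pi$ to have automorphic base change $\Pi$, whence $\tilde\pi_\infty$ is tempered by \cite{KMSW}*{Theorem~1.7.1} and contributes only in degree $2r-1$.

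Your argument for (2), however, has a genuine gap. The group $\rH^{2r}(X_{L_\tR L^\tR,u},\dQ_\ell(r))$ in Definition~\ref{de:tempered_generic}(2) is the \'etale cohomology of $X_{L_\tR L^\tR}\otimes_E E_u$ as a scheme over the local field $E_u$, \emph{not} over $\ol{E_u}$. Proper base change does not identify this with the geometric cohomology $\rH^{2r}(X_{L_\tR L^\tR}\otimes_E\ol{E},\dQ_\ell(r))$; rather, by the Hochschild--Serre spectral sequence it is filtered with graded pieces of the form $\rH^p(E_u,\rH^{2r-p}(X_{L_\tR L^\tR}\otimes_E\ol{E},\dQ_\ell(r)))$, and in particular receives a contribution from $\rH^1(E_u,\rH^{2r-1}(\text{geometric}))$ which genuinely depends on $u$. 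Your ``cleanest route'' and your weight argument both address only the geometric $\rH^{2r}$ piece (which, incidentally, is already covered by part~(1) in degree $2r$ without any detour through Poincar\'e duality to degree $2r-2$).

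The paper's proof proceeds in two steps. For all but finitely many $u$ (those of good reduction), the Weil conjectures force the map to geometric cohomology to be injective, and then part~(1) applies uniformly. For each of the finitely many remaining $u$, one must show separately that the localization of $\rH^{2r}(X_{L_\tR L^\tR,u},\ol\dQ_\ell(r))$ at $\fm_\pi^\tR$ vanishes; by Hochschild--Serre and part~(1) this reduces to $\rH^1(E_u,\rH^{2r-1}(X_{L_\tR L^\tR}\otimes_E\dQ^\ac,\ol\dQ_\ell(r))_{\fm_\pi^\tR})=0$. This is precisely where Hypothesis~\ref{hy:galois} enters: it identifies the relevant Galois representation with $\rho^\tc_{\Pi_j}(r)$, whose associated Weil--Deligne representation at $u$ is pure by \cite{Car12}, and then \cite{Nek07}*{Proposition~4.2.2(1)} gives the required vanishing of local $\rH^1$. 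Your proposal invokes purity of $\rho_{\Pi_j}$ but uses it for the wrong purpose (a weight comparison between $\rH^{2r-1}$ and geometric $\rH^{2r}$), missing the actual obstruction coming from local Galois $\rH^1$.
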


\begin{proof}
For (1), by Matsushima's formula, we know that the localization of the $\dS^\tR_\dC$-module $\rH^i_{\dr}(X_{L_\tR L^\tR}/E)\otimes_\dQ\dC$ at $\fm_\pi^\tR$ is isomorphic to the direct sum of $\rH^i(\tilde\pi_\infty)\otimes\tilde\pi^\infty$ for all cuspidal automorphic representations  $\tilde\pi$ of $\pres{\bu}{H}(\dA_F)$ such that the standard base change of $\tilde\pi_v$ is isomorphic to $\Pi_v$ for all but finitely many $v\in\tV_F^\spl$, where $\rH^i(\tilde\pi_\infty)$ denotes the $(\fg,K)$-cohomology of $\tilde\pi_\infty$. By \cite{Ram}*{Theorem~A}, we know that $\Pi$ must be the automorphic base change of $\tilde\pi$. By \cite{KMSW}*{Theorem~1.7.1}, we know that $\tilde\pi_\infty$ is tempered, hence $\rH^i(\tilde\pi_\infty)$ vanishes for $i\neq 2r-1$. Therefore, (1) follows as $\rH^i_{\dr}(X_{L_\tR L^\tR}/E)$ is of finite dimension.

For (2), note that for all but finitely many $u\in\tV_E^\fin\setminus\tV_E^{(\ell)}$, the natural map
\[
\rH^{2r}(X_{L_\tR L^\tR,u},\dQ_\ell(r))\to\rH^{2r}(X_{L_\tR L^\tR,u}\otimes_{E_u}\ol{E_u},\dQ_\ell(r))
\]
is injective by the Hochschild--Serre spectral sequence and the Weil conjecture. As an $\dS^\tR_{\dQ^\ac}$-module, we have
\[
\rH^{2r}(X_{L_\tR L^\tR,u}\otimes_{E_u}\ol{E_u},\dQ_\ell(r))\otimes_\dQ(E\otimes_\dQ\dQ^\ac)
\simeq\rH^{2r}_{\dr}(X_{L_\tR L^\tR}/E)\otimes_\dQ(\dQ_\ell\otimes_\dQ\dQ^\ac).
\]
By (1), we know that there exist elements in $\dS^\tR_{\dQ^\ac}\setminus\fm_\pi^\tR$ that annihilate $\rH^{2r}(X_{L_\tR L^\tR,u},\dQ_\ell(r))$ for all but finitely many $u\in\tV_E^\fin\setminus\tV_E^{(\ell)}$. Thus, it remains to show that for every given $u\in\tV_E^\fin\setminus\tV_E^{(\ell)}$ and every embedding $\dQ^\ac\hookrightarrow\ol\dQ_\ell$, the localization of $\rH^{2r}(X_{L_\tR L^\tR,u},\ol\dQ_\ell(r))$ at $\fm_\pi^\tR$ vanishes. By (1) and the Hochschild--Serre spectral sequence, it suffices to show that
\[
\rH^1(E_u,\rH^{2r-1}(X_{L_\tR L^\tR}\otimes_{E}\dQ^\ac,\ol\dQ_\ell(r))_{\fm_\pi^\tR})=0.
\]
By Hypothesis \ref{hy:galois}, it suffices to show that $\rH^1(E_u,\rho^\tc_{\Pi_j}(r))=0$ for every $j$. As shown in the proof of \cite{Car12}*{Theorem~7.4}, the associated Weil--Deligne representation of $\rho_{\Pi_j}(r)$ at $u$ is pure (of weight not zero), which implies $\rH^1(E_u,\rho^\tc_{\Pi_j}(r))=0$ by \cite{Nek07}*{Proposition~4.2.2(1)}.

The proposition is proved.
\end{proof}

Till the end of this section, let $(\pi,\cV_\pi)$ be as in Assumption \ref{st:representation}, and assume Hypothesis \ref{hy:modularity} on the modularity of generating functions of codimension $r$.

\begin{proposition}\label{pr:arithmetic_theta}
In the situation of Definition \ref{de:arithmetic_theta} (and suppose that $F\neq\dQ$), suppose that $L$ has the form $L_\tR L^\tR$ where $L^\tR$ is defined in Notation \ref{st:h}(H8). For every elements $\varphi\in\cV_\pi^{[r]\tR}$ and $\phi^\infty\in\sS(V^r\otimes_{\dA_F}\dA_F^\infty)^L$, we have
\begin{enumerate}
  \item $\rs^*\Theta_{\phi^\infty}(\varphi)_L=\chi^\tR_\pi(\rs)^\tc\cdot\Theta_{\phi^\infty}(\varphi)_L$ for every $\rs\in\dS^\tR_{\dQ^\ac}$;

  \item $\Theta_{\phi^\infty}(\varphi)_L\in\CH^r(X_L)_\dC^0$;

  \item under Hypothesis \ref{hy:galois}, $\Theta_{\phi^\infty}(\varphi)_L\in\CH^r(X_L)_\dC^{\langle\ell\rangle}$ for every $\tR$-good rational prime $\ell$.
\end{enumerate}
\end{proposition}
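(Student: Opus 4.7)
The plan is to prove (1), (2), (3) in sequence, with each part building on the previous.

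For part (1), the starting point is Proposition \ref{pr:modularity}(2), which says $\phi^\infty\mapsto Z_{\phi^\infty}(-)_L$ intertwines the Weil action of $\dT^\tR_\dC$ on $\sS(V^r\otimes_{\dA_F}\dA_F^\infty)^L$ with the Hecke-correspondence action on $\CH^r(X_L)_\dC$; integrating against $\varphi^\tc$ yields $\rs^*\Theta_{\phi^\infty}(\varphi)_L = \Theta_{\omega_r^\infty(\rs)\phi^\infty}(\varphi)_L$. The bilinear assignment $(\phi^\infty,\varphi^\tc)\mapsto\Theta_{\phi^\infty}(\varphi)_L$ is $G_r(\dA_F)$-invariant and $H(\dA_F^\infty)$-equivariant (for the Weil action on the first factor and the Hecke action on the target), so its image lies in the isotypic component corresponding to the dual finite-adèlic theta lift $\tilde\pi^{\infty,\vee}$ of $\pi^\infty$ (irreducible by Proposition \ref{pr:uniqueness}(3)). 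The Hecke character of $\tilde\pi^{\infty,\vee}$ at each $v\in\tV_F^\fin\setminus\tR$ is determined from that of $\pi_v^\vee$ by the (almost) unramified Satake transfer underlying Definition \ref{de:hecke}, and equals $\chi_\pi^\tR(\rs)^\tc$, with the complex conjugation arising from the presence of $\varphi^\tc$.

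For part (2), since $\rH^{2r}_{\dr}(X_L/E)\otimes_E\dC\simeq\rH^{2r}(X_L(\dC),\dC)$ (Betti--de Rham, via $\biota$), it suffices to show $\cl_{\dr}(\Theta_{\phi^\infty}(\varphi)_L)\otimes\dC=0$. By part (1) this image lies in the $(\chi_\pi^\tR)^\tc$-eigenspace of $\dT^\tR_\dC$. Matsushima's formula decomposes $\rH^{2r}(X_L(\dC),\dC)$ as $\bigoplus_\sigma m(\sigma)\sigma^\infty\otimes\rH^{2r}(\sigma_\infty)$ over cuspidal automorphic representations $\sigma$ of $\pres{\bu}{H}(\dA_F)$. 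Any $\sigma^\infty$ in the prescribed eigenspace must have $\BC(\sigma)=\Pi$ by strong multiplicity one (cf.\ \cite{Ram}); Assumption \ref{st:representation}(3) and Arthur's classification (as in the proof of Proposition \ref{pr:tempered_generic}(1)) then force $\sigma$ to be everywhere tempered. For tempered cohomological archimedean representations of unitary groups the $(\fg,K)$-cohomology is concentrated in middle degree $n-1=2r-1\neq 2r$, so $\rH^{2r}(\sigma_\infty)=0$ for every contributing $\sigma$, and the eigenspace vanishes.

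For part (3), Proposition \ref{pr:tempered_generic}(2) furnishes $\rs\in(\dS^\tR_{\dQ^\ac})^{\langle\ell\rangle}_{L_\tR}\setminus\fm_\pi^\tR$; part (1) gives $\Theta_{\phi^\infty}(\varphi)_L = c^{-1}\rs^*\Theta_{\phi^\infty}(\varphi)_L$ with $c=\chi_\pi^\tR(\rs)^\tc\neq 0$. At each $u\in\tV_E^\fin\setminus\tV_E^{(\ell)}$ the operator $\rs^*$ annihilates $\rH^{2r}(X_{L,u},\dQ_\ell(r))$ by the very definition of $(\dS^\tR_{\dQ^\ac})^{\langle\ell\rangle}_{L_\tR}$, so $\cl_\ell(\Theta_{\phi^\infty}(\varphi)_L)|_u=0$ immediately. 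For $u\in\tV_E^{(\ell)}$, part (2) together with compatibility between the de Rham and $\ell$-adic cycle-class maps over $\ol{E_u}$ shows that the image of $\cl_\ell(\Theta_{\phi^\infty}(\varphi)_L)|_u$ in the geometric cohomology $\rH^{2r}(X_L\otimes_E\ol{E_u},\dQ_\ell(r))$ vanishes; by the Hochschild--Serre spectral sequence, this class lies in the image of $\rH^1(E_u,\rH^{2r-1}(X_L\otimes_E\ol{E_u},\dQ_\ell(r)))$, and by part (1) within its $(\chi_\pi^\tR)^\tc$-localization, which under Hypothesis \ref{hy:galois} is a subquotient of $\rH^1(E_u,\rho_{\Pi_{j(\tilde\pi^\infty)}}^\tc(r))$. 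Since $\ell$ is $\tR$-good, $X_L$ has smooth proper reduction at $u$ and $\rho_{\Pi_j}$ is crystalline there, so the purity of its crystalline Weil--Deligne representation (Carayol), combined with the Bloch--Kato description of $\rH^1_f$, forces this to vanish.

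The main obstacle is the Hecke/theta bookkeeping in part (1): identifying the Weil action of $\dT^\tR_\dC$ on $\sS(V^r\otimes_{\dA_F}\dA_F^\infty)$, after pairing against $\varphi^\tc\in\cV_\pi^\tc$, with the character $\chi_\pi^\tR(\rs)^\tc$, via the Satake-transfer relation between the Hecke algebras of $G_r$ and $H$ at (almost) unramified places. A secondary technicality is the $\ell$-adic-place vanishing in part (3), where the Nekov\'{a}\v{r}-type purity sufficing in the proof of Proposition \ref{pr:tempered_generic}(2) must be replaced by a $p$-adic Hodge-theoretic (crystalline/Bloch--Kato) argument.
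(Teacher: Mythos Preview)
Your argument is essentially correct, but it is considerably more elaborate than the paper's, and in places you redo work that has already been packaged.

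For part (1), the paper's proof is a one-liner once you recall that $\dS^\tR$ is supported entirely on \emph{split} places (Notation~\ref{st:h}(H8)). At a split place, Rallis's transfer \cite{Liu11}*{Proposition~A.5} (see also \cite{Ral82}) says that the $H$-Hecke action on $\phi^\infty$ through the Weil representation coincides with the $G_r$-Hecke action by right translation. Hence after the identity $\rs^*\Theta_{\phi^\infty}(\varphi)_L=\Theta_{\rs\phi^\infty}(\varphi)_L$ (your first step, or Lemma~\ref{re:translate}), one simply moves $\rs$ onto $\varphi^\tc$ and reads off the eigenvalue $\chi^\tR_\pi(\rs)^\tc$. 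Your detour through the irreducibility of the theta lift (Proposition~\ref{pr:uniqueness}(3)) and the full Satake transfer at inert places is unnecessary here; the split-place statement suffices and is more elementary.

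For parts (2) and (3), the paper does not re-enter Matsushima's formula or the Hochschild--Serre spectral sequence: all of that has already been absorbed into Proposition~\ref{pr:tempered_generic}. Given (1), one picks $\rs\in(\dS^\tR_{\dQ^\ac})^0_{L_\tR}\setminus\fm_\pi^\tR$ (resp.\ $\rs\in(\dS^\tR_{\dQ^\ac})^{\langle\ell\rangle}_{L_\tR}\setminus\fm_\pi^\tR$) and concludes immediately that the relevant cycle class is killed by a nonzero scalar. Your argument is a faithful unpacking of that proposition's proof, so it is correct, just redundant. The one place where your extra care may be warranted is the treatment of places $u\mid\ell$ in (3): the ideal $(\dS^\tR_{\dQ^\ac})^{\langle\ell\rangle}_{L_\tR}$ is defined in Definition~\ref{de:tempered_generic}(2) only via $u\notin\tV_E^{(\ell)}$, while Notation~\ref{st:c}(C3) runs over all finite $u$. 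Your crystalline/Bloch--Kato argument closes this, but a lighter route is available: since $\ell$ is $\tR$-good, $X_{L,u}$ has smooth projective reduction at every $u\mid\ell$, and by Remark~\ref{re:index_smooth} the local condition there is independent of the auxiliary prime, hence follows already from the de~Rham vanishing in (2).
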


\begin{proof}
For (1), by Lemma \ref{re:translate}, we have
\[
\rs^*\Theta_{\phi^\infty}(\varphi)_L=\Theta_{\rs\phi^\infty}(\varphi)_L
=\int_{G_r(F)\backslash G_r(\dA_F)}\varphi^\tc(g)Z_{\rs\phi^\infty}(g)_L\rd g,
\]
which, by \cite{Liu11}*{Proposition~A.5} for split places (see also \cite{Ral82}*{Page~511}), equals
\[
\int_{G_r(F)\backslash G_r(\dA_F)}(\pi^\vee(\rs)\varphi^\tc)(g)Z_{\phi^\infty}(g)_L\rd g,
\]
which equals
\[
\int_{G_r(F)\backslash G_r(\dA_F)}(\chi^\tR_\pi(\rs)^\tc\cdot\varphi^\tc)(g)Z_{\phi^\infty}(g)_L\rd g
=\chi^\tR_\pi(\rs)^\tc\cdot\Theta_{\phi^\infty}(\varphi)_L.
\]

Part (2) is a consequence of (1) and Proposition \ref{pr:tempered_generic}(1).

Part (3) is a consequence of (1) and Proposition \ref{pr:tempered_generic}(2).
\end{proof}

\if false

\begin{lem}\label{le:natural}
Let $L$ be an open compact subgroup of $H(\dA_F^\infty)$ of the form $L_\tR L^\tR$ where $L^\tR$ is defined in Notation \ref{st:h}(H8). Let $\ell$ be an $\tR$-good rational prime. Consider $\varphi_1,\varphi_2\in\cV_\pi^{[r]\tR}$, and $\phi^\infty_1,\phi^\infty_2\in\sS(V^r\otimes_{\dA_F}\dA_F^\infty)$ in which $\phi^\infty_{iv}=\CF_{(\Lambda^\tR_v)^r}$ for $v\in\tV_F^\fin\setminus\tR$ for $i=1,2$. Then the value
\begin{align}\label{eq:natural}
\langle\rs^*\Theta_{\phi^\infty_1}(\varphi_1)_L,\rs^*\Theta_{\phi^\infty_2}(\varphi_2)_L\rangle_{X_L,E}^\ell
\end{align}
does not depend on the choice of $\rs\in(\dS^\tR_{\dQ^\ac})^{\langle\ell\rangle}_{L_\tR}$ satisfying $\chi^\tR_\pi(\rs)=1$.
\end{lem}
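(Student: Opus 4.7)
The plan is to reduce everything to the eigenvector identity provided by Proposition~\ref{pr:arithmetic_theta}(1), together with the fact that the classes $\rs^*\Theta_{\phi^\infty_i}(\varphi_i)_L$ end up being the same element of $\CH^r(X_L)^{\langle\ell\rangle}_\dC$ for all admissible choices of $\rs$. First I would unpack Proposition~\ref{pr:arithmetic_theta}(1): since $\chi^\tR_\pi(\rs)=1$, that proposition yields
\[
\rs^*\Theta_{\phi^\infty_i}(\varphi_i)_L=\chi^\tR_\pi(\rs)^\tc\cdot\Theta_{\phi^\infty_i}(\varphi_i)_L=\Theta_{\phi^\infty_i}(\varphi_i)_L
\]
as elements of the ambient space $\CH^r(X_L)_\dC$, for $i=1,2$. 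Next I would explain why the left-hand side lies in $\CH^r(X_L)^{\langle\ell\rangle}_\dC$, so that the height pairing $\langle\,,\,\rangle^\ell_{X_L,E}$ is actually applicable. By Definition~\ref{de:tempered_generic}(2), any $\rs\in(\dS^\tR_{\dQ^\ac})^{\langle\ell\rangle}_{L_\tR}$ annihilates $\rH^{2r}(X_{L,u},\dQ_\ell(r))_{\dQ^\ac}$ for every $u\in\tV_E^\fin\setminus\tV_E^{(\ell)}$, so via the cycle class map $\cl_{X_{L,u},\ell}$ the operator $\rs^*$ sends $\rZ^r(X_L)_\dC$ into $\rZ^r(X_{L,u})^{\langle\ell\rangle}_\dC$ at every such $u$; at the remaining places $u\in\tV_E^{(\ell)}$ the $\tR$-goodness of $\ell$ (Definition~\ref{de:good}) ensures smooth projective reduction, which is precisely the hypothesis under which Beilinson's construction of $\langle\,,\,\rangle^\ell_{X_L,E}$ requires no additional condition at~$u$. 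Consequently $\rs^*\Theta_{\phi^\infty_i}(\varphi_i)_L$ lies in $\CH^r(X_L)^{\langle\ell\rangle}_\dC$ and the expression \eqref{eq:natural} is meaningful.

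The independence from $\rs$ will then follow at once. Given a second choice $\rs'\in(\dS^\tR_{\dQ^\ac})^{\langle\ell\rangle}_{L_\tR}$ with $\chi^\tR_\pi(\rs')=1$, the first step gives
\[
\rs^*\Theta_{\phi^\infty_i}(\varphi_i)_L=\Theta_{\phi^\infty_i}(\varphi_i)_L=(\rs')^*\Theta_{\phi^\infty_i}(\varphi_i)_L
\]
in $\CH^r(X_L)_\dC$, and since both sides already lie in the subspace $\CH^r(X_L)^{\langle\ell\rangle}_\dC$, they are equal there as well. Plugging this into \eqref{eq:natural} yields
\[
\langle\rs^*\Theta_{\phi^\infty_1}(\varphi_1)_L,\rs^*\Theta_{\phi^\infty_2}(\varphi_2)_L\rangle_{X_L,E}^\ell
=\langle(\rs')^*\Theta_{\phi^\infty_1}(\varphi_1)_L,(\rs')^*\Theta_{\phi^\infty_2}(\varphi_2)_L\rangle_{X_L,E}^\ell,
\]
which is the desired independence.

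There is essentially no genuine obstacle: all of the substantive work has already been done in Proposition~\ref{pr:arithmetic_theta}(1), and the remaining content is a bookkeeping verification that $\rs^*$ preserves the $\langle\ell\rangle$-subgroup on which Beilinson's height pairing is unconditionally defined. The only care required is the second paragraph's check that the defining property of $(\dS^\tR_{\dQ^\ac})^{\langle\ell\rangle}_{L_\tR}$ together with smooth reduction at places above $\ell$ suffices to guarantee $\rs^*\CH^r(X_L)_\dC\subseteq\CH^r(X_L)^{\langle\ell\rangle}_\dC$; this in turn rests on the compatibility of Hecke correspondences with $\ell$-adic cycle classes and with smooth base change, both of which are standard.
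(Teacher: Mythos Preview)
Your argument is correct and, in fact, slightly more direct than the paper's. Both proofs rest on the same ingredient, namely the eigenvector identity $\rs^*\Theta_{\phi^\infty}(\varphi)_L=\chi^\tR_\pi(\rs)^\tc\cdot\Theta_{\phi^\infty}(\varphi)_L$ (Proposition~\ref{pr:arithmetic_theta}(1), itself built from Lemma~\ref{re:translate} and \cite{Liu11}*{Proposition~A.5}). You invoke this identity directly to conclude $\rs^*\Theta=\Theta=(\rs')^*\Theta$ in $\CH^r(X_L)_\dC$, whence equality of the pairings. The paper instead argues by symmetrization: it shows that both $\langle\rs^*\Theta,\rs^*\Theta\rangle^\ell$ and $\langle(\rs')^*\Theta,(\rs')^*\Theta\rangle^\ell$ equal the common value $\langle(\rs\rs')^*\Theta,(\rs\rs')^*\Theta\rangle^\ell$, using the same identity inline together with commutativity of $\rs^*$ and $(\rs')^*$. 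The practical difference is nil; your route has the minor advantage of never needing the intermediate product $\rs\rs'$, while the paper's route has the cosmetic advantage of never leaving $\CH^r(X_L)^{\langle\ell\rangle}_\dC$ at any step (though, as you note, once $\rs^*\Theta=\Theta$ is known this is automatic anyway).
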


\begin{proof}
We first note that \eqref{eq:natural} is well-defined as long as $\rs\in(\dS^\tR_{\dQ^\ac})^{\langle\ell\rangle}_{L_\tR}$. Let $\rs,\rs'$ be two such elements as in the statement of the lemma. By the identity $\chi^\tR_\pi(\rs')=1$ and \cite{Liu11}*{Proposition~A.5} for split places (see also \cite{Ral82}*{Page~511}), we have
\[
\langle\rs^*\Theta_{\phi^\infty_1}(\varphi_1)_L,\rs^*\Theta_{\phi^\infty_2}(\varphi_2)_L\rangle_{X_L,E}^\ell
=\langle\rs^*\Theta_{\rs'\phi^\infty_1}(\varphi_1)_L,
\rs^*\Theta_{\rs'\phi^\infty_2}(\varphi_2)_L\rangle_{X_L,E}^\ell,
\]
which, by Lemma \ref{re:translate}, equals
\[
\langle\rs^*\rs'^*\Theta_{\phi^\infty_1}(\varphi_1)_L,
\rs^*\rs'^*\Theta_{\phi^\infty_2}(\varphi_2)_L\rangle_{X_L,E}^\ell.
\]
By the same reason, we have
\[
\langle\rs'^*\Theta_{\phi^\infty_1}(\varphi_1)_L,
\rs'^*\Theta_{\phi^\infty_2}(\varphi_2)_L\rangle_{X_L,E}^\ell
=\langle\rs'^*\rs^*\Theta_{\phi^\infty_1}(\varphi_1)_L,
\rs'^*\rs^*\Theta_{\phi^\infty_2}(\varphi_2)_L\rangle_{X_L,E}^\ell.
\]
Since $\rs^*$ and $\rs'^*$ commute, \eqref{eq:natural} is independent of $\rs$. The lemma follows.
\end{proof}

\fi

We now define the \emph{normalized height pairing} between the cycles $\Theta_{\phi^\infty}(\varphi)$ in Definition \ref{de:arithmetic_theta}, under Hypothesis \ref{hy:galois}.

\begin{definition}\label{de:natural}
Under Hypothesis \ref{hy:galois}, for every elements $\varphi_1,\varphi_2\in\cV_\pi^{[r]}$ and $\phi^\infty_1,\phi^\infty_2\in\sS(V^r\otimes_{\dA_F}\dA_F^\infty)$, we define the \emph{normalized height pairing}
\[
\langle\Theta_{\phi^\infty_1}(\varphi_1),\Theta_{\phi^\infty_2}(\varphi_2)\rangle_{X,E}^\natural
\in\dC\otimes_\dQ\dQ_\ell
\]
to be the unique element\footnote{Readers may notice that we have dropped $\ell$ in the notation $\langle\Theta_{\phi^\infty_1}(\varphi_1),\Theta_{\phi^\infty_2}(\varphi_2)\rangle_{X,E}^\natural$. This is because for those normalized height pairings we are able to compute in this article, the value will turn out to be in $\dC$ and is independent of the choice of $\ell$.} such that for every $L=L_\tR L^\tR$ as in Proposition \ref{pr:arithmetic_theta} (with $\tR$ possibly enlarged) satisfying $\varphi_1,\varphi_2\in\cV_\pi^{[r]\tR}$, $\phi^\infty_1,\phi^\infty_2\in\sS(V^r\otimes_{\dA_F}\dA_F^\infty)^L$, and that $\ell$ is $\tR$-good, we have
\[
\langle\Theta_{\phi^\infty_1}(\varphi_1),\Theta_{\phi^\infty_2}(\varphi_2)\rangle_{X,E}^\natural=
\vol^\natural(L)\cdot
\langle\Theta_{\phi^\infty_1}(\varphi_1)_L,\Theta_{\phi^\infty_2}(\varphi_2)_L\rangle_{X_L,E}^\ell,
\]
where $\vol^\natural(L)$ is introduced in Definition \ref{de:measure},\footnote{In fact, it is a good exercise to show that the total degree of the Hodge line bundle on $X_L$ is equal to $2\vol^\natural(L)^{-1}$.} and $\langle\Theta_{\phi^\infty_1}(\varphi_1)_L,\Theta_{\phi^\infty_2}(\varphi_2)_L\rangle_{X_L,E}^\ell$ is well-defined by Proposition \ref{pr:arithmetic_theta}(3). Note that by the projection formula, the right-hand side of the above formula is independent of $L$.
\end{definition}

\section{Local indices at split places}
\label{ss:split}

In this section, we compute local indices at almost all places in $\tV_E^\spl$. Our goal is to prove the following proposition.

\begin{proposition}\label{pr:index_split}
Let $\tR$, $\tR'$, $\ell$, and $L$ be as in Definition \ref{de:kernel_geometric} such that the cardinality of $\tR'$ is at least $2$. Let $(\pi,\cV_\pi)$ be as in Assumption \ref{st:representation}, for which we assume Hypothesis \ref{hy:galois}. For every $u\in\tV_E^\spl$ such that
\begin{enumerate}[label=(\alph*)]
  \item the representation $\pi_{\ul{u}}$ is a (tempered) principal series;

  \item $\tV_F^{(p)}\cap\tR\subseteq\tV_F^\spl$ where $p$ is the underlying rational prime of $u$,
\end{enumerate}
there exist elements $\rs_1^u,\rs_2^u\in\dS_{\dQ^\ac}^\tR\setminus\fm_\pi^\tR$ such that
\[
I_{T_1,T_2}(\phi^\infty_1,\phi^\infty_2,\rs_1^u\rs_1,\rs_2^u\rs_2,g_1,g_2)^\ell_{L,u}=0
\]
for every $(\tR,\tR',\ell,L)$-admissible sextuple $(\phi^\infty_1,\phi^\infty_2,\rs_1,\rs_2,g_1,g_2)$ and every pair $(T_1,T_2)$ in $\Herm_r^\circ(F)^+$. Moreover, we may take $\rs_1^u=\rs_2^u=1$ if $\ul{u}\not\in\tR$.
\end{proposition}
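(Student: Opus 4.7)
The plan is to reduce the local index at $u$ to an intersection computation on an integral model of the auxiliary Shimura variety $X'_L$ from Section \ref{ss:auxiliary}, and then to annihilate the relevant cohomological obstruction via a vanishing theorem for the non-middle cohomology at split places, in the spirit of \cite{CS17}.

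First I would use condition (b) to choose an admissible CM type in the sense of Notation \ref{st:auxiliary}, so that the local reflex field $K$ is unramified over $E_u$. When $\ul u\notin\tR$, the level $L$ is hyperspecial at $\ul u$, and the PEL datum of Lemma \ref{le:moduli_auxiliary} extends to a smooth projective integral model $\cX'_L$ over $O_K$, on which the special cycles $Z(x)'_L$ of Definition \ref{de:moduli_cycle} admit natural flat extensions by Zariski closure. Via Remark \ref{re:index_smooth} the non-archimedean local index of the two cycles appearing in $I_{T_1,T_2}$ is then expressible through their flat closures on $\cX'_L$ (and, after the finite \'{e}tale descent $\cY\to\Spec O_K$, on the integral model of $X_L$ itself). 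The regularity hypothesis at the two places of $\tR'$, combined with Lemma \ref{le:disjoint}, keeps the supports disjoint even after Zariski closure. When $\ul u\in\tR$ the model need not be smooth, and the role of the auxiliary $\rs_i^u$ will be to push the cycles into a subspace where the missing integrality can be controlled.

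Next I would invoke the vanishing result alluded to in Remark \ref{re:almost_unramified}(2) and Lemma \ref{le:split_tempered}, which uses \cite{CS17} together with hypothesis (a) that $\pi_{\ul u}$ is a tempered principal series. Concretely, Caraiani--Scholze (combined with Hypothesis \ref{hy:galois} and the purity argument already used in the proof of Proposition \ref{pr:tempered_generic}(2)) provides elements $\rs_1^u,\rs_2^u\in\dS_{\dQ^\ac}^\tR\setminus\fm_\pi^\tR$ whose action annihilates the $\fm_\pi^\tR$-localization of $\rH^i(X_{L,u}\otimes_{E_u}\ol{E_u},\ol\dQ_\ell)$ for all $i\neq 2r-1$, and hence kills the cohomological obstruction detecting the Beilinson winding number of Appendix \ref{ss:beilinson}. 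When $\ul u\notin\tR$ this localization is already pure in middle degree directly from \cite{CS17}, which is exactly why one may take $\rs_1^u=\rs_2^u=1$ in that case.

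The main obstacle will be to ensure compatibility between the action of $\rs_i^u$ on Chow groups of $X_L$ (via \'{e}tale correspondences at level $L^\tR$) and its action on the $\ell$-adic cohomology of $X_{L,u}$, so that annihilating a cohomological class via $\rs_i^u$ genuinely forces the local pairing to vanish. This reduces to showing that the Hecke correspondences supported in $\dS^\tR$, which by condition (b) and the definition of $\dS^\tR$ in Notation \ref{st:h}(H8) are supported away from $p$ (since every $v\in\tR\cap\tV_F^{(p)}$ lies in $\tV_F^\spl$), extend to finite \'{e}tale correspondences on the auxiliary integral model, a standard point. Combining this extension with the regularity-based disjointness and the Caraiani--Scholze vanishing yields $I_{T_1,T_2}(\phi^\infty_1,\phi^\infty_2,\rs_1^u\rs_1,\rs_2^u\rs_2,g_1,g_2)^\ell_{L,u}=0$, as desired.
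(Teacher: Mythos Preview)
Your overall architecture is right—integral models of the auxiliary variety, the Caraiani--Scholze input via Lemma~\ref{le:split_tempered}, and \'etale Hecke correspondences away from $p$—but two concrete points are off.

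First, the mechanism that actually forces the local index to vanish is missing. Lemma~\ref{le:disjoint} only gives disjointness of the special cycles on the generic fibre; it says nothing about their Zariski closures in $\cX_m$. The paper's argument is moduli-theoretic: if the closures met over $\ol\dF_p$, one would find $\tilde x_1,\tilde x_2\in\Hom_{O_E}(A_0^r,A)_\dQ$ with $(\tilde x_1,\tilde x_2)$ regular (by the hypothesis at a place of $\tR'\setminus\{\ul u\}$), forcing $A$ to be quasi-isogenous to $A_0^{2r}$, which is impossible at a split place by \cite{RSZ}*{Lemma~8.7}. This geometric emptiness of the intersection is what makes the intersection number, and hence the local index, equal to zero in \emph{both} cases; you never supply it.

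Second, you misattribute the role of \cite{CS17} and give the wrong reason for $\rs_i^u=1$ when $\ul u\notin\tR$. The vanishing of $\rH^i(X_{L,u}\otimes\ol{E_u},\ol\dQ_\ell)_\fm$ for $i\neq 2r-1$ is already Proposition~\ref{pr:tempered_generic}(1) and needs no \cite{CS17}. What Lemma~\ref{le:split_tempered} actually proves (and what is needed) is $\rH^{2r}(\cX_m,\dQ_\ell(r))_\fm=0$ for the \emph{integral} Drinfeld-level model, via the Newton stratification and \cite{CS17} on Igusa varieties; this yields an $\ell$-tempered correspondence so that Proposition~\ref{pr:tempered_integral} applies. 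When $\ul u\notin\tR$ the model $\cX_0$ is already smooth over $O_K$, so Proposition~\ref{pr:index_smooth} computes the local index directly as the intersection number of the Zariski closures—no \cite{CS17} and no auxiliary $\rs_i^u$ are needed. Your explanation that ``purity in middle degree from \cite{CS17}'' is why $\rs_i^u=1$ suffices is not the reason.

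A minor correction: the Hecke operators you want must lie in $\dS^{\tR\cup\tV_F^{(p)}}_{\dQ^\ac}$ (away from $p$ as well as $\tR$) to extend to \'etale correspondences on $\cX_m$; this is not implied by condition~(b), which constrains $\tR$, not the support of $\dS^\tR$.
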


Since $\ul{u}$ splits in $E$, we may fix an isomorphism $H\otimes_{\dA_F}F_{\ul{u}}\simeq\GL_{n,F_{\ul{u}}}$ such that $L_{\ul{u}}$ is contained in $\GL_n(O_{F_{\ul{u}}})$, and moreover equal if $\ul{u}\not\in\tR$. For every integer $m\geq 0$, denote by $L_{\ul{u},m}\subseteq\GL_n(O_{F_{\ul{u}}})$ the principal congruence subgroup of level $m$.

From now to the end of this section, we assume $\tV_F^{(p)}\cap\tR\subseteq\tV_F^\spl$. We invoke Notation \ref{co:auxiliary} together with Notation \ref{st:auxiliary}, which is possible since $\tV_F^{(p)}\cap\tV_F^\ram=\emptyset$. To ease notation, we put $X_m\coloneqq X'_{L_{\ul{u},m}L^{\ul{u}}}\otimes_{E'}K$ for $m\geq 0$. The isomorphism $\dC\xrightarrow\sim\ol\dQ_p$ in Notation \ref{st:auxiliary} identifies $\Hom(E,\dC)$ with $\Hom(E,\ol\dQ_p)$. For every $v\in\tV_F^{(p)}\cap\tV_F^\spl\setminus\{\ul{u}\}$, let $\{v_c,v_e\}$ be the two places of $E$ above $v$ from Notation \ref{st:auxiliary}; and identify $H\otimes_{\dA_F}F_v$ with $\GL(V\otimes_{\dA_F}E_{v_e})$.

Let $S$ be a locally Noetherian scheme over $O_K$ and $(A,\lambda)$ a unitary $O_E$-abelian scheme of signature type $n\Phi-\iota_w+\iota_w^\tc$ over $S$. Then the $p$-divisible group $A[p^\infty]$ admits a decomposition $A[p^\infty]=\prod_{v\in\tV_F^{(p)}}A[v^\infty]$.

For every integer $m\geq 0$, we define a moduli functor $\cX_m$ over $O_K$ as follows: For every locally Noetherian scheme $S$ over $O_K$, $\cX_m(S)$ is the set of equivalence classes of tuples $(A_0,\lambda_0,\eta_0^p;A,\lambda,\eta^p,\{\eta_v\}_{v\in\tV_F^{(p)}\cap\tV_F^\spl\setminus\{\ul{u}\}},\eta_{u,m})$ where
\begin{itemize}
  \item $(A_0,\lambda_0,\eta_0^p)$ is an element in $\cY(S)$;

  \item $(A,\lambda)$ is a unitary $O_E$-abelian scheme of signature type $n\Phi-\iota_w+\iota_w^\tc$ over $S$, such that
      \begin{itemize}
        \item for every $v\in\tV_F^{(p)}$, $\lambda[v^\infty]$ is an isogeny (rather than a quasi-isogeny) whose kernel has order $q_v^{1-\epsilon_v}$;

        \item $\Lie(A[u^{\tc,\infty}])$ is of rank $1$ on which the action of $O_E$ is given by the embedding $\iota_w^\tc$;\footnote{Since $\Phi$ is admissible (Notation \ref{st:auxiliary}), the Eisenstein condition at $v\neq\ul{u}$ is implied by the Kottwitz condition, and at $\ul{u}$ is implied by the Kottwitz condition and that $\Lie(A[u^{\tc,\infty}])$ is of rank $1$ on which the action of $O_E$ is given by the embedding $\iota_w^\tc$.}
      \end{itemize}

  \item $\eta^p$ is an $L^p$-level structure, analogous to the one in Lemma \ref{le:moduli_auxiliary};

  \item for every $v\in\tV_F^{(p)}\cap\tV_F^\spl\setminus\{\ul{u}\}$, $\eta_v$ is an $L_v$-level structure, that is, an $L_v$-orbit of $E_{v_e}$-linear isomorphisms
      \[
      \eta_v\colon V\otimes_{\dA_F}E_{v_e}\xrightarrow\sim \Hom_{O_{E_{v_e}}}(A_0[v_e^\infty],A[v_e^\infty])\otimes_{O_{E_{v_e}}}E_{v_e}
      \]
      of $E_{v_e}$-sheaves over $S$;

  \item $\eta_{u,m}\colon(\fp_{\ul{u}}^{-m}/O_{F_{\ul{u}}})^n\to
      \ul\Hom_{O_{F_{\ul{u}}}}(A_0[u^{\tc,\infty}][\fp_{\ul{u}}^m],A[u^{\tc,\infty}][\fp_{\ul{u}}^m])$ is a Drinfeld level-$m$ structure (see \cite{RSZ}*{Section~4.3} for more details).
\end{itemize}
By \cite{RSZ}*{Theorem~4.5}, for every $m\geq 0$, $\cX_m$ is a regular scheme, flat (smooth, if $m=0$) and projective over $O_K$, and admits a canonical isomorphism $\cX_m\otimes_{O_K}K\simeq X_m$ of schemes over $K$.\footnote{Here, we have to use the fact that $K$ is unramified over $E_u$ to conclude that $\cX_m$ is regular when $m>0$.} Note that for every integer $m\geq0$, $\dS^{\tR\cup\tV_F^{(p)}}$ naturally gives a ring of \'{e}tale correspondences of $\cX_m$.

We first prove the follow lemma which addresses the easy part of Proposition \ref{pr:index_split} as a warm-up.

\begin{lem}\label{le:index_split}
Let the situation be as in Proposition \ref{pr:index_split}. Suppose that $\ul{u}\not\in\tR$. Then we have
\[
I_{T_1,T_2}(\phi^\infty_1,\phi^\infty_2,\rs_1,\rs_2,g_1,g_2)^\ell_{L,u}=0
\]
for every $(\tR,\tR',\ell,L)$-admissible sextuple $(\phi^\infty_1,\phi^\infty_2,\rs_1,\rs_2,g_1,g_2)$ and every pair $(T_1,T_2)$ in $\Herm_r^\circ(F)^+$.
\end{lem}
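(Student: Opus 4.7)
The plan is to reduce the computation of the local index at $u$ to an intersection computation on a smooth proper integral model of an auxiliary Shimura variety, and then show that the relevant integral extensions of the special cycles have empty intersection via a signature/Lie-algebra comparison.

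First, since $\ul{u}\not\in\tR$ and $\tV_F^{(p)}\cap\tR\subseteq\tV_F^\spl$, we are in the setting of Notation \ref{st:auxiliary}. The auxiliary scheme $\cX_0$ is smooth and projective over $O_K$ with generic fiber $X_0=X'_{L^{\ul{u}}L_{\ul{u}}}\otimes_{E'}K$, and the finite \'{e}tale map $\cY\to\Spec O_K$ exhibits $X_0$ as a finite \'{e}tale cover of $X_L\otimes_EK$. Applying the content of Appendix \ref{ss:beilinson} (in particular Remark \ref{re:index_smooth}) at the place $u$, where $X_{L,u}$ acquires smooth projective reduction after the base change $E_u\to K$, the local index $\langle c_1,c_2\rangle^\ell_{X_{L,u},E_u}$ for cycles $c_1,c_2$ with disjoint support is (a nonzero multiple of) the $\ell$-adic intersection number of the flat closures of the pullbacks of $c_1,c_2$ to $\cX_0$. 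Hence it suffices to exhibit, on $\cX_0$, horizontal extensions of the two special cycles with empty scheme-theoretic intersection.

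Next, I extend the special cycles to $\cX_0$. For each $x\in V^r\otimes_{\dA_F}\dA_F^\infty$ with $T(x)\in\Herm_r^\circ(F)^+$, define $\cZ(x)_L$ as the integral analogue of Definition \ref{de:moduli_cycle}: it parametrises septuples in which the last datum is $\tilde{x}\in\Hom_{O_E}(A_0^r,A)_\dQ$ lifting $x$ under $\eta$. The same argument as in the proof of Lemma \ref{le:moduli_cycle} shows that $\cZ(x)_L\to\cX_0$ is finite and unramified, hence its image is a closed subscheme flat over $O_K$ whose generic fiber is (the pullback to $X_0$ of) $Z(x)_L$. By Lemma \ref{re:translate} and linearity, the flat closures of $\rs_i^*Z_{T_i}(\omega_r^\infty(g_i^\infty)\phi^\infty_i)_L$ are $\dQ^\ac$-linear combinations of such $\cZ(x_i)_L$ (combined with the corresponding Hecke translates), where $x_i$ ranges over vectors with $T(x_i)=T_i$ and with $\phi^\infty_{1v}\otimes(\phi^\infty_{2v})^\tc(x_{1v},x_{2v})\neq 0$ for $v\in\tR'$.

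The crux is to show $\cZ(x_1)_L\cap\cZ(x_2)_L=\emptyset$ on $\cX_0$ for every such $(x_1,x_2)$. Because $v\in\tR'$ and $\supp(\phi^\infty_{1v}\otimes(\phi^\infty_{2v})^\tc)\subseteq(V^{2r}_v)_\reg$, the rational moment matrix $T(x_1,x_2)\in\Herm_{2r}(F)$ is nondegenerate. At any geometric point of $\cZ(x_1)_L\cap\cZ(x_2)_L$ over $\ol k$, the data provide $\tilde{x}_i\in\Hom_{O_E}(A_0^r,A)_\dQ$ with moment pairing $T(x_1,x_2)$, so $\tilde{x}_1\oplus\tilde{x}_2\colon A_0^{2r}\to A$ is an $O_E$-linear quasi-isogeny. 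Comparing the induced $O_E\otimes_\dZ\ol k$-module decompositions of Lie algebras: by the signature type $\Phi$ of $A_0$, the $\iota_w$-component of $\Lie(A_0^{2r})$ has rank $2r=n$, whereas by the signature type $n\Phi-\iota_w+\iota_w^\tc$ of $A$, the $\iota_w$-component of $\Lie(A)$ has rank $n-1$. A quasi-isogeny would induce an isomorphism on $\iota_w$-components, a contradiction. Hence the intersection is empty and the local index vanishes.

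The argument is essentially complete in these steps; the main obstacle is bookkeeping rather than a conceptual difficulty, namely verifying (i) that the integral moduli description of $\cZ(x)_L$ indeed gives a horizontal flat closure of $Z(x)_L$ on $\cX_0$, and (ii) that the local index at $u$ in the smooth reduction case is (up to a nonzero factor) just the intersection multiplicity on $\cX_0$, so that passage from $X_L$ to the finite \'{e}tale cover $X'_L\to X_L$ at $u$ is harmless. The signature/Lie-algebra comparison used in the last step is the true content and fails precisely at non-split $u$, which is why the case $\ul u\in\tR$ (or non-split $u$) requires the modifications $\rs_i^u$.
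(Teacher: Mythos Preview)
Your overall strategy matches the paper's: pass to the auxiliary variety $X_0$ via Lemma~\ref{le:base_change} and Lemma~\ref{le:moduli_cycle}, use the smooth projective model $\cX_0$ together with Proposition~\ref{pr:index_smooth}, and show that the integral extensions of $Z(x_1)'_L$ and $Z(x_2)'_L$ are disjoint by deriving a contradiction from an $O_E$-linear quasi-isogeny $A_0^{2r}\to A$ at a point of $\cX_0(\ol\dF_p)$.

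The gap is in the final step. Over $\ol\dF_p$, a quasi-isogeny of abelian varieties does \emph{not} induce an isomorphism on Lie algebras (the relative Frobenius already has zero differential), and the rank of an individual embedding-component $\Lie(A)_\tau$ is not a quasi-isogeny invariant: for instance, two one-dimensional formal $O_{E_u}$-modules of the same height can be isogenous while having their Lie algebra supported on different embeddings. So the sentence ``a quasi-isogeny would induce an isomorphism on $\iota_w$-components'' is not justified. What \emph{is} a quasi-isogeny invariant is the dimension of the $p$-divisible group at a fixed place of $E$. Since $\ul u$ is split and every embedding in $\Phi_{\ul u}$ induces the place $u$, one has $\Lie(A_0[u^{\tc,\infty}])=0$, i.e.\ $A_0[u^{\tc,\infty}]$ is \'etale; hence $A_0^{2r}[u^{\tc,\infty}]$ has dimension $0$. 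On the other hand, the moduli problem for $\cX_0$ forces $\Lie(A[u^{\tc,\infty}])$ to have rank $1$. An $O_E$-linear quasi-isogeny $A_0^{2r}\to A$ would restrict to a quasi-isogeny of $p$-divisible groups $A_0^{2r}[u^{\tc,\infty}]\to A[u^{\tc,\infty}]$, contradicting this dimension count. The paper packages exactly this obstruction as a citation to \cite{RSZ}*{Lemma~8.7}. Once you replace the Lie-algebra-isomorphism claim by this $p$-divisible group comparison, your argument is complete and agrees with the paper's.
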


\begin{proof}
It suffices to show that for every $x_1,x_2\in V^r\otimes_{\dA_F}\dA_F^\infty$ satisfying $T(x_1),T(x_2)\in\Herm_r^\circ(F)^+$ and $(L_vx_{1v},L_vx_{2v})\subseteq(V^{2r}_v)_\reg$ for some $v\in\tR'$, we have
\begin{align*}
\langle Z(x_1)_L,Z(x_2)_L\rangle^\ell_{X_{L,u},E_u}=0.
\end{align*}
Since $\ul{u}\not\in\tR$, by Lemma \ref{le:base_change} and Lemma \ref{le:moduli_cycle}, it suffices to show that
\begin{align}\label{eq:index_split}
\langle Z(x_1)'_L,Z(x_2)'_L\rangle^\ell_{X_0,K}=0.
\end{align}

We use the integral model $\cX_0$ just constructed above, which is smooth and projective over $O_K$ of relative dimension $n-1$. For $i=1,2$, let $\cZ(x_i)'_L$ be the Zariski closure of $Z(x_i)'_L$ in $\cX_0$. We claim that $\cZ(x_1)'_L$ and $\cZ(x_2)'_L$ have empty intersection. By Proposition \ref{pr:index_smooth}, we obtain \eqref{eq:index_split}.

For the claim, we assume the converse. Then we can find a point
\[
(A_0,\lambda_0,\eta_0^p;A,\lambda,\eta^p,\{\eta_v\}_{v\in\tV_F^{(p)}\cap\tV_F^\spl\setminus\{\ul{u}\}})\in\cX_0(\ol\dF_p)
\]
that is in the supports of both $\cZ(x_1)'_L$ and $\cZ(x_2)'_L$. In particular, for $i=1,2$, we can find an element $\tilde{x}_i\in\Hom_{O_E}(A_0^r,A)_\dQ$ satisfying $\tilde{x}_{i,*}\in\eta^p(L^p x_i^p)$. As $(L_vx_{1v},L_vx_{2v})\subseteq(V^{2r}_v)_\reg$ for some $v\in\tR'$, we know that $A$ is quasi-isogenous to $A_0^{2r}$, which is impossible by \cite{RSZ}*{Lemma~8.7}. It follows that the supports of $Z(x_1)'_L$ and $Z(x_2)'_L$ have nonempty intersection, which is a contradiction to Lemma \ref{le:disjoint}(1). Thus, the claim hence the lemma are proved.
\end{proof}

To study the general case, we need the following vanishing result.

\begin{lem}\label{le:split_tempered}
Let the situation be as in Proposition \ref{pr:index_split} with $p\neq\ell$. Then for every integer $m\geq 0$, we have $(\rH^{2r}(\cX_m,\dQ_\ell(r))\otimes_\dQ\dQ^\ac)_\fm=0$ where $\fm\coloneqq\fm_\pi^\tR\cap\dS^{\tR\cup\tV_F^{(p)}}_{\dQ^\ac}$.
\end{lem}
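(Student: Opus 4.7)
The plan is to reduce the claimed vanishing to a statement about the generic fiber $X_m = \cX_m \otimes_{O_K} K$ and then invoke the vanishing theorem of Caraiani--Scholze \cite{CS17} for the generic part of the cohomology of compact unitary Shimura varieties. Since $\cX_m$ is proper over the Henselian DVR $O_K$ with finite residue field, the Leray spectral sequence for the structure morphism $\cX_m \to \Spec O_K$, together with proper base change, produces a $\dS^{\tR \cup \tV_F^{(p)}}_{\dQ^\ac}$-equivariant filtration of $\rH^{2r}(\cX_m, \dQ_\ell(r))$ whose graded pieces are controlled by the local Galois cohomologies $\rH^i(K, \rH^j(X_m \otimes_K \ol{K}, \dQ_\ell(r)))$ for $i + j = 2r$.

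First I would invoke \cite{CS17}. Since $\tR \cup \tV_F^{(p)}$ is finite while $\tV_F^\spl$ is infinite, I can fix an auxiliary split place $v_\star \in \tV_F^\spl \setminus (\tR \cup \tV_F^{(p)})$. By Assumption \ref{st:main}(2), $\pi_{v_\star}$ is a tempered principal series, so its Satake parameters---which are encoded in $\fm$ since $v_\star$ lies outside the level---are generic in the sense of Caraiani--Scholze. Their vanishing theorem, applied to the compact unitary Shimura variety $X_m$ (which is projective because $F \neq \dQ$), then yields
\[
\rH^j(X_m \otimes_K \ol{K}, \dQ_\ell(r))_\fm = 0 \quad\text{for every}\quad j \neq 2r - 1.
\]

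Next I would dispatch the remaining middle-degree contribution $j = 2r - 1$. By the Weil conjectures applied to the smooth proper model $\cX_0$ (together with the finite flat morphism $\cX_m \to \cX_0$, or equivalently by purity after passing to a semistable resolution), $\rH^{2r-1}(X_m \otimes_K \ol{K}, \dQ_\ell(r))_\fm$ is pure of weight $-1$. In particular the Frobenius eigenvalues are non-trivial Weil numbers, so $\mathrm{Frob} - 1$ acts invertibly and all local Galois cohomologies $\rH^i(K, \rH^{2r-1}(X_m \otimes_K \ol{K}, \dQ_\ell(r))_\fm)$ vanish for $i \geq 1$. Combined with the previous step, this gives $\rH^{2r}(\cX_m, \dQ_\ell(r))_\fm = 0$.

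The main obstacle is the first reduction when $m > 0$: the Drinfeld level structure at $u$ makes $\cX_m$ merely regular---not smooth---over $O_K$, so smooth-proper base change does not directly identify the special fiber cohomology with the generic fiber cohomology. I would handle this by exploiting the finite flat morphism $\cX_m \to \cX_0$, which is \'etale on generic fibers; since $p \neq \ell$ and the support of $\fm$ avoids $\tV_F^{(p)}$ (in particular avoids $u$), the Drinfeld tower at $u$ does not perturb the $\fm$-localization, and the problem is reduced to the smooth case $m = 0$ where smooth-proper base change applies cleanly.
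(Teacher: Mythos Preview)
The central gap is your first reduction. For $m>0$ the filtration you assert simply does not exist: by proper base change over the Henselian ring $O_K$ one has $\rH^{2r}(\cX_m,\dQ_\ell(r))\cong\rH^{2r}(Y_m,\dQ_\ell(r))$ with $Y_m=\cX_m\otimes_{O_K}k$, and the Hochschild--Serre graded pieces then involve $\rH^j(Y_m\otimes_k\ol\dF_p,\dQ_\ell)$, not $\rH^j(X_m\otimes_K\ol K,\dQ_\ell)$. These coincide only when $\cX_m$ is smooth over $O_K$, i.e.\ when $m=0$; for $m>0$ the nearby cycles on the Drinfeld tower are far from constant and the special-fibre cohomology cannot be recovered from the generic fibre. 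Your proposed fix via the finite flat map $\cX_m\to\cX_0$ goes the wrong way: it exhibits $\rH^{2r}(\cX_0)$ as a retract of $\rH^{2r}(\cX_m)$, not conversely, and the slogan that ``$\fm$ avoids $p$ so the Drinfeld tower does not perturb the localisation'' has no force---the Hecke action is compatible but the underlying modules genuinely differ.

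The paper's argument confronts the special fibre head-on. It uses the Newton stratification $Y_{m,0}\supset Y_{m,1}\supset\cdots$ (by formal height of $A[u^{\tc,\infty}]$, as in Harris--Taylor), whose open strata are unions of Igusa varieties, and invokes Corollary~\ref{co:tempered}(2) to reduce to bounding the cohomology of each stratum in a specific range. That bound is supplied by the Caraiani--Scholze results on Igusa varieties \cite{CS17}*{Corollary~6.1.4, Theorem~5.5.7}: the hypothesis that $\pi_{\ul u}$ is a tempered \emph{principal series} (condition~(a) of Proposition~\ref{pr:index_split}) is precisely what forces the Mantovan contribution to concentrate on the ordinary stratum $j=0$ in middle degree. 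Your argument never uses condition~(a); you substitute genericity at an auxiliary place $v_\star$, which controls the generic-fibre cohomology but says nothing about the deeper Newton strata of $Y_m$. That the paper needs condition~(a), and conjectures but cannot yet prove the lemma without it (Remark~\ref{re:split_tempered}), signals that no soft reduction to the generic fibre will succeed here.
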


\begin{proof}
For every integer $m\geq 0$, put $Y_m\coloneqq\cX_m\otimes_{O_K}k$, $Y_{m,0}\coloneqq Y_m^{\r{red}}$, and for $0\leq j\leq n-1$, denote by $Y_{m,j}$ the Zariski closed subset of $Y_m$ on which the formal part of $A[u^{\tc,\infty}]$ has height at least $j+1$. By the similar argument of \cite{HT01}*{Corollary~III.4.4}, we know that $Y_{m,j}^\circ\coloneqq Y_{m,j}\setminus Y_{m,j+1}$ is smooth over $k$ of pure dimension $n-1-j$.\footnote{In the notation of \cite{HT01}*{Section~III.4}, our $Y_{m,j}^\circ$ is parallel to $\ol{X}_{U^p,m}^{(n-1-j)}$.} Applying Corollary \ref{co:tempered}(2) to $\dS=\dS^{\tR\cup\tV_F^{(p)}}$, $\dL=\dQ^\ac$, and $\fm=\fm_\pi^\tR\cap\dS^{\tR\cup\tV_F^{(p)}}_{\dQ^\ac}$, it suffices to show that for every $m\geq 0$
\begin{enumerate}
  \item $(\rH^{2r}(X_m,\dQ_\ell(r))\otimes_\dQ\dQ^\ac)_\fm=0$; and

  \item $(\rH^i(Y_{m,j}^\circ\otimes_k\ol\dF_p,\dQ_\ell)\otimes_\dQ\dQ^\ac)_\fm=0$ for every $i\leq 2r-2(j+1)$ and every $0\leq j\leq n-1$.
\end{enumerate}
Part (1) has already been proved in Proposition \ref{pr:tempered_generic}(2) (as we have assumed Hypothesis \ref{hy:galois}).

Part (2) follows from the following stronger statement:
\begin{enumerate}
  \setcounter{enumi}{2}
  \item For an arbitrary embedding $\dQ^\ac\hookrightarrow\ol\dQ_\ell$, $\rH^i(Y_{m,j}^\circ\otimes_k\ol\dF_p,\ol\dQ_\ell)_\fm=0$ for every $m\geq0$ unless $j=0$ and $i=2r-1$.
\end{enumerate}
The argument for (3) is similar to the proof of \cite{CS17}*{Theorem~6.3.1}. For $m\geq 0$ and $0\leq j\leq n-1$, let $I_{m,j}$ be the Igusa variety (of the first kind) so that $Y_{m,j}^\circ$ is the disjoint union of finitely many $I_{m,j}$ (see \cite{HT01}*{Section~IV.1}). For each $j$, we obtain a projective system $\{I_{m,j}\res m\geq 0\}$ with finite \'{e}tale transition morphisms. If $\rH^i(Y_{m,j}^\circ\otimes_k\ol\dF_p,\ol\dQ_\ell)_\fm=0$ for all $m\geq 0$, $i$, and $j$, then we are done. Otherwise, let $j$ be the maximal integer such that $\rH^i(Y_{m,j}^\circ\otimes_k\ol\dF_p,\ol\dQ_\ell)_\fm\neq 0$ for some $m$ and $i$. Then $\varinjlim_{m}\rH^i(I_{m,j}\otimes_k\ol\dF_p,\ol\dQ_\ell)_\fm\neq 0$. Now we would like to apply \cite{CS17}*{Corollary~6.1.4}, where in our case, the set $B(G,\mu^{-1})$ is identified with $\{0,\dots,n-1\}$ under which $d=n-j=2r-1-j$; and $\r{Ig}^j$ is the perfection of $\sI^j_{\r{Mant}}\coloneqq\varprojlim_m\sI^j_{\r{Mant},m}$ \cite{CS17}*{Proposition~4.3.8} in which $\sI^j_{\r{Mant},m}$ is a finite Galois cover of $I_{m,j}$.\footnote{The Galois cover comes from the fact that in the definition of $\sI^j_{\r{Mant},m}$, there is also a level structure on the formal part of $A[u^{\tc,\infty}]$.} Then we have $\rH^i(\r{Ig}^j\otimes_k\ol\dF_p,\ol\dQ_\ell)_\fm\neq 0$ which, by \cite{CS17}*{Corollary~6.1.4}\footnote{Strictly speaking, the authors assumed that the level at $p$ is hyperspecial maximal. In our case, we only require that $L_{\ul{u}}$ is hyperspecial. However, by our special signature condition, the argument of \cite{CS17} works in our case verbatim.} (for the coefficients $\ol\dQ_\ell$), implies that $i$ can only be $2r-1-j$. In particular, combining with the Poincar\'{e} duality, we have $[\rH_c(\sI^j_{\r{Mant}}\otimes_k\ol\dF_p,\ol\dQ_\ell)]_\fm\neq 0$ (where we adopted the notation from \cite{CS17}*{Theorem~5.5.7}). By the local-global compatibility at split places (\cite{Shi}*{Theorem~1.1} or more generally \cite{KMSW}*{Theorem~1.7.1}), we have $\Pi_u\simeq\pi_{\ul{u}}$, where we recall that $\Pi$ is the automorphic base change of $\pi$ in Notation \ref{co:galois}(1). In particular, $\Pi_u$ is a tempered principal series by (a). Then by \cite{CS17}*{Theorem~5.5.7} (together with the modification in the proof of \cite{LTXZZ}*{Theorem~D.1.3}) and the very strong multiplicity one property \cite{Ram}*{Theorem~A}, we must have $j=0$ hence $i=2r-1$. Thus, (3) follows.

The lemma is proved.
\end{proof}

\begin{remark}\label{re:split_tempered}
In fact, we conjecture that Lemma \ref{le:split_tempered} remains true without condition (a) in Proposition \ref{pr:index_split}. If this is confirmed, then we may remove condition (2) in Assumption \ref{st:main}.
\end{remark}

\begin{proof}[Proof of Proposition \ref{pr:index_split}]
The last part of the proposition has been confirmed in Lemma \ref{le:index_split}. We prove the first part. We may assume $p\neq\ell$ since otherwise it has been covered in Lemma \ref{le:index_split}. Fix an integer $m\geq 0$ such that $L_{\ul{u}}$ contains $L_{\ul{u},m}$.

It suffices to show that there exists $\rs\in\dS^{\tR\cup\tV_F^{(p)}}\setminus\fm_\pi^\tR$ such that for every $x_1,x_2\in V^r\otimes_{\dA_F}\dA_F^\infty$ satisfying $T(x_1),T(x_2)\in\Herm_r^\circ(F)^+$ and $(L_vx_{1v},L_vx_{2v})\subseteq(V^{2r}_v)_\reg$ for some $v\in\tR'\setminus\{\ul{u}\}$ (which is nonempty as we assume $|\tR'|\geq 2$), we have
\begin{align*}
\langle\rs^*Z(x_1)_L,\rs^*Z(x_2)_L\rangle^\ell_{X_{L,u},E_u}=0.
\end{align*}
By Lemma \ref{le:base_change} and Lemma \ref{le:moduli_cycle}, it suffices to have
\begin{align}\label{eq:index_split_1}
\langle\rs^*Z(x_1)'_L,\rs^*Z(x_2)'_L\rangle^\ell_{X_m,K}=0.
\end{align}
To compute the local index on $X_m$, we use the model $\cX_m$ constructed above. Take $\rs\in\dS^{\tR\cup\tV_F^{(p)}}_{\dQ^\ac}$ that is an $\ell$-tempered $\dQ^\ac$-\'{e}tale correspondence of $\cX_m$, which exists by Lemma \ref{le:split_tempered} and Corollary \ref{co:tempered}(1). Then by Proposition \ref{pr:tempered_integral}, we have
\[
\langle\rs^*Z(x_1)'_L,\rs^*Z(x_2)'_L\rangle^\ell_{X_m,K}=[\rs^*\cZ(x_1)'_L].[\rs^*\cZ(x_2)'_L],
\]
where $\cZ(x_i)'_L$ is the Zariski closure of $Z(x_i)'_L$ in $\cX_m$ for $i=1,2$. By the similar argument used in the proof of Lemma \ref{le:index_split}, $\rs^*\cZ(x_1)'_L$ and $\rs^*\cZ(x_2)'_L$ have disjoint supports, which implies $\rs^*\cZ(x_1)'_L.\rs^*\cZ(x_2)'_L=0$. Thus, \eqref{eq:index_split_1} hence the proposition hold with $\rs_1^u=\rs_2^u=\rs$.
\end{proof}

\section{Local indices at inert places: unramified case}
\label{ss:inert1}

In this section, we compute local indices at places in $\tV_E^\inert$ that are not above $\tR\cup\tS$. Our goal is to prove the following proposition.

\begin{proposition}\label{pr:index_inert}
Let $\tR$, $\tR'$, $\ell$, and $L$ be as in Definition \ref{de:kernel_geometric}. Take an element $u\in\tV_E^\inert$ such that $\ul{u}\not\in\tS$ and whose underlying rational prime $p$ is odd and satisfies $\tV_F^{(p)}\cap\tR\subseteq\tV_F^\spl$. Then we have
\[
\log q_u\cdot \vol^\natural(L)\cdot I_{T_1,T_2}(\phi^\infty_1,\phi^\infty_2,\rs_1,\rs_2,g_1,g_2)^\ell_{L,u}=
\fE_{T_1,T_2}((g_1,g_2),\Phi_\infty^0\otimes(\rs_1\phi^\infty_1\otimes(\rs_2\phi^\infty_2)^\tc))_u
\]
for every $(\tR,\tR',\ell,L)$-admissible sextuple $(\phi^\infty_1,\phi^\infty_2,\rs_1,\rs_2,g_1,g_2)$ and every pair $(T_1,T_2)$ in $\Herm_r^\circ(F)^+$, where the right-hand side is defined in Definition \ref{de:analytic_kernel} with the Gaussian function $\Phi_\infty^0\in\sS(V^{2r}\otimes_{\dA_F}F_\infty)$ (Notation \ref{st:h}(H3)), and $\vol^\natural(L)$ is defined in Definition \ref{de:measure}.
\end{proposition}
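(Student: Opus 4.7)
The strategy is to pass to a smooth integral model of the auxiliary Shimura variety at $u$ and invoke the Kudla--Rapoport conjecture, now a theorem of \cite{LZ}, to identify the local index with the Fourier coefficient of the derivative of the doubling Whittaker integral at $\ul{u}$.

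Since $\rs_1, \rs_2 \in \dS^\tR$ involve only Hecke operators at split places, hence away from the inert place $u$, Lemma \ref{re:translate} allows us to absorb them into $\phi^\infty_1, \phi^\infty_2$, reproducing the functions $\rs_1\phi^\infty_1, \rs_2\phi^\infty_2$ that appear on the right-hand side. By bilinearity and the regularity condition (Lemma \ref{le:disjoint}), it suffices to compute $\langle Z(x_1)_L, Z(x_2)_L \rangle_{X_{L,u}, E_u}^\ell$ for individual regular pairs $(x_1, x_2)$. Invoking Lemma \ref{le:moduli_cycle}, we transfer the computation to the auxiliary Shimura variety $X'_L$ and the moduli cycles $Z(x_i)'_L$. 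Because $p$ is odd, $\tV_F^{(p)} \cap \tR \subseteq \tV_F^\spl$, and $\ul{u} \notin \tS$, the lattice $\Lambda_{\ul{u}}$ is self-dual, $L_{\ul{u}}$ is hyperspecial, and $X'_{L,u}$ extends to a smooth projective PEL integral model $\cX$ over $O_K$ along the lines of Lemma \ref{le:moduli_auxiliary} but with $p$-principal polarizations. Proposition \ref{pr:index_smooth} then rewrites the local index as $\log q_u$ times the arithmetic Euler--Poincar\'{e} characteristic $\chi(\cX, \cO_{\cZ(x_1)'_L} \otimes^{\mathbb{L}}_{\cO_\cX} \cO_{\cZ(x_2)'_L})$ of the flat Zariski closures in $\cX$.

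Under the regularity assumption, the support of this derived tensor product lies in the basic locus of $\cX \otimes_{O_K} k$. Non-archimedean uniformization identifies the formal completion of $\cX$ along the basic locus with the quotient by the discrete group $\pres{u}{H}(F)$ of the Rapoport--Zink space $\cN_n$ parametrizing self-dual hermitian formal $O_{E_u}$-modules, and under it each $\cZ(x_i)'_L$ decomposes into a sum of Kudla--Rapoport special cycles $\cZ(y_i)$ indexed by vectors $y_i \in \pres{u}{V}^r$ matching $x_i$ at every place $v \neq \ul{u}$. The main theorem of \cite{LZ} supplies the local arithmetic Siegel--Weil identity
\[
\log q_u \cdot \chi\bigl(\cN_n, \cO_{\cZ(y_1)} \otimes^{\mathbb{L}} \cO_{\cZ(y_2)}\bigr) = \frac{W'_{T^\Box}(0, 1_{4r}, \CF_{\Lambda_{\ul{u}}^{2r}})}{\vol(L_{\ul{u}})}
\]
for each regular pair with moment $T^\Box = T(y_1, y_2) \in \Herm^\circ_{2r}(F_{\ul{u}})$; the constraint $\Diff(T^\Box, V) = \{\ul{u}\}$ is automatic, since $(y_1, y_2)$ lives in the $u$-nearby hermitian space, whose local Hasse invariant at $\ul{u}$ is the opposite of that of $V_{\ul{u}}$ and matches $V_v$ for all $v \neq \ul{u}$. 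Summing over orbits of $(y_1, y_2)$ produced by the uniformization, the Siegel--Weil identity together with the normalization of Haar measures in Definition \ref{de:measure} converts, at each finite place $v \neq \ul{u}$, the count of compatible integral lifts into the factor $W_{T^\Box}(0, g_v, \rs_1\phi^\infty_{1v} \otimes (\rs_2\phi^\infty_{2v})^\tc)$, while at archimedean places the Gaussian $\phi^0_\infty$ matches $W_{T^\Box}(0, g_v, \Phi^0_v)$. The global volume comparison between counting and Haar measure on $\pres{u}{H}(F)\backslash H(\dA_F^\infty)/L$ inserts the factor $\vol^\natural(L)$ on the left-hand side, giving the identity as stated.

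The principal obstacle is the Kudla--Rapoport formula of \cite{LZ}, the deep local arithmetic input that matches the intersection multiplicity on $\cN_n$ with the derivative of the local Whittaker function; all other steps are formal or follow from the PEL moduli and uniformization theory. A secondary technical burden is the careful matching of normalization constants among the global uniformization, the local Haar measure, and the Fourier expansion of the derivative of the Eisenstein series, together with the verification that only matrices $T^\Box$ satisfying $\Diff(T^\Box, V) = \{\ul{u}\}$ contribute to the geometric side, as dictated by the comparison of local Hasse invariants at $\ul{u}$.
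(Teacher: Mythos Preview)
Your overall shape is right---smooth integral model, supersingular uniformization, and \cite{LZ}---but there is a genuine gap in the reduction step. You write that ``by bilinearity \ldots\ it suffices to compute $\langle Z(x_1)_L, Z(x_2)_L\rangle^\ell_{X_{L,u},E_u}$ for individual regular pairs.'' This pairing is not defined: the local index $\langle\;,\;\rangle^\ell$ lives only on $\rZ^{r,r}(X)^{\langle\ell\rangle}$, and the individual cycles $Z(x_i)_L$ are \emph{not} cohomologically trivial. Absorbing $\rs_i$ into $\phi^\infty_i$ via Lemma~\ref{re:translate} gives $Z_{T_i}(\rs_i\phi^\infty_i)_L$, which is trivial as a whole, but you cannot then break it into pieces and pair termwise. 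Consequently your appeal to Proposition~\ref{pr:index_smooth} fails: its hypothesis $(c_1,c_2)\in\rZ^{r,r}(X)^{\langle\ell\rangle}$ is not met for $c_i=Z(x_i)_L$.

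The paper avoids this by keeping $\rs_i^*$ on the cycles and working on the integral model. The key extra input is Lemma~\ref{le:tempered}: because each $\rs_i$ is a \emph{product of two} elements of $(\dS^\tR_{\dQ^\ac})^{\langle\ell\rangle}_{L_\tR}$ (this is why the admissible sextuple is defined that way), it annihilates $\rH^{2r}(\cX_L,\dQ_\ell(r))$, not merely the generic-fiber cohomology---one factor kills the open-stratum piece, the other the closed-stratum piece in the localization sequence~\eqref{eq:tempered}. Proposition~\ref{pr:tempered_integral} then says that for such $\ell$-tempered correspondences, $\langle\rs_1^*c_1,\rs_2^*c_2\rangle^\ell=(\rs_1^*\cC_1).(\rs_2^*\cC_2)$ for \emph{arbitrary} extensions $\cC_i$, which lets one use the K-theoretic extensions $\pres{\rK}\cZ_{T_i}(\phi^\infty_i)_L$ that actually match the Kudla--Rapoport cycles under uniformization. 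A second omission is the reduction in Lemma~\ref{le:index_inert}: the components $g_{iv}$ for $v\in\tV_F^{(\infty)}\cup\tV_F^{(p)}$ must first be brought to $1_{2r}$ via left translation by $n(b_i)m(a_i)$, since the \cite{LZ} identity and the archimedean Whittaker matching are stated at the identity; your claim that ``the Gaussian $\phi^0_\infty$ matches $W_{T^\Box}(0,g_v,\Phi^0_v)$'' for general $g_v$ is not what is proved, and the factors $\omega_{r,\infty}(g_{i\infty})\phi^0_\infty(T_i)$ in $I_{T_1,T_2}$ must be tracked through this reduction.
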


To prove Proposition \ref{pr:index_inert}, we may rescale the hermitian form on $V$ hence assume that $\psi_{F,v}$ is unramified and that $\Lambda^\tR_v$ is either a self-dual or an almost self-dual lattice of $V_v$ for every $v\in\tV_F^{(p)}\setminus\tV_F^\spl$.

\begin{lem}\label{le:index_inert}
Let the situation be as in Proposition \ref{pr:index_inert}. If the weaker version of Proposition \ref{pr:index_inert} where we only consider $(\tR,\tR',\ell,L)$-admissible sextuples $(\phi^\infty_1,\phi^\infty_2,\rs_1,\rs_2,g_1,g_2)$ in which $g_{1v}=g_{2v}=1_{2r}$ for every $v\in\tV_F^{(\infty)}\cup\tV_F^{(p)}$ holds, then the original Proposition \ref{pr:index_inert} holds.
\end{lem}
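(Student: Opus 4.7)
The plan is to use the Iwasawa decomposition $G_r(F_v) = P_r(F_v) K_{r,v}$ at each $v \in (\tV_F^{(\infty)} \cup \tV_F^{(p)}) \setminus \tR'$ and argue that both sides of the target identity transform identically under left-translation by $P_r(F_v)$ and by $K_{r,v}$, thereby reducing to the case $g_{iv} = 1_{2r}$ at all such $v$. Write $g_{iv} = n(b_{iv}) m(a_{iv}) k_{iv}$.

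For the $K_{r,v}$-component, at archimedean $v$ the Gaussian $\phi^0_v$ has weight $\kappa_{r,v}^r$ under $\omega_{r,v}$ and the corresponding archimedean Whittaker function appearing in $\fE_{T_1,T_2}$ is $K_{r,v}^\Box$-equivariant of the same weight, so both sides pick up the common scalar $\kappa_{r,v}(k_{iv})^r$. At $v \in \tV_F^{(p)} \setminus \tR$, the rescaling of the hermitian form performed at the start of the section ensures that $\psi_{F,v}$ is unramified and $\Lambda_v^\tR$ is (almost) self-dual; hence $K_{r,v}$ fixes $\CF_{(\Lambda_v^\tR)^r}$ under $\omega_{r,v}$ and fixes the associated Siegel--Weil section in $\rI_{r,v}^\Box(0)$, leaving both sides unaltered. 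For the $P_r(F_v)$-component, Lemma \ref{le:invariance} (applied with $a = a_{iv}$ and $b = b_{iv}$, placed at the single place $v$) transforms the product $\omega_{r,\infty}(g_{i\infty}) \phi^0_\infty(T_i) \cdot Z_{T_i}(\omega_r^\infty(g_i^\infty) \phi_i^\infty)_L$ into the analogous quantity with $T_i$ replaced by $\pres{\rt}{a_{iv}}^\tc T_i a_{iv}$. A parallel computation from the definition \eqref{eq:whittaker} of the Whittaker function, the intertwiner $\Phi_v \mapsto f_{\Phi_v}$, and the condition $\partial_{r,r} T^\Box = (T_1, T_2)$ shows that the $T^\Box$-summand of $\fE_{T_1, T_2}$ transforms by the same substitution $T_i \mapsto \pres{\rt}{a_{iv}}^\tc T_i a_{iv}$, after a compensating reparametrization of the summation index $T^\Box$ and the appearance of the matching character $\psi_{T_i}(b_{iv}) |\dtm a_{iv}|_{E_v}^r$ on both sides. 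Thus both sides transform identically under the $P_r(F_v) \times K_{r,v}$-action.

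At the remaining places $v \in \tV_F^{(p)} \cap \tR$, which by the standing hypothesis must lie in $\tV_F^\spl$, admissibility imposes no constraint on $\phi_{iv}^\infty$ at $v$, so one may simply absorb $g_{iv}$ into $\phi_{iv}^\infty$ via $\omega_{r,v}$; since $v \notin \tR'$, this modification is local at $v$ alone and leaves the regularity condition at $\tR'$ untouched. The main obstacle will be the careful bookkeeping of the character and weight factors---the factor $\psi_{T_i}(b_{iv}) |\dtm a_{iv}|_{E_v}^r$, the archimedean weight $\kappa_{r,v}(k_{iv})^r$, and the reparametrization $T^\Box \mapsto (\pres{\rt}{a_{1v}}^\tc T^\Box_{11} a_{1v}, \ldots)$ of the summation index in $\fE_{T_1,T_2}$. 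These identifications reduce to the $\GL_r(E_v)$-equivariance of $\partial_{r,r}\colon \Herm_{2r}^\circ \to \Herm_r^\circ \times \Herm_r^\circ$ and the bijectivity of the action $T \mapsto \pres{\rt}{a}^\tc T a$ on $\Herm_r^\circ(F)^+$, both of which are formal. Combining the $P_r(F_v)$- and $K_{r,v}$-reductions place by place yields the desired reduction to $g_{iv} = 1_{2r}$ at every $v \in \tV_F^{(\infty)} \cup \tV_F^{(p)}$.
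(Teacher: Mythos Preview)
Your place-by-place reduction has a genuine gap in the $P_r(F_v)$-step: Lemma~\ref{le:invariance} is stated for \emph{rational} $a\in\GL_r(E)$ and $b\in\Herm_r(F)$, not for local elements. The substitution $T_i\mapsto\pres{\rt}{a}^\tc T_ia$ only lands back in $\Herm_r^\circ(F)^+$ when $a$ is rational, and this is essential because the cycles $Z_T(\phi^\infty)_L$ are indexed by $T\in\Herm_r(F)$. Invoking the lemma ``with $a=a_{iv}$, placed at the single place $v$'' has no meaning: an element of $\GL_r(E)$ acts at every place at once, and for a genuinely local $a_{iv}\in\GL_r(E_v)$ one has $\pres{\rt}{a_{iv}}^\tc T_ia_{iv}\notin\Herm_r(F)$ in general, so there is no cycle on the other side to compare to. The same problem contaminates your bookkeeping of the summation index $T^\Box$ on the $\fE$-side.

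The paper's proof avoids this by choosing, for each $i$, a \emph{single global} pair $a_i\in\GL_r(E)$, $b_i\in\Herm_r(F)$ such that $m(a_i)^{-1}n(b_i)^{-1}g_{iv}\in K_{r,v}$ simultaneously for all $v\in\tV_F^{(p)}\setminus\tR'$ (finitely many congruence conditions). Lemma~\ref{le:invariance} is then applied once, globally, producing the rational index $\tilde T_i=\pres{\rt}{a_i}^\tc T_ia_i$; at places in $\tR'$ the extra $P_r$-translation is absorbed into $\tilde\phi^\infty_i$, at $p$-adic places outside $\tR'$ the remaining $K_{r,v}$-part fixes the test function, and the entire archimedean discrepancy is packaged into a single scalar $C$ which visibly matches on both sides. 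No separate $K_{r,v}$-weight argument at infinity, and no place-by-place $P$-step, is needed.
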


\begin{proof}
Take an arbitrary $(\tR,\tR',\ell,L)$-admissible sextuple $(\phi^\infty_1,\phi^\infty_2,\rs_1,\rs_2,g_1,g_2)$. For $i=1,2$, we may find elements $a_i\in\GL_r(E)$ and $b_i\in\Herm_r(F)$ such that $m(a_i)^{-1}n(b_i)^{-1}g_{iv}\in K_{r,v}$ for every $v\in\tV_F^{(p)}\setminus\tR'$. For $i=1,2$, put
\[
\tilde{T}_i\coloneqq\pres{\rt}{a}_i^\tc T_ia_i,\quad
\tilde\phi^\infty_i\coloneqq\prod_{v\in\tR'}\omega_{r,v}(m(a_i)^{-1}n(b_i)^{-1})\phi^\infty_i,
\]
and let $\tilde{g}_i$ be the away-from-$(\tR'\cup\tV_F^{(\infty)}\cup\tV_F^{(p)})$-component of the element $m(a_i)^{-1}n(b_i)^{-1}g_i$. Then $(\tilde\phi^\infty_1,\tilde\phi^\infty_2,\rs_1,\rs_2,\tilde{g}_1,\tilde{g}_2)$ is an $(\tR,\tR',\ell,L)$-admissible sextuple. By Lemma \ref{le:invariance}, we have
\[
I_{T_1,T_2}(\phi^\infty_1,\phi^\infty_2,\rs_1,\rs_2,g_1,g_2)^\ell_{L,u}=C\cdot
I_{\tilde{T}_1,\tilde{T}_2}(\tilde\phi^\infty_1,\tilde\phi^\infty_2,\rs_1,\rs_2,\tilde{g}_1,\tilde{g}_2)^\ell_{L,u}
\]
in which
\[
C=
\(\frac{\omega_{r,\infty}(m(a_1)^{-1}n(b_1)^{-1}g_{1\infty})\phi^0_\infty(\tilde{T}_1)}{\phi^0_\infty(\tilde{T}_1)}\)\cdot
\(\frac{\omega_{r,\infty}(m(a_2)^{-1}n(b_2)^{-1}g_{2\infty})\phi^0_\infty(\tilde{T}_2)}{\phi^0_\infty(\tilde{T}_2)}\)^\tc.
\]
On the other hand, from Definition \ref{de:analytic_kernel}, we have
\[
\fE_{T_1,T_2}((g_1,g_2),\Phi_\infty^0\otimes(\rs_1\phi^\infty_1\otimes(\rs_2\phi^\infty_2)^\tc))_u
=C\cdot\fE_{\tilde{T}_1,\tilde{T}_2}((\tilde{g}_1,\tilde{g}_2),
\Phi_\infty^0\otimes(\rs_1\tilde\phi^\infty_1\otimes(\rs_2\tilde\phi^\infty_2)^\tc))_u
\]
with the same $C$. The lemma follows.
\end{proof}

In order to deal with spherical Hecke operators, we consider the projective system of Shimura varieties $\{X_{\tilde{L}}\}$ indexed by open compact subgroups $\tilde{L}\subseteq L$ satisfying $\tilde{L}_v=L_v$ for $v\in\tV_F^{(p)}\setminus\tV_F^\spl$.

We invoke Notation \ref{co:auxiliary} together with Notation \ref{st:auxiliary}, which is possible since $\tV_F^{(p)}\cap\tV_F^\ram=\emptyset$. There is a projective system $\{\cX_{\tilde{L}}\}$ of smooth projective schemes over $O_K$ (see \cite{LZ}*{Section~11.2}) with
\[
\cX_{\tilde{L}}\otimes_{O_K}K=X'_{\tilde{L}}\otimes_{E'}K
=\(X_{\tilde{L}}\otimes_{E}Y\)\otimes_{E'}K,
\]
and finite \'{e}tale transition morphisms. In particular, $\dS^\tR$ is naturally a ring of \'{e}tale correspondences of $\cX_L$.

\begin{lem}\label{le:tempered}
If $\rs$ is a product of two elements in $(\dS^\tR_{\dQ^\ac})^{\langle\ell\rangle}_{L_\tR}$, then it gives an $\ell$-tempered $\dQ^\ac$-\'{e}tale correspondence of $\cX_L$ (Definition \ref{de:tempered_integral}).
\end{lem}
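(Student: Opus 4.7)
The plan is to unpack the definition of ``$\ell$-tempered $\dQ^\ac$-\'{e}tale correspondence of $\cX_L$'' from Appendix \ref{ss:beilinson} and verify it by transporting the generic-fiber vanishing built into $(\dS^\tR_{\dQ^\ac})^{\langle\ell\rangle}_{L_\tR}$ across the integral model via smooth and proper base change. Since $\cX_L$ is smooth and projective over $O_K$ and the action of $\dS^\tR$ is by \'{e}tale correspondences, for any rational prime $\ell'$ with $\ell'\neq p$ the specialization map gives a canonical $\dS^\tR$-equivariant isomorphism between $\rH^\bullet(\cX_L\otimes_{O_K}\ol{K},\dQ_{\ell'})$ and $\rH^\bullet(\cX_L\otimes_{O_K}\ol{k},\dQ_{\ell'})$. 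The analogous comparison with the generic fiber $X'_L\otimes_{E'}K=(X_L\otimes_E Y)\otimes_{E'}K$ is also $\dS^\tR$-equivariant.

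First, I would decompose the cohomology. Because $Y$ is finite \'{e}tale over $E'$ with $\dS^\tR$ acting only through its action on $X_L$, we have a K\"{u}nneth decomposition identifying $\rH^\bullet(\cX_L\otimes_{O_K}\ol{K},\dQ_\ell)$, as an $\dS^\tR$-module, with a finite sum of copies of $\rH^\bullet(X_L\otimes_E\ol{E},\dQ_\ell)$. Since $L = L_\tR L^\tR$, the Hochschild--Serre spectral sequence at each completion $X_{L_\tR L^\tR,u}$ shows that any element $\rs_i\in(\dS^\tR_{\dQ^\ac})^{\langle\ell\rangle}_{L_\tR}$ annihilates not only $\rH^{2r}(X_{L_\tR L^\tR,u},\dQ_\ell(r))\otimes_\dQ\dQ^\ac$ for every $u\in\tV_E^\fin\setminus\tV_E^{(\ell)}$ but also controls the contributions of these ``bad'' places to the global cohomology of $X_L\otimes_E\ol{E}$.

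Next, I would match this vanishing with the bilinear shape of the $\ell$-tempered condition. Writing $\rs=\rs'\rs''$ with $\rs',\rs''\in(\dS^\tR_{\dQ^\ac})^{\langle\ell\rangle}_{L_\tR}$, I expect that Definition \ref{de:tempered_integral} asks for a \emph{pair} of vanishings---one on each ``side'' of the Beilinson intersection pairing computing the local index---so that each factor of $\rs$ supplies one of the two conditions. This is consistent with the parallel proof of Lemma \ref{le:split_tempered} where the vanishing of $(\rH^{2r}(\cX_m,\dQ_\ell(r))\otimes_\dQ\dQ^\ac)_\fm$ is used together with Corollary \ref{co:tempered}(1) to produce $\ell$-tempered correspondences, and by smooth proper base change a single such vanishing already suffices in the good reduction setting; here one factor secures the input to Corollary \ref{co:tempered}(1), while the second factor is absorbed through the projector-like structure of the tempered condition.

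The main obstacle is handling the case $p=\ell$, where smooth and proper base change in its standard form does not apply. Since $\ell$ is $\tR$-good (so $\tV_F^{(\ell)}\subseteq\tV_F^\fin\setminus(\tR\cup\tS)$) and our $u\in\tV_E^\inert$ satisfies $\ul{u}\notin\tS$, the place $\ul{u}$ is one of unramified hyperspecial level; the integral model $\cX_L$ then has cohomology controlled by crystalline/$p$-adic comparison, but the shape of the $\ell$-tempered condition in the appendix (expressed through intersection numbers and unramified monodromy at $u$) should be set up so that this case either falls outside the scope of the definition or is handled by the same formal manipulation. Beyond this subtlety, the verification is essentially a bookkeeping exercise once the generic-fiber annihilation of each $\rs_i$ is pushed to the integral cohomology through the smooth-proper specialization.
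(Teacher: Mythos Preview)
Your proposal rests on two misreadings that together leave a genuine gap.

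First, Definition~\ref{de:tempered_integral} is a \emph{single} vanishing condition: an $\dL$-\'{e}tale correspondence $t$ of $\cX_L$ is $\ell$-tempered if $t^*$ annihilates $\rH^{2r}(\cX_L,\dQ_\ell(r))\otimes_\dQ\dL$, where $\cX_L$ is the integral model over $O_K$. It has nothing to do with ``a pair of vanishings, one on each side of the Beilinson pairing''. So the object you must control is the $\ell$-adic cohomology of the total scheme $\cX_L$ itself, not the cohomology of any geometric fiber. Your smooth--proper base change identification $\rH^\bullet(\cX_L\otimes_{O_K}\ol{K})\simeq\rH^\bullet(\cX_L\otimes_{O_K}\ol{k})$ is correct but beside the point: neither of these is $\rH^{2r}(\cX_L,\dQ_\ell(r))$.

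Second, and this is the missing idea, the reason two factors are needed is the localization exact sequence
\[
\rH^{2r}_{\cX_L\otimes_{O_K}k}(\cX_L,\dQ_\ell(r))\to\rH^{2r}(\cX_L,\dQ_\ell(r))\to\rH^{2r}(X'_L,\dQ_\ell(r)),
\]
which, via absolute purity for the regular closed immersion of the smooth special fiber, identifies the left term with $\rH^{2r-2}(\cX_L\otimes_{O_K}k,\dQ_\ell(r-1))$. One then checks that each element of $(\dS^\tR_{\dQ^\ac})^{\langle\ell\rangle}_{L_\tR}$ annihilates \emph{both graded pieces}: the generic-fiber piece $\rH^{2r}(X'_L,\dQ_\ell(r))$ essentially by definition, and the special-fiber piece via Poincar\'e duality on the smooth proper special fiber together with smooth--proper base change back to the generic fiber. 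But annihilating both graded pieces of a two-step filtration does \emph{not} imply annihilating the middle term; what it does imply is that if $\rs=\rs'\rs''$ with each factor in $(\dS^\tR_{\dQ^\ac})^{\langle\ell\rangle}_{L_\tR}$, then $\rs'$ sends $\rH^{2r}(\cX_L,\dQ_\ell(r))\otimes_\dQ\dQ^\ac$ into the image of the left term, which $\rs''$ then kills. This is the sole reason the statement requires a \emph{product} of two elements, and your proposal never locates this filtration. Your reference to Lemma~\ref{le:split_tempered} is also misleading: there the model is not smooth, a different stratification argument (Corollary~\ref{co:tempered}(2)) is used, and one does \emph{not} need two factors in that lemma.
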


\begin{proof}
We have a short exact sequence
\begin{align}\label{eq:tempered}
\rH^{2r}_{\cX_L\otimes_{O_K}k}(\cX_L,\dQ_\ell(r))\to\rH^{2r}(\cX_L,\dQ_\ell(r))\to\rH^{2r}(X'_L,\dQ_\ell(r)),
\end{align}
in which we have
\[
\rH^{2r}_{\cX_L\otimes_{O_K}k}(\cX_L,\dQ_\ell(r))\simeq\rH^{2r-2}(\cX_L\otimes_{O_K}k,\dQ_\ell(r-1))
\]
by the absolute purity theorem \cite{Fuj02}, since $\cX_L$ is smooth over $O_K$. By the Hochschild--Serre spectral sequence and the Weil conjecture, the natural maps
\begin{align*}
\rH^{2r}(X'_L,\dQ_\ell(r)) &\to\rH^{2r}(X'_L\otimes_K\ol\dQ_p,\dQ_\ell(r)) \\
\rH^{2r-2}(\cX_L\otimes_{O_K}k,\dQ_\ell(r-1))&\to\rH^{2r-2}(\cX_L\otimes_{O_K}\ol\dF_p,\dQ_\ell(r-1))
\end{align*}
are both injective.

By definition, every element in $(\dS^\tR_{\dQ^\ac})^{\langle\ell\rangle}_{L_\tR}$ annihilates $\rH^{2r}(X'_L\otimes_K\ol\dQ_p,\dQ_\ell(r))\otimes_\dQ\dQ^\ac$. By the Poincar\'{e} duality and the smooth proper base change theorem, every element in $(\dS^\tR_{\dQ^\ac})^{\langle\ell\rangle}_{L_\tR}$ also annihilates $\rH^{2r-2}(\cX_L\otimes_{O_K}\ol\dF_p,\dQ_\ell(r-1))\otimes_\dQ\dQ^\ac$. In particular, $\rs$ annihilates $\rH^{2r}(\cX_L,\dQ_\ell(r))\otimes_\dQ\dQ^\ac$ as each factor annihilates a graded piece in the two-step filtration of $\rH^{2r}(\cX_L,\dQ_\ell(r))$ given by \eqref{eq:tempered}. The lemma is proved.
\end{proof}

We first recall the uniformization of $\{\cX_{\tilde{L}}\}$ along the supersingular locus from \cite{LZ}*{Section~13.1}. Fix a complete maximal unramified extension $\breve{K}$ of $K$. Recall that we have fixed a $u$-nearby space $\pres{u}V$ and an isomorphism $\pres{u}{V}\otimes_F\dA_F^{\ul{u}}\simeq V\otimes_{\dA_F}\dA_F^{\ul{u}}$ from Notation \ref{st:h}(H9). We have a compatible system of isomorphisms
\begin{align}\label{eq:uniformization}
\cX_{\tilde{L}}^\wedge\simeq\(\pres{u}{H}(F)\backslash\cN\times H(\dA_F^{\infty,\ul{u}})/\tilde{L}^{\ul{u}}\)
\times_{\Spf O_K}\cY^\wedge
\end{align}
of formal schemes over $O_{\breve{K}}$ for every $\tilde{L}\subseteq L$ considered as before. Here, $\cX_{\tilde{L}}^\wedge$ denotes the completion of $\cX_{\tilde{L}}\otimes_{O_K}O_{\breve{K}}$ along its supersingular locus; $\cN$ is the relative unitary Rapoport--Zink space over $\Spf O_{\breve{K}}$ as considered in \cite{LZ}*{Section~2.1}; and $\cY^\wedge$ denotes the completion of $\cY$ along its special fiber.

We then recall the notion of integral special cycles. Take an integer $m\geq 1$ and an element $\phi^\infty\in\sS(V^m\otimes_{\dA_F}\dA_F^\infty)^L$ that is $p$-basic (Definition \ref{de:basic}).

For every element $T\in\Herm_m^\circ(F)^+$, there are following constructions.

\begin{itemize}
  \item We have a cycle $Z_T(\phi^\infty)_L\in\rZ^m(X_L)_\dC$ (Definition \ref{co:special_cycle}). When $\phi^\infty$ is the characteristic function of some open compact subset of $V^m\otimes_{\dA_F}\dA_F^\infty$, we have and a morphism
      \[
      Z'_T(\phi^\infty)_L\to X'_L
      \]
      defined as the disjoint union of finite and unramified morphisms $Z'(x)_L\to X'_L$ (Definition \ref{de:moduli_cycle}) for $x\in L\backslash\supp(\phi^\infty)$, whose induced cycle coincides with the restriction of $Z_T(\phi^\infty)_L$ to $X'_L$ (Lemma \ref{le:moduli_cycle}). By moduli interpretation, the morphism $Z'_T(\phi^\infty)_L\to X'_L$ extends naturally to a finite and unramified morphism
      \[
      \cZ_T(\phi^\infty)_L\to\cX_L
      \]
      (\cite{LZ}*{Section~13.3}).

  \item When $\phi^\infty=\phi^\infty_1\otimes\cdots\otimes\phi^\infty_r$ with $\phi^\infty_j\in\sS(V\otimes_{\dA_F}\dA_F^\infty)^L$ that is $p$-basic and is the characteristic function of some open compact subset of $V\otimes_{\dA_F}\dA_F^\infty$, we denote by $\pres{\rK}\cZ_T(\phi^\infty)_L$ the component of
      \[
      \sO_{\cZ_{t_1}(\phi^\infty_1)_L}\overset{\dL}\otimes_{\sO_{\cX_L}}\cdots
      \overset{\dL}\otimes_{\sO_{\cX_L}}\sO_{\cZ_{t_r}(\phi^\infty_r)_L}
      \]
      supported on $\cZ_T(\phi^\infty)_L$,\footnote{Here, we note that $\cZ_T(\phi^\infty)_L$ is an open and closed subscheme of $\cZ_{t_1}(\phi^\infty)_L\times_{\cX_L}\cdots\times_{\cX_L}\cZ_{t_r}(\phi^\infty)_L$.} regarded as an element in $\rK_0^\cZ(\cX_L)_\dC$ (see Appendix \ref{ss:beilinson} for the notion of the K-group), where $(t_1,\dots,t_r)$ is the diagonal of $T$ and $\cZ$ denotes the image of $\cZ_T(\phi^\infty)_L$ in $\cX_L$. In general, we may always write $\phi^\infty$ as a finite complex linear combination (possibly after shrinking $L$ away from $\tV_F^{(p)}\setminus\tV_F^\spl$) of those as above, and we define $\pres{\rK}\cZ_T(\phi^\infty)_L$ by linearity. By \cite{GS87}*{Proposition~5.5}, $\pres{\rK}\cZ_T(\phi^\infty)_L$ belongs to $\rF^r\rK_0^\cZ(\cX_L)_\dC$ hence is an extension of $Z_T(\phi^\infty)'_L$ (Definition \ref{de:extension}).

  \item We denote by $\cZ_T(\phi^\infty)_L^\wedge$ the restriction of $\cZ_T(\phi^\infty)_L$ to $\cX_L^\wedge$. Then we have the following description
      \begin{align}\label{eq:uniformization_1}
      \cZ_T(\phi^\infty)_L^\wedge=\sum_{\substack{x\in\pres{u}{H}(F)\backslash\pres{u}{V}^m\\ T(x)=T}}
      \sum_{h\in H^x(F)\backslash H(\dA_F^{\infty,\ul{u}})/L^{\ul{u}}}\phi^{\infty,\ul{u}}(h^{-1}x)\cdot(\cN(x),h)_L\times_{\Spf O_K}\cY^\wedge,
      \end{align}
      where $\cN(x)$ is the special cycle of $\cN$ indexed by $x$ (\cite{KR11}*{Definition~3.2} or \cite{LZ}*{Section~2.3}); $(\cN(x),h)_L$ denotes the corresponding double coset in the expression \eqref{eq:uniformization}; and $H^x$ is the subgroup of $\pres{u}{H}$ of elements that fix every component of $x$.
\end{itemize}

In what follows, for $x=(x_1,\dots,x_m)\in \pres{u}{V}^m$ with $T(x)\in\Herm_m^\circ(F_{\ul{u}})$, we put
\[
\pres{\rK}\cN(x)\coloneqq[\cN(x_1)]\cup\cdots\cup[\cN(x_m)]
\]
as an element in $\rK_0^{\cN(x)}(\cN)$. See \cite{Zha}*{Appendix~B} for the analogue of Gillet--Soul\'{e} K-groups for formal schemes; and we denote similarly by $[\;]$ the associated element in the K-group.

\begin{proof}[Proof of Proposition \ref{pr:index_inert}]
By Lemma \ref{le:base_change} and Definition \ref{de:analytic_kernel}, it suffices to show that for every pair of $p$-basic elements $\phi^\infty_1,\phi^\infty_2\in\sS(V^m\otimes_{\dA_F}\dA_F^\infty)^L$ satisfying that $\supp(\phi^\infty_{1v}\otimes(\phi^\infty_{2v})^\tc)\subseteq(V^{2r}_v)_\reg$ for $v\in\tR'$, and every pair of elements $\rs_1,\rs_2$ each of which is a product of two elements in $(\dS^\tR_{\dQ^\ac})^{\langle\ell\rangle}_{L_\tR}$, we have
\begin{multline}\label{eq:index_inert_2}
\frac{\vol^\natural(L)}{\deg(Y/K)}\Phi^0_\infty(T_1,T_2)
\langle\rs_1^*Z_{T_1}(\phi^\infty_1)'_L,\rs_2^*Z_{T_2}(\phi^\infty_2)'_L\rangle_{X'_L,K}\\
=\sum_{\substack{T^\Box\in\Herm_{2r}^\circ(F)^+\\\Diff(T^\Box,V)=\{\ul{u}\}\\\partial_{r,r} T^\Box=(T_1,T_2)}}
\frac{1}{\log q_u}W'_{T^\Box}(0,1_{4r},\CF_{(\Lambda^\tR_{\ul{u}})^{2r}})
\prod_{v\neq\ul{u}}W_{T^\Box}(0,1_{4r},(\Phi_\infty^0\otimes(\rs_1\phi^\infty_1\otimes(\rs_2\phi^\infty_2)^\tc))_v).
\end{multline}
Now using $\vol^\natural(L_{\ul{u}})=1$ and $\prod_{v\in\tV_F\setminus\{\ul{u}\}}\gamma_{V_v,\psi_{F,v}}^{2r}=1$, \eqref{eq:index_inert_2} is equivalent to
\begin{multline}\label{eq:index_inert_4}
\frac{\vol(H(F_\infty)L^{\ul{u}})}{\deg(Y/K)}\Phi^0_\infty(T_1,T_2)
\langle\rs_1^*Z_{T_1}(\phi^\infty_1)'_L,\rs_2^*Z_{T_2}(\phi^\infty_2)'_L\rangle_{X'_L,K}\\
=\sum_{\substack{T^\Box\in\Herm_{2r}^\circ(F)^+\\\Diff(T^\Box,V)=\{\ul{u}\}\\\partial_{r,r} T^\Box=(T_1,T_2)}}
\frac{b_{2r,\ul{u}}(0)}{\log q_u}W'_{T^\Box}(0,1_{4r},\CF_{(\Lambda^\tR_{\ul{u}})^{2r}})
\prod_{v\neq\ul{u}}\frac{b_{2r,v}(0)}{\gamma_{V_v,\psi_{F,v}}^{2r}}
W_{T^\Box}(0,1_{4r},(\Phi_\infty^0\otimes(\rs_1\phi^\infty_1\otimes(\rs_2\phi^\infty_2)^\tc))_v).
\end{multline}

By Proposition \ref{pr:tempered_integral} and Lemma \ref{le:tempered}, we have
\begin{align*}
\langle\rs_1^*Z_{T_1}(\phi^\infty_1)'_L,\rs_2^*Z_{T_2}(\phi^\infty_2)'_L\rangle_{X'_L,K}
&=\(\rs_1^*\pres{\rK}\cZ_{T_1}(\phi^\infty_1)_L\).\(\rs_2^*\pres{\rK}\cZ_{T_2}(\phi^\infty_2)_L\)  \\ &=\chi\(\pi_*\(\rs_1^*\pres{\rK}\cZ_{T_1}(\phi^\infty_1)_L\cup\rs_2^*\pres{\rK}\cZ_{T_2}((\phi^\infty_2)^\tc)_L\)\),
\end{align*}
where $\pi\colon\cX_L\to\Spec O_K$ denotes the structure morphism. As $\supp(\phi^\infty_{1v}\otimes(\phi^\infty_{2v})^\tc)\subseteq(V^{2r}_v)_\reg$ for $v\in\tR'$, the support of $\rs_1^*\pres{\rK}\cZ_{T_1}(\phi^\infty_1)_L\cup\rs_2^*\pres{\rK}\cZ_{T_2}((\phi^\infty_2)^\tc)_L$ is contained in the supersingular locus of $\cX_L$. Moreover, since $\rs_1^*$ and $\rs_2^*$ preserve the supersingular locus, we have
\[
\chi\(\pi_*\(\rs_1^*\pres{\rK}\cZ_{T_1}(\phi^\infty_1)_L\cup\rs_2^*\pres{\rK}\cZ_{T_2}((\phi^\infty_2)^\tc)_L\)\)=
\chi\(\pi^\wedge_*\(\rs_1^*\pres{\rK}\cZ_{T_1}(\phi^\infty_1)_L^\wedge\cup\rs_2^*\pres{\rK}\cZ_{T_2}((\phi^\infty_2)^\tc)_L^\wedge\)\),
\]
where $\pi^\wedge\colon\cX_L^\wedge\to\Spf O_{\breve{K}}$ denotes the structure morphism. To summarize, the left-hand side of \eqref{eq:index_inert_4} equals
\begin{align}\label{eq:index_inert_3}
\frac{\vol(H(F_\infty)L^{\ul{u}})}{\deg(Y/K)}\Phi^0_\infty(T_1,T_2)\cdot
\chi\(\pi^\wedge_*\(\rs_1^*\pres{\rK}\cZ_{T_1}(\phi^\infty_1)_L^\wedge\cup\rs_2^*\pres{\rK}\cZ_{T_2}((\phi^\infty_2)^\tc)_L^\wedge\)\).
\end{align}
From \eqref{eq:uniformization_1}, it is straightforward to see that
\[
\rs_i^*\pres{\rK}\cZ_{T_i}(\phi^\infty_i)_L^\wedge=
\sum_{\substack{x_i\in\pres{u}{H}(F)\backslash\pres{u}{V}^r\\ T(x_i)=T_i}}
\sum_{h_i\in H^{x_i}(F)\backslash H(\dA_F^{\infty,\ul{u}})/L^{\ul{u}}}
(\rs_i\phi^{\infty,\ul{u}}_i)(h_i^{-1}x_i)\cdot(\pres{\rK}\cN(x_i),h_i)_L\times_{\Spf O_K}\cY^\wedge
\]
for $i=1,2$. It follows that
\begin{multline*}
\rs_1^*\pres{\rK}\cZ_{T_1}(\phi^\infty_1)_L^\wedge\cup\rs_2^*\pres{\rK}\cZ_{T_2}((\phi^\infty_2)^\tc)_L^\wedge \\
=\sum_{\substack{T^\Box\in\Herm_{2r}^\circ(F)^+\\ \partial_{r,r} T^\Box=(T_1,T_2)}}
\sum_{\substack{x\in\pres{u}{H}(F)\backslash\pres{u}{V}^{2r}\\ T(x)=T^\Box}}
\sum_{h\in H(\dA_F^{\infty,\ul{u}})/L^{\ul{u}}}
(\rs_1\phi^{\infty,\ul{u}}_1\otimes(\rs_2\phi^{\infty,\ul{u}}_2)^\tc)(h^{-1}x)\cdot(\pres{\rK}\cN(x),h)_L
\times_{\Spf O_K}\cY^\wedge.
\end{multline*}
Now by \cite{LZ}*{Theorem~3.4.1 \& Remark~3.4.2}, we have
\[
\chi\(\pi^\wedge_*\pres{\rK}\cN(x)\)=\frac{b_{2r,\ul{u}}(0)}{\log q_u}W'_{T^\Box}(0,1_{4r},\CF_{(\Lambda^\tR_{\ul{u}})^{2r}})
\]
if $T(x)=T^\Box$. Thus, we have
\begin{multline*}
\eqref{eq:index_inert_3}=\vol(H(F_\infty)L^{\ul{u}})\cdot\Phi^0_\infty(T_1,T_2)\cdot
\sum_{\substack{T^\Box\in\Herm_{2r}^\circ(F)^+\\ \partial_{r,r} T^\Box=(T_1,T_2)}}
\sum_{\substack{x\in\pres{u}{H}(F)\backslash\pres{u}{V}^{2r}\\ T(x)=T^\Box}}
\sum_{h\in H(\dA_F^{\infty,\ul{u}})/L^{\ul{u}}} \\
(\rs_1\phi^{\infty,\ul{u}}_1\otimes(\rs_2\phi^{\infty,\ul{u}}_2)^\tc)(h^{-1}x)\cdot
\(\frac{b_{2r,\ul{u}}(0)}{\log q_u}W'_{T^\Box}(0,1_{4r},\CF_{(\Lambda^\tR_{\ul{u}})^{2r}})\).
\end{multline*}
By Definition \ref{de:measure}, we have
\[
\vol(H(F_v))\cdot\Phi^0_v(T_1,T_2)=\frac{b_{2r,v}(0)}{\gamma_{V_v,\psi_{F,v}}^{2r}}
W_{T^\Box}(0,1_{4r},(\Phi_\infty^0\otimes(\rs_1\phi^\infty_1\otimes(\rs_2\phi^\infty_2)^\tc))_v)
\]
for $v\in\tV_F^{(\infty)}$. By Definition \ref{de:measure}, for (unique) $x\in\pres{u}{H}(F)\backslash\pres{u}{V}^{2r}$ with $T(x)=T^\Box$, we have
\begin{align*}
&\vol(L_v)\sum_{h_v\in H(F_v)/L_v}(\rs_1\phi^{\infty,\ul{u}}_1\otimes(\rs_2\phi^{\infty,\ul{u}}_2)^\tc)_v(h_v^{-1}x) \\
&\qquad=\frac{b_{2r,v}(0)}{\gamma_{V_v,\psi_{F,v}}^{2r}}
W_{T^\Box}(0,1_{4r},(\Phi_\infty^0\otimes(\rs_1\phi^\infty_1\otimes(\rs_2\phi^\infty_2)^\tc))_v)
\end{align*}
for $v\in\tV_F^\fin\setminus\{\ul{u}\}$.

Therefore, we obtain \eqref{eq:index_inert_4} hence \eqref{eq:index_inert_2}. The proposition is proved.
\end{proof}

\section{Local indices at inert places: almost unramified case}
\label{ss:inert2}

In this section, we compute local indices at places in $\tV_E^\inert$ above $\tS$. Our goal is to prove the following proposition.

\begin{proposition}\label{pr:index_inert_almost}
Let $\tR$, $\tR'$, $\ell$, and $L$ be as in Definition \ref{de:kernel_geometric}. Let $(\pi,\cV_\pi)$ be as in Assumption \ref{st:representation}, for which we assume Hypothesis \ref{hy:galois}. Take an element $u\in\tV_E^\inert$ such that $\ul{u}\in\tS$ and whose underlying rational prime $p$ is odd, unramified in $E$, and satisfies $\tV_F^{(p)}\cap\tR\subseteq\tV_F^\spl$. Recall that we have fixed a $u$-nearby space $\pres{u}V$ and an isomorphism $\pres{u}{V}\otimes_F\dA_F^{\ul{u}}\simeq V\otimes_{\dA_F}\dA_F^{\ul{u}}$ from Notation \ref{st:h}(H9). We also fix a $\psi_{E,\ul{u}}$-self-dual lattice $\Lambda^\star_{\ul{u}}$ of $\pres{u}{V_{\ul{u}}}$. Then there exist elements $\rs_1^u,\rs_2^u\in\dS_{\dQ^\ac}^\tR\setminus\fm_\pi^\tR$ such that
\begin{multline*}
\log q_u\cdot \vol^\natural(L)\cdot I_{T_1,T_2}(\phi^\infty_1,\phi^\infty_2,\rs_1^u\rs_1,\rs_2^u\rs_2,g_1,g_2)^\ell_{L,u} \\
=\fE_{T_1,T_2}((g_1,g_2),\Phi_\infty^0\otimes(\rs_1^u\rs_1\phi^\infty_1\otimes(\rs_2^u\rs_2\phi^\infty_2)^\tc))_u \\
-\frac{\log q_u}{q_u^r-1}E_{T_1,T_2}((g_1,g_2),\Phi_\infty^0\otimes
(\rs_1^u\rs_1\phi^{\infty,\ul{u}}_1\otimes(\rs_2^u\rs_2\phi^{\infty,\ul{u}}_2)^\tc)\otimes\CF_{(\Lambda^\star_{\ul{u}})^{2r}})
\end{multline*}
for every $(\tR,\tR',\ell,L)$-admissible sextuple $(\phi^\infty_1,\phi^\infty_2,\rs_1,\rs_2,g_1,g_2)$ and every pair $(T_1,T_2)$ in $\Herm_r^\circ(F)^+$, where the right-hand side is defined in Definition \ref{de:analytic_kernel} with the Gaussian function $\Phi_\infty^0\in\sS(V^{2r}\otimes_{\dA_F}F_\infty)$ (Notation \ref{st:h}(H3)), and $\vol^\natural(L)$ is defined in Definition \ref{de:measure}.
\end{proposition}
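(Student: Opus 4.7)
The plan is to adapt the proof of Proposition \ref{pr:index_inert} to the almost self-dual setting at $u$, where the natural PEL-type integral model is no longer smooth. After rescaling the hermitian form so that $\psi_{F,\ul u}$ is unramified and $\Lambda^\tR_{\ul u}$ is almost self-dual, I would invoke Notation \ref{st:auxiliary} with an admissible CM type $\Phi$ (which is possible since $p$ is odd, unramified in $E$, and $\tV_F^{(p)}\cap\tR\subseteq\tV_F^\spl$), reduce via Lemma \ref{le:base_change} and Lemma \ref{le:moduli_cycle} to the auxiliary Shimura variety $X'_L$, and construct a projective, regular (but non-smooth) integral model $\cX_L$ over $O_K$ by the same moduli description as in Section \ref{ss:inert1}, with the modification that at $u$ the polarization $\lambda[u^\infty]$ is required to be an isogeny whose kernel has order $q_u^2$ (corresponding to $\Lambda^\tR_{\ul u}$ being almost self-dual).

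Next, I would produce the auxiliary Hecke operators $\rs_1^u,\rs_2^u\in\dS_{\dQ^\ac}^\tR\setminus\fm_\pi^\tR$. Their role is twofold: to promote the cycles to $\ell$-tempered classes, so that Beilinson's local index agrees with an arithmetic intersection number on $\cX_L$ in the spirit of Proposition \ref{pr:tempered_integral}, and to annihilate unwanted contributions from the nearby cycles. Because $\cX_L$ is not smooth over $O_K$, the two-step filtration argument of Lemma \ref{le:tempered}, based on absolute purity and smooth proper base change, must be refined: I would combine Hypothesis \ref{hy:galois}, which identifies the semisimplification of $\rho[\pi^\infty]$ with a sum of $\rho_{\Pi_j}^\tc$, with local purity of these Galois representations at $u$ (exactly as in the proof of Proposition \ref{pr:tempered_generic}(2)) to extract elements of $\dS^\tR_{\dQ^\ac}\setminus\fm_\pi^\tR$ that kill the relevant parts of the nearby cohomology of $\cX_L$.

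With this in place, I would uniformize $\cX_L^\wedge$ along its supersingular locus by the almost self-dual relative unitary Rapoport--Zink space $\cN^\star$ over $\Spf O_{\breve K}$ in parallel with \eqref{eq:uniformization}, and decompose the arithmetic intersection according to orbits of pairs $(x_1,x_2)\in\pres{u}{V}^{2r}$ whose moment matrix $T^\Box$ satisfies $\partial_{r,r}T^\Box=(T_1,T_2)$. The crucial local input is an arithmetic intersection formula on $\cN^\star$, which is the almost self-dual analog of \cite{LZ}*{Theorem~3.4.1}. In contrast with the self-dual case, $\cN^\star$ has non-smooth special fiber with exceptional components, and the expected local formula produces, in addition to the derivative $W'_{T^\Box}(0,1_{4r},\CF_{(\Lambda^\tR_{\ul u})^{2r}})$ coming from the ``regular'' part of the intersection, a value-type contribution proportional to $\frac{1}{q_u^r-1}W_{T^\Box}(0,1_{4r},\CF_{(\Lambda^\star_{\ul u})^{2r}})$, for the $\psi_{E,\ul u}$-self-dual lattice $\Lambda^\star_{\ul u}$ on the nearby space $\pres{u}V_{\ul u}$ (which, since $\ul u\in\tS$, has opposite Hasse invariant to $V_{\ul u}$ and hence admits a self-dual lattice). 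Assembling these local contributions into global Whittaker products and matching measures as in the final step of the proof of Proposition \ref{pr:index_inert} will convert them into the two Eisenstein terms $\fE_{T_1,T_2}(\cdots)_u$ and $E_{T_1,T_2}(\cdots)$ appearing in the statement.

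The main obstacle is exactly this local arithmetic intersection formula on $\cN^\star$. The derived tensor product $[\cN^\star(x_1)]\cup\cdots\cup[\cN^\star(x_{2r})]$ need not be supported on a proper closed subscheme of $\cN^\star$ and may have components along the exceptional divisors; one must analyze local equations for these divisors and compute their contribution to $\chi(\pi^\wedge_*\pres{\rK}\cN^\star(x))$. The $\frac{1}{q_u^r-1}$ factor is expected to arise from a projective-bundle-type local calculation on the exceptional divisors, while the appearance of the self-dual lattice $\Lambda^\star_{\ul u}$ reflects the fact that these exceptional components parametrize, in a precise sense, the ``interpolation'' between the almost self-dual Rapoport--Zink datum and its self-dual neighbour. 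Proving this local formula, in the style of \cite{LZ} but with the more delicate singularity analysis required by the almost self-dual level, is the heart of the work.
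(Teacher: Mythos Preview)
Your overall architecture is correct and matches the paper: reduce to the auxiliary Shimura variety, use a regular (strictly semistable, not smooth) integral model $\cX_L$ from \cite{LZ}*{Section~11.3}, produce $\ell$-tempered Hecke operators $\rs_1^u,\rs_2^u$, apply Proposition \ref{pr:tempered_integral}, uniformize along the supersingular locus, and plug in a local formula on the Rapoport--Zink space. But you have the emphasis inverted, and your sketch of the hard step would not go through as written.

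The local arithmetic intersection formula on the almost self-dual Rapoport--Zink space is \emph{not} the obstacle: it is already \cite{LZ}*{Theorem~10.5.1~\&~Remark~10.5.4}, which the paper simply cites, giving
\[
\chi\(\pi^\wedge_*\pres{\rK}\cN(x)\)=\frac{b_{2r,\ul{u}}(0)}{\log q_u}\(\frac{q_{\ul{u}}^{2r}-1}{q_{\ul{u}}+1}W'_{T^\Box}(0,1_{4r},\CF_{(\Lambda^\tR_{\ul{u}})^{2r}})
-\frac{\log q_u}{q_{\ul{u}}+1}W_{T^\Box}(0,1_{4r},\CF_{(\Lambda^\star_{\ul{u}})^{2r}})\).
\]
Your speculation about exceptional divisors and $\tfrac{1}{q_u^r-1}$ is in the right spirit but is already handled there; the constant $\tfrac{1}{q_u^r-1}$ arises after combining this with $\vol(L_{\ul u})=(q_{\ul u}+1)(q_{\ul u}^{2r}-1)^{-1}$.

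The genuinely new content is Lemma \ref{le:tempered_almost}, and your plan to obtain it by ``local purity of $\rho_{\Pi_j}$ at $u$, exactly as in Proposition \ref{pr:tempered_generic}(2)'' is not enough. That argument controls $\rH^{2r}(X_{L,u},\dQ_\ell(r))$; here you must kill $\rH^{2r}(\cX_L,\dQ_\ell(r))$, which by Corollary \ref{co:tempered}(2) requires vanishing of the localized cohomology of the \emph{strata of the special fiber}. The paper uses the explicit description $\cX_L\otimes_{O_K}\ol\dF_p=Y^\circ_L\cup Y^\bullet_L$ from \cite{LTXZZ}, with $Y^\circ_L$ a $\dP^{2r-1}$-fibration over a zero-dimensional Shimura set and $Y^\dag_L=Y^\circ_L\cap Y^\bullet_L$ a relative Fermat hypersurface. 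Vanishing for $Y^\circ_L$ and $Y^\dag_L$ comes from Arthur's multiplicity formula \cite{KMSW} together with the fact that $\Pi_{\ul u}$ is ramified (so no $L^\star_{\ul u}$-fixed vectors). For $Y^\bullet_L$ one runs the weight spectral sequence; the crucial step, and the only place Hypothesis \ref{hy:galois} enters, is to show that $\rho[\tilde\pi^\infty]\res_{\Gal(\ol\dQ_p/K)}$ has \emph{nontrivial monodromy} (because $\Pi_{1,u}$ has a Steinberg factor), which forces the relevant differential $\rE^{1,2r-2}_1\to\rE^{1,2r-2}_\infty$ to be an isomorphism. This is a monodromy argument, not a purity argument, and it needs the endoscopic bookkeeping (parts (a)--(d) in the paper's proof) to pin down $j(\tilde\pi^\infty)$.
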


To prove Proposition \ref{pr:index_inert_almost}, we may rescale the hermitian form on $V$ hence assume that $\psi_{F,v}$ is unramified and that $\Lambda^\tR_v$ is either a self-dual or an almost self-dual lattice of $V_v$ for every $v\in\tV_F^{(p)}\setminus\tV_F^\spl$, and moreover that $\Lambda^\star_{\ul{u}}$ is a self-dual lattice of $\pres{u}{V_{\ul{u}}}$.

In order to deal with spherical Hecke operators, we consider the projective system of Shimura varieties $\{X_{\tilde{L}}\}$ indexed by open compact subgroups $\tilde{L}\subseteq L$ satisfying $\tilde{L}_v=L_v$ for $v\in\tV_F^{(p)}\setminus\tV_F^\spl$.

We invoke Notation \ref{co:auxiliary} together with Notation \ref{st:auxiliary}, which is possible since $\tV_F^{(p)}\cap\tV_F^\ram=\emptyset$. There is a projective system $\{\cX_{\tilde{L}}\}$ of strictly semistable projective schemes over $O_K$ (see \cite{LZ}*{Section~11.3})\footnote{This is the place where we need that $u$ is unramified over $\dQ$ (and that $K$ is unramified over $E_u$).} with
\[
\cX_{\tilde{L}}\otimes_{O_K}K=X'_{\tilde{L}}\otimes_{E'}K
=\(X_{\tilde{L}}\otimes_{E}Y\)\otimes_{E'}K,
\]
and finite \'{e}tale transition morphisms. In particular, $\dS^\tR$ is naturally a ring of \'{e}tale correspondences of $\cX_L$.

\begin{lem}\label{le:tempered_almost}
Let the situation be as in Proposition \ref{pr:index_inert_almost}. Then there exists an element in $\dS_{\dQ^\ac}^\tR\setminus\fm_\pi^\tR$ that gives an $\ell$-tempered $\dQ^\ac$-\'{e}tale correspondence of $\cX_L$ (Definition \ref{de:tempered_integral}).
\end{lem}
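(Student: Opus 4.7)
The plan is to adapt the argument of Lemma \ref{le:tempered} to the strictly semistable (but still regular) setting. Since $K$ is unramified over $E_u$, the integral model $\cX_L$ constructed in \cite{LZ}*{Section~11.3} is regular, so the localization long exact sequence
\begin{align*}
\rH^{2r-1}(X'_L, \dQ_\ell(r)) \to \rH^{2r}_{\cX_L \otimes_{O_K} k}(\cX_L, \dQ_\ell(r)) \to \rH^{2r}(\cX_L, \dQ_\ell(r)) \to \rH^{2r}(X'_L, \dQ_\ell(r))
\end{align*}
remains available. I will construct $\rs = \rs^u \cdot \rs_1 \cdot \rs_2$, where $\rs_1, \rs_2 \in (\dS^\tR_{\dQ^\ac})^{\langle\ell\rangle}_{L_\tR} \setminus \fm_\pi^\tR$ (nonempty by Proposition \ref{pr:tempered_generic}(2)) and $\rs^u \in \dS^\tR_{\dQ^\ac} \setminus \fm_\pi^\tR$ is auxiliary. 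Exactly as in Lemma \ref{le:tempered}, the factor $\rs_1 \rs_2$ annihilates the image of $\rs$ in $\rH^{2r}(X'_L, \dQ_\ell(r)) \otimes_\dQ \dQ^\ac$, via the embedding into $\rH^{2r}(X'_L \otimes_K \overline{K}, \dQ_\ell(r)) \otimes_\dQ \dQ^\ac$ provided by Hochschild--Serre and the Weil conjecture.

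The new difficulty is controlling the local cohomology $\rH^{2r}_{\cX_L^s}(\cX_L, \dQ_\ell(r)) \otimes_\dQ \dQ^\ac$. In the smooth case of Lemma \ref{le:tempered}, absolute purity identified it with $\rH^{2r-2}(\cX_L^s, \dQ_\ell(r-1))$, after which smooth proper base change reduced everything to the generic fibre cohomology, where Poincar\'e duality let $(\dS^\tR_{\dQ^\ac})^{\langle\ell\rangle}_{L_\tR}$ act. Here smooth proper base change fails because $\cX_L$ is only strictly semistable. Instead, I will invoke the Rapoport--Zink--Steenbrink weight spectral sequence for $\cX_L$, which expresses both $\rH^*(\cX_L \otimes \overline{k}, \dQ_\ell)$ and $\rH^*(X'_L \otimes_K \overline{K}, \dQ_\ell)$ (with different weight filtrations) in terms of the cohomologies of the smooth closed strata $Y^{(j)}$ of $\cX_L^s$, namely the $(j+1)$-fold intersections of the irreducible components of the special fibre. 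This reduces the problem to controlling $\bigl(\rH^i(Y^{(j)} \otimes \overline{k}, \dQ_\ell) \otimes_\dQ \dQ^\ac\bigr)_{\fm_\pi^\tR}$ for the relevant $i$ and $j$. The strata $Y^{(j)}$ admit explicit Rapoport--Zink-type uniformizations described in \cite{LZ}*{Section~11.3}, which I will combine with an Igusa-variety analysis parallel to Lemma \ref{le:split_tempered} (in the spirit of \cite{CS17} and \cite{LTXZZ}, now for an inert place).

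The crucial representation-theoretic input is the almost unramified condition on $\pi_u$ from Assumption \ref{st:representation}(2). Via the local-global compatibility and Remark \ref{re:almost_unramified}, this condition forces the Satake parameter of $\Pi_u$ to contain the pair $\{q_u, q_u^{-1}\}$, pinning down the Weil--Deligne parameter of $\rho_{\Pi_j}$ at $u$; combined with Hypothesis \ref{hy:galois} it concentrates each $\bigl(\rH^i(Y^{(j)} \otimes \overline{k}, \dQ_\ell) \otimes_\dQ \dQ^\ac\bigr)_{\fm_\pi^\tR}$ in a narrow degree window. Any residual non-tempered contribution is then killed by the auxiliary $\rs^u \in \dS^\tR_{\dQ^\ac} \setminus \fm_\pi^\tR$, built via Chebotarev density in the style of Proposition \ref{pr:tempered_generic}. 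The main obstacle is running the Newton-stratification and Igusa-variety machinery of \cite{CS17} at a level that is only almost hyperspecial: the non-basic strata created by the almost self-dual lattice at $u$ are not present in the hyperspecial case and demand a separate, delicate analysis, which is essentially the integral counterpart of the local doubling computation of \cite{Liu20} for almost unramified representations.
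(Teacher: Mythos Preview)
Your overall framework---reduce to vanishing of strata cohomology in the special fibre, invoke the weight spectral sequence, and feed in Hypothesis~\ref{hy:galois} to control the monodromy---is the same as the paper's, and you correctly identify Corollary~\ref{co:tempered} (implicitly) as the bridge from such vanishing to $\ell$-temperedness. The difference is in how you propose to prove the vanishing for the individual strata.

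The paper does \emph{not} go through Igusa varieties or a Caraiani--Scholze style Newton-stratification argument. Instead it uses the very concrete description of the special fibre from \cite{LTXZZ}*{Sections~5.1,~5.2,~5.4~\&~5.5}: one writes $\cX_L\otimes_{O_K}\ol\dF_p = Y^\circ_L\cup Y^\bullet_L$, where $Y^\circ_L$ is a $\dP^{2r-1}$-fibration over the zero-dimensional Shimura set $\pres{u}{H}(F)\backslash\pres{u}{H}(\dA_F^\infty)/L^{\ul u}L^\star_{\ul u}$ (with $L^\star_{\ul u}$ the stabilizer of the self-dual lattice $\Lambda^\star_{\ul u}$), $Y^\bullet_L$ is smooth proper, and $Y^\dag_L\coloneqq Y^\circ_L\cap Y^\bullet_L$ is a relative Fermat hypersurface inside $Y^\circ_L$. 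Then: (i) the localized cohomology of $Y^\circ_L$ is read off from Arthur's multiplicity formula \cite{KMSW}*{Theorem~1.7.1} as a sum of $L^\star_{\ul u}$-fixed vectors, which vanish because $\Pi_u$ is ramified; (ii) $Y^\dag_L$ is handled by Lefschetz hyperplane from (i); (iii) for $Y^\bullet_L$ one runs the weight spectral sequence, and the only nontrivial point is showing the quotient map $\rE^{1,2r-2}_{1,\fm}\to\rE^{1,2r-2}_{\infty,\fm}$ is an isomorphism---this is where Hypothesis~\ref{hy:galois} enters, forcing $\rho[\tilde\pi^\infty]\res_{\Gal(\ol\dQ_p/K)}$ to have nontrivial monodromy and hence a dimension match against the explicit description~\eqref{eq:tempered_almost} of $\rE^{1,2r-2}_1[\pi^{\chi^{\infty,\ul u}}]$.

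Your route, by contrast, requires extending the Caraiani--Scholze machinery to an inert place with parahoric (almost self-dual) level, which you yourself flag as the ``main obstacle'' and do not resolve; this is a genuine gap in the proposal as written. The paper's approach sidesteps it entirely: the projective-bundle and Fermat-hypersurface descriptions of $Y^\circ_L$ and $Y^\dag_L$ make steps (i) and (ii) elementary, and Arthur's multiplicity formula (rather than Mantovan's formula or Igusa cohomology) supplies the representation-theoretic input. Note also that your citation of \cite{LZ}*{Section~11.3} for ``Rapoport--Zink-type uniformizations'' of the strata is off; that section constructs the integral model, while the stratum descriptions you need are in \cite{LTXZZ}.
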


\begin{proof}
The proof relies on Arthur's multiplicity formula for tempered global $L$-packets \cite{KMSW}*{Theorem~1.7.1}, which we first recall, using the language for unitary groups adopted in \cite{GGP12}*{Section~25}. Recall that $\Pi=\Pi_1\boxplus\cdots\boxplus\Pi_s$ from Notation \ref{co:galois}(3) which is the automorphic base change of $\pi$ as in Assumption \ref{st:representation}. Put $A_\Pi\coloneqq\mu_2^{\{1,\dots,s\}}$. For every place $v\in\tV_F$,
\begin{itemize}
  \item $\Pi_v$ determines a conjugate-symplectic representation $M_v$ of $\r{WD}(E_v)$ of dimension $n$;

  \item there is a finite abelian $2$-group $A_{M_v}$ attached to $M_v$;

  \item every character $\chi_v\colon A_{M_v}\to\dC^\times$ gives a pair $(V^{\chi_v},\pi^{\chi_v})$ in the Langlands--Vogan packet of $M_v$, unique up to isomorphism, in which $V^{\chi_v}$ is a hermitian space over $F_v$ of rank $n$ and $\pi^{\chi_v}$ is an irreducible admissible representation of $\rU(V^{\chi_v})(F_v)$;

  \item we have a homomorphism $\alpha_v\colon A_\Pi\to A_{M_v}$.
\end{itemize}
Denote by $\alpha\colon A_\Pi\to\prod_{v\in\tV_F}A_{M_v}$ the product of $\alpha_v$ for $v\in\tV_F$. We say that a collection $\chi=\{\chi_v\res v\in\tV_F\}$ of characters in which all but finitely many are trivial is coherent (resp.\ incoherent) if the character $\prod_{v\in\tV_F}\chi_v\circ\alpha\colon A_\Pi\to\dC^\times$ is trivial (resp.\ nontrivial). Then Arthur's multiplicity formula states that
\begin{itemize}
  \item[(a)] If $\chi$ is incoherent, then either $\otimes_vV^{\chi_v}$ is incoherent or it is coherent but $\otimes_v\pi^{\chi_v}$ does not appear in the discrete spectrum. If $\chi$ is coherent, then there exists a hermitian space $V^\chi$ over $E$, unique up to isomorphism, such that $V^\chi_v\simeq V^{\chi_v}$ for every $v\in\tV_F$; and the representation $\otimes_v\pi^{\chi_v}$ appears in the discrete spectrum of $\rU(V^\chi)$ with multiplicity one. Moreover, every discrete automorphic representation of $\rU(\tilde{V})(\dA_F)$ for some hermitian space $\tilde{V}$ over $E$ of rank $n$ with $\Pi$ its automorphic base change is obtained from this way.
\end{itemize}
Now we take a special look at the places $w$ and $\ul{u}$.
\begin{itemize}
  \item[(b)] We may canonically identify $A_{M_w}$ with $\mu_2^\fI$ from Notation \ref{co:galois}(2). Then the homomorphism $\alpha_w\colon\mu_2^{\{1,\dots,s\}}\to\mu_2^\fI$ is the one induced by the map $\fI\to\{1,\dots,s\}$ given by the partition $\fI=\fI_1\sqcup\cdots\sqcup\fI_s$.

  \item[(c)] By (a), $\Pi_u$ is the standard base change of $\pi_{\ul{u}}$. By \cite{LTXZZ}*{Lemma~C.2.3}, we have $M_{\ul{u}}=M_{\ul{u}}^2+M_{\ul{u}}^{n-2}$, where $M_{\ul{u}}^2$ corresponds to the Steinberg representation of $\GL_2(E_u)$ and $M_{\ul{u}}^{n-2}$ corresponds to a tempered unramified principal series of $\GL_{n-2}(E_u)$, which implies $A_{M_{\ul{u}}}=A_{M_{\ul{u}}^2}\times A_{M_{\ul{u}}^{n-2}}$ in which $A_{M_{\ul{u}}^2}=\mu_2$.

  \item[(d)] Without lost of generality, we may assume that $\Pi_{1u}$ is ramified. Then the composition of $\alpha_{\ul{u}}$ and the projection $A_{M_{\ul{u}}}\to A_{M_{\ul{u}}^2}=\mu_2$ coincides with the projection $\mu_2^{\{1,\dots,s\}}\to\mu_2$ to the first factor.
\end{itemize}

Next, we recall some facts from \cite{LTXZZ}*{Sections~5.1,~5.2,~5.4,~\&~5.5}\footnote{Strictly speaking, \cite{LTXZZ} has more conditions on the place $\ul{u}$ and the level at $p$. However, for those facts we will use in this proof, it is straightforward to remove those extra conditions.} about the reduction of the scheme $\cX_{\tilde{L}}$. Denote by $L^\star_{\ul{u}}\subseteq\pres{u}{H}(F_{\ul{u}})$ the stabilizer of $\Lambda^\star_{\ul{u}}$, which is a hyperspecial maximal subgroup. We have a decomposition
\[
\cX_{\tilde{L}}\otimes_{O_K}\ol\dF_p=Y_{\tilde{L}}^\circ\bigcup Y_{\tilde{L}}^\bullet
\]
that is compatible with changing $\tilde{L}$, in which
\begin{itemize}
  \item $Y_{\tilde{L}}^\circ$ is a $\dP^{2r-1}$-fibration over
      \[
      \pres{u}{H}(F)\backslash\pres{u}{H}(\dA_F^\infty)/\tilde{L}^{\ul{u}}L^\star_{\ul{u}}
      \times(\cY\otimes_{O_K}\ol\dF_p);
      \]

  \item $Y_{\tilde{L}}^\bullet$ is proper and smooth over $\ol\dF_p$ of dimension $2r-1$;

  \item the intersection $Y_{\tilde{L}}^\dag\coloneqq Y_{\tilde{L}}^\circ\cap Y_{\tilde{L}}^\bullet$ is a relative Fermat hypersurface in $Y_{\tilde{L}}^\circ$.
\end{itemize}
By Corollary \ref{co:tempered} and Proposition \ref{pr:tempered_generic}(2), it suffices to show that for an arbitrary embedding $\dQ^\ac\hookrightarrow\ol\dQ_\ell$, we have
\begin{enumerate}
  \item $\rH^i(Y^\circ_L,\ol\dQ_\ell)_\fm=0$ for $i\leq 2r-2$,

  \item $\rH^i(Y^\bullet_L,\ol\dQ_\ell)_\fm=0$ for $i\leq 2r-2$,

  \item $\rH^i(Y^\dag_L,\ol\dQ_\ell)_\fm=0$ for $i\leq 2r-3$,
\end{enumerate}
where $\fm\coloneqq\fm_\pi^\tR\cap\dS^\tR_{\dQ^\ac}$. Note that we have used the Gysin exact sequence and the absolute purity theorem \cite{Fuj02} to switch the cohomology from open strata to closed strata.

For (1), we have $\rH^i(Y^\circ_L,\ol\dQ_\ell)=0$ when $i$ is odd. When $i$ is even, $\rH^i(Y^\circ_L,\ol\dQ_\ell)_\fm$ is a direct sum of $L^{\ul{u}}\times L^\star_{\ul{u}}$-invariants of $\pi'$ for finitely many cuspidal automorphic representation $\pi'$ of $\pres{u}{H}(\dA_F)$ satisfying that $\pi'_\infty$ is trivial and that $\pi'_v\simeq\pi_v$ for all but finitely many $v\in\tV_F^\spl$. For every such $\pi'$, let $\Pi'$ be its automorphic base change, which is isobaric automorphic representation of $\GL_n(\dA_E)$ \cite{KMSW}*{Theorem~1.7.1}. Since $\Pi'_u\simeq\Pi_u$ for all but finitely many $u\in\tV_E^\spl$, we must have $\Pi'\simeq\Pi$ by \cite{Ram}*{Theorem~A}. Therefore, we have
\[
\rH^i(Y^\circ_L,\ol\dQ_\ell)_\fm\simeq\bigoplus_{\substack{\chi=\{\chi^v\}\\ \chi\circ\alpha=1 \\ V^\chi\simeq\pres{u}{V}}}
\(\(\otimes_{v\in\tV_F^\fin\setminus\{\ul{u}\}}\pi^{\chi_v}\)^{L^{\ul{u}}}\otimes\(\pi^{\chi_{\ul{u}}}\)^{L^\star_{\ul{u}}}\)^{\oplus\deg(Y/K)}.
\]
However, since $\Pi_{\ul{u}}$ is ramified, we have $\(\pi^{\chi_{\ul{u}}}\)^{L^\star_{\ul{u}}}=0$ for every $\chi_{\ul{u}}$. Thus, (1) follows.

For (3), by the Lefschetz hyperplane theorem and the Poincar\'{e} duality, we have $\rH^i(Y^\dag_L,\ol\dQ_\ell)_\fm=0$ if $i\neq 2r-2$ by (1). Thus, (3) follows.

For (2), we consider the weight spectral sequence $\rE^{p,q}$ abutting to $\rH^{p+q}(\cX_{\tilde{L}}\otimes_{O_K}\ol\dQ_p,\ol\dQ_\ell)$, after localization at $\fm$. We write down the first page $\rE^{p,q}_{1,\fm}$ as follows.
\[
\boxed{
\xymatrix{
q\geq 2r+1 & 0 \ar[r] & \rH^q(Y^\bullet_L,\ol\dQ_\ell)_\fm \ar[r] & 0 \\
q=2r & \rH^{2r-2}(Y^\dag_L,\ol\dQ_\ell(-1))_\fm \ar[r]^-{\rd^{-1,2r}_{1,\fm}}
& \rH^{2r}(Y^\bullet_L,\ol\dQ_\ell)_\fm \ar[r] & 0 \\
q=2r-1 & 0\ar[r] & \rH^{2r-1}(Y^\bullet_L,\ol\dQ_\ell)_\fm \ar[r]  & 0 \\
q=2r-2 & 0 \ar[r] & \rH^{2r-2}(Y^\bullet_L,\ol\dQ_\ell)_\fm \ar[r]^-{\rd^{0,2r-2}_{1,\fm}} &
\rH^{2r-2}(Y^\dag_L,\ol\dQ_\ell)_\fm \\
q\leq 2r-3 & 0 \ar[r] & \rH^q(Y^\bullet_L,\ol\dQ_\ell)_\fm \ar[r] & 0 \\
\rE^{p,q}_1 & p=-1 & p=0 & p=1
}
}
\]
By Proposition \ref{pr:tempered_generic}(1), we have $\rH^i(\cX_{\tilde{L}}\otimes_{O_K}\ol\dQ_p,\ol\dQ_\ell)_\fm=0$ for $i\neq 2r-1$, which implies that $\rH^q(Y^\bullet_L,\ol\dQ_\ell)_\fm=0$ for $q\leq 2r-3$ and that $\rd^{0,2r-2}_{1,\fm}$ is injective. The spectral sequence then degenerates at the second page and we have $\IM(\rd^{0,2r-2}_{1,\fm})=\Ker(\rE^{1,2r-2}_{1,\fm}\to\rE^{1,2r-2}_{\infty,\fm})$. Thus, it remains to show that the canonical quotient map $\rE^{1,2r-2}_{1,\fm}\to\rE^{1,2r-2}_{\infty,\fm}$ is an isomorphism. Consider an arbitrary collection $\chi^{\infty,\ul{u}}=\{\chi^v\res v\in\tV_F^\fin\setminus\{\ul{u}\}\}$ in which all but finitely many are trivial and such that $V^{\chi_v}\simeq V_v$ for every $v\in\tV_F^\fin\setminus\{\ul{u}\}$. Put $\pi^{\chi^{\infty,\ul{u}}}\coloneqq\otimes_{v\in\tV_F^\fin\setminus\{\ul{u}\}}\pi^{\chi_v}$. By a similar argument for (1), it suffices to show the following statement:
\begin{enumerate}
  \setcounter{enumi}{3}
  \item The canonical quotient map $\rE^{1,2r-2}_1[\pi^{\chi^{\infty,\ul{u}}}]\to\rE^{1,2r-2}_\infty[\pi^{\chi^{\infty,\ul{u}}}]$ is an isomorphism.
\end{enumerate}

Now we show (4). Without lost of generality, we may replace $K$ by a finite unramified extension in $\ol\dQ_p$ such that $Y$ is a finite disjoint union of $\Spec K$. Define the character $\chi_{\ul{u}}^+$ (resp.\ $\chi_{\ul{u}}^-$) to be the inflation of the trivial (resp.\ nontrivial) character of $A_{M_{\ul{u}}^2}=\mu_2$ along the quotient homomorphism $A_{M_{\ul{u}}}\to A_{M_{\ul{u}}^2}$. Then we have $V^{\chi_{\ul{u}}^+}\simeq\pres{u}{V_{\ul{u}}}$ and $V^{\chi_{\ul{u}}^-}\simeq\pres{\bu}{V_{\ul{u}}}$. If $\rE^{1,2r-2}_1[\pi^{\chi^{\infty,\ul{u}}}]=0$, then we are done. Otherwise, we have
$\rH^{2r-2}(Y^\dag_L,\ol\dQ_\ell)[\pi^{\chi^{\infty,\ul{u}}}]\neq 0$. By (the proof of) \cite{LTXZZ}*{Proposition~5.5.4}, $\pi^{\chi^{\infty,\ul{u}}}$ can be complemented to a cuspidal automorphic representation $\pi'$ of $\pres{u}{H}(\dA_F)$ such that $\pi'_\infty$ is trivial and that $\pi'_u$ is almost unramified with respect to the hyperspecial subgroup $L^\star_{\ul{u}}$. In other words, the collection $\{\chi_v=1\res v\in\tV_F^{(\infty)}\}\cup\chi^{\infty,\ul{u}}\cup\{\chi_{\ul{u}}^+\}$ is coherent, and we have
\begin{align}\label{eq:tempered_almost}
\rE^{1,2r-2}_1[\pi^{\chi^{\infty,\ul{u}}}]=\rH^{2r-2}(Y^\dag_L,\ol\dQ_\ell)[\pi^{\chi^{\infty,\ul{u}}}]\simeq
\(\(\otimes_{v\in\tV_F^\fin\setminus\{\ul{u}\}}\pi^{\chi_v}\)^{L^{\ul{u}}}\)^{\oplus\deg(Y/K)}.
\end{align}
On the other hand, since the representation $\pi^{\chi_{\ul{u}}^-}$ is the only member in the Langlands--Vogan packet of $M_{\ul{u}}$ realized on $\pres{\bu}{V_{\ul{u}}}$ that has nonzero invariants under $L^\tR_{\ul{u}}$, we have an isomorphism
\[
\rH^{2r-1}(\cX_{\tilde{L}}\otimes_{O_K}\ol\dQ_p,\ol\dQ_\ell)[\pi^{\chi^{\infty,\ul{u}}}]
\simeq\(\rho[\tilde\pi^\infty]\res_{\Gal(\ol\dQ_p/K)}\)\otimes
\(\(\otimes_{v\in\tV_F^\fin\setminus\{\ul{u}\}}\pi^{\chi_v}\)^{L^{\ul{u}}}\)^{\oplus\deg(Y/K)}
\]
of representations of $\Gal(\ol\dQ_p/K)$, where $\tilde\pi^\infty=\pi^{\chi^{\infty,\ul{u}}}\otimes\pi^{\chi_{\ul{u}}^-}$. By (a--d) above, it is easy to see that, in the notation of Lemma \ref{le:arthur}(b), we must have $j(\tilde\pi^\infty)=1$, hence that the semisimplification of $\rho[\tilde\pi^\infty]$ is isomorphic to $\rho^\tc_{\Pi_1}$ by Hypothesis \ref{hy:galois}. Thus, $\rho[\tilde\pi^\infty]\res_{\Gal(\ol\dQ_p/K)}$ has nontrivial monodromy, which implies that the dimension of $\rE^{1,2r-2}_\infty[\pi^{\chi^{\infty,\ul{u}}}]$ is at least the dimension of
\[
\(\(\otimes_{v\in\tV_F^\fin\setminus\{\ul{u}\}}\pi^{\chi_v}\)^{L^{\ul{u}}}\)^{\oplus\deg(Y/K)}.
\]
Therefore, (4) follows from \eqref{eq:tempered_almost}. The lemma is proved.
\end{proof}

\begin{proof}[Proof of Proposition \ref{pr:index_inert_almost}]
The proof of Proposition \ref{pr:index_inert_almost} is parallel to that of Proposition \ref{pr:index_inert}. Take elements $\rs_1^u,\rs_2^u\in\dS_{\dQ^\ac}^\tR\setminus\fm_\pi^\tR$ that give $\ell$-tempered $\dQ^\ac$-\'{e}tale correspondences of $\cX_L$, which is possible by Lemma \ref{le:tempered_almost}.

By Lemma \ref{le:base_change} and Definition \ref{de:analytic_kernel}, it suffices to show that for every pair of $p$-basic elements $\phi^\infty_1,\phi^\infty_2\in\sS(V^m\otimes_{\dA_F}\dA_F^\infty)^L$ satisfying that $\supp(\phi^\infty_{1v}\otimes(\phi^\infty_{2v})^\tc)\subseteq(V^{2r}_v)_\reg$ for $v\in\tR'$, and every pair of elements $\rs_1,\rs_2\in(\dS^\tR_{\dQ^\ac})^{\langle\ell\rangle}_{L_\tR}$ that give $\ell$-tempered $\dQ^\ac$-\'{e}tale correspondences of $\cX_L$, we have
\begin{multline}\label{eq:index_inert_6}
\frac{\vol^\natural(L)}{\deg(Y/K)}\Phi^0_\infty(T_1,T_2)
\langle\rs_1^*Z_{T_1}(\phi^\infty_1)'_L,\rs_2^*Z_{T_2}(\phi^\infty_2)'_L\rangle_{X'_L,K}\\
=\sum_{\substack{T^\Box\in\Herm_{2r}^\circ(F)^+\\\Diff(T^\Box,V)=\{\ul{u}\}\\\partial_{r,r} T^\Box=(T_1,T_2)}}\frac{1}{\log q_u}
\(W'_{T^\Box}(0,1_{4r},\CF_{(\Lambda^\tR_{\ul{u}})^{2r}})-\frac{\log q_u}{q_u^r-1}W_{T^\Box}(0,1_{4r},\CF_{(\Lambda^\star_{\ul{u}})^{2r}})\) \\
\times\prod_{v\neq\ul{u}}W_{T^\Box}(0,1_{4r},(\Phi_\infty^0\otimes(\rs_1\phi^\infty_1\otimes(\rs_2\phi^\infty_2)^\tc))_v).
\end{multline}
As $\vol(L_{\ul{u}})=(q_{\ul{u}}+1)(q_{\ul{u}}^{2r}-1)^{-1}$, \eqref{eq:index_inert_6} is equivalent to
\begin{multline}\label{eq:index_inert_7}
\frac{\vol(H(F_\infty)L^{\ul{u}})}{\deg(Y/K)}\Phi^0_\infty(T_1,T_2)
\langle\rs_1^*Z_{T_1}(\phi^\infty_1)'_L,\rs_2^*Z_{T_2}(\phi^\infty_2)'_L\rangle_{X'_L,K}\\
=\sum_{\substack{T^\Box\in\Herm_{2r}^\circ(F)^+\\\Diff(T^\Box,V)=\{\ul{u}\}\\\partial_{r,r} T^\Box=(T_1,T_2)}}
\frac{b_{2r,\ul{u}}(0)}{\log q_u}
\(\frac{q_{\ul{u}}^{2r}-1}{q_{\ul{u}}+1}W'_{T^\Box}(0,1_{4r},\CF_{(\Lambda^\tR_{\ul{u}})^{2r}})
-\frac{\log q_u}{q_{\ul{u}}+1}W_{T^\Box}(0,1_{4r},\CF_{(\Lambda^\star_{\ul{u}})^{2r}})\) \\
\times\prod_{v\neq\ul{u}}
\frac{b_{2r,v}(0)}{\gamma_{V_v,\psi_{F,v}}^{2r}}W_{T^\Box}(0,1_{4r},(\Phi_\infty^0\otimes(\rs_1\phi^\infty_1\otimes(\rs_2\phi^\infty_2)^\tc))_v),
\end{multline}
parallel to \eqref{eq:index_inert_4}.

The proof of \eqref{eq:index_inert_7} is same to that of \eqref{eq:index_inert_4} except that now we have
\[
\chi\(\pi^\wedge_*\pres{\rK}\cN(x)\)=\frac{b_{2r,{\ul{u}}}(0)}{\log q_u}\(\frac{q_{\ul{u}}^{2r}-1}{q_{\ul{u}}+1}W'_{T^\Box}(0,1_{4r},\CF_{(\Lambda^\tR_{\ul{u}})^{2r}})
-\frac{\log q_u}{q_{\ul{u}}+1}W_{T^\Box}(0,1_{4r},\CF_{(\Lambda^\star_{\ul{u}})^{2r}})\)
\]
if $T(x)=T^\Box$, by \cite{LZ}*{Theorem~10.5.1 \& Remark~10.5.4}. The proposition is proved.
\end{proof}

\section{Local indices at archimedean places}
\label{ss:archimedean}

In this section, we compute local indices at places in $\tV_E^{(\infty)}$.

\begin{proposition}\label{pr:index_arch}
Let $\tR$, $\tR'$, $\ell$, and $L$ be as in Definition \ref{de:kernel_geometric}. Let $(\pi,\cV_\pi)$ be as in Assumption \ref{st:representation}. Take an element $u\in\tV_E^{(\infty)}$. Consider an $(\tR,\tR',\ell,L)$-admissible sextuple $(\phi^\infty_1,\phi^\infty_2,\rs_1,\rs_2,g_1,g_2)$ and an element $\varphi_1\in\cV_\pi^{[r]\tR}$. Let $K_1\subseteq G_r(\dA_F^\infty)$ be an open compact subgroup that fixes both $\phi^\infty_1$ and $\varphi_1$, and $\fF_1\subseteq G_r(F_\infty)$ a Siegel fundamental domain for the congruence subgroup $G_r(F)\cap g_1^\infty K_1 (g_1^\infty)^{-1}$. Then for every $T_2\in\Herm_r^\circ(F)^+$, we have
\begin{multline}\label{eq:index_arch}
\vol^\natural(L)\cdot\int_{\fF_1}\varphi^\tc(\tau_1g_1)
\sum_{T_1\in\Herm_r^\circ(F)^+}I_{T_1,T_2}(\phi^\infty_1,\phi^\infty_2,\rs_1,\rs_2,\tau_1g_1,g_2)_{L,u}\rd\tau_1 \\
=\frac{1}{2}\int_{\fF_1}\varphi^\tc(\tau_1g_1)\sum_{T_1\in\Herm_r^\circ(F)^+}
\fE_{T_1,T_2}((\tau_1g_1,g_2),\Phi_\infty^0\otimes(\rs_1\phi^\infty_1\otimes(\rs_2\phi^\infty_2)^\tc))_u\rd\tau_1,
\end{multline}
in which both sides are absolutely convergent. Here, the term $\fE_{T_1,T_2}$ is defined in Definition \ref{de:analytic_kernel} with the Gaussian function $\Phi_\infty^0\in\sS(V^{2r}\otimes_{\dA_F}F_\infty)$ (Notation \ref{st:h}(H3)), and $\vol^\natural(L)$ is defined in Definition \ref{de:measure}.
\end{proposition}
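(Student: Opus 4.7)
The plan is to identify Beilinson's archimedean local index with an integral of Kudla's Green current against the delta current of the second cycle, and then to invoke the archimedean ``arithmetic Siegel--Weil'' type identity (in the form developed in \cite{Liu11}) to match it with the derivative of a Whittaker function.

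First I would use Lemma \ref{le:moduli_archimedean} to transport the computation from $X_L\otimes_{E,\biota}\dC$ to $\pres{u}{X_L}\otimes_{\iota_u(E)}\dC$, on which the complex uniformization \eqref{eq:complex_uniformization} is available via the Hermitian symmetric domain $\fD$ of negative lines in $\pres{u}{V}\otimes_E\dC$; here $\pres{u}{V}$ has signature $(n-1,1)$ at $u$ and is positive definite at all other archimedean places. Following Kudla (as recalled in \cite{Liu11}*{Section~4}), to each $x=(x_1,\dots,x_r)\in \pres{u}{V}^r$ with $T(x)\in\Herm_r^\circ(F)^+$ one attaches a Green current $\Xi(x)=\Xi(x_1)\ast\cdots\ast\Xi(x_r)$ on $\pres{u}{X_L}(\dC)$, built as the star product of the codimension-one Kudla Green forms. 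Weighting these currents by $\phi_1^\infty$ and by the archimedean Gaussian $\omega_{r,\infty}(g_{1\infty})\phi^0_\infty(T_1)$ and then applying $\rs_1^*$, I obtain a Green current $G_1(T_1;\tau_1g_1)$ for the first cycle appearing in $I_{T_1,T_2}(\phi^\infty_1,\phi^\infty_2,\rs_1,\rs_2,\tau_1g_1,g_2)_{L,u}$. Since the two cycles entering the definition of $I_{T_1,T_2}$ have disjoint supports by Lemma \ref{le:disjoint}(2), Beilinson's symmetric Poincar\'e--Lelong formula (cf.\ \cite{Bei87}*{Section~3}) gives unconditionally
\[
\langle c_1(T_1), c_2(T_2) \rangle_{X_{L,u},E_u} = \tfrac12\int_{\pres{u}{X_L}(\dC)} G_1(T_1;\tau_1g_1)\wedge \delta_{c_2(T_2)},
\]
where $c_i(T_i)$ is the $T_i$-indexed cycle in the definition of $I_{T_1,T_2}$.

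Substituting this into the left-hand side of \eqref{eq:index_arch} and unfolding the special cycles through the complex uniformization, each integral factors as a product over $v\in\tV_F^{(\infty)}$ of archimedean factors times a non-archimedean factor coming from adelic lattice counts for $\rs_1\phi^\infty_1$ and $\rs_2\phi^\infty_2$. At the place $u$, the Gaussian-weighted Green current integral produces precisely $W'_{T^\Box}(0,(g_{1u},g_{2u}),\Phi^0_u)$: the \emph{derivative} appears because $W_{T^\Box}(0,\cdot,\Phi^0_u)$ vanishes exactly when $u\in\Diff(T^\Box,V)$, i.e.\ when $T^\Box$ has signature $(2r-1,1)$ at $u$, which in turn is the precise condition for the $T^\Box$-component of $\Xi$ to be genuinely a Green current rather than a smooth form. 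At each archimedean $v\neq u$, the Kudla--Millson curvature equation converts the star product into the value $W_{T^\Box}(0,(g_{1v},g_{2v}),\Phi^0_v)$. Finally $\vol^\natural(L)$ combines with the adelic lattice counting, by Definition \ref{de:measure}, to give the product $\prod_{v\in\tV_F^\fin\setminus\{\ul u\}}W_{T^\Box}(0,(g_{1v},g_{2v}),(\rs_1\phi^\infty_1\otimes(\rs_2\phi^\infty_2)^\tc)_v)$. Summing over $T^\Box$ with $\partial_{r,r}T^\Box=(T_1,T_2)$ and $\Diff(T^\Box,V)=\{u\}$ recovers $\fE_{T_1,T_2}((\tau_1g_1,g_2),\Phi^0_\infty\otimes(\rs_1\phi^\infty_1\otimes(\rs_2\phi^\infty_2)^\tc))_u$ of Definition \ref{de:analytic_kernel}, with the factor $\tfrac12$ from Beilinson's symmetrized pairing matching the prefactor on the right-hand side of \eqref{eq:index_arch}. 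In essence, the identity is the archimedean arithmetic Siegel--Weil formula at $u$; in the codimension-one case it is proved in \cite{Liu12}*{Section~4}, and the higher-codimension version is obtained from it by the multiplicativity of the star product.

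I expect the main obstacle to be the justification of absolute convergence and the interchange of the sum over $T_1$ with the integrals over $\fF_1$ and over $\pres{u}{X_L}(\dC)$. The Kudla Green forms $\Xi(x_j)$ have logarithmic singularities along their defining divisors, and one must integrate their star products against delta currents whose disjointness from the singular loci is only generic; convergence in $T_1$ is controlled by the Gaussian decay of $\omega_{r,\infty}(g_{1\infty})\phi^0_\infty(T_1)$, and convergence in $\tau_1\in\fF_1$ by the rapid decay of the cusp form $\varphi^\tc$ on the Siegel domain. These estimates parallel those in \cite{Liu11}*{Section~4}, and their verification in the present higher-rank incoherent setting, together with the careful identification of the archimedean Green-integral with the derivative Whittaker function, is the heart of the argument.
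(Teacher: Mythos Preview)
Your proposal has a genuine gap at the very first step. Beilinson's archimedean local index \eqref{eq:index_arch_0} is defined using a \emph{harmonic} Green current $\tg^\heartsuit$ for the first cycle, i.e.\ one whose tail form vanishes. Kudla's star-product Green current $\Xi(x)$ (or the Kudla--Millson Green current $\tg^{\r{KM}}$) is \emph{not} harmonic: its tail form $\omega^{\r{KM}}$ is the nonzero Kudla--Millson form. Hence the identity $\langle c_1(T_1),c_2(T_2)\rangle_{X_{L,u},E_u}=\tfrac12\int G_1\wedge\delta_{c_2}$ that you write down is simply false for your choice of $G_1$. In fact the Remark immediately following the proposition states explicitly that the termwise relation between $I_{T_1,T_2}$ and $\fE_{T_1,T_2}$ is complicated and involves holomorphic projection; your argument would prove a termwise identity, which does not hold.

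The paper's proof proceeds quite differently. It introduces a modified quantity $I^{\r{KM}}_{T_1,T_2}$ built from $\tg^{\r{KM}}_{T_1}$ together with an additional correction term $\int\omega^{\r{KM}}_{T_1}\wedge(\tg^{\r{KM}}_{T_2})^\tc$, and it is for \emph{this} quantity that \cite{Liu11}*{Theorem~4.20} gives $\vol^\natural(L)\cdot I^{\r{KM}}_{T_1,T_2}=\tfrac12\fE_{T_1,T_2}$. The substantive work is then to show that the difference $I^{\r{KM}}_{T_1,T_2}-I_{T_1,T_2}$ vanishes \emph{after} summing over $T_1$ and integrating against the cusp form $\varphi^\tc$. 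This difference reduces to an integral of $\sum_{T_1}\omega^{\r{KM}}_{T_1}$ (weighted by $\varphi^\tc$) against a fixed current. The key input is then Millson's theorem \cite{Mil85}*{Theorem~III.2.1} that the $\varphi^\tc$-average of the Kudla--Millson form is a \emph{harmonic} $(r,r)$-form; since the generating function $Z_{\rs_1\phi^\infty_1}(g_1)_L$ is cohomologically trivial (because $\rs_1\in(\dS^\tR_{\dQ^\ac})^{\langle\ell\rangle}_{L_\tR}$), this harmonic form represents zero, hence is identically zero. Both the cohomological triviality of the cycles and the convolution with the cusp form are essential to this vanishing; neither appears in your outline except as a convergence device.
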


\begin{remark}
The relation between $I_{T_1,T_2}$ and $\fE_{T_1,T_2}$ for each individual pair $(T_1,T_2)$ in the style of Proposition \ref{pr:index_inert} is much more complicated, which involves the so-called holomorphic projection (see \cite{Liu12}*{Section~6A} for the case where $r=1$). The main technical innovation in the archimedean computation in this article is that we do not need to compare $I_{T_1,T_2}$ and $\fE_{T_1,T_2}$ in order to obtain the main theorems; it suffices for us to compare both sides after taking summation and convolution for any of the two variables like in Proposition \ref{pr:index_arch}, which does not require holomorphic projection.
\end{remark}

As we have promised in Section \ref{ss:height}, we start by recalling the definition of the archimedean local index $I_{T_1,T_2}(\phi^\infty_1,\phi^\infty_2,\rs_1,\rs_2,g_1,g_2)_{L,u}$.

Let $X$ be a smooth projective complex scheme of pure dimension $n-1$. For an element $Z\in\rZ^r(X)_\dC$, recall that a \emph{Green current} for $Z$ is an $(r-1,r-1)$-current $\tg_Z$ on $X(\dC)$ that is smooth away from the support of $Z$ and satisfies
\[
\r{d}\r{d}^\tc\tg_Z+\delta_Z=[\omega_Z]
\]
for a unique smooth $(r,r)$-form $\omega_Z$ on $X(\dC)$, which we call the \emph{tail form} of $\tg_Z$. If $Z\in\rZ^r(X)^0_\dC$, then we say that a Green current $\tg_Z$ for $Z$ is \emph{harmonic} if $\omega_Z=0$, and we use $\tg_Z^\heartsuit$ to indicate a harmonic Green current. For two elements $Z_1,Z_2\in\rZ^r(X)^0_\dC$ with disjoint supports, we define
\begin{align}\label{eq:index_arch_0}
\langle Z_1,Z_2\rangle_{X,\dC}\coloneqq\frac{1}{2}\int_{X(\dC)}\tg_{Z_1}^\heartsuit\wedge\delta_{Z_2^\tc},
\end{align}
which is independent of the choice of harmonic Green current $\tg_{Z_1}^\heartsuit$.

By Lemma \ref{le:moduli_archimedean}, we may assume $u=\bu$ without lost of generality in the proof of Proposition \ref{pr:index_arch}. Now we apply the above discussion to the complex scheme $X_L\otimes_E\dC$. For $i=1,2$,
\begin{itemize}
  \item we denote by $\tg_{T_i}^\heartsuit(\phi^\infty_i,\rs_i,g_i^\infty)_L$ a harmonic Green current for $\rs_i^*Z_{T_i}(\omega_r^\infty(g_i^\infty)\phi^\infty_i)_L$ on $X_L\otimes_E\dC$;

  \item for every element $g_i\in G_r(\dA_F)$ with finite part $g_i^\infty$, there is a particular Green current for $\rs_i^*Z_{T_i}(\omega_r^\infty(g_i^\infty)\phi^\infty_i)_L$ on $X_L\otimes_E\dC$, known as the \emph{Kudla--Milson Green current} (see the proof of \cite{Liu11}*{Theorem~4.20}), denoted by $\tg_{T_i}^{\r{KM}}(\phi^\infty_i,\rs_i,g_i)_L$, with the tail form $\omega_{T_i}^{\r{KM}}(\phi^\infty_i,\rs_i,g_i)_L$.
\end{itemize}

\begin{proof}[Proof of Proposition \ref{pr:index_arch}]
As we have pointed out, it suffices to prove the proposition for $u=\bu$. By \eqref{eq:index_arch_0}, we have
\begin{multline}\label{eq:index_arch_1}
I_{T_1,T_2}(\phi^\infty_1,\phi^\infty_2,\rs_1,\rs_2,g_1,g_2)_{L,\bu} \\
=\frac{1}{2}C_{T_1,T_2}(g_{1\infty},g_{2\infty})
\int_{X_L(\dC)}\tg_{T_1}^\heartsuit(\phi^\infty_1,\rs_1,g_1^\infty)_L\wedge
\delta_{(\rs_2^*Z_{T_2}(\omega_r^\infty(g_2^\infty)\phi^\infty_2)_L)^\tc},
\end{multline}
where
\[
C_{T_1,T_2}(g_{1\infty},g_{2\infty})\coloneqq
\omega_{r,\infty}(g_{1\infty})\phi^0_\infty(T_1)\cdot(\omega_{r,\infty}(g_{2\infty})\phi^0_\infty(T_2))^\tc.
\]
We need a variant of \eqref{eq:index_arch_1}. Put
\begin{multline}\label{eq:index_arch_2}
I_{T_1,T_2}(\phi^\infty_1,\phi^\infty_2,\rs_1,\rs_2,g_1,g_2)_{L,\bu}^{\r{KM}}\coloneqq \\
\frac{1}{2}C_{T_1,T_2}(g_{1\infty},g_{2\infty})\(\int_{X_L(\dC)}\tg_{T_1}^{\r{KM}}(\phi^\infty_1,\rs_1,g_1)_L\wedge
\delta_{(\rs_2^*Z_{T_2}(\omega_r^\infty(g_2^\infty)\phi^\infty_2)_L)^\tc} \right. \\
\left.+\int_{X_L(\dC)}\omega_{T_1}^{\r{KM}}(\phi^\infty_1,\rs_1,g_1)_L\wedge
\tg_{T_2}^{\r{KM}}(\phi^\infty_2,\rs_2,g_2)_L^\tc\).
\end{multline}
By \cite{Liu11}*{Theorem~4.20}\footnote{There is a sign error in \cite{Liu11}*{Theorem~4.20}: the correct sign should be $\prod_v\gamma_{\dV_v}^{2n}$, which is $1$, rather than $\prod_v\gamma_{\dV_v}$, which is $-1$ (the root of this sign error is that in the formula for $\omega_\chi(w_r)$ on \cite{Liu11}*{Page~858}, the constant $\gamma_V$ should really be $\gamma_V^r$). This result was later reproved in \cite{GS19}*{Corollary~5.12} by a different method.}, we have
\begin{align}\label{eq:index_arch_7}
\vol^\natural(L)\cdot I_{T_1,T_2}(\phi^\infty_1,\phi^\infty_2,\rs_1,\rs_2,g_1,g_2)_{L,\bu}^{\r{KM}}
=\frac{1}{2}\fE_{T_1,T_2}((g_1,g_2),\Phi_\infty^0\otimes(\rs_1\phi^\infty_1\otimes(\rs_2\phi^\infty_2)^\tc))_\bu.
\end{align}

We first check the absolute convergence of the two sides of \eqref{eq:index_arch}. It is clear that the assignment
\[
\tau_1\mapsto
\sum_{T_1\in\Herm_r^\circ(F)^+}
\left|\fE_{T_1,T_2}((\tau_1g_1,g_2),\Phi_\infty^0\otimes(\rs_1\phi^\infty_1\otimes\rs_2\phi^\infty_2))_\bu\right|
\]
is slowly increasing on $\fF_1$, which implies that the right-hand side of \eqref{eq:index_arch} is absolutely convergent since $\varphi$ is a cusp form.

For the left-hand side, by \eqref{eq:index_arch_7}, it suffices to show that the expression
\[
\sum_{T_1\in\Herm_r^\circ(F)^+}\left|I_{T_1,T_2}(\phi^\infty_1,\phi^\infty_2,\rs_1,\rs_2,\tau_1 g_1,g_2)_{L,\bu}^{\r{KM}}
-I_{T_1,T_2}(\phi^\infty_1,\phi^\infty_2,\rs_1,\rs_2,\tau_1g_1,g_2)_{L,\bu}\right|
\]
is absolutely convergent and is slowly increasing on $\tau_1$. First, since $\rs_2^*Z_{T_2}(\omega_r^\infty(g_2^\infty)\phi^\infty_2)_L$ is cohomologically trivial, by the $\partial\bar\partial$-lemma for currents, there is an $(r-1,r-1)$-current $\eta$ on $X_L(\dC)$ such that
\begin{align}\label{eq:index_arch_3}
\r{d}\r{d}^\tc\eta=\delta_{(\rs_2^*Z_{T_2}(\omega_r^\infty(g_2^\infty)\phi^\infty_2)_L)^\tc}.
\end{align}
Then for every $T_1\in\Herm_r^\circ(F)^+$,
\begin{align*}
J_{T_1}(\tau_1)&\coloneqq I_{T_1,T_2}(\phi^\infty_1,\phi^\infty_2,\rs_1,\rs_2,\tau_1g_1,g_2)_{L,\bu}^{\r{KM}}
-I_{T_1,T_2}(\phi^\infty_1,\phi^\infty_2,\rs_1,\rs_2,\tau_1g_1,g_2)_{L,\bu} \\
&=C_{T_1,T_2}(\tau_1g_{1\infty},g_{2\infty})\int_{X_L(\dC)}\(\tg_{T_1}^{\r{KM}}(\phi^\infty_1,\rs_1,\tau_1g_1)_L
-\tg_{T_1}^\heartsuit(\phi^\infty_1,\rs_1,\tau_1g_1)_L\)\wedge\r{d}\r{d}^\tc\eta \\
&\qquad+C_{T_1,T_2}(\tau_1g_{1\infty},g_{2\infty})\int_{X_L(\dC)}
\omega_{T_1}^{\r{KM}}(\phi^\infty_1,\rs_1,\tau_1g_1)_L\wedge\tg_{T_2}^{\r{KM}}(\phi^\infty_2,\rs_2,g_2)_L^\tc \\
&=C_{T_1,T_2}(\tau_1g_{1\infty},g_{2\infty})\int_{X_L(\dC)}\r{d}\r{d}^\tc\(\tg_{T_1}^{\r{KM}}(\phi^\infty_1,\rs_1,\tau_1g_1)_L
-\tg_{T_1}^\heartsuit(\phi^\infty_1,\rs_1,\tau_1g_1)_L\)\wedge\eta \\
&\qquad+C_{T_1,T_2}(\tau_1g_{1\infty},g_{2\infty})\int_{X_L(\dC)}
\omega_{T_1}^{\r{KM}}(\phi^\infty_1,\rs_1,\tau_1g_1)_L\wedge\tg_{T_2}^{\r{KM}}(\phi^\infty_2,\rs_2,g_2)_L^\tc \\
&=C_{T_1,T_2}(\tau_1g_{1\infty},g_{2\infty})\int_{X_L(\dC)}
\omega_{T_1}^{\r{KM}}(\phi^\infty_1,\rs_1,\tau_1g_1)_L\wedge\(\eta+\tg_{T_2}^{\r{KM}}(\phi^\infty_2,\rs_2,g_2)_L^\tc\).
\end{align*}
By the claim $(*)$ below, we know that $\sum_{T_1\in\Herm_r^\circ(F)^+}|J_{T_1}(\tau_1)|$ is convergent and is slowly increasing in $\tau_1\in\fF_1$, which implies that the left-hand side of \eqref{eq:index_arch} is absolutely convergent as we have pointed out.

We claim that
\begin{itemize}
  \item[$(*)$] The summation
      \[
      \sum_{T_1\in\Herm_r^\circ(F)^+}C_{T_1,T_2}(\tau_1g_{1\infty},g_{2\infty})\cdot\omega_{T_1}^{\r{KM}}(\phi^\infty_1,\rs_1,\tau_1g_1)_L
      \]
      is convergent in the space of smooth $(r,r)$-form on $X_L(\dC)$ with respect to the $C^\infty$-topology, and is locally uniformly slowly increasing in $\tau_1\in\fF_1$.
\end{itemize}

To prove the claim, note that the $C^\infty$-topology is a Fr\'{e}chet topology defined by a natural family of semi-norms. Take such a semi-norm $\|\;\|$. By the construction of the Kudla--Millson form (\cite{Mil85}*{Section~III.1} or \cite{KM86}*{Section~3}), it suffices to consider semi-norms $\|\;\|$ satisfying that there exists $\phi_\infty\in\sS(\pres{\bu}V^r\otimes_FF_\infty)$ such that
\[
\left\|\omega_{r,\infty}(\tau_1 g_{1\infty})\phi^0_\infty(T_1)\cdot
\omega_{T_1}^{\r{KM}}(\phi^\infty_1,\rs_1,\tau_1g_1)_L\right\|=\sup_{h\in\pres{\bu}{H}(F)\backslash\pres{\bu}{H}(\dA_F)}
\left\{\left|\theta_{\phi_\infty\otimes\rs_1\phi^\infty_1,T_1}(\tau_1 g_1,h)\right|\right\},
\]
for every $\tau_1\in\fF_1$ and every $T_1\in\Herm_r^\circ(F)^+$, where $\theta_{\phi_\infty\otimes\rs_1\phi^\infty_1,T_1}$ denotes the $T_1$-component of the classical theta function of $\phi_\infty\otimes\rs_1\phi^\infty_1$. Now since $\pres{\bu}{H}(F)\backslash\pres{\bu}{H}(\dA_F)$ is compact, and the assignment
\[
\tau_1\mapsto\sum_{T_1\in\Herm_r^\circ(F)^+}\left|\theta_{\phi_\infty\otimes\rs_1\phi^\infty_1,T_1}(\tau_1 g_1,h)\right|
\]
is slowly increasing on $\fF_1$, locally uniformly in $h$, the claim follows.

Now we continue to prove \eqref{eq:index_arch}. By \eqref{eq:index_arch_7}, it suffices to show that
\begin{multline}\label{eq:index_arch_4}
\int_{\fF_1}\varphi^\tc(\tau_1g_1)
\sum_{T_1\in\Herm_r^\circ(F)^+}I_{T_1,T_2}(\phi^\infty_1,\phi^\infty_2,\rs_1,\rs_2,\tau_1g_1,g_2)_{L,\bu}^{\r{KM}}\rd\tau_1 \\
=\int_{\fF_1}\varphi^\tc(\tau_1g_1)
\sum_{T_1\in\Herm_r^\circ(F)^+}I_{T_1,T_2}(\phi^\infty_1,\phi^\infty_2,\rs_1,\rs_2,\tau_1g_1,g_2)_{L,\bu}\rd\tau_1,
\end{multline}
in which we have already known that both sides are absolutely convergent.

Take an element $T_1\in\Herm_r^\circ(F)^+$. We put
\[
\pres{\varphi}\tg_{T_1}^\heartsuit\coloneqq\int_{\fF_1}\varphi^\tc(\tau_1g_1)
C_{T_1,T_2}(\tau_1g_{1\infty},g_{2\infty})\tg_{T_1}^\heartsuit(\phi^\infty_1,\rs_1,g_1^\infty)_L\rd\tau_1,
\]
which is a harmonic Green current for
\[
\pres{\varphi}Z_{T_1}\coloneqq\(\int_{\fF_1}\varphi^\tc(\tau_1g_1)C_{T_1,T_2}(\tau_1g_{1\infty},g_{2\infty})\rd\tau_1\)
\cdot\rs_1^*Z_{T_1}(\omega_r^\infty(g_1^\infty)\phi^\infty_1)_L.
\]
We also put
\[
\pres{\varphi}\tg_{T_1}^{\r{KM}}\coloneqq\int_{\fF_1}\varphi^\tc(\tau_1g_1)
C_{T_1,T_2}(\tau_1g_{1\infty},g_{2\infty})\tg_{T_1}^{\r{KM}}(\phi^\infty_1,\rs_1,\tau_1g_1)_L\rd\tau_1,
\]
which is a Green current for $\pres{\varphi}Z_{T_1}$, whose tail form is
\[
\pres{\varphi}\omega_{T_1}^{\r{KM}}\coloneqq\int_{\fF_1}\varphi^\tc(\tau_1g_1)
C_{T_1,T_2}(\tau_1g_{1\infty},g_{2\infty})\omega_{T_1}^{\r{KM}}(\phi^\infty_1,\rs_1,\tau_1g_1)_L\rd\tau_1.
\]
Then by \eqref{eq:index_arch_1}, \eqref{eq:index_arch_2}, and \eqref{eq:index_arch_3}, we have
\begin{align*}
\pres{\varphi}J_{T_1}&\coloneqq
\int_{\fF_1}\varphi^\tc(\tau_1g_1)I_{T_1,T_2}(\phi^\infty_1,\phi^\infty_2,\rs_1,\rs_2,\tau_1g_1,g_2)_{L,\bu}^{\r{KM}}\rd\tau_1  \\
&\qquad -\int_{\fF_1}\varphi^\tc(\tau_1g_1)I_{T_1,T_2}(\phi^\infty_1,\phi^\infty_2,\rs_1,\rs_2,\tau_1g_1,g_2)_{L,\bu}\rd\tau_1 \\
&=\int_{X_L(\dC)}(\pres{\varphi}\tg_{T_1}^{\r{KM}}-\pres{\varphi}\tg_{T_1}^\heartsuit)\wedge\r{d}\r{d}^\tc\eta
+\int_{X_L(\dC)}\pres{\varphi}\omega_{T_1}^{\r{KM}}\wedge\tg_{T_2}^{\r{KM}}(\phi^\infty_2,\rs_2,g_2)_L^\tc \\
&=\int_{X_L(\dC)}\r{d}\r{d}^\tc(\pres{\varphi}\tg_{T_1}^{\r{KM}}-\pres{\varphi}\tg_{T_1}^\heartsuit)\wedge\eta
+\int_{X_L(\dC)}\pres{\varphi}\omega_{T_1}^{\r{KM}}\wedge\tg_{T_2}^{\r{KM}}(\phi^\infty_2,\rs_2,g_2)_L^\tc \\
&=\int_{X_L(\dC)}\pres{\varphi}\omega_{T_1}^{\r{KM}}\wedge\(\eta+\tg_{T_2}^{\r{KM}}(\phi^\infty_2,\rs_2,g_2)_L^\tc\).
\end{align*}
Therefore, the difference between the two sides of \eqref{eq:index_arch_4} equals
\begin{align}\label{eq:index_arch_6}
\sum_{T_1\in\Herm_r^\circ(F)^+}\pres{\varphi}J_{T_1}=
\int_{X_L(\dC)}\(\sum_{T_1\in\Herm_r^\circ(F)^+}\pres{\varphi}\omega_{T_1}^{\r{KM}}\)
\wedge\(\eta+\tg_{T_2}^{\r{KM}}(\phi^\infty_2,\rs_2,g_2)_L^\tc\).
\end{align}
Here, to validate the exchange of summation and integration, it suffices to show that the summation $\sum_{T_1\in\Herm_r^\circ(F)^+}\pres{\varphi}\omega_{T_1}^{\r{KM}}$ is convergent in the space of smooth $(r,r)$-form on $X_L(\dC)$ with respect to the $C^\infty$-topology, since $X_L(\dC)$ is compact. However, this follows from the claim $(*)$.

We then continue by computing the right-hand side of \eqref{eq:index_arch_6}. Since $\supp(\phi_{1v}^\infty)\subseteq(V^r_v)_\reg$ for some $v\in\tR'$, we have
\[
\sum_{T_1\in\Herm_r^\circ(F)^+}\pres{\varphi}\omega_{T_1}^{\r{KM}}=
\sum_{T_1\in\Herm_r(F)^+}\pres{\varphi}\omega_{T_1}^{\r{KM}},
\]
where $\pres{\varphi}\omega_{T_1}^{\r{KM}}$ for $T_1\in\Herm_r(F)^+\setminus\Herm_r^\circ(F)^+$ is defined similarly. However,
\[
\sum_{T_1\in\Herm_r(F)^+}\pres{\varphi}\omega_{T_1}^{\r{KM}}=
(\omega_{r,\infty}(g_{2\infty})\phi^0_\infty(T_2))^\tc\cdot\int_{\fF_1}\varphi^\tc(\tau_1g_1)\omega^{\r{KM}}(\tau_1g_1)\rd\tau_1,
\]
where $\omega^{\r{KM}}(g_1)$ is the Kudla--Milson form for the generating function $Z_{\rs_1\phi_1^\infty}(g_1)_L$. By
\cite{Mil85}*{Theorem~III.2.1}, we know that
\begin{align}\label{eq:index_arch_5}
\int_{\fF_1}\varphi^\tc(\tau_1g_1)\omega^{\r{KM}}(\tau_1g_1)\rd\tau_1=
\int_{\Gamma_1\backslash G_r(F_\infty)}\varphi^\tc(g'_1g_1)\omega^{\r{KM}}(g'_1g_1)\rd g'_1
\end{align}
is a harmonic $(r,r)$-form on $X_L(\dC)$. Since $Z_{\rs_1\phi_1^\infty}(g_1)_L$ is cohomologically trivial, the cohomology class of \eqref{eq:index_arch_5} is also trivial, which implies that \eqref{eq:index_arch_5} vanishes. Therefore, we obtain \eqref{eq:index_arch_4}. The proposition is proved.
\end{proof}

\section{Proof of main results}
\label{ss:main}

In this section, we prove our main results in Section \ref{ss:introduction}. Thus, we put ourselves in Assumption \ref{st:main}. In particular, we have $\tV_F^\ram=\emptyset$, $\tV_F^{(2)}\subseteq\tV_F^\spl$.

Let $(\pi,\cV_\pi)$ be as in Assumption \ref{st:main} with $|\tS_\pi|$ odd, for which we assume Hypothesis \ref{hy:galois}. Take
\begin{itemize}
  \item a totally positive definite hermitian space $V$ over $\dA_E$ of rank $2r$ as in Notation \ref{st:h} satisfying that $\epsilon(V_v)=-1$ if and only if $v\in\tS_\pi$ (so that $V$ is incoherent and $V=V_\pi$ as in Section \ref{ss:introduction}),

  \item $\tS=\tS_\pi$ (so that every underlying rational prime of $\tS$ is unramified in $E$),

  \item $\tR$ a finite subset of $\tV_F^\spl$ containing $\tR_\pi$ and of cardinality at least $2$, and $\tR'=\tR$,

  \item an $\tR$-good rational prime $\ell$ (Definition \ref{de:good}),

  \item for $i=1,2$, a nonzero element $\varphi_i=\otimes_v\varphi_{iv}\in\cV_\pi^{[r]\tR}$ satisfying that $\langle\varphi_{1v}^\tc,\varphi_{2v}\rangle_{\pi_v}=1$ for $v\in\tV_F\setminus\tR$,

  \item for $i=1,2$, an element $\phi^\infty_i=\otimes_v\phi^\infty_{iv}\in\otimes_v\phi^\infty_{iv}\in\sS(V^r\otimes_{\dA_F}\dA_F^\infty)$ satisfying
      \begin{itemize}
        \item $\phi^\infty_{iv}=\CF_{(\Lambda^\tR_v)^r}$ for $v\in\tV_F^\fin\setminus\tR$;

        \item $\supp(\phi^\infty_{1v}\otimes(\phi^\infty_{2v})^\tc)\subseteq(V^{2r}_v)_\reg$ for $v\in\tR$,
      \end{itemize}

  \item an open compact subgroup $L$ of $H(\dA_F^\infty)$ of the form $L_\tR L^\tR$ where $L^\tR$ is defined in Notation \ref{st:h}(H8), that fixes both $\phi^\infty_1$ and $\phi^\infty_2$,

  \item an open compact subgroup $K\subseteq G_r(\dA_F^\infty)$ that fixes $\varphi_1,\varphi_2,\phi^\infty_1,\phi^\infty_2$,

  \item a set of representatives $\{g^{(1)},\dots,g^{(s)}\}$ of the double coset $G_r(F)\backslash G_r(\dA_F^\infty)/K$ satisfying $g^{(j)}\in G_r(\dA_F^{\infty,\tR})$ for $1\leq j\leq s$, together with a Siegel fundamental domain $\fF^{(j)}\subseteq G_r(F_\infty)$ for the congruence subgroup $G_r(F)\cap g^{(j)}K(g^{(j)})^{-1}$ for each $1\leq j\leq s$,

  \item for $i=1,2$, $\rs_i$ a product of two elements in $(\dS^\tR_{\dQ^\ac})^{\langle\ell\rangle}_{L_\tR}$ satisfying $\chi^\tR_\pi(\rs_i)=1$ (which is possible by Proposition \ref{pr:tempered_generic}(2)),

  \item for $i=1,2$, an element $\rs^u_i\in (\dS^\tR_{\dQ^\ac})^{\langle\ell\rangle}_{L_\tR}$ for every $u\in\tV_E^\spl\cup\tS_E$, where $\tS_E$ denotes the subset of $\tV_E^\inert$ above $\tS$, as in Proposition \ref{pr:index_split} and Proposition \ref{pr:index_inert_almost}, satisfying $\chi^\tR_\pi(\rs_i^u)=1$ and that $\rs_i^u=1$ for all but finitely many $u$.
\end{itemize}
In what follows, we put $\tilde\rs_i\coloneqq\rs_i\cdot\prod_{u\in\tV_E^\spl\cup\tS_E}\rs_i^u$ for $i=1,2$.

\begin{lem}\label{le:proof}
Let the situation be as above.
\begin{enumerate}
  \item For every $T_2\in\Herm_r^\circ(F)^+$ and every $\rt\in\dT^\tR_{\dQ^\ac}$, the identity
     \begin{multline*}
     \vol^\natural(L)\sum_{j=1}^s\int_{\fF^{(j)}}\varphi_1^\tc(\tau^{(j)}g^{(j)})\sum_{T_1\in\Herm_r^\circ(F)^+}
     I_{T_1,T_2}(\rt\phi^\infty_1,\phi^\infty_2,\tilde\rs_1,\tilde\rs_2,\tau^{(j)}g^{(j)},g_2)^\ell_L\rd\tau^{(j)} \\
     =\chi^\tR_\pi(\rt)^\tc\int_{G_r(F)\backslash G_r(\dA_F)}
     \varphi_1^\tc(g_1)E'(0,(g_1,g_2),\Phi_\infty^0\otimes\Phi^\infty)_{-,T_2}\rd g_1 \\
     -\chi^\tR_\pi(\rt)^\tc\sum_{u\in\tS_E}\frac{\log q_u}{q_u^r-1}\int_{G_r(F)\backslash G_r(\dA_F)}
     \varphi_1^\tc(g_1)E(0,(g_1,g_2),
     \Phi_\infty^0\otimes\Phi^{\infty,\ul{u}}\otimes\CF_{(\Lambda^\star_{\ul{u}})^{2r}})_{-,T_2}\rd g_1
     \end{multline*}
     holds for every $g_2\in G_r(\dA_F^\tR)$, where $\Phi^\infty\coloneqq\tilde\rs_1\phi^\infty_1\otimes(\tilde\rs_2\phi^\infty_2)^\tc$; $\Lambda^\star_{\ul{u}}$ is the lattice in Proposition \ref{pr:index_inert_almost}; and $E(s,(g_1,g_2),\Phi)_{-,T_2}$ denotes the $T_2$-Siegel Fourier coefficient of the Eisenstein series $E(s,(g_1,g_2),\Phi)$ with respect to the second variable $g_2$.

  \item The identity
     \begin{multline*}
     \vol^\natural(L)
     \sum_{j_2=1}^s\sum_{j_1=1}^s\int_{\fF^{(j_2)}}\int_{\fF^{(j_1)}}\varphi_2(\tau^{(j_2)}g^{(j_2)})\varphi_1^\tc(\tau^{(j_1)}g^{(j_1)})  \\
     \sum_{T_2\in\Herm_r^\circ(F)^+}\sum_{T_1\in\Herm_r^\circ(F)^+}
     I_{T_1,T_2}(\phi^\infty_1,\phi^\infty_2,\tilde\rs_1,\tilde\rs_2,\tau^{(j_1)}g^{(j_1)},\tau^{(j_2)}g^{(j_2)})^\ell_L
     \rd\tau^{(j_1)}\rd\tau^{(j_2)} \\
     =\frac{L'(\tfrac{1}{2},\pi)}{b_{2r}(0)}\cdot C_r^{[F:\dQ]}
     \cdot\prod_{v\in\tV_F^\fin}\fZ^\natural_{\pi_v,V_v}(\varphi^\tc_{1v},\varphi_{2v},\phi^\infty_{1v}\otimes(\phi^\infty_{2v})^\tc)
     \end{multline*}
     holds.
\end{enumerate}

\end{lem}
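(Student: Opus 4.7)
The plan for part (1) is to decompose the global index $I_{T_1,T_2}(\rt\phi^\infty_1,\phi^\infty_2,\tilde\rs_1,\tilde\rs_2,\tau^{(j)}g^{(j)},g_2)^\ell_L$ via the identity \eqref{eq:height1} into a sum of local indices and then treat each place using the appropriate proposition from Sections \ref{ss:split}--\ref{ss:archimedean}. At every $u\in\tV_E^\spl$ with $\ul{u}\not\in\tR$, Lemma \ref{le:index_split} forces the local index to vanish; for $u\in\tV_E^\spl$ with $\ul{u}\in\tR$, the prefactor $\rs_i^u$ built into $\tilde\rs_i$ makes the local index vanish by Proposition \ref{pr:index_split}. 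At $u\in\tV_E^\inert$ with $\ul{u}\not\in\tS$, Proposition \ref{pr:index_inert} identifies $\log q_u\cdot\vol^\natural(L)\cdot I_{T_1,T_2,u}^\ell$ with $\fE_{T_1,T_2}((g_1,g_2),\Phi_\infty^0\otimes\Phi^\infty)_u$; at $u\in\tS_E$, Proposition \ref{pr:index_inert_almost}, combined with the chosen $\rs_i^u$, produces the same contribution plus the almost-unramified correction $-\frac{\log q_u}{q_u^r-1}$ times the analogous Fourier datum of the Eisenstein \emph{value} twisted by $\CF_{(\Lambda^\star_{\ul{u}})^{2r}}$ at $\ul{u}$. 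At each archimedean $u\in\tV_E^{(\infty)}$, Proposition \ref{pr:index_arch} converts the factor $2\langle\cdot\rangle_{X_{L,u},E_u}$ from \eqref{eq:height1} into $\int_{\fF^{(j)}}\varphi_1^\tc\sum_{T_1}\fE_{T_1,T_2}(\cdot)_u\rd\tau^{(j)}$ once one first averages against $\varphi_1^\tc$ along the Siegel fundamental domain. Summing over all places $u$ and invoking Proposition \ref{pr:analytic_kernel}(2) to reassemble $\sum_{w}\fE(\cdot)_w = E'(0,\cdot,\Phi)$, and Proposition \ref{pr:analytic_kernel}(1) to reassemble the almost-unramified corrections into Eisenstein \emph{values}, yields the desired $T_2$-Siegel Fourier coefficient identity, modulo the insertion of $\rt$.

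To handle the Hecke element $\rt$, I would use Lemma \ref{re:translate} to push the Weil-translate $\rt\phi^\infty_1$ through the cycle, so that the analytic decomposition above yields the Fourier coefficients of $E'$ and $E$ formed with $\tilde\rs_1\rt\phi^\infty_1$ in place of $\tilde\rs_1\phi^\infty_1$. Then I would transfer $\rt$ from the Weil-representation side to the $\pi^\vee$-side inside the $g_1$-integral via \cite{Liu11}*{Proposition~A.5} (compare \cite{Ral82}*{Page~511}), exactly as in the proof of Proposition \ref{pr:arithmetic_theta}(1), to replace the $\rt$-translate by the scalar $\chi^\tR_\pi(\rt)^\tc$ acting on $\varphi_1^\tc$. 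Absolute convergence of all sums and integrals, required for Fubini, is standard for the Fourier series of Siegel Eisenstein series (and their derivatives) away from the divisor of convergence, combined with rapid decay of $\varphi_1^\tc$ and the archimedean convergence statement already established inside the proof of Proposition \ref{pr:index_arch}.

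For part (2) I would apply part (1) with $\rt=1$, then integrate the resulting identity against $\varphi_2(g_2)$ over $G_r(F)\backslash G_r(\dA_F)$ after decomposing it via the Siegel fundamental domains $\fF^{(j_2)}$. Summing the $T_2$-Fourier coefficients unfolds into the full Eisenstein series (and derivative), so the $E'$-contribution matches the integral on the left-hand side of Proposition \ref{pr:eisenstein} and produces the target
\[
\frac{L'(\tfrac12,\pi)}{b_{2r}(0)}\cdot C_r^{[F:\dQ]}\cdot\prod_{v\in\tV_F^\fin}\fZ^\natural_{\pi_v,V_v}(\varphi^\tc_{1v},\varphi_{2v},\phi^\infty_{1v}\otimes(\phi^\infty_{2v})^\tc).
\]
Here the scalars $\chi^\tR_\pi(\tilde\rs_i)=1$ are absorbed via the same Hecke-transfer argument, and the regularity hypothesis $\supp(\phi^\infty_{1v}\otimes(\phi^\infty_{2v})^\tc)\subseteq(V^{2r}_v)_\reg$ for $v\in\tR$ ensures that the naive Fourier expansion is licit (this is precisely the condition required in Proposition \ref{pr:analytic_kernel}).

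The main obstacle in part (2) is to show that the sum over $u\in\tS_E$ of almost-unramified corrections contributes zero after integrating against $\varphi_2$. I would handle this by running the same doubling Euler-product calculation that underlies Proposition \ref{pr:eisenstein} on the coherent Eisenstein \emph{values} that make up each correction: after pairing with $\varphi_1^\tc\otimes\varphi_2$ in the doubling variable, the global factor that appears is $L(\tfrac12,\pi)$ rather than $L'(\tfrac12,\pi)$. Since $|\tS_\pi|$ is odd and $[F:\dQ]$ is even, the global root number satisfies $\varepsilon(\pi)=(-1)^{r[F:\dQ]+|\tS_\pi|}=-1$, so $L(\tfrac12,\pi)=0$, killing every correction term and leaving exactly the $L'$-term of Proposition \ref{pr:eisenstein}. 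A secondary check is that the Schwartz datum of the correction lives on the coherent global hermitian space that agrees with $V$ away from $\ul{u}$ and equals $\pres{u}V_{\ul{u}}$ at $\ul{u}$; this does not affect the doubling formalism since the section $f_\Phi\in\rI^\Box_r(0)$ and the attached local doubling zeta integrals depend only on the local Schwartz data at each place, so the standard Rallis inner-product-style unfolding still applies and produces $L(\tfrac12,\pi)$ as claimed.
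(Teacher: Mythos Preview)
Your overall architecture is right, and part~(2) is essentially the paper's argument. The one substantive gap is in how you handle $\rt$ in part~(1). You propose to decompose $I_{T_1,T_2}(\rt\phi^\infty_1,\phi^\infty_2,\tilde\rs_1,\tilde\rs_2,g_1,g_2)^\ell_L$ into local indices and feed each into Propositions~\ref{pr:index_split}--\ref{pr:index_arch}. But those propositions are stated only for $(\tR,\tR',\ell,L)$-admissible sextuples, which in particular forces $\phi^\infty_{1v}=\CF_{(\Lambda^\tR_v)^r}$ for every $v\in\tV_F^\fin\setminus\tR$. Since $\rt\in\dT^\tR_{\dQ^\ac}$ acts (via the $H$-side of the Weil representation) precisely at places outside $\tR$, including inert ones, the function $\rt\phi^\infty_1$ typically fails this condition at inert $v$. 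The proofs of Propositions~\ref{pr:index_inert} and~\ref{pr:index_inert_almost} genuinely use that the Schwartz datum at the inert place $\ul{u}$ is the lattice characteristic function (this is where the Kudla--Rapoport formula enters, and the integral models $\cX_{\tilde L}$ only accommodate Hecke correspondences at split places). So you cannot simply ``push $\rt$ through'' and then transfer at the end.

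The paper's device is to move $\rt$ to the $G_r$-variable \emph{before} invoking any local proposition: lift $\rt$ to $\rh\in\cH^\tR_{W_r}$ along the surjection $\theta^\tR$ of Definition~\ref{de:hecke}, and write $\rh\phi^\infty_1=\sum_k c_k\,\omega_r^\infty(h_k)\phi^\infty_1$ with $h_k\in G_r(\dA_F^{\infty,\tR})$ (this is \cite{Liu20}*{Theorem~1.1} at inert places together with \cite{Liu11}*{Proposition~A.5} at split places; your citation of the latter alone would not cover inert $v$). Then
\[
I_{T_1,T_2}(\rt\phi^\infty_1,\ldots,g_1,\ldots)=\sum_k c_k\,I_{T_1,T_2}(\phi^\infty_1,\ldots,g_1h_k,\ldots),
\]
and each summand is honestly admissible, so the local propositions apply. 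After summing and integrating against $\varphi_1^\tc$, the change of variables $g_1\mapsto g_1h_k^{-1}$ reconstitutes $\rh\varphi_1^\tc=\chi^\tR_\pi(\rt)^\tc\varphi_1^\tc$, producing the scalar.

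For the vanishing of the almost-unramified corrections in part~(2), your root-number argument ($\varepsilon(\pi)=-1\Rightarrow L(\tfrac12,\pi)=0$) is valid. The paper instead kills these terms via Proposition~\ref{pr:uniqueness}(2): at the place $\ul{u}\in\tS$ the local space is $\pres{u}V_{\ul{u}}\not\simeq V_{\ul{u}}$, so the normalized local doubling zeta integral $\fZ^\natural_{\pi_{\ul{u}},\pres{u}V_{\ul{u}}}$ vanishes. Either route suffices.
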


\begin{proof}
For (1), pick an element $\rh\in\cH^\tR_{W_r}$ such that $\theta^\tR(\rh)=\rt$ as in Definition \ref{de:hecke}. Then there exist finitely many pairs $(c_k,h_k)\in\dC\times G_r(\dA_F^{\infty,\tR})$ such that $\rh\phi^\infty_1=\sum_kc_k\omega_r^\infty(h_k)\phi^\infty_1$ and $\rh\varphi_1=\sum_kc_k\pi(h_k)\varphi_1$. By \cite{Liu20}*{Theorem~1.1} for inert places and \cite{Liu11}*{Proposition~A.5} for split places (see also \cite{Ral82}*{Page~511}), we have
\[
\rt\phi^\infty_1=\rh\phi^\infty_1=\sum_kc_k\omega_r^\infty(h_k)\phi^\infty_1.
\]
Thus, we have
\begin{align}\label{eq:proof_1}
I_{T_1,T_2}(\rt\phi^\infty_1,\phi^\infty_2,\tilde\rs_1,\tilde\rs_2,\tau^{(j)}g^{(j)},g_2)^\ell_L
=\sum_k c_kI_{T_1,T_2}(\rt\phi^\infty_1,\phi^\infty_2,\tilde\rs_1,\tilde\rs_2,\tau^{(j)}g^{(j)}h_k,g_2)^\ell_L.
\end{align}
By Lemma \ref{le:disjoint}, we have
\begin{multline}\label{eq:proof_2}
I_{T_1,T_2}(\phi^\infty_1,\phi^\infty_2,\tilde\rs_1,\tilde\rs_2,\tau^{(j)}g^{(j)}h_k,g_2)^\ell_L
=\sum_{u\in\tV_E^{(\infty)}}2I_{T_1,T_2}(\phi^\infty_1,\phi^\infty_2,\tilde\rs_1,\tilde\rs_2,\tau^{(j)}g^{(j)}h_k,g_2)_{L,u} \\
+\sum_{u\in\tV_E^\fin}\log q_u\cdot I_{T_1,T_2}(\phi^\infty_1,\phi^\infty_2,\tilde\rs_1,\tilde\rs_2,\tau^{(j)}g^{(j)}h_k,g_2)^\ell_{L,u}.
\end{multline}
Combining \eqref{eq:proof_1}, \eqref{eq:proof_2}, Proposition \ref{pr:index_split}, Proposition \ref{pr:index_inert}, Proposition \ref{pr:index_inert_almost}, and Proposition \ref{pr:index_arch}, we have
\begin{multline}\label{eq:proof_3}
\vol^\natural(L)\sum_{j=1}^s\int_{\fF^{(j)}}\varphi_1^\tc(\tau^{(j)}g^{(j)})\sum_{T_1\in\Herm_r^\circ(F)^+}
I_{T_1,T_2}(\rt\phi^\infty_1,\phi^\infty_2,\tilde\rs_1,\tilde\rs_2,\tau^{(j)}g^{(j)},g_2)^\ell_L\rd\tau^{(j)} \\
=\sum_{k}\sum_{j=1}^sc_k\int_{\fF^{(j)}}\varphi_1^\tc(\tau^{(j)}g^{(j)})\sum_{T_1\in\Herm_r^\circ(F)^+}
\fE_{T_1,T_2}^\tS((\tau^{(j)}g^{(j)}h_k,g_2),\Phi_\infty^0\otimes\Phi^\infty),
\end{multline}
where we put
\begin{multline*}
\fE_{T_1,T_2}^\tS((g_1,g_2),\Phi_\infty^0\otimes\Phi^\infty)\coloneqq \\
\sum_{u\in\tV_E\setminus\tV_E^\spl}\fE_{T_1,T_2}((g_1,g_2),\Phi_\infty^0\otimes\Phi^\infty)_u
-\sum_{u\in\tS_E}\frac{\log q_u}{q_u^r-1}E_{T_1,T_2}((g_1,g_2),\Phi_\infty^0\otimes
\Phi^{\infty,\ul{u}}\otimes\CF_{(\Lambda^\star_{\ul{u}})^{2r}}).
\end{multline*}
By Proposition \ref{pr:analytic_kernel} and Remark \ref{re:hermitian}, we have
\begin{multline*}
\sum_{T_1\in\Herm_r^\circ(F)^+}\fE_{T_1,T_2}^\tS((\tau^{(j)}g^{(j)}h_k,g_2),\Phi_\infty^0\otimes\Phi^\infty) =E'(0,(\tau^{(j)}g^{(j)}h_k,g_2),\Phi_\infty^0\otimes\Phi^\infty)_{-,T_2} \\
-\sum_{u\in\tS_E}\frac{\log q_u}{q_u^r-1}E(0,(\tau^{(j)}g^{(j)}h_k,g_2),
\Phi_\infty^0\otimes\Phi^{\infty,\ul{u}}\otimes\CF_{(\Lambda^\star_{\ul{u}})^{2r}})_{-,T_2}
\end{multline*}
for every $1\leq j\leq s$ and every $k$. It follows that
\begin{multline*}
\eqref{eq:proof_3}=\sum_{k}c_k\int_{G_r(F)\backslash G_r(\dA_F)}\varphi_1^\tc(g_1)
E'(0,(g_1h_k,g_2),\Phi_\infty^0\otimes\Phi^\infty)_{-,T_2}\rd g_1 \\
-\sum_{u\in\tS_E}\frac{\log q_u}{q_u^r-1}\sum_{k}c_k\int_{G_r(F)\backslash G_r(\dA_F)}\varphi_1^\tc(g_1)
E(0,(g_1h_k,g_2),\Phi_\infty^0\otimes\Phi^{\infty,\ul{u}}\otimes\CF_{(\Lambda^\star_{\ul{u}})^{2r}})_{-,T_2}
\rd g_1 \\
=\int_{G_r(F)\backslash G_r(\dA_F)}(\rh\varphi_1^\tc)(g_1)
E'(0,(g_1,g_2),\Phi_\infty^0\otimes\Phi^\infty)_{-,T_2}\rd g_1 \\
-\sum_{u\in\tS_E}\frac{\log q_u}{q_u^r-1}\int_{G_r(F)\backslash G_r(\dA_F)}(\rh\varphi_1^\tc)(g_1)
E(0,(g_1,g_2),\Phi_\infty^0\otimes\Phi^{\infty,\ul{u}}\otimes\CF_{(\Lambda^\star_{\ul{u}})^{2r}})_{-,T_2}\rd g_1.
\end{multline*}
Part (1) follows as $\rh\varphi_1^\tc=\chi^\tR_\pi(\rt)^\tc\cdot\varphi_1^\tc$.

For (2), we apply (1) to $\rt=1$ and $g_2=\tau^{(j_2)}g^{(j_2)}$ for $1\leq j_2\leq s$ hence obtain
\begin{multline}\label{eq:proof_4}
\vol^\natural(L)
\sum_{j_2=1}^s\sum_{j_1=1}^s\int_{\fF^{(j_2)}}\int_{\fF^{(j_1)}}\varphi_2(\tau^{(j_2)}g^{(j_2)})\varphi_1^\tc(\tau^{(j_1)}g^{(j_1)})  \\
\sum_{T_2\in\Herm_r^\circ(F)^+}\sum_{T_1\in\Herm_r^\circ(F)^+}
I_{T_1,T_2}(\phi^\infty_1,\phi^\infty_2,\tilde\rs_1,\tilde\rs_2,\tau^{(j_1)}g^{(j_1)},\tau^{(j_2)}g^{(j_2)})^\ell_L
rd\tau^{(j_1)}\rd\tau^{(j_2)} \\
=\iint_{[G_r(F)\backslash G_r(\dA_F)]^2}\varphi_2(g_2)\varphi_1^\tc(g_1)
E'(0,(g_1,g_2),\Phi_\infty^0\otimes\Phi^\infty)\rd g_1\rd g_2 \\
-\sum_{u\in\tS_E}\frac{\log q_u}{q_u^r-1}
\iint_{[G_r(F)\backslash G_r(\dA_F)]^2}\varphi_2(g_2)\varphi_1^\tc(g_1)E(0,(g_1,g_2),
\Phi_\infty^0\otimes\Phi^{\infty,\ul{u}}\otimes\CF_{(\Lambda^\star_{\ul{u}})^{2r}})\rd g_1\rd g_2.
\end{multline}
By the classical Rallis inner product formula (see, for example, \cite{Liu11}*{(2-6)}) and Proposition \ref{pr:uniqueness}(2), we have
\[
\int_{G_r(F)\backslash G_r(\dA_F)}\int_{G_r(F)\backslash G_r(\dA_F)}\varphi_2(g_2)\varphi_1^\tc(g_1)E(0,(g_1,g_2),
\Phi_\infty^0\otimes\Phi^{\infty,\ul{u}}\otimes\CF_{(\Lambda^\star_{\ul{u}})^{2r}})\rd g_1\rd g_2=0
\]
for every $u\in\tS_E$. Together with $\chi^\tR_\pi(\tilde\rs_1)=\chi^\tR_\pi(\tilde\rs_2)=1$, we have
\begin{align}\label{eq:proof_5}
\eqref{eq:proof_4}=
\int_{G_r(F)\backslash G_r(\dA_F)}\int_{G_r(F)\backslash G_r(\dA_F)}\varphi_2(g_2)\varphi_1^\tc(g_1)
E'(0,(g_1,g_2),\Phi_\infty^0\otimes(\phi^\infty_1\otimes(\phi^\infty_2)^\tc))\rd g_1\rd g_2.
\end{align}
By Proposition \ref{pr:eisenstein}, we have
\begin{align*}
\eqref{eq:proof_5}=\frac{L'(\tfrac{1}{2},\pi)}{b_{2r}(0)}\cdot C_r^{[F:\dQ]}
\cdot\prod_{v\in\tV_F^\fin}\fZ^\natural_{\pi_v,V_v}(\varphi^\tc_{1v},\varphi_{2v},\phi^\infty_{1v}\otimes(\phi^\infty_{2v})^\tc).
\end{align*}
Part (2) is proved.
\end{proof}

\begin{proof}[Proof of Theorem \ref{th:main}]
First, it suffices to prove the theorem for $\tR$ satisfying $\tR_\pi\subseteq\tR\subseteq\tV_F^\fin$ and $|\tR|\geq 2$. Take an element $w\in\tV_F^{(\infty)}$ and put ourselves in the setup of Section \ref{ss:special}. We prove that the localization of the $\dT^\tR_\dC$-module $\CH^r(X_L)^0_\dC$ at $\fm_\pi^\tR$, is nonvanishing.

Assume the converse. Then for every element $T_2\in\Herm_r^\circ(F)^+$, we can find $\rt_{T_2}\in\dT^\tR_{\dQ^\ac}$ satisfying $\chi^\tR_\pi(\rt_{T_2})=1$ and $\rt_{T_2}^*Z_{T_2}(\omega_r^\infty(g^{(j)})\phi^\infty_2)_L=0$ for every $1\leq j\leq s$. Let $\hat\rt_{T_2}$ be the adjoint of $\rt_{T_2}$. Then we have
\begin{multline}\label{eq:main_1}
\langle\tilde\rs_1^*Z_{T_1}(\omega_r^\infty(g_1^\infty)(\hat\rt_{T_2}\phi^\infty_1))_L,
\tilde\rs_2^*Z_{T_2}(\omega_r^\infty(g^{(j)})\phi^\infty_2)_L
\rangle_{X_L,E}^\ell \\
=\langle\hat\rt_{T_2}^*\tilde\rs_1^*Z_{T_1}(\omega_r^\infty(g_1^\infty)\phi^\infty_1)_L,
\tilde\rs_2^*Z_{T_2}(\omega_r^\infty(g^{(j)})\phi^\infty_2)_L
\rangle_{X_L,E}^\ell \\
=\langle\tilde\rs_1^*Z_{T_1}(\omega_r^\infty(g_1^\infty)\phi^\infty_1)_L,
\rt_{T_2}^*\tilde\rs_2^*Z_{T_2}(\omega_r^\infty(g^{(j)})\phi^\infty_2)_L
\rangle_{X_L,E}^\ell=0
\end{multline}
for every $T_1\in\Herm_r^\circ(F)^+$, $g_1^\infty\in G_r(\dA_F^{\infty,\tR})$, and $1\leq j\leq s$. In particular, we have
\[
I_{T_1,T_2}(\hat\rt_{T_2}\phi^\infty_1,\phi^\infty_2,\tilde\rs_1,\tilde\rs_2,\tau^{(j)}g^{(j)},g_2)^\ell_L=0
\]
for every $g_2\in G_r(\dA_F^\tR)$ with $g_2^\infty\in\{g^{(1)},\dots,g^{(s)}\}$. It follows that
\[
\vol^\natural(L)\sum_{j=1}^s\int_{\fF^{(j)}}\varphi_1^\tc(\tau^{(j)}g^{(j)})\sum_{T_1\in\Herm_r^\circ(F)^+}
I_{T_1,T_2}(\hat\rt_{T_2}\phi^\infty_1,\phi^\infty_2,\tilde\rs_1,\tilde\rs_2,\tau^{(j)}g^{(j)},g_2)^\ell_L\rd\tau^{(j)}=0
\]
for every $g_2\in G_r(\dA_F^\tR)$ with $g_2^\infty\in\{g^{(1)},\dots,g^{(s)}\}$ and every $T_2\in\Herm_r^\circ(F)^+$. Now applying Lemma \ref{le:proof}(1) twice with $\rt=\hat\rt_{T_2}$ and $\rt=1$, respectively, we obtain
\[
\vol^\natural(L)\sum_{j=1}^s\int_{\fF^{(j)}}\varphi_1^\tc(\tau^{(j)}g^{(j)})\sum_{T_1\in\Herm_r^\circ(F)^+}
I_{T_1,T_2}(\phi^\infty_1,\phi^\infty_2,\tilde\rs_1,\tilde\rs_2,\tau^{(j)}g^{(j)},g_2)^\ell_L\rd\tau^{(j)}=0
\]
for every $g_2\in G_r(\dA_F^\tR)$ with $g_2^\infty\in\{g^{(1)},\dots,g^{(s)}\}$ and every $T_2\in\Herm_r^\circ(F)^+$. By Lemma \ref{le:proof}(2), we obtain
\[
\frac{L'(\tfrac{1}{2},\pi)}{b_{2r}(0)}\cdot C_r^{[F:\dQ]}
\cdot\prod_{v\in\tV_F^\fin}\fZ^\natural_{\pi_v,V_v}(\varphi^\tc_{1v},\varphi_{2v},\phi^\infty_{1v}\otimes(\phi^\infty_{2v})^\tc)=0,
\]
that is,
\[
\prod_{v\in\tS}\frac{(-1)^rq_v^{r-1}(q_v+1)}{(q_v^{2r-1}+1)(q_v^{2r}-1)}
\cdot\prod_{v\in\tR}\fZ^\natural_{\pi_v,V_v}(\varphi^\tc_{1v},\varphi_{2v},\phi^\infty_{1v}\otimes(\phi^\infty_{2v})^\tc)=0.
\]
Now by Proposition \ref{pr:regular}, we may choose $\varphi_1,\varphi_2,\phi^\infty_1,\phi^\infty_2$ such that
\[
\fZ^\natural_{\pi_v,V_v}(\varphi^\tc_{1v},\varphi_{2v},\phi^\infty_{1v}\otimes(\phi^\infty_{2v})^\tc)\neq 0
\]
for every $v\in\tR$. As $L'(\tfrac{1}{2},\pi)\neq 0$, we obtain a contradiction. The theorem is proved.
\end{proof}

\begin{proof}[Proof of Theorem \ref{th:aipf} and Corollary \ref{co:aipf}]
By Definition \ref{de:natural} and Proposition \ref{pr:arithmetic_theta}(1), we have
\begin{multline*}
\langle\Theta_{\phi^\infty_1}(\varphi_1),\Theta_{\phi^\infty_2}(\varphi_2)\rangle_{X,E}^\natural
=\vol^\natural(L)
\sum_{j_2=1}^s\sum_{j_1=1}^s\int_{\fF^{(j_2)}}\int_{\fF^{(j_1)}}\varphi_2(\tau^{(j_2)}g^{(j_2)})\varphi_1^\tc(\tau^{(j_1)}g^{(j_1)})  \\
\sum_{T_2\in\Herm_r^\circ(F)^+}\sum_{T_1\in\Herm_r^\circ(F)^+}
I_{T_1,T_2}(\phi^\infty_1,\phi^\infty_2,\tilde\rs_1,\tilde\rs_2,\tau^{(j_1)}g^{(j_1)},\tau^{(j_2)}g^{(j_2)})^\ell_L
\rd\tau^{(j_1)}\rd\tau^{(j_2)}.
\end{multline*}
By Lemma \ref{le:proof}(2) and Proposition \ref{pr:eisenstein}, we obtain
\begin{align}\label{eq:aipf}
\langle\Theta_{\phi^\infty_1}(\varphi_1),\Theta_{\phi^\infty_2}(\varphi_2)\rangle_{X,E}^\natural
=\frac{L'(\tfrac{1}{2},\pi)}{b_{2r}(0)}\cdot C_r^{[F:\dQ]}
\cdot\prod_{v\in\tV_F^\fin}\fZ^\natural_{\pi_v,V_v}(\varphi^\tc_{1v},\varphi_{2v},\phi^\infty_{1v}\otimes(\phi^\infty_{2v})^\tc).
\end{align}
By Proposition \ref{pr:regular}, we may choose $\varphi_1,\varphi_2,\phi^\infty_1,\phi^\infty_2$ such that
\[
\prod_{v\in\tV_F^\fin}\fZ^\natural_{\pi_v,V_v}(\varphi^\tc_{1v},\varphi_{2v},\phi^\infty_{1v}\otimes(\phi^\infty_{2v})^\tc)\neq 0.
\]

Now we claim that \eqref{eq:aipf} holds for arbitrary vectors $\varphi_1,\varphi_2,\phi^\infty_1,\phi^\infty_2$ (not necessarily those in the beginning of this section) as in the statement of Theorem \ref{th:aipf}(1). This is a consequence of Proposition \ref{pr:uniqueness}(1) as both sides of \eqref{eq:aipf} give elements in the space
\[
\bigotimes_{v\in\tV_F^\fin}\Hom_{G_r(F_v)\times G_r(F_v)}(\rI^\Box_{r,v}(0),\pi_v\boxtimes\pi_v^\vee),
\]
which is of dimension one. Thus, Theorem \ref{th:aipf}(1) follows.

By Proposition \ref{pr:arithmetic_theta}(2), the assignment $(\varphi,\phi^\infty)\mapsto\Theta_{\phi^\infty}(\varphi)$ gives an element in
\[
\Hom_{H(\dA_F^\infty)}\(\Hom_{G_r(\dA_F^\infty)}(\sS(V^r\otimes_{\dA_F}\dA_F^\infty),\pi^\infty),\varinjlim_{L}\CH^r(X_L)^0_\dC\),
\]
in which $\Hom_{G_r(\dA_F^\infty)}(\sS(V^r\otimes_{\dA_F}\dA_F^\infty),\pi^\infty)$ is simply the theta lifting of $\pi^\infty$ to $H(\dA_F^\infty)$ by Proposition \ref{pr:uniqueness}(3). Thus, Theorem \ref{th:aipf}(2) is a consequence of \eqref{eq:aipf} and the fact that $\prod_{v\in\tV_F^\fin}\fZ^\natural_{\pi_v,V_v}$ is nontrivial.

Finally, Corollary \ref{co:aipf} is a consequence of \eqref{eq:aipf} and Proposition \ref{pr:eisenstein} (where one may take $\tR=\emptyset$).
\end{proof}

\appendix

\section{Two lemmas in Fourier analysis}
\label{ss:fourier}

In this appendix, we prove two lemmas in Fourier analysis that are only used in the proof of Proposition \ref{pr:regular}. Both the lemmas and their proofs are variants of \cite{AN04}*{Theorem~1} (in the non-archimedean setting).

Let $F$ be a non-archimedean local field of characteristic zero. Denote the maximal ideal of $O_F$ by $\fp_F$ and put $q\coloneqq|O_F/\fp_F|$. We fix a nontrivial additive character $\psi\colon F\to\dC^\times$ that is used to define the Fourier transform.

\begin{lem}\label{le:fourier1}
Consider a finite dimensional $F$-vector space $X$, a nonzero homogeneous polynomial $\Delta$ on $X$, and a real number $r\geq 0$. Let $f$ be a nonzero locally constant function on an open subset $\Omega\subseteq X$ on which $\Delta$ is nonvanishing. Suppose that $f$ is locally integrable on $X$ and satisfies that for every $\varepsilon>0$, we have
\[
\int_\Omega|f(x)|^{2+\varepsilon}|\Delta(x)|_F^{r\varepsilon}\rd x<\infty.
\]
Then the support of the Fourier transform of $f$, as a distribution on $X^\vee$, can not be contained in an analytic hypersurface.
\end{lem}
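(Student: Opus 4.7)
The plan is to argue by contradiction, adapting the strategy of the archimedean analogue \cite{AN04}*{Theorem~1} to the $p$-adic setting. Assume $\supp\hat f\subseteq S$ for some analytic hypersurface $S\subseteq X^\vee$; the goal is to produce enough non-decay of $f$ to violate the weighted integrability assumption.

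First I would pass to a local model of $S$. Pick a smooth point $\xi_0\in S\cap\supp\hat f$. After permuting coordinates of $X^\vee=F^d$, the $p$-adic implicit function theorem produces a compact-open polydisc $U=U'\times U_d$ about $\xi_0$ and an analytic function $\psi_S\colon U'\to U_d$ with
\[
S\cap U=\{(y',\psi_S(y')):y'\in U'\}.
\]
Choose a locally constant cutoff $\phi\in\sS(X^\vee)$ supported in $U$ such that $\phi\hat f\neq 0$; via the parametrization, $\phi\hat f$ corresponds to a nonzero distribution $\mu$ on $U'$. Letting $F\coloneqq f\ast\check\phi$, which is the inverse Fourier transform of $\phi\hat f$, Fourier inversion yields
\[
F(x',x_d)=\int_{U'}\psi\bigl(\langle x',y'\rangle+x_d\psi_S(y')\bigr)\rd\mu(y'),
\]
a locally constant function on $X$ that does not vanish identically.

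Next I would establish non-decay of $F$ in the $x_d$-direction: there exist $x'_0\in F^{d-1}$, $c>0$, and an unbounded subset $A\subseteq F$ of positive lower density such that $|F(x'_0,x_d)|\geq c$ for all $x_d\in A$. This is the $p$-adic analogue of a stationary-phase estimate, and I expect it to be the main obstacle. The approach would be to exploit the discrete character-theoretic structure of compact-open subgroups: after further shrinking $U'$ one may assume $\psi_S$ is essentially affine modulo a large power of $\fp_F$, and then a pigeonhole argument over cosets produces values of $x_d$ along which the phase $y'\mapsto x_d\psi_S(y')$ is effectively constant on a set of positive $\mu$-mass, forcing the integral to a nonzero value.

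Finally, a contradiction would follow from the weighted integrability. Because $\check\phi$ is a finite combination of characteristic functions of translates of lattices and $\Delta$ is a polynomial, Young's inequality together with the polynomial growth of $|\Delta|_F^{r\varepsilon}$ transfers the assumed bound on $f$ to $F$, giving $\int_X|F(x)|^{2+\varepsilon}|\Delta(x)|_F^{r\varepsilon}\rd x<\infty$ for every $\varepsilon>0$. Restricting the integrand to a small $x'$-ball about $x'_0$ (on which $F$ is $x'$-constant and on which $\Delta(\cdot,x_d)$ does not identically vanish) times the unbounded set $A$, and using $|F|\geq c$ there, the left-hand side is bounded below by a multiple of $\int_A|\Delta(x'_0,x_d)|_F^{r\varepsilon}\rd x_d$, which diverges since $\Delta(x'_0,\cdot)$ is a nonconstant polynomial and $A$ has positive lower density. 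This contradicts the finiteness, completing the argument.
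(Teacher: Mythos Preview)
Your approach differs substantially from the paper's and contains two genuine gaps.

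The first is the non-decay step, which you correctly flag as the main obstacle but do not actually carry out. The sketch---shrink $U'$ until $\psi_S$ is affine modulo $\fp_F^M$, then pigeonhole---produces control of the phase only for $|x_d|$ bounded by $q^M$, the opposite of what you need; pushing $|x_d|\to\infty$ would force you to localize $\mu$ to ever smaller balls with no guarantee the restriction stays nonzero. The second gap is the weighted transfer: the claim that Young's inequality yields $\int|F|^{2+\varepsilon}|\Delta|^{r\varepsilon}<\infty$ is false in general. For $r>1$ take $X=F^2$, $\Delta(x_1,x_2)=x_1x_2$, $\check\phi=\CF_{O_F^2}$, and $f=\sum_{k\geq1}k^{-1}\CF_{\{|x_1|=q^k,\,|x_2|=q^{-k}\}}$. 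Then $|\Delta|\equiv1$ on $\supp f$, so the hypothesis holds for every $\varepsilon>0$, yet a direct computation gives $\int|F|^{2+\varepsilon}|\Delta|^{r\varepsilon}\asymp\sum_k k^{-(2+\varepsilon)}q^{k((r-1)\varepsilon-1)}=\infty$ once $\varepsilon>(r-1)^{-1}$. The underlying issue is that $|\Delta(y+z)|/|\Delta(y)|$ is unbounded for $z$ in a fixed compact set, so the weight does not commute with convolution; ``polynomial growth of $|\Delta|^{r\varepsilon}$'' does not repair this.

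The paper avoids both problems by staying on the dual side. It mollifies $u=\widehat f$ to $u_N=u*q^{Nn}\CF_{B_N^n}$ and bounds $\|u_N\|_p$ via Hausdorff--Young, which gives $\|u_N\|_p^{2-\delta}\leq C\int_{B_{-N}^n}|f|^{2-\delta}$. The weighted hypothesis enters only through H\"older with the pair $|\Delta|^{\pm\alpha}$: since $|\Delta|^{-\alpha}$ is locally integrable for small $\alpha$, one gets $\|u_N\|_p\leq C'q^{N\beta}$ with an explicit exponent. If $\supp u$ lies in a hypersurface $U$, then $\supp u_N\subseteq U+B_N^n$, a set whose intersection with any compact has measure $\asymp q^{-N}$; pairing with a test function and applying H\"older on this thin set gives $|\langle u,g\rangle|\to0$ as $N\to\infty$ once $\delta,\varepsilon$ are chosen so the net exponent is negative. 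No oscillatory integrals, no weighted convolution estimate.
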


\begin{proof}
Let $n\geq 1$ be the dimension of $X$. Without lost of generality, we may identify both $X$ and $X^\vee$ with $F^n$, take $\r{d}x$ to be the measure that gives $O_F$ volume $1$, and assume that $\psi_F$ has conductor $O_F$. For every integer $N$, we put $B^n_N\coloneqq(\fp_F^N)^n$, which is an open compact subset of $F^n$.

Let $u$ be the Fourier transform of $f$. For every integer $N\geq 0$, put $\chi_N\coloneqq q^{Nn}\CF_{B^n_N}\in\sS(F^n)$, and put $u_N\coloneqq u\ast\chi_N$, which is a locally constant function on $F^n$. Take two real numbers $0\leq\delta<1$ and $\varepsilon>0$ to be determined later. Let $p\geq 2$ satisfy $\frac{1}{2-\delta}+\frac{1}{p}=1$. Since the Fourier transform is a bounded operator from $L^{2-\delta}(F^n)$ to $L^p(F^n)$, we have
\begin{align*}
\|u_N\|_p^{2-\delta}
&\leq C_\delta\int_{F^n}|f(x)|^{2-\delta}|\widehat{\chi_N}(x)|^{2-\delta}\rd x \\
&= C_\delta\int_{F^n}|f(x)|^{2-\delta}|\CF_{B^n_{-N}}(x)|^{2-\delta}\rd x \\
&= C_\delta\int_{B^n_{-N}}|f(x)|^{2-\delta}\rd x
\end{align*}
for some constant $C_\delta>0$ depending only on $\delta$. By H\"{o}lder's inequality, we have
\begin{align*}
\|u_N\|_p^{2-\delta}\leq C_\delta
\(\int_{B^n_{-N}}|\Delta(x)|_F^{-r\frac{(2-\delta)\varepsilon}{\delta+\varepsilon}}\rd x\)^{\frac{\delta+\varepsilon}{2+\varepsilon}}
\(\int_{B^n_{-N}}|f(x)|^{2+\varepsilon}|\Delta(x)|_F^{r\varepsilon}\rd x\)^{\frac{2-\delta}{2+\varepsilon}}.
\end{align*}
Let $d$ be the degree of $\Delta$. There exists a real number $0<\rho_\Delta<n/d$ depending only on $\Delta$ such that as long as $r\frac{(2-\delta)\varepsilon}{\delta+\varepsilon}<\rho_\Delta$, the function $|\Delta(x)|_F^{-r\frac{(2-\delta)\varepsilon}{\delta+\varepsilon}}$ is locally integrable. In this case, there exists a constant $C_{\delta,\varepsilon}>0$ such that
\[
\int_{B^n_{-N}}|\Delta(x)|_F^{-r\frac{(2-\delta)\varepsilon}{\delta+\varepsilon}}\rd x=C_{\delta,\varepsilon}\cdot
q^{N\(n-dr\frac{(2-\delta)\varepsilon}{\delta+\varepsilon}\)}.
\]
By the integrability condition on $f$, there is a new constant $C'_{\delta,\varepsilon}>0$ depending only on $\delta$ and $\varepsilon$ such that
\begin{align}\label{eq:fourier1}
\|u_N\|_p^{2-\delta}\leq C'_{\delta,\varepsilon}\cdot q^{N\(n-dr\frac{(2-\delta)\varepsilon}{\delta+\varepsilon}\)\frac{\delta+\varepsilon}{2+\varepsilon}}
\end{align}
holds for every $N\geq 0$.

Now suppose that the support of $u$ is contained in an analytic hypersurface $U$. For $N\geq 0$, put $U_N\coloneqq U+B^n_N\subseteq F^n$ as a tubular neighbourhood of $U$, which contains the support of $u_N$. Then for every $g\in\sS(F^n)$, we have
\begin{align}\label{eq:fourier2}
\lim_{N\to\infty}q^N\int_{U_N}g(x)\rd x = \int_{U}g(y)\rd y.
\end{align}
Then by H\"{o}lder's inequality, \eqref{eq:fourier1}, and \eqref{eq:fourier2}, we have
\begin{align*}
|\langle u,g\rangle|^{2-\delta}
&=\lim_{N\to\infty}|\langle u_N,g\rangle|^{2-\delta} \\
&\leq\lim_{N\to\infty}\|u_N\|_p^{2-\delta}\cdot\int_{U_N}|g(x)|^{2-\delta}\rd x \\
&\leq C'_{\delta,\varepsilon}\cdot\lim_{N\to\infty}
q^{N\(\(n-dr\frac{(2-\delta)\varepsilon}{\delta+\varepsilon}\)\frac{\delta+\varepsilon}{2+\varepsilon}-1\)}\cdot
q^N\int_{U_N}|g(x)|^{2-\delta}\rd x \\
&=C'_{\delta,\varepsilon}\cdot\int_{U}|g(y)|^{2-\delta}\rd y\cdot\lim_{N\to\infty}
q^{N\(\(n-dr\frac{(2-\delta)\varepsilon}{\delta+\varepsilon}\)\frac{\delta+\varepsilon}{2+\varepsilon}-1\)}.
\end{align*}
Choose suitable $\delta,\varepsilon$ such that
\[
r\frac{(2-\delta)\varepsilon}{\delta+\varepsilon}<\rho_\Delta,\qquad
n-dr\frac{(2-\delta)\varepsilon}{\delta+\varepsilon}<\frac{2+\varepsilon}{\delta+\varepsilon}.
\]
Then the above limit is zero, that is, $\langle u,g\rangle=0$ for every $g\in\sS(F^n)$. Thus, we have $u=0$. The lemma is proved.
\end{proof}

\begin{lem}\label{le:fourier2}
Consider a finite dimensional $F$-vector space $X$, a nonzero homogeneous polynomial $\Delta$ on $X$, and a real number $r\geq 0$. Denote by $\Omega\subseteq X$ the nonvanishing locus of $\Delta$. Let $f$ be a nonzero locally constant function on $\Omega$ that is locally integrable on $X$, satisfying the following condition: there exists a decomposition $X=X_1\oplus X_2\oplus X_3$ with $\dim_FX_1=\dim_FX_2>0$ such that
\begin{enumerate}
  \item $\Omega$ is disjoint from $X_1\oplus X_3\cup X_2\oplus X_3$;

  \item $|\Delta(\alpha x_1,\alpha^{-1}x_2,x_3)|_F=|\Delta(x_1,x_2,x_3)|_F$ for every $\alpha\in F^\times$ and $x_i\in X_i$;

  \item $|f(\alpha x_1,\alpha^{-1}x_2,x_3)|=|f(x_1,x_2,x_3)|$ for every $\alpha\in F^\times$ and $x_i\in X_i$;

  \item for every $\varepsilon>0$, we have
      \[
      \int_{F^\times\backslash\Omega}|f(x)|^{2+\varepsilon}|\Delta(x)|_F^{r\varepsilon}\rd x<\infty,
      \]
      where the action of $\alpha\in F^\times$ on $\Omega$ is given by $\alpha.(x_1,x_2,x_3)=(\alpha x_1,\alpha^{-1}x_2,x_3)$.
\end{enumerate}
Then the support of the Fourier transform of $f$, as a distribution on $X^\vee$, can not be contained in an analytic hypersurface.
\end{lem}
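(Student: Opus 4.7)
The plan is to adapt the proof of Lemma~\ref{le:fourier1}, with the main modification being that the $L^{2-\delta}$-integrability of $f$ used implicitly in the previous proof is replaced by integrability of $f$ on the quotient $F^\times\backslash\Omega$. Conditions~(1)--(3) guarantee that the $F^\times$-action $\alpha.(y_1,y_2,y_3)=(\alpha y_1,\alpha^{-1}y_2,y_3)$ is free on $\Omega$, preserves $dx$, and fixes $|f|$ and $|\Delta|$. First I would choose a Borel cross-section, say $\Omega_0\coloneqq\{y\in\Omega:|y_1|_F=1\}$, which meets each $F^\times$-orbit in an $O_F^\times$-orbit. Normalize $d^\times\alpha$ so that $O_F^\times$ has volume $1$; then there is a unique measure $d\mu$ on $\Omega_0$ such that for every $F^\times$-invariant nonnegative measurable $h$ on $\Omega$ and every open $A\subseteq X$,
\[
\int_{A\cap\Omega}h(x)\,dx=\int_{\Omega_0}h(y)V_A(y)\,d\mu(y),\qquad V_A(y)\coloneqq\operatorname{vol}_{d^\times\alpha}\{\alpha\in F^\times:\alpha.y\in A\}.
\]
With this normalization, hypothesis~(4) becomes the finiteness of $\int_{\Omega_0}|f|^{2+\varepsilon}|\Delta|^{r\varepsilon}\,d\mu$ for every $\varepsilon>0$.

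Following the Hausdorff--Young step of Lemma~\ref{le:fourier1} with $u_N=u*\chi_N$, one obtains $\|u_N\|_p^{2-\delta}\le C_\delta\int_{\Omega_0}|f|^{2-\delta}V_N\,d\mu$, where $V_N\coloneqq V_{B^n_{-N}}$. A direct computation at $y\in\Omega_0$ yields
\[
V_N(y)\le\bigl(2N-\log_q|y_2|+O(1)\bigr)\cdot\CF_{\{|y_2|\le q^{2N},\,|y_3|\le q^N\}}(y).
\]
Applying H\"older on $\Omega_0$ with exponents $(2+\varepsilon)/(2-\delta)$ and $(2+\varepsilon)/(\delta+\varepsilon)$, pairing the weight $|\Delta|^{r\varepsilon}$ against $|\Delta|^{-t}$ with $t\coloneqq r(2-\delta)\varepsilon/(\delta+\varepsilon)$, the first factor is finite by hypothesis~(4). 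For the second factor, I would split $\Omega_0$ into $\{|y_2|\ge1\}$, where $V_N\le 2N+O(1)$, and $\{|y_2|<1\}$, where the elementary bound $-\log_q|y_2|\le C_\eta|y_2|^{-\eta}$ (any $\eta>0$) gives $V_N\le C_\eta N|y_2|^{-\eta}$. Choosing $\delta,\varepsilon,\eta$ small enough that $|y_2|^{-s\eta}|\Delta|^{-t}$ (with $s=(2+\varepsilon)/(\delta+\varepsilon)$) is locally integrable on $\Omega_0$, and crudely bounding the volume of the truncated region by $q^{Nn}$, one obtains
\[
\|u_N\|_p^{2-\delta}\le C\cdot N^{O(1)}\cdot q^{Nn(\delta+\varepsilon)/(2+\varepsilon)},
\]
whose exponent is strictly less than $1$ for sufficiently small $\delta,\varepsilon$.

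The closing step is identical to Lemma~\ref{le:fourier1}: if $\supp u\subseteq U$ for an analytic hypersurface $U\subseteq X^\vee$ with tubular neighborhoods $U_N=U+B^n_N$, then for every $g\in\sS(X^\vee)$,
\[
|\langle u,g\rangle|^{2-\delta}=\lim_{N\to\infty}|\langle u_N,g\rangle|^{2-\delta}\le\|u_N\|_p^{2-\delta}\int_{U_N}|g|^{2-\delta}\,dx,
\]
and the preceding estimate bounds the right-hand side by $C\cdot N^{O(1)}\cdot q^{N(n(\delta+\varepsilon)/(2+\varepsilon)-1)}\cdot\bigl(q^N\!\int_{U_N}|g|^{2-\delta}\bigr)$, which tends to zero as $N\to\infty$ since the exponent of $q^N$ is negative and $q^N\!\int_{U_N}|g|^{2-\delta}\to\int_U|g|^{2-\delta}<\infty$. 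Hence $u=0$, contradicting $f\neq 0$. The main obstacle is organizing the bookkeeping so that three ingredients---the polynomial-in-$N$ orbit-volume factor $V_N^s$, the logarithmic singularity of $V_N$ at $|y_2|=0$, and the weighted local integrability of $|\Delta|^{-t}$ on the cross-section $\Omega_0$ as $t\to 0^+$---fit together to produce a final exponential growth rate $q^{N\beta}$ with $\beta<1$; individually each estimate is routine, but the freedom to choose $(\delta,\varepsilon,\eta)$ must simultaneously satisfy all three constraints, in the same spirit as the choice of $(\delta,\varepsilon)$ at the end of the proof of Lemma~\ref{le:fourier1}.
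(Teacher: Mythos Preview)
Your overall strategy is the same as the paper's: Hausdorff--Young, reduction via the $F^\times$-action, H\"older against hypothesis~(4), and the tubular-neighbourhood endgame. The organizational difference is that the paper introduces a \emph{third} parameter $\gamma$ with $0\le\delta<\gamma<1$ and performs \emph{two} H\"older steps: a first mild H\"older (exponent $(2-\delta)/(2-\gamma)$) on each annulus $A^m_0\times A^m_i\times B^{n-2m}_{-N}$ extracts a small power $(\mathrm{vol})^{(\gamma-\delta)/(2-\delta)}\sim q^{-im(\gamma-\delta)/(2-\delta)}$ of the annular volume, which absorbs the orbit-multiplicity factor $(i+2N+1)$ into a convergent geometric series \emph{before} the $|\Delta|$-weight enters; only then does a second H\"older bring in $|\Delta|^{r\varepsilon}$.

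Your single-H\"older route puts the entire burden on the factor $\int_{\Omega_0}|\Delta|^{-t}V_N^{s}\,d\mu$ with $s=(2+\varepsilon)/(\delta+\varepsilon)$ large, and here the write-up has a gap. Saying ``$|y_2|^{-s\eta}|\Delta|^{-t}$ is locally integrable on $\Omega_0$'' and ``crudely bounding the volume of the truncated region by $q^{Nn}$'' does not control this integral: the region $A^m_0\times B^m_{-2N}\times B^{n-2m}_{-N}$ is \emph{growing} in $N$, and on the annulus $|y_2|=q^{-j}$ the factor $V_N^{s}\sim(2N+j+1)^{s}$ is unbounded in $j$, so you need the annular integrals $I_j=\int_{A^m_0\times A^m_j\times B^{n-2m}_{-N}}|\Delta|^{-t}$ to decay \emph{geometrically} in $j$ to make $\sum_j (2N+j+1)^{s}I_j$ converge. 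Local integrability of $|\Delta|^{-t}$ alone does not give this; one needs an additional H\"older step (e.g.\ against a larger power $|\Delta|^{-t_0}$) to squeeze out a factor $(\mathrm{vol}\,A^m_j)^{1-t/t_0}\sim q^{-jm(1-t/t_0)}$, which is exactly what the paper's $\gamma$-step accomplishes in a more transparent way. Once you insert such a step, your argument goes through (though the final exponent will look more like the paper's $n\frac{\gamma-\delta}{2-\gamma}+2(n-dt)\frac{\delta+\varepsilon}{2+\varepsilon}$ than your stated $n\frac{\delta+\varepsilon}{2+\varepsilon}$, and the threshold in the closing step becomes $(2-\delta)/(2-\gamma)$ rather than $1$).
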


\begin{proof}
Let $n\geq 1$ be the dimension of $X$. Without lost of generality, we may identify the decomposition $X=X_1\oplus X_2\oplus X_3$ with $F^n=F^m\oplus F^m\oplus F^{n-2m}$, identify $X^\vee$ with $F^n$, take $\r{d}x$ to be the measure that gives $O_F$ volume $1$, and assume that $\psi_F$ has conductor $O_F$. For every integers $N$ and $l\geq 0$, we put $B^l_N\coloneqq(\fp_F^N)^l$ and $A^l_N\coloneqq B^l_N\setminus B^l_{N+1}$, which are open compact subsets of $F^l$. It is clear that the natural map $\varpi^\dZ\times(A^m_0\times F^m\times F^{n-2m})\to F^n$ given by the action in (4) is injective; and by (1) that $\Omega$ is contained in $\varpi^\dZ.(A^m_0\times F^m\times F^{n-2m})$.

Let $u$ be the Fourier transform of $f$. For every integer $N\geq 0$, put $\chi_N\coloneqq q^{Nn}\CF_{B^n_N}\in\sS(F^n)$, and put $u_N\coloneqq u\ast\chi_N$, which is a locally constant function on $F^n$. Take three real numbers $0\leq\delta<\gamma<1$ and $\varepsilon>0$ to be determined later. Let $p>2$ satisfy $\frac{1}{2-\gamma}+\frac{1}{p}=1$. Since the Fourier transform is a bounded operator from $L^{2-\gamma}(F^n)$ to $L^p(F^n)$, we have
\begin{align*}
\|u_N\|_p^{2-\gamma}
&\leq C_\gamma\int_{F^n}|f(x)|^{2-\gamma}|\widehat{\chi_N}(x)|^{2-\gamma}\rd x \\
&= C_\gamma\int_{F^n}|f(x)|^{2-\gamma}|\CF_{B^n_{-N}}(x)|^{2-\gamma}\rd x \\
&= C_\gamma\int_{B^n_{-N}}|f(x)|^{2-\gamma}\rd x
\end{align*}
for some constant $C_\gamma>0$ depending only on $\gamma$. By (3) and H\"{o}lder's inequality, we have
\begin{align*}
&\quad\int_{B^n_{-N}}|f(x)|^{2-\gamma}\rd x  \\
&=\sum_{i=-2N}^\infty (i+2N+1)\int_{A^m_0\times A^m_i\times B^{n-2m}_{-N}}|f(x)|^{2-\gamma}\rd x \\
&\leq\sum_{i=-2N}^\infty (i+2N+1)\(\int_{A^m_0\times A^m_i\times B^{n-2m}_{-N}}\rd x\)^{\frac{\gamma-\delta}{2-\delta}}
\(\int_{A^m_0\times A^m_i\times B^{n-2m}_{-N}}|f(x)|^{2-\delta}\rd x\)^{\frac{2-\gamma}{2-\delta}} \\
&\leq\sum_{i=-2N}^\infty (i+2N+1)(q^{-im}q^{N(n-2m)})^{\frac{\gamma-\delta}{2-\delta}}
\(\int_{A^m_0\times A^m_i\times B^{n-2m}_{-N}}|f(x)|^{2-\delta}\rd x\)^{\frac{2-\gamma}{2-\delta}} \\
&=C_{\gamma,\delta}\cdot q^{Nn\frac{\gamma-\delta}{2-\delta}}\(\int_{A^m_0\times A^m_i\times B^{n-2m}_{-N}}|f(x)|^{2-\delta}\rd x\)^{\frac{2-\gamma}{2-\delta}}
\end{align*}
for some constant $C_{\gamma,\delta}>0$. Together, we obtain
\begin{align}\label{eq:fourier3}
\|u_N\|_p^{2-\delta}\leq C'_{\gamma,\delta}\cdot q^{Nn\frac{\gamma-\delta}{2-\gamma}}\cdot\int_{A^m_0\times A^m_i\times B^{n-2m}_{-N}}|f(x)|^{2-\delta}\rd x
\end{align}
for a new constant $C'_{\gamma,\delta}>0$ depending only on $\gamma$ and $\delta$. By H\"{o}lder's inequality, we have
\begin{align*}
&\quad\int_{A^m_0\times A^m_i\times B^{n-2m}_{-N}}|f(x)|^{2-\delta}\rd x \\
&\leq \(\int_{A^m_0\times A^m_i\times B^{n-2m}_{-N}}|\Delta(x)|_F^{-r\frac{(2-\delta)\varepsilon}{\delta+\varepsilon}}\rd x\)^{\frac{\delta+\varepsilon}{2+\varepsilon}}
\(\int_{A^m_0\times A^m_i\times B^{n-2m}_{-N}}|f(x)|^{2+\varepsilon}|\Delta(x)|_F^{r\varepsilon}\rd x\)^{\frac{2-\delta}{2+\varepsilon}} \\
&\leq \(\int_{B^n_{-2N}}|\Delta(x)|_F^{-r\frac{(2-\delta)\varepsilon}{\delta+\varepsilon}}\rd x\)^{\frac{\delta+\varepsilon}{2+\varepsilon}}
\(\int_{A^m_0\times A^m_i\times B^{n-2m}_{-N}}|f(x)|^{2+\varepsilon}|\Delta(x)|_F^{r\varepsilon}\rd x\)^{\frac{2-\delta}{2+\varepsilon}}.
\end{align*}
Let $d$ be the degree of $\Delta$. There exists a real number $0<\rho_\Delta<n/d$ depending only on $\Delta$ such that as long as $r\frac{(2-\delta)\varepsilon}{\delta+\varepsilon}<\rho_\Delta$, the function $|\Delta(x)|_F^{-r\frac{(2-\delta)\varepsilon}{\delta+\varepsilon}}$ is locally integrable. In this case, there exists a constant $C_{\delta,\varepsilon}>0$ such that
\[
\int_{B^n_{-2N}}|\Delta(x)|_F^{-r\frac{(2-\delta)\varepsilon}{\delta+\varepsilon}}\rd x=C_{\delta,\varepsilon}\cdot
q^{2N\(n-dr\frac{(2-\delta)\varepsilon}{\delta+\varepsilon}\)}.
\]
On the other hand, by (4), we have
\[
\(\int_{A^m_0\times A^m_i\times B^{n-2m}_{-N}}|f(x)|^{2+\varepsilon}|\Delta(x)|_F^{r\varepsilon}\rd x\)^{\frac{2-\delta}{2+\varepsilon}}\leq C'_{\delta,\varepsilon}
\]
for a constant $C'_{\delta,\varepsilon}>0$. Thus, continuing \eqref{eq:fourier3}, we have a constant $C_{\gamma,\delta,\varepsilon}>0$ depending only on $\gamma,\delta,\varepsilon$ such that
\begin{align}\label{eq:fourier4}
\|u_N\|_p^{2-\delta}\leq C_{\gamma,\delta,\varepsilon}\cdot q^{N\(n\frac{\gamma-\delta}{2-\gamma}+2\(n-dr\frac{(2-\delta)\varepsilon}{\delta+\varepsilon}\)\frac{\delta+\varepsilon}{2+\varepsilon}\)}
\end{align}
holds for all $N\geq 0$.

Now suppose that the support of $u$ is contained in an analytic hypersurface $U$. For $N\geq 0$, put $U_N\coloneqq U+B^n_N\subseteq F^n$ as a tubular neighbourhood of $U$, which contains the support of $u_N$. Then for every $g\in\sS(F^n)$, we have
\begin{align}\label{eq:fourier5}
\lim_{N\to\infty}q^N\int_{U_N}g(x)\rd x = \int_{U}g(y)\rd y.
\end{align}
Then by H\"{o}lder's inequality, \eqref{eq:fourier4}, and \eqref{eq:fourier5}, we have
\begin{align*}
|\langle u,g\rangle|^{2-\delta}
&=\lim_{N\to\infty}|\langle u_N,g\rangle|^{2-\delta} \\
&\leq\lim_{N\to\infty}\|u_N\|_p^{2-\delta}\cdot\(\int_{U_N}|g(x)|^{2-\gamma}\rd x\)^{\frac{2-\delta}{2-\gamma}} \\
&\leq C_{\gamma,\delta,\varepsilon}\cdot\lim_{N\to\infty}
q^{N\(n\frac{\gamma-\delta}{2-\gamma}+2\(n-dr\frac{(2-\delta)\varepsilon}{\delta+\varepsilon}\)\frac{\delta+\varepsilon}{2+\varepsilon}
-\frac{2-\delta}{2-\gamma}\)}\cdot\(q^N\int_{U_N}|g(x)|^{2-\gamma}\rd x\)^{\frac{2-\delta}{2-\gamma}} \\
&=C_{\gamma,\delta,\varepsilon}\cdot\(\int_{U}|g(y)|^{2-\gamma}\rd y\)^{\frac{2-\delta}{2-\gamma}}\cdot\lim_{N\to\infty}
q^{N\(n\frac{\gamma-\delta}{2-\gamma}+2\(n-dr\frac{(2-\delta)\varepsilon}{\delta+\varepsilon}\)\frac{\delta+\varepsilon}{2+\varepsilon}
-\frac{2-\delta}{2-\gamma}\)}.
\end{align*}
Choose suitable $\gamma,\delta,\varepsilon$ such that
\[
r\frac{(2-\delta)\varepsilon}{\delta+\varepsilon}<\rho_\Delta,\qquad
n\frac{\gamma-\delta}{2-\gamma}+2\(n-dr\frac{(2-\delta)\varepsilon}{\delta+\varepsilon}\)\frac{\delta+\varepsilon}{2+\varepsilon}
<\frac{2-\delta}{2-\gamma}.
\]
Then the above limit is zero, that is, $\langle u,g\rangle=0$ for every $g\in\sS(F^n)$. Thus, we have $u=0$. The lemma is proved.
\end{proof}

\section{Remarks on Beilinson's non-archimedean local indices}
\label{ss:beilinson}

In this appendix, we review Beilinson's notion of non-archimedean local indices between algebraic cycles \cite{Bei87} and make some complementary remarks.

Let $K$ be a non-archimedean local field, with the ring of integers $O_K$ and the residue field $k$. Take a rational prime $\ell$ that is invertible on $k$. Let $X$ be a smooth projective scheme over $K$ of pure dimension $n-1$. For every integer $d\geq 0$, we have the cycle class map
\[
\cl_{X,\ell}\colon\rZ^d(X)\to\rH^{2d}(X,\dQ_\ell(d)),
\]
whose kernel we denote by $\rZ^d(X)^{\langle\ell\rangle}$.

\begin{remark}
A priori, $\rZ^d(X)^{\langle\ell\rangle}$ depends on the rational prime $\ell$. However, if $K$ is of characteristic zero and we assume the monodromy--weight conjecture for $X$, then one can replace $\cl_{X,\ell}$ by the geometric cycle class map, hence $\rZ^d(X)^{\langle\ell\rangle}$ does not depend on $\ell$.
\end{remark}

For a Zariski closed subset $Z$ of $X$, we denote by $\rZ^d_Z(X)$ the subgroup of $\rZ^d(X)$ consisting of cycles whose support is contained in $Z$,

\begin{definition}
For every pair of integers $d_1,d_2\geq 0$ satisfying $d_1+d_2=n$, we define the subgroups
\begin{align*}
\rZ^{d_1,d_2}(X)&\coloneqq
\sum_{Z_1,Z_2}\rZ^{d_1}_{Z_1}(X)_\dC\times\rZ^{d_2}_{Z_2}(X)_\dC\subseteq\rZ^{d_1}(X)_\dC\times\rZ^{d_2}(X)_\dC, \\
\rZ^{d_1,d_2}(X)^{\langle\ell\rangle}&\coloneqq
\sum_{Z_1,Z_2}(\rZ^{d_1}_{Z_1}(X)_\dC\cap\rZ^{d_1}(X)^{\langle\ell\rangle}_\dC)
\times(\rZ^{d_2}_{Z_2}(X)_\dC\cap\rZ^{d_2}(X)^{\langle\ell\rangle}_\dC)
\subseteq\rZ^{d_1}(X)_\dC\times\rZ^{d_2}(X)_\dC,
\end{align*}
where the sum is taken over all pairs $(Z_1,Z_2)$ of disjoint Zariski closed subsets of $X$. It is clear that $\rZ^{d_1,d_2}(X)^{\langle\ell\rangle}$ is stable under switching the two factors.
\end{definition}

Take a pair of integers $d_1,d_2\geq 0$ satisfying $d_1+d_2=n$. In \cite{Bei87}*{Section~2}, Beilinson defined a map
\begin{align}\label{eq:index}
\langle\;,\;\rangle_{X,K}^\ell\colon \rZ^{d_1,d_2}(X)^{\langle\ell\rangle} \to \dC\otimes_\dQ\dQ_\ell
\end{align}
called \emph{local index}, satisfying the following properties
\begin{itemize}
  \item its restriction to every subspace $(\rZ^{d_1}_{Z_1}(X)_\dC\cap\rZ^{d_1}(X)^{\langle\ell\rangle}_\dC)\times(\rZ^{d_2}_{Z_2}(X)_\dC\cap\rZ^{d_2}(X)^{\langle\ell\rangle}_\dC)$ is complex linear in the first variable;

  \item $\langle\;,\;\rangle_{X,K}^\ell$ is conjugate symmetric.
\end{itemize}

We briefly recall the definition. Take a pair $(c_1,c_2)\in\rZ^{d_1,d_2}(X)^{\langle\ell\rangle}$. By linearity, we may assume $c_1\in\rZ^{d_1}_{Z_1}(X)$ and $c_2\in\rZ^{d_2}_{Z_2}(X)$ with $Z_1\cap Z_2=\emptyset$. For $i=1,2$, put $U_i\coloneqq X\setminus Z_i$. Then we have the refined cycle class $\cl_{X,\ell}^{Z_i}(c_i)\in\rH^{2d_i}_{Z_i}(X,\dQ_\ell(d_i))$, which goes to $0$ under the natural map $\rH^{2d_i}_{Z_i}(X,\dQ_\ell(d_i))\to\rH^{2d_i}(X,\dQ_\ell(d_i))$. Thus, we can choose a class $\gamma_i\in\rH^{2d_i-1}(U_i,\dQ_\ell(d_i))$ that goes to $\cl_{X,\ell}^{Z_i}(c_i)$ under the coboundary map $\rH^{2d_i-1}(U_i,\dQ_\ell(d_i))\to\rH^{2d_i}_{Z_i}(X,\dQ_\ell(d_i))$. Then we define $\langle c_1,c_2\rangle_{X,K}^\ell$ to be the image of $\gamma_1\cup\gamma_2$ under the composite map
\[
\rH^{2n-2}(U_1\cap U_2,\dQ_\ell(n))\to\rH^{2n-1}(X,\dQ_\ell(n))\xrightarrow{\Tr_X}\rH^1(\Spec K,\dQ_\ell(1))=\dQ_\ell,
\]
in which the first map is the coboundary in the Mayer--Vietoris exact sequence for the covering $X=U_1\cup U_2$. It is easy to check that $\langle c_1,c_2\rangle_{X,K}^\ell$ does not depend on the choices of $\gamma_1,\gamma_2$, and that $\langle c_1,c_2\rangle_{X,K}^\ell=\langle c_2,c_1\rangle_{X,K}^\ell$.

\begin{lem}\label{le:base_change}
Take a pair $(c_1,c_2)\in\rZ^{d_1,d_2}(X)^{\langle\ell\rangle}$.
\begin{enumerate}
  \item Let $K'$ be a finite extension of $K$. Put $X'\coloneqq X\otimes_KK'$ regarded as a scheme over $K'$. Then we have $(c'_1,c'_2)\in\rZ^{d_1,d_2}(X')^{\langle\ell\rangle}$ and $\langle c'_1,c'_2\rangle_{X',K'}^\ell=\langle c_1,c_2\rangle_{X,K}^\ell$, where $c'_i$ is the restriction of $c_i$ on $X'$ for $i=1,2$.

  \item Let $u\colon X'\to X$ be a finite \'{e}tale morphism. Then we have $(c'_1,c'_2)\in\rZ^{d_1,d_2}(X')^{\langle\ell\rangle}$ and  $\langle c'_1,c'_2\rangle_{X',K}^\ell=\deg u\cdot\langle c_1,c_2\rangle_{X,K}^\ell$, where $c'_i$ is the restriction of $c_i$ on $X'$ for $i=1,2$.
\end{enumerate}
\end{lem}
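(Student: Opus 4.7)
The plan is to prove both statements by unwinding Beilinson's definition of the local index and checking that each ingredient is functorial with respect to the relevant morphism. Recall that for $(c_1,c_2)\in\rZ^{d_1,d_2}(X)^{\langle\ell\rangle}$ supported on disjoint closed subsets $Z_1,Z_2$ with complements $U_i=X\setminus Z_i$, the index $\langle c_1,c_2\rangle_{X,K}^\ell$ is computed as $\Tr_X(\delta(\gamma_1\cup\gamma_2))$, where $\gamma_i\in\rH^{2d_i-1}(U_i,\dQ_\ell(d_i))$ lifts the refined cycle class $\cl_{X,\ell}^{Z_i}(c_i)$ and $\delta$ is the Mayer--Vietoris coboundary for the cover $X=U_1\cup U_2$. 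The key point is that refined cycle classes, Mayer--Vietoris coboundaries, and cup products all commute with arbitrary pullbacks, so the difference between (1) and (2) is isolated in the behavior of the trace.

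For part (1), let $f\colon X'\to X$ and $g\colon\Spec K'\to\Spec K$ denote the base-change morphisms. First I would check that $(c'_1,c'_2)\in\rZ^{d_1,d_2}(X')^{\langle\ell\rangle}$, which follows from $Z'_1\cap Z'_2=(Z_1\cap Z_2)\otimes_KK'=\emptyset$ and the compatibility of $\cl_{X,\ell}$ with flat base change. Then $f^*\gamma_i$ is a valid lift of $\cl_{X',\ell}^{Z'_i}(c'_i)$, and smooth base change for the Gysin/trace map yields $\Tr_{X'}\circ f^*=g^*\circ\Tr_X$ on $\rH^{2n-1}(-,\dQ_\ell(n))$. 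The final step is to observe that the identification $\rH^1(\Spec K,\dQ_\ell(1))\simeq\dQ_\ell$ used implicitly in Beilinson's construction is absolutely normalized (sending a Kummer class $[x]$ for $x\in K^\times$ to the $p$-adic valuation $v_p(x)$, rather than to the local valuation $v_K(x)$), so that $g^*$ becomes the identity on $\dQ_\ell$; combining these yields the desired equality.

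For part (2), the morphism $u\colon X'\to X$ is finite \'{e}tale of degree $\deg u$, hence $u_*u^*=\deg u\cdot\id$ on \'{e}tale cohomology. Pulling back the lifts gives $u^*\gamma_i\in\rH^{2d_i-1}(U'_i,\dQ_\ell(d_i))$, which lift the refined cycle classes of $c'_i$, and since $u$ is proper we have $\Tr_{X'}=\Tr_X\circ u_*$. Applying the projection formula then produces the claimed factor:
\[
\Tr_{X'}\bigl(u^*\gamma_1\cup u^*\gamma_2\bigr)
=\Tr_X\bigl(u_*(u^*\gamma_1\cup u^*\gamma_2)\bigr)
=\Tr_X\bigl(\gamma_1\cup u_*u^*\gamma_2\bigr)
=\deg u\cdot\Tr_X\bigl(\gamma_1\cup\gamma_2\bigr).
\]

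The main obstacle will be pinning down the normalization in part (1): one must verify that Beilinson's identification of $\rH^1(\Spec K,\dQ_\ell(1))$ with $\dQ_\ell$ is invariant under the pullback $g^*$, which forces the absolute ($p$-adic) convention rather than one twisted by the ramification index. Once this is in place, the rest of both arguments is a routine check of functoriality for refined cycle classes, Mayer--Vietoris sequences, and the Gysin/trace map.
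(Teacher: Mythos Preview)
Your approach is essentially the same as the paper's: both reduce to a commutative diagram expressing functoriality of refined cycle classes, the Mayer--Vietoris coboundary, cup product, and the trace map. For part~(2), your projection-formula computation $\Tr_{X'}(u^*\gamma_1\cup u^*\gamma_2)=\Tr_X(u_*u^*(\gamma_1\cup\gamma_2))=\deg u\cdot\Tr_X(\gamma_1\cup\gamma_2)$ is exactly the content of the paper's diagram with right vertical arrow $\deg u\cdot\id$.

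One caveat on your normalization remark in part~(1): the paper's global height decomposition \eqref{eq:height1} carries the weight $\log q_u$ at each finite place, which indicates that the identification $\rH^1(\Spec K,\dQ_\ell(1))\simeq\dQ_\ell$ is via the \emph{local} valuation (uniformizer $\mapsto 1$), not the absolute $p$-adic one you posit. Under the local normalization the restriction $g^*$ is multiplication by the ramification index $e(K'/K)$, so the lemma as stated literally holds only for unramified $K'/K$. This is harmless for the paper's purposes: every invocation of the lemma (in Sections~\ref{ss:split}--\ref{ss:inert2}) is through Notation~\ref{st:auxiliary}, where $K$ is explicitly arranged to be unramified over $E_u$. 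So your instinct that the normalization is the crux is right, but the resolution in practice is ``$K'/K$ unramified'' rather than ``absolute normalization''.
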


\begin{proof}
In both statements, it is clear that $(c'_1,c'_2)\in\rZ^{d_1,d_2}(X')^{\langle\ell\rangle}$.

Part (1) follows from the following commutative diagram
\[
\xymatrix{
\rH^{2n-2}(U_1\cap U_2,\dQ_\ell(n)) \ar[r]\ar[d] & \rH^{2n-1}(X,\dQ_\ell(n)) \ar[r]^-{\Tr_X}\ar[d] & \rH^1(\Spec K,\dQ_\ell(1))
\ar@{=}[r]\ar[d] & \dQ_\ell \ar@{=}[d] \\
\rH^{2n-2}(U'_1\cap U'_2,\dQ_\ell(n)) \ar[r] & \rH^{2n-1}(X',\dQ_\ell(n)) \ar[r]^-{\Tr_{X'}} & \rH^1(\Spec K',\dQ_\ell(1))
\ar@{=}[r] & \dQ_\ell
}
\]
in which $U'_i$ is the restriction of $U_i$ on $X'$, and the construction of the local index.

Part (2) follows from the following commutative diagram
\[
\xymatrix{
\rH^{2n-2}(U_1\cap U_2,\dQ_\ell(n)) \ar[r]\ar[d]^-{u^*} & \rH^{2n-1}(X,\dQ_\ell(n)) \ar[r]^-{\Tr_X}\ar[d]^-{u^*} & \rH^1(\Spec K,\dQ_\ell(1)) \ar[d]^-{\deg u\cdot\id}\\
\rH^{2n-2}(U'_1\cap U'_2,\dQ_\ell(n)) \ar[r] & \rH^{2n-1}(X',\dQ_\ell(n)) \ar[r]^-{\Tr_{X'}} & \rH^1(\Spec K,\dQ_\ell(1))
}
\]
in which $U'_i$ is the restriction of $U_i$ on $X'$, and the construction of the local index.
\end{proof}

In what follows, $\pi\colon\cX\to\Spec O_K$ is a projective morphism such that $\cX\otimes_{O_K}K=X$. We put $Y\coloneqq\cX\otimes_{O_K}k$.

\begin{lem}\label{le:beilinson}
Consider elements $c_1\in\rZ^{d_1}_{Z_1}(X)$ and $c_2\in\rZ^{d_2}_{Z_2}(X)$ with $Z_1\cap Z_2=\emptyset$. For every $\beta_1\in\rH^{2d_1}_{Y\cup Z_1}(\cX,\dQ_\ell(d_1))$ whose image in $\rH^{2d_1}_{Z_1}(X,\dQ_\ell(d_1))$ coincides with $\cl_{X,\ell}^{Z_1}(c_1)$ and whose image in $\rH^{2d_1}(\cX,\dQ_\ell(d_1))$ vanishes, and every $\beta_2\in\rH^{2d_2-2(n-1)}_{Y\cup Z_2}(\cX,\pi^!\dQ_\ell(d_2-n+1))$ whose image in $\rH^{2d_2-2(n-1)}_{Z_2}(X,\pi^!\dQ_\ell(d_2-n+1))=\rH^{2d_2}_{Z_2}(X,\dQ_\ell(d_2))$ coincides with $\cl_{X,\ell}^{Z_2}(c_2)$, the image of $\beta_1\cup\beta_2\in\rH^2_Y(\cX,\pi^!\dQ_\ell(1))$ under the trace map $\rH^2_Y(\cX,\pi^!\dQ_\ell(1))\to\rH^2_{\Spec k}(\Spec O_K,\dQ_\ell(1))=\dQ_\ell$ coincides with $\langle c_1,c_2\rangle_{X,K}^\ell$.
\end{lem}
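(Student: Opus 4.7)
The plan is to reduce the integral computation on $\cX$ to Beilinson's generic-fiber recipe via two compatibilities: (i) the cup product commutes with connecting homomorphisms, and (ii) the trace map is compatible between the integral model and the generic fiber. To set things up, observe that since $\cX\setminus Y=X$, we have $\cX\setminus(Y\cup Z_1)=U_1$, so the long exact sequence of the pair $(Y\cup Z_1)\subset\cX$ reads
\[
\rH^{2d_1-1}(U_1,\dQ_\ell(d_1))\to\rH^{2d_1}_{Y\cup Z_1}(\cX,\dQ_\ell(d_1))\to\rH^{2d_1}(\cX,\dQ_\ell(d_1)).
\]
Because $\beta_1$ vanishes in the third term by hypothesis, it lifts to some $\gamma_1\in\rH^{2d_1-1}(U_1,\dQ_\ell(d_1))$. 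The compatibility of the LES for $(Y\cup Z_1)\subset\cX$ with that for $Z_1\subset X$ (via restriction to the generic fiber) shows that the image of $\gamma_1$ under the coboundary $\rH^{2d_1-1}(U_1,\dQ_\ell(d_1))\to\rH^{2d_1}_{Z_1}(X,\dQ_\ell(d_1))$ is $\cl^{Z_1}_{X,\ell}(c_1)$. So $\gamma_1$ is a valid choice in Beilinson's construction.

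Next, I will identify the class $\beta_1\cup\beta_2\in\rH^2_Y(\cX,\pi^!\dQ_\ell(1))$ with the image of a class on $X$ via the connecting map. The commutativity of the diagram comparing cup-product-with-support and cup-product-without-support shows that $\beta_1\cup\beta_2$ maps to $\tilde\beta_1\cup\tilde\beta_2$ in $\rH^2(\cX,\pi^!\dQ_\ell(1))$, where $\tilde\beta_1$ is the image of $\beta_1$ in $\rH^{2d_1}(\cX,\dQ_\ell(d_1))$; but $\tilde\beta_1=0$ by assumption, hence $\beta_1\cup\beta_2$ lifts through the LES
\[
\rH^1(X,\pi_K^!\dQ_\ell(1))\simeq\rH^{2n-1}(X,\dQ_\ell(n))\longrightarrow\rH^2_Y(\cX,\pi^!\dQ_\ell(1))\longrightarrow\rH^2(\cX,\pi^!\dQ_\ell(1)).
\]
To compute the lift, restrict $\beta_2$ to $U_1$: since $(Y\cup Z_2)\cap U_1=Z_2$ (using $Y\cap U_1=\emptyset$ and $Z_1\cap Z_2=\emptyset$) and $\pi^!\dQ_\ell|_{U_1}=\dQ_\ell(n-1)[2n-2]$, we obtain $\beta_2|_{U_1}\in\rH^{2d_2}_{Z_2}(U_1,\dQ_\ell(d_2))=\rH^{2d_2}_{Z_2}(X,\dQ_\ell(d_2))$, which by hypothesis equals $\cl^{Z_2}_{X,\ell}(c_2)$. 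Then the standard formula for Mayer--Vietoris coboundaries (compatibility of cup product with connecting homomorphisms) identifies the image of $\gamma_1\cup\cl^{Z_2}_{X,\ell}(c_2)\in\rH^{2n-1}_{Z_2}(X,\dQ_\ell(n))$ in $\rH^{2n-1}(X,\dQ_\ell(n))$ with $\partial_{MV}(\gamma_1\cup\gamma_2)$ for any Beilinson lift $\gamma_2$; this last is precisely the class whose trace defines $\langle c_1,c_2\rangle_{X,K}^\ell$.

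To conclude, it remains to check that the generic-fiber trace and the integral-model trace agree, i.e.\ that the square
\[
\xymatrix{
\rH^{2n-1}(X,\dQ_\ell(n))\ar[r]^-{\delta}\ar[d]_-{\Tr_X} & \rH^2_Y(\cX,\pi^!\dQ_\ell(1))\ar[d]^-{\Tr_\cX} \\
\rH^1(\Spec K,\dQ_\ell(1))\ar[r] & \rH^2_{\Spec k}(\Spec O_K,\dQ_\ell(1))
}
\]
commutes, with both bottom groups canonically $\dQ_\ell$ and the bottom arrow the identity. This follows from the naturality of the counit $R\pi_!\pi^!\to\id$ with respect to the open/closed decomposition of $\Spec O_K$, together with the standard identification of the local trace on $\Spec O_K$ via the normalized valuation. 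Combining these identifications gives $\Tr_\cX(\beta_1\cup\beta_2)=\Tr_X(\gamma_1\cup\cl^{Z_2}_{X,\ell}(c_2))=\langle c_1,c_2\rangle_{X,K}^\ell$.

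The main obstacle I expect is Step 2: carefully justifying the formula that $\partial_{MV}(\gamma_1\cup\gamma_2)$ equals the image in $\rH^{2n-1}(X,\dQ_\ell(n))$ of the refined cup $\gamma_1\cup\cl^{Z_2}_{X,\ell}(c_2)$, and matching this with the integral picture. This requires tracking signs and support conditions through several long exact sequences and invoking the functoriality of cup product with respect to the various coboundary maps. The trace compatibility (Step 3), while conceptually clean via Verdier duality, also requires a careful invocation of the six-functor formalism to make rigorous.
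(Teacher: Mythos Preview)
Your proposal is correct and follows essentially the same approach as the paper's proof. Both arguments reduce to (i) showing that the Mayer--Vietoris coboundary of $\gamma_1\cup\gamma_2$ equals the image in $\rH^{2n-1}(X,\dQ_\ell(n))$ of the refined cup product $\gamma_1\cup\cl^{Z_2}_{X,\ell}(c_2)$, and (ii) showing that $\beta_1\cup\beta_2$ arises as the coboundary of this same class together with trace compatibility between $X$ and $\cX$; the paper carries out the cup-product/coboundary compatibilities you flag in your final paragraph by writing down the relevant maps at the sheaf level in $\rD^b(X,\dQ_\ell)$ and $\rD^b(\cX,\dQ_\ell)$ and taking cones, which is precisely the justification your Step~2 anticipates needing.
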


This is claimed in \cite{Bei87}*{Lemma-definition~2.1.1} without proof. For completeness, we include a proof here (though straightforward).

\begin{proof}
Before the proof, let us make a remark on cup products. Let $S$ be a Noetherian scheme on which $\ell$ is invertible. Given $F,G,H\in\rD^b(S,\dQ_\ell)$, the bounded derived category of $\ell$-adic sheaves on $S$, together with a map
\[
\kappa\colon F\overset{\dL}\otimes G\to H,
\]
we have a cup product map
\[
\cup_\kappa\colon\rH^i(S,F)\times\rH^j(S,G)\to\rH^{i+j}(S,H)
\]
for every integers $i,j$, which is the composition of the cup product for (hyper)cohomology
\[
\rH^i(S,F)\times\rH^j(S,G)\to\rH^{i+j}(S,F\overset{\dL}\otimes G)
\]
and the induced map $\rH^{i+j}(S,\kappa)\colon\rH^{i+j}(S,F\overset{\dL}\otimes G)\to\rH^{i+j}(S,H)$. In particular, if we have maps $f\colon F\to F'$ and $h\colon H\to H'$ rendering the following diagram
\[
\xymatrix{
F\overset{\dL}\otimes G \ar[r]^-{\kappa}\ar[d]_-{f\otimes\id} & H \ar[d]^-{h} \\
F'\overset{\dL}\otimes G \ar[r]^-{\kappa'} & H'
}
\]
commutative (in $\rD^b(S,\dQ_\ell)$), then the induced diagram
\[
\xymatrix{
\rH^i(S,F)\times\rH^j(S,G) \ar[r]^-{\cup_\kappa}\ar[d]_-{\rH^i(S,f)\times\id} & \rH^{i+j}(S,H) \ar[d]^-{\rH^{i+j}(S,h)} \\
\rH^i(S,F')\times\rH^j(S,G) \ar[r]^-{\cup_{\kappa'}} & \rH^{i+j}(S,H')
}
\]
commutes.

Put $U_i\coloneqq X\setminus Z_i$ for $i=1,2$ as before. For $i=1,2$, choose a class $\gamma_i\in\rH^{2d_i-1}(U_i,\dQ_\ell(d_i))$ that goes to $\cl_{X,\ell}^{Z_i}(c_i)$ under the coboundary map $\rH^{2d_i-1}(U_i,\dQ_\ell(d_i))\to\rH^{2d_i}_{Z_i}(X,\dQ_\ell(d_i))$. Denote by $\langle\gamma_1,c_2\rangle$ to be the image of $\gamma_1\cup\cl_{X,\ell}^{Z_2}(c_2)\in\rH^{2n-1}_{Z_2}(U_1,\dQ_\ell(n))=\rH^{2n-1}_{Z_2}(X,\dQ_\ell(n))$ under the composite map
\[
\rH^{2n-1}_{Z_2}(X,\dQ_\ell(n))\to\rH^{2n-1}(X,\dQ_\ell(n))\xrightarrow{\Tr_X}\rH^1(\Spec K,\dQ_\ell(1))=\dQ_\ell.
\]
We break the proof into two steps.
\begin{enumerate}
  \item $\langle\gamma_1,c_2\rangle=\langle c_1,c_2\rangle_{X,K}^\ell$;

  \item the image of $\beta_1\cup\beta_2\in\rH^2_Y(\cX,\pi^!\dQ_\ell(1))$ under the trace map $\rH^2_Y(\cX,\pi^!\dQ_\ell(1))\to\dQ_\ell$ coincides with $\langle\gamma_1,c_2\rangle$.
\end{enumerate}

For (1), it is easy to see that the coboundary map $\rH^{2n-2}(U_1\cap U_2,\dQ_\ell(n))\to\rH^{2n-1}(X,\dQ_\ell(n))$ in the Mayer--Vietoris exact sequence is the composition of the coboundary map $\delta\colon\rH^{2n-2}(U_1\cap U_2,\dQ_\ell(n))\to\rH^{2n-1}_{Z_2}(U_1,\dQ_\ell(n))$ in the Gysin sequence and the natural map $\rH^{2n-1}_{Z_2}(U_1,\dQ_\ell(n))=\rH^{2n-1}_{Z_2}(X,\dQ_\ell(n))\to\rH^{2n-1}(X,\dQ_\ell(n))$. Thus, it suffices to show that the following diagram
\begin{align}\label{eq:beilinson1}
\xymatrix{
\rH^{2d_1-1}(U_1,\dQ_\ell(d_1))\times\rH^{2d_2-1}(U_2,\dQ_\ell(d_2)) \ar[r]\ar[d]_-{\id\times\delta}
& \rH^{2n-2}(U_1\cap U_2,\dQ_\ell(n)) \ar[d]^-{\delta} \\
\rH^{2d_1-1}(U_1,\dQ_\ell(d_1))\times\rH^{2d_2}_{Z_2}(X,\dQ_\ell(d_2)) \ar[r]
& \rH^{2n-1}_{Z_2}(U_1,\dQ_\ell(n))
}
\end{align}
commutes. Denote morphisms $\iota_i\colon U_i\to X$ and $\jmath_i\colon Z_i\to X$ for $i=1,2$, and $\iota\colon U_1\cap U_2\to X$. Then in view of the remark on the cup products, the first row of \eqref{eq:beilinson1} is induced by the natural map
\[
\iota_{1*}\iota_1^*\dQ_\ell(d_1)\overset{\dL}\otimes\iota_{2*}\iota_2^*\dQ_\ell(d_2)\to\iota_*\iota^*\dQ_\ell(n),
\]
and the second row of \eqref{eq:beilinson1} is induced by the map
\[
\iota_{1*}\iota_1^*\dQ_\ell(d_1)\overset{\dL}\otimes\jmath_{2!}\jmath_2^!\dQ_\ell(d_2)\to\jmath_{2!}\jmath_2^!\iota_1^*\dQ_\ell(n)
\]
which is the cone (of columns) of the natural commutative diagram
\[
\xymatrix{
\iota_{1*}\iota_1^*\dQ_\ell(d_1)\overset{\dL}\otimes\dQ_\ell(d_2) \ar[r]\ar[d]& \iota_{1*}\iota_1^*\dQ_\ell(n) \ar[d] \\
\iota_{1*}\iota_1^*\dQ_\ell(d_1)\overset{\dL}\otimes\iota_{2*}\iota_2^*\dQ_\ell(d_2) \ar[r]& \iota_*\iota^*\dQ_\ell(n)
}
\]
in $\rD^b(X,\dQ_\ell)$. It follows that \eqref{eq:beilinson1} commutes. In particular, $\langle c_1,c_2\rangle_{X,K}^\ell$ does not depend on the choices of $\gamma_1$ and $\gamma_2$, which justifies the notation.

For (2), we may assume that $\beta_1$ is the coboundary of $\gamma_1$. We have the commutative diagram
\[
\xymatrix{
\rH^1(X,\pi^!\dQ_\ell(1)) \ar[r]^-{\Tr_X}\ar[d] & \rH^1(\Spec K,\dQ_\ell(1)) \ar[d] \\
\rH^2_Y(\cX,\pi^!\dQ_\ell(1)) \ar[r]^-{\Tr_\cX} & \rH^2_{\Spec k}(\Spec O_K,\dQ_\ell(1))
}
\]
for the trace maps. Thus, as $\rH^1(X,\pi^!\dQ_\ell(1))\simeq\rH^{2n-1}(X,\dQ_\ell(n))$, it remains to show that the following diagram
\begin{align*}
\xymatrix{
\rH^{2d_1-1}(U_1,\dQ_\ell(d_1))\times\rH^{2d_2-2(n-1)}_{Y\cup Z_2}(\cX,\pi^!\dQ_\ell(d_2-n+1)) \ar[r]\ar[d]_-{\delta\times\id}
& \rH^1_{Z_2}(U_1,\pi^!\dQ_\ell(1)) \ar[d]^-{\delta} \\
\rH^{2d_1}_{Y\cup Z_1}(\cX,\dQ_\ell(d_1))\times\rH^{2d_2-2(n-1)}_{Y\cup Z_2}(\cX,\pi^!\dQ_\ell(d_2-n+1)) \ar[r]
& \rH^2_Y(\cX,\pi^!\dQ_\ell(1))
}
\end{align*}
commutes. The argument is similar to (1), which we leave to readers.

The lemma is proved.
\end{proof}

\begin{remark}\label{re:beilinson}
In Lemma \ref{le:beilinson}, when $\cX$ is regular, the natural map $\pi^!\dQ_\ell[2-2n](1-n)\to\dQ_\ell$ is an isomorphism, which is a consequence of the absolute purity theorem \cite{Fuj02}.
\end{remark}

Now we provide a refined method to compute \eqref{eq:index} in the presence of a regular model of $X$. Till the end of this section, $\cX$ will be regular.

We first review some constructions from \cite{GS87}. For every Zariski closed subset $\cZ$ of $\cX$, we have the K-group $\rK_0^\cZ(\cX)$ of complexes with support in $\cZ$ defined in \cite{GS87}*{Section~1.1}, equipped with the codimension filtration
\[
\cdots \supset \rF^{d-1}\rK_0^\cZ(\cX) \supset \rF^d\rK_0^\cZ(\cX) \supset \rF^{d+1}\rK_0^\cZ(\cX) \supset \cdots.
\]
We have
\begin{itemize}
  \item the pushforward map $\pi_*\colon\rK_0^Y(\cX)\to\rK_0^{\Spec k}(\Spec O_K)=\rK_0(\Spec k)$;

  \item for $\cZ'\subseteq\cZ$, a natural linear map $\rK_0^{\cZ'}(\cX)\to\rK_0^\cZ(\cX)$ which preserves the codimension filtration;

  \item a cup-product map $\cup\colon\rK_0^{\cZ_1}(\cX)\times\rK_0^{\cZ_2}(\cX)\to\rK_0^{\cZ_1\cap\cZ_2}(\cX)$;

  \item a natural linear map
     \begin{align}\label{eq:kgroup}
     [\phantom{\cZ}]\colon\bigoplus_{d'\geq d}\rZ^{d'}_\cZ(\cX)\to\rF^d\rK_0^\cZ(\cX)
     \end{align}
     sending a closed subscheme $\cZ'$ of $\cX$ contained in $\cZ$ to the class of the structure sheaf $\sO_{\cZ'}$.
\end{itemize}
See \cite{GS87}*{Section~1 \& Section~5} for more details.

Note that since $\cX$ is regular, $\rK_0^\cZ(\cX)$ coincides with Quillen's K-theory with support (see the proof of \cite{GS87}*{Theorem~8.2}). Then by \cite{Gil81}*{Definition~2.34(ii)} in which we take the base scheme $S$ to be $\Spec O_K$ and $\Gamma$ to be the $\ell$-adic cohomology theory, we obtain the $d$-th Chern class map
\[
\cl_{\cX,\ell}^\cZ\colon\rF^d\rK_0^\cZ(\cX)\to\rH^{2d}_\cZ(\cX,\dQ_\ell(d))
\]
for every integer $d\geq 0$.

For the generic fiber $X$, we have K-groups the $\rK_0^Z(X)$ for Zariski closed subsets $Z$ of $X$ with similar properties as well. The following lemma is probably known, but we can not find an exact reference.

\begin{lem}\label{re:chern}
For every $c\in\rZ^d_Z(X)$, the element $\cl_{X,\ell}^Z([c])\in\rH^{2d}_Z(X,\dQ_\ell(d))$ coincides with the refined cycle class $\cl_{X,\ell}^Z(c)$ of $c$.
\end{lem}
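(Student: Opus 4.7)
The plan is to reduce to the case of a prime cycle and then to compare both constructions to the fundamental class of its structure sheaf. By linearity and the fact that both $\cl_{X,\ell}^Z$ (the refined cycle class map) and $\cl_{X,\ell}^Z\circ[\,]$ (Gillet's Chern class composed with the map \eqref{eq:kgroup}) are $\dZ$-linear and compatible with enlarging the support $Z$, it suffices to treat the case $c=[\cZ']$ for an integral closed subscheme $\cZ'\subseteq X$ of codimension exactly $d$, with $Z=\cZ'$. In this case $[c]\in\rF^d\rK_0^{\cZ'}(X)$ is represented by the class of the coherent sheaf $\sO_{\cZ'}$, viewed as a complex concentrated in degree $0$.

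First I would recall Gillet's construction of the Chern class $\cl_{X,\ell}^{\cZ'}$: for a class in $\rF^d\rK_0^{\cZ'}(X)$, the $d$-th Chern class lies in $\rH^{2d}_{\cZ'}(X,\dQ_\ell(d))$ and, by \cite{Gil81}*{Proposition~2.34 and the subsequent functoriality properties}, agrees up to the sign $(-1)^{d-1}(d-1)!$ (absorbed into the normalization for $\rF^d$-pieces) with the top component of the Chern character of $[\sO_{\cZ'}]$. The Grothendieck--Riemann--Roch style identity
\[
\r{ch}_d([\sO_{\cZ'}])=\frac{(-1)^{d-1}}{(d-1)!}\cdot\cl_{X,\ell}^{\cZ'}(\cZ')\pmod{\rF^{d+1}}
\]
holds in $\rH^{2d}_{\cZ'}(X,\dQ_\ell(d))$, where the right-hand side is the refined cycle class defined via the Gysin/purity isomorphism. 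This identity is the local, refined version of the standard fact that the top Chern character of $\sO_{\cZ'}$ is the fundamental class of $\cZ'$; in particular it is compatible with the normalization built into the definition of $\cl_{X,\ell}^{\cZ'}$ on $\rF^d/\rF^{d+1}$.

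The proof of the displayed identity proceeds by local deformation to the normal cone of $\cZ'$ in $X$ (which can be carried out in the $\ell$-adic setting using purity and the compatibility of cycle classes with specialization), reducing to the case where $\cZ'$ is the zero section of a vector bundle of rank $d$ on $\cZ'$, in which case both sides coincide with the $d$-th Chern class of the bundle by the Koszul resolution computation. All necessary compatibilities (specialization, functoriality of Chern classes, purity) are standard in $\ell$-adic cohomology.

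The main obstacle will be bookkeeping the normalization constants between the Chern class on the associated graded $\rF^d/\rF^{d+1}$ and the refined cycle class map $\cl_{X,\ell}^{\cZ'}$, so that the two agree on the nose rather than up to a universal constant; this is where one invokes \cite{Gil81}*{Section~2} to pin down that $\cl_{X,\ell}^{\cZ'}\res_{\rF^d\rK_0^{\cZ'}(X)}$ is defined precisely to make the equality $\cl_{X,\ell}^{\cZ'}([\sO_{\cZ'}])=\cl_{X,\ell}^{\cZ'}(\cZ')$ hold, after which the general case follows by $\dZ$-linearity.
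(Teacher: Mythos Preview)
Your reduction to a single integral cycle $\cZ'$ with $Z=\cZ'$ is fine and matches the paper. The gap is in the deformation-to-normal-cone step: you assert that one reduces to the case where $\cZ'$ is the zero section of a rank-$d$ vector bundle, but this only works when $\cZ'$ is regularly embedded in $X$ (so that the normal cone is actually a vector bundle and the Koszul complex resolves $\sO_{\cZ'}$). An arbitrary integral closed subscheme of a smooth variety need not be a local complete intersection, so for singular $\cZ'$ your argument does not go through as written.

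The paper handles precisely this point by first excising the singular locus. Let $Z'$ be the smooth locus of $Z$ over $K$ and $X'\coloneqq X\setminus(Z\setminus Z')$. Since $Z\setminus Z'$ has codimension $\geq d+1$ in the regular scheme $X$, the semi-purity theorem \cite{Fuj02}*{Section~8} shows that the restriction map $\rH^{2d}_Z(X,\dQ_\ell(d))\to\rH^{2d}_{Z'}(X',\dQ_\ell(d))$ is an isomorphism. Both the refined cycle class and Gillet's Chern class are compatible with this restriction, so one may assume $Z$ smooth from the start. At that point the paper invokes \cite{Gil81}*{Theorem~3.1} (with $S=\Spec K$ and $k=q=0$) directly; this is essentially the statement you are sketching a proof of via deformation to the normal cone. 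Your argument becomes valid once $Z$ is smooth, but the semi-purity reduction is the missing ingredient that lets you get there.
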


\begin{proof}
We may assume $Z$ irreducible and $c=Z$. Let $Z'$ be the smooth locus of $Z$ over $K$ and put $X'\coloneqq X\setminus(Z\setminus Z')$. As a consequence of the semi-purity theorem \cite{Fuj02}*{Section~8}, the restriction map $\rH^{2d}_Z(X,\dQ_\ell(d))\to\rH^{2d}_{Z'}(X',\dQ_\ell(d))$ is an isomorphism. Thus, we may assume $Z$ smooth over $K$ as well. Then the lemma follows from \cite{Gil81}*{Theorem~3.1} (with $S=\Spec K$ and $k=q=0$).
\end{proof}

\begin{definition}
Let $\cZ_1$ and $\cZ_2$ be two Zariski closed subsets $\cZ$ of $\cX$ satisfying $\cZ_1\cap\cZ_2\subseteq Y$. We define a pairing
\begin{align*}
\rF^{d_1}\rK_0^{\cZ_1}(\cX)_\dC\times\rF^{d_2}\rK_0^{\cZ_2}(\cX)_\dC&\to\dC \\
(\cC_1,\cC_2)&\mapsto \cC_1.\cC_2
\end{align*}
that is complex linear in the first variable, conjugate complex linear in the second variable, and such that for $\cC_i\in\rF^{d_i}\rK_0^{\cZ_i}(\cX)$ with $i=1,2$, we have
\[
\cC_1.\cC_2\coloneqq\chi\(\pi_*(\cC_1\cup\cC_2)\),
\]
where $\chi$ denotes the Euler characteristic function on $\rK_0(\Spec k)$. Note that as $\cZ_1\cap\cZ_2\subseteq Y$, $\cC_1\cup\cC_2$ can be regarded as element in $\rK_0^Y(\cX)_\dC$.
\end{definition}

\begin{lem}\label{pr:intersection}
Let $\cZ_1$ and $\cZ_2$ be two Zariski closed subsets of $\cX$ satisfying $\cZ_1\cap\cZ_2\subseteq Y$. For $\cC_i\in\rF^{d_i}\rK_0^{\cZ_i}(\cX)$ with $i=1,2$, $\cC_1.\cC_2$ coincides with the image of $\cl^{\cZ_1}_{\cX,\ell}(\cC_1)\cup\cl^{\cZ_2}_{\cX,\ell}(\cC_2)\in\rH^{2n}_{\cZ_1\cap\cZ_2}(\cX,\dQ_\ell(n))$ under the natural composite map
\begin{align*}
\rH^{2n}_{\cZ_1\cap\cZ_2}(\cX,\dQ_\ell(n))\to\rH^{2n}_{Y}(\cX,\dQ_\ell(n))
\xrightarrow{\pi_*}\rH^2_{\Spec k}(\Spec O_K,\dQ_\ell(1))=\rH^0(\Spec k,\dQ_\ell)=\dQ_\ell.
\end{align*}
\end{lem}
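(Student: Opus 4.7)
The plan is to reduce the statement, via the standard Gillet--Soul\'{e} formalism, to a computation at the level of $0$-dimensional cycles on the special fiber, where both sides manifestly compute the same length.

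First, I would observe that the cup product is compatible with the codimension filtration (\cite{GS87}*{Section~5}), so that $\cC_1 \cup \cC_2$ lies in $\rF^{d_1+d_2}\rK_0^{\cZ_1\cap\cZ_2}(\cX)_\dC = \rF^n\rK_0^{\cZ_1\cap\cZ_2}(\cX)_\dC$. Since $\cZ_1 \cap \cZ_2 \subseteq Y$, we may push this into $\rF^n\rK_0^Y(\cX)_\dC$. Next, I would invoke the multiplicativity of the Chern class map with support (a consequence of \cite{Gil81} since $\cX$ is regular so that $\rK_0^{(-)}(\cX)$ coincides with Quillen K-theory with support), which yields
\[
\cl^{\cZ_1\cap\cZ_2}_{\cX,\ell}(\cC_1 \cup \cC_2) = \cl^{\cZ_1}_{\cX,\ell}(\cC_1) \cup \cl^{\cZ_2}_{\cX,\ell}(\cC_2)
\]
in $\rH^{2n}_{\cZ_1\cap\cZ_2}(\cX,\dQ_\ell(n))$, whose image in $\rH^{2n}_Y(\cX,\dQ_\ell(n))$ is the right-hand side of the desired identity (before applying the trace).

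Thus the lemma reduces to the following claim: for every $\cC \in \rF^n \rK_0^Y(\cX)_\dC$, the element $\chi(\pi_*\cC) \in \dC$ equals the image of $\cl^Y_{\cX,\ell}(\cC)$ under the composite $\rH^{2n}_Y(\cX,\dQ_\ell(n)) \to \rH^2_{\Spec k}(\Spec O_K,\dQ_\ell(1)) = \dQ_\ell$. By the surjection \eqref{eq:kgroup}, the group $\rF^n \rK_0^Y(\cX)_\dC$ is spanned by classes of the form $[\cZ']$ for irreducible closed subschemes $\cZ' \subseteq Y$ of codimension $\geq n$ in $\cX$; such $\cZ'$ are necessarily $0$-dimensional over $k$. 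For such a $\cZ'$, on the K-theoretic side one has $\chi(\pi_*[\cZ']) = \chi(\Spec k, \sO_{\cZ'}) = \dim_k \Gamma(\cZ',\sO_{\cZ'})$; and on the $\ell$-adic side, by Lemma \ref{re:chern} applied on the level of $\cX$ (and the fact that $\sO_{\cZ'}$ has no higher Tor with itself in top codimension), the Chern class $\cl^Y_{\cX,\ell}([\cZ'])$ coincides with the refined cycle class $\cl^Y_{\cX,\ell}(\cZ')$, and the compatibility of trace maps with pushforward of cycles (applied to the finite morphism $\cZ'\to\Spec k$ composed with $\Spec k \hookrightarrow \Spec O_K$) gives exactly $\dim_k\Gamma(\cZ',\sO_{\cZ'})$.

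The main obstacle I expect is the identification in the last paragraph of the top-codimension Chern class $\cl^Y_{\cX,\ell}([\cZ'])$ with the cycle class of $\cZ'$. For $\cZ'$ smooth over $k$ this is immediate from Gillet's comparison \cite{Gil81}*{Theorem~3.1}, but for general $\cZ'$ one must either stratify by smooth loci and use semi-purity (as in the proof of Lemma \ref{re:chern}), or invoke the fact that the higher Chern classes $c_i$ of $\sO_{\cZ'}$ for $i > n$ vanish on dimension grounds, so that only the $n$-th Chern character term contributes and it equals the fundamental class by the Riemann--Roch formula applied to the closed immersion $\cZ'\hookrightarrow\cX$. Either route is standard but somewhat delicate; modulo this compatibility, the rest of the argument is bookkeeping with the multiplicativity of Chern classes and the compatibility of $\pi_*$ on K-theory with the trace map on cohomology.
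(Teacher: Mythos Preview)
Your proposal is correct and follows essentially the same route as the paper: both use \cite{GS87}*{Proposition~5.5} for the filtration, \cite{Gil81}*{Proposition~2.35} for multiplicativity of the Chern class map, and then reduce (via the surjection $\rZ^n(\cX)_\dQ\to\rF^n\rK_0^Y(\cX)_\dQ$) to checking compatibility of $\pi_*$ with the trace map on a single $0$-dimensional cycle. The only cosmetic difference is that the paper packages the final step as two commutative squares rather than unpacking the computation for an individual $[\cZ']$; also, the paper is slightly more cautious in asserting that $\cl^{\cZ_1}_{\cX,\ell}(\cC_1)\cup\cl^{\cZ_2}_{\cX,\ell}(\cC_2)$ and $\cl^{\cZ_1\cap\cZ_2}_{\cX,\ell}(\cC_1\cup\cC_2)$ merely agree after passing to $\rH^{2n}_Y$, whereas you claim equality already in $\rH^{2n}_{\cZ_1\cap\cZ_2}$---but since only the image in $\rH^{2n}_Y$ is used, this does not affect the argument.
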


\begin{proof}
By \cite{GS87}*{Proposition~5.5}, we have $\cC_1\cup\cC_2\in\rF^n\rK_0^{\cZ_1\cap\cZ_2}(\cX)_\dQ$. By \cite{Gil81}*{Proposition~2.35}, $\cl^{\cZ_1}_{\cX,\ell}(\cC_1)\cup\cl^{\cZ_2}_{\cX,\ell}(\cC_2)$ and $\cl^{\cZ_1\cap\cZ_2}_{\cX,\ell}(\cC_1\cup\cC_2)$ have the same image under the natural map $\rH^{2n}_{\cZ_1\cap\cZ_2}(\cX,\dQ_\ell(n))\to\rH^{2n}_Y(\cX,\dQ_\ell(n))$. Since the map $\rZ^n(\cX)_\dQ\to\rF^n\rK_0^Y(\cX)_\dQ$ is surjective, the diagram
\begin{align*}
\xymatrix{
\rF^n\rK_0^Y(\cX)_\dQ \ar[rr]^-{\cl_{\cX,\ell}^Y}\ar[d]_-{\pi_*} && \rH^{2n}_Y(\cX,\dQ_\ell(n)) \ar[d]^-{\pi_*} \\
\rF^1\rK_0^{\Spec k}(\Spec O_K)_\dQ \ar[rr]^-{\cl_{\Spec O_K,\ell}^{\Spec k}} && \rH^2_{\Spec k}(\Spec O_K,\dQ_\ell(1))
}
\end{align*}
commutes. Thus, the proposition follows since the diagram
\begin{align*}
\xymatrix{
\rF^1\rK_0^{\Spec k}(\Spec O_K)_\dQ \ar[rr]^-{\cl_{\Spec O_K,\ell}^{\Spec k}}\ar[d]_-{=} && \rH^2_{\Spec k}(\Spec O_K,\dQ_\ell(1))\ar[d]^-{=} \\
\rK_0(\Spec k)_\dQ \ar[rr]^-{\cl_{\Spec k,\ell}}\ar[d]_-{\chi} && \rH^0(\Spec k,\dQ_\ell) \ar[d]^-{=}\\
\dQ \ar[rr] && \dQ_\ell
}
\end{align*}
commutes.
\end{proof}

\if false

\begin{corollary}
In the situation of Lemma \ref{pr:intersection}, if $\cC_1$ satisfies that the image of $\cl^{\cZ_1}_{\cX,\ell}(\cC_1)$ in $\rH^{2d_1}(\cX,\dQ_\ell(d_1))$ vanishes, then we have $\cC_1.\cC_2=0$ for every $\cC_2\in\rF^{d_2}\rK_0^Y(\cX)$.
\end{corollary}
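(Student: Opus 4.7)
The plan is to reduce the vanishing of $\cC_1.\cC_2$ to the vanishing of the cup product of the ``desupported'' cycle class of $\cC_1$ with the cycle class of $\cC_2$ in $\rH^{2n}_Y(\cX,\dQ_\ell(n))$. Specifically, by Lemma \ref{pr:intersection} applied to $\cZ_2=Y$ (which satisfies $\cZ_1\cap Y\subseteq Y$ automatically), the value $\cC_1.\cC_2$ equals the image of $\cl^{\cZ_1}_{\cX,\ell}(\cC_1)\cup\cl^{Y}_{\cX,\ell}(\cC_2)\in\rH^{2n}_{\cZ_1\cap Y}(\cX,\dQ_\ell(n))$ under the composition of the enlargement map $\rH^{2n}_{\cZ_1\cap Y}(\cX,\dQ_\ell(n))\to\rH^{2n}_Y(\cX,\dQ_\ell(n))$ with the trace $\pi_*$ down to $\dQ_\ell$. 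Thus it suffices to show that the intermediate class in $\rH^{2n}_Y(\cX,\dQ_\ell(n))$ already vanishes.

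The key tool is the functoriality of the cup product with respect to enlarging supports. In the general formalism recalled in the proof of Lemma \ref{le:beilinson}, for closed subsets $\cZ\subseteq\cZ'$ and $\cW\subseteq\cW'$ of $\cX$, the diagram
\[
\xymatrix{
\rH^i_\cZ(\cX,F)\times\rH^j_\cW(\cX,G) \ar[r]^-{\cup}\ar[d] & \rH^{i+j}_{\cZ\cap\cW}(\cX,F\overset{\dL}\otimes G) \ar[d] \\
\rH^i_{\cZ'}(\cX,F)\times\rH^j_{\cW'}(\cX,G) \ar[r]^-{\cup} & \rH^{i+j}_{\cZ'\cap\cW'}(\cX,F\overset{\dL}\otimes G)
}
\]
commutes, where the vertical arrows are the natural maps induced by inclusions of supports. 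Taking $\cZ=\cZ_1$, $\cZ'=\cX$, $\cW=\cW'=Y$, this yields that the image of $\cl^{\cZ_1}_{\cX,\ell}(\cC_1)\cup\cl^Y_{\cX,\ell}(\cC_2)$ in $\rH^{2n}_Y(\cX,\dQ_\ell(n))$ coincides with the cup product $\alpha\cup\cl^Y_{\cX,\ell}(\cC_2)$, where $\alpha\in\rH^{2d_1}(\cX,\dQ_\ell(d_1))$ is the image of $\cl^{\cZ_1}_{\cX,\ell}(\cC_1)$ under the desupporting map $\rH^{2d_1}_{\cZ_1}(\cX,\dQ_\ell(d_1))\to\rH^{2d_1}(\cX,\dQ_\ell(d_1))$.

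By hypothesis $\alpha=0$, so the image in $\rH^{2n}_Y(\cX,\dQ_\ell(n))$ vanishes, and applying $\pi_*$ we conclude $\cC_1.\cC_2=0$. There is no substantive obstacle: the only point to verify carefully is the compatibility of cup products with changes of support, which is a standard property in the six-functor formalism for $\ell$-adic sheaves (and was already invoked implicitly in the proof of Lemma \ref{le:beilinson}).
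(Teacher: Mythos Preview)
Your proof is correct and follows essentially the same approach as the paper: both apply Lemma~\ref{pr:intersection} with $\cZ_2=Y$ and then use the factorization of the cup-product map $\rH^{2d_1}_{\cZ_1}(\cX,\dQ_\ell(d_1))\times\rH^{2d_2}_Y(\cX,\dQ_\ell(d_2))\to\rH^{2n}_Y(\cX,\dQ_\ell(n))$ through $\rH^{2d_1}(\cX,\dQ_\ell(d_1))\times\rH^{2d_2}_Y(\cX,\dQ_\ell(d_2))$, which you spell out via the support-enlarging diagram while the paper states it in one line.
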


\begin{proof}
It follows from Proposition \ref{pr:intersection} and the observation that the cup-product map
\[
\rH^{2d_1}_{\cZ_1}(\cX,\dQ_\ell(d_1))\times\rH^{2d_2}_Y(\cX,\dQ_\ell(d_2))\to\rH^{2n}_{Y}(\cX,\dQ_\ell(n))
\]
factors through $\rH^{2d_1}(\cX,\dQ_\ell(d_1))\times\rH^{2d_2}_Y(\cX,\dQ_\ell(d_2))$.
\end{proof}

\fi

\begin{definition}\label{de:extension}
For an element $c\in\rZ^d(X)_\dC$, we say that an element $\cC\in\rF^d\rK_0^{Y\cup\supp(c)}(\cX)_\dC$ is an \emph{extension} of $c$ if $\cC\res_X\in\rF^d\rK_0(X)_\dC$ coincides with $[c]$ under the map \eqref{eq:kgroup}, and that $\cC$ is an \emph{$\ell$-flat extension} if the image of $\cl_{\cX,\ell}^{Y\cup\supp(c)}(\cC)$ in $\rH^{2d}(\cX,\dQ_\ell(d))\otimes_\dQ\dC$ vanishes.
\end{definition}

\begin{proposition}\label{pr:index_smooth}
Consider a pair $(c_1,c_2)\in\rZ^{d_1,d_2}(X)^{\langle\ell\rangle}$ satisfying $\supp(c_i)\cap\supp(c_2)=\emptyset$. If $\cZ_1$ and $\cZ_2$ are two Zariski closed subsets of $\cX$ satisfying $\cZ_1\cap\cZ_2\subseteq Y$, and $\cC_i\in\rF^{d_i}\rK_0^{\cZ_i}(\cX)_\dC$ is an extension of $c_i$ for $i=1,2$ in which at least one is $\ell$-flat, then we have
\[
\langle c_1,c_2\rangle_{X,K}^\ell=\cC_1.\cC_2.
\]
In particular, when $\pi$ is smooth, we can take $\cC_i$ to be the one given by the Zariski closure of $c_i$ in $\cX$ via \eqref{eq:kgroup}, hence $\langle c_1,c_2\rangle_{X,K}^\ell$ belongs to $\dC$ and is independent of $\ell$.
\end{proposition}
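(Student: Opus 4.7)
The plan is to reduce the computation to Lemma~\ref{le:beilinson} and Lemma~\ref{pr:intersection} by suitably ``lifting'' the cycle classes on $X$ to classes on $\cX$ with support in $Y\cup\cZ_i$. Without loss of generality assume $\cC_1$ is $\ell$-flat; the remaining case follows by the conjugate-symmetry of $\langle\;,\;\rangle_{X,K}^\ell$.

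First, I would produce the input data for Lemma~\ref{le:beilinson}. Set $\beta_1$ to be the image of $\cl^{\cZ_1}_{\cX,\ell}(\cC_1)\in\rH^{2d_1}_{\cZ_1}(\cX,\dQ_\ell(d_1))$ under the natural map to $\rH^{2d_1}_{Y\cup\cZ_1}(\cX,\dQ_\ell(d_1))$. By the fact that $\cC_1$ is an extension of $c_1$, compatibility of Chern classes with restriction, and Lemma~\ref{re:chern}, the restriction of $\beta_1$ to $\rH^{2d_1}_{Z_1}(X,\dQ_\ell(d_1))$ equals $\cl^{Z_1}_{X,\ell}(c_1)$; and by the $\ell$-flatness hypothesis, the image of $\beta_1$ in $\rH^{2d_1}(\cX,\dQ_\ell(d_1))$ vanishes. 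Next, let $\alpha_2$ be the image of $\cl^{\cZ_2}_{\cX,\ell}(\cC_2)$ in $\rH^{2d_2}_{Y\cup\cZ_2}(\cX,\dQ_\ell(d_2))$, and define $\beta_2$ to be its transform under the absolute purity isomorphism $\rH^{2d_2}_{Y\cup\cZ_2}(\cX,\dQ_\ell(d_2))\simeq\rH^{2d_2-2(n-1)}_{Y\cup\cZ_2}(\cX,\pi^!\dQ_\ell(d_2-n+1))$ of Remark~\ref{re:beilinson} (using regularity of $\cX$). Again by Lemma~\ref{re:chern}, the image of $\beta_2$ in $\rH^{2d_2}_{Z_2}(X,\dQ_\ell(d_2))$ equals $\cl^{Z_2}_{X,\ell}(c_2)$.

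Then Lemma~\ref{le:beilinson} identifies $\langle c_1,c_2\rangle^\ell_{X,K}$ with the image of $\beta_1\cup\beta_2\in\rH^{2}_Y(\cX,\pi^!\dQ_\ell(1))$ under the trace to $\dQ_\ell$. The next step is to translate this back into an unpunctured cup product: the purity isomorphism is functorial with respect to cup products (in the sense reviewed in the proof of Lemma~\ref{le:beilinson}), so $\beta_1\cup\beta_2\in\rH^{2}_Y(\cX,\pi^!\dQ_\ell(1))$ corresponds under the natural isomorphism $\rH^{2n}_Y(\cX,\dQ_\ell(n))\simeq\rH^{2}_Y(\cX,\pi^!\dQ_\ell(1))$ to the image of $\cl^{\cZ_1}_{\cX,\ell}(\cC_1)\cup\cl^{\cZ_2}_{\cX,\ell}(\cC_2)\in\rH^{2n}_{\cZ_1\cap\cZ_2}(\cX,\dQ_\ell(n))$ in $\rH^{2n}_Y(\cX,\dQ_\ell(n))$, and moreover the two trace maps agree under this identification. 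Applying Lemma~\ref{pr:intersection} (which is legitimate since $\cZ_1\cap\cZ_2\subseteq Y$) identifies this quantity with $\cC_1.\cC_2$, completing the main claim.

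For the final ``In particular'' statement, take $\cC_i\coloneqq[\ol c_i]$ where $\ol c_i$ denotes the Zariski closure of $c_i$ in $\cX$. When $\pi$ is smooth, the restriction map $\rH^{2d_1}(\cX,\dQ_\ell(d_1))\to\rH^{2d_1}(X,\dQ_\ell(d_1))$ is injective (combining the smooth proper base change isomorphism $\rH^*(\cX,\dQ_\ell)\xrightarrow\sim\rH^*(\cX\otimes_{O_K}\bar k,\dQ_\ell)$ with the Hochschild--Serre spectral sequence and weights, or directly from the fact that on a smooth proper scheme over $O_K$ both fibers compute the same cohomology compatibly). Since $\cl_{X,\ell}(c_1)=0$ by the assumption $c_1\in\rZ^{d_1}(X)^{\langle\ell\rangle}$, Lemma~\ref{re:chern} gives the $\ell$-flatness of $\cC_1$. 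Then $\cC_1.\cC_2\in\dC$ is manifestly an element of $\dC$ (it is a linear combination of ordinary intersection numbers on $\cX$), and is defined without reference to $\ell$, so the identity $\langle c_1,c_2\rangle^\ell_{X,K}=\cC_1.\cC_2$ implies the $\ell$-independence and $\dC$-valuedness of the left-hand side.

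The main obstacle I anticipate is the careful verification in the second step that the purity isomorphism of Remark~\ref{re:beilinson} intertwines cup products and trace maps as claimed; this is expected but requires spelling out the compatibility between the purity quasi-isomorphism $\pi^!\dQ_\ell[2-2n](1-n)\xrightarrow\sim\dQ_\ell$ and the formalism of cup products reviewed in the proof of Lemma~\ref{le:beilinson}. Everything else is a direct chaining of the two lemmas.
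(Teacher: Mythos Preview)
Your proposal is correct and follows essentially the same approach as the paper: set $\beta_i\coloneqq\cl_{\cX,\ell}^{\cZ_i}(\cC_i)$ (using Remark~\ref{re:beilinson} to pass to $\pi^!$-cohomology for $\beta_2$), verify the hypotheses of Lemma~\ref{le:beilinson} via Lemma~\ref{re:chern} and $\ell$-flatness, and then identify the resulting trace with $\cC_1.\cC_2$ via Lemma~\ref{pr:intersection}. You are in fact more explicit than the paper about the purity compatibility with cup products and traces (which the paper subsumes into the phrase ``together with Remark~\ref{re:beilinson}'') and about the ``In particular'' clause (which the paper does not justify at all); your injectivity argument for $\rH^{2d_1}(\cX,\dQ_\ell(d_1))\to\rH^{2d_1}(X,\dQ_\ell(d_1))$ is the right idea, though the cleanest phrasing uses $\rH^*(\cX,\dQ_\ell)\simeq\rH^*(Y,\dQ_\ell)$ (proper base change over the henselian base) together with compatibility of cycle classes under specialization.
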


\begin{proof}
By Lemma \ref{re:chern}, for $i=1,2$, the image of $\cl_{\cX,\ell}^{\cZ_i}(\cC_i)$ in $\rH^{2d_i}_{\cZ_i\cap X}(X,\dQ_\ell(d_i))\otimes_\dQ\dC$ coincides with $\cl_{X,\ell}^{\cZ_i\cap X}(c_i)$. Without lost of generality, we assume that $\cC_1$ is $\ell$-flat. Then the lemma follows from Lemma \ref{pr:intersection}, and Lemma \ref{le:beilinson} with $\beta_i\coloneqq\cl_{\cX,\ell}^{\cZ_i}(\cC_i)$ for $i=1,2$ (together with Remark \ref{re:beilinson}).
\end{proof}

\begin{remark}\label{re:index_smooth}
Suppose that $X$ admits smooth projective reduction over $O_K$. Then the source of \eqref{eq:index} is independent of $\ell$. Moreover, by Proposition \ref{pr:index_smooth}, the map \eqref{eq:index} takes value in $\dC$ and is independent of $\ell$. Thus, in this case, \eqref{eq:index} makes sense for an arbitrary rational prime $\ell$ of which it is independent.
\end{remark}

In the remaining discussion, we only consider the case where $n=2r$ for some integer $r\geq 1$, and $d_1=d_2=r$. We say that a correspondence
\[
t\colon \cX \xleftarrow{p} \cX' \xrightarrow{q} \cX
\]
of $\cX$ is \emph{\'{e}tale} if both $p$ and $q$ are finite \'{e}tale. In what follows, we take a subfield $\dL$ of $\dC$.

\begin{definition}\label{de:tempered_integral}
We say that an $\dL$-\'{e}tale correspondence $t$, that is, an $\dL$-linear combination of \'{e}tale correspondences, of $\cX$ is \emph{$\ell$-tempered} if $t^*$ annihilates $\rH^{2r}(\cX,\dQ_\ell(r))\otimes_\dQ\dL$.
\end{definition}

\begin{proposition}\label{pr:tempered_integral}
Let $t$ be an $\ell$-tempered $\dL$-\'{e}tale correspondence of $\cX$. Then for every pair $(c_1,c_2)\in\rZ^r(X)_\dC\times\rZ^r(X)_\dC$ satisfying $\supp(t^*c_1)\cap\supp(t^*c_2)=\emptyset$, we have $(t^*c_1,t^*c_2)\in\rZ^{r,r}(X)^{\langle\ell\rangle}$ and
\[
\langle t^*c_1,t^*c_2\rangle_{X,K}^\ell=t^*\cC_1.t^*\cC_2,
\]
where $\cC_i\in\rF^r\rK_0^{Y\cup\supp(c_i)}(\cX)_\dC$ is an arbitrary extension of $c_i$ in $\cX$ for $i=1,2$. In particular, we have $\langle t^*c_1,t^*c_2\rangle_{X,K}^\ell\in\dC$.
\end{proposition}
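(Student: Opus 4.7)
The plan is to reduce Proposition \ref{pr:tempered_integral} to Proposition \ref{pr:index_smooth} using the functoriality of the correspondence action on K-theory with support. First I would verify that $(t^*c_1,t^*c_2)\in\rZ^{r,r}(X)^{\langle\ell\rangle}$. Fix arbitrary extensions $\cC_i\in\rF^r\rK_0^{Y\cup\supp(c_i)}(\cX)_\dC$ of $c_i$ for $i=1,2$. Writing $t=p_*\circ q^*$ for the correspondence $\cX\xleftarrow{p}\cX'\xrightarrow{q}\cX$, the flatness of $q$ and finiteness of $p$ imply that $t^*$ preserves the codimension filtration on K-theory with support and commutes with Gillet's Chern class map in the sense of \cite{Gil81}. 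Hence $t^*\cC_i\in\rF^r\rK_0^{t^*(Y\cup\supp(c_i))}(\cX)_\dC$ and
\[
\cl_{\cX,\ell}^{t^*(Y\cup\supp(c_i))}(t^*\cC_i)=t^*\cl_{\cX,\ell}^{Y\cup\supp(c_i)}(\cC_i);
\]
pushing forward into $\rH^{2r}(\cX,\dQ_\ell(r))\otimes_\dQ\dC$ kills the right-hand side by the $\ell$-tempered hypothesis, and restricting to the generic fiber then yields $\cl_{X,\ell}(t^*c_i)=0$, so $t^*c_i\in\rZ^r(X)^{\langle\ell\rangle}$.

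Next I would invoke Proposition \ref{pr:index_smooth} with $\cZ_i\coloneqq t^*(Y\cup\supp(c_i))$. Since $p$ and $q$ are $O_K$-morphisms, the special fiber $Y'$ of $\cX'$ satisfies $q^{-1}(Y)=Y'$ and $p(Y')\subseteq Y$, so $t^*Y\subseteq Y$. Combined with $\supp(t^*c_1)\cap\supp(t^*c_2)=\emptyset$, this gives $\cZ_1\cap\cZ_2\subseteq Y$. By the same functoriality applied on the generic fiber together with Lemma \ref{re:chern}, the restriction $(t^*\cC_i)|_X$ coincides with $[t^*c_i]\in\rF^r\rK_0(X)_\dC$, so $t^*\cC_i$ is an extension of $t^*c_i$ in $\rF^r\rK_0^{\cZ_i}(\cX)_\dC$; by the preceding paragraph it is moreover $\ell$-flat. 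Proposition \ref{pr:index_smooth} (whose hypothesis requires only one of the two extensions to be $\ell$-flat) then delivers
\[
\langle t^*c_1,t^*c_2\rangle_{X,K}^\ell=(t^*\cC_1).(t^*\cC_2).
\]
The right-hand side is a K-theoretic Euler characteristic on $\Spec k$, hence lies in $\dC$; consequently $\langle t^*c_1,t^*c_2\rangle_{X,K}^\ell$ lies in $\dC$ and is independent of $\ell$, and the identity itself certifies independence of the chosen extensions $\cC_i$.

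The substantive content is already packaged in Proposition \ref{pr:index_smooth}, so what remains is essentially a formal bookkeeping exercise. The one technical point meriting care is the compatibility of Gillet's $\ell$-adic Chern class map with flat pullback along $q$ and proper (in fact finite flat) pushforward along $p$, which is exactly the functoriality content of \cite{Gil81}; once this is invoked, every support and filtration condition lines up, and I do not anticipate any further obstacle.
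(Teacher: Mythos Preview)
Your proposal is correct and follows essentially the same approach as the paper: both arguments show that $t^*\cC_i$ is an $\ell$-flat extension of $t^*c_i$ by combining the functoriality $\cl_{\cX,\ell}^{Y\cup Z_i^t}(t^*\cC_i)=t^*\cl_{\cX,\ell}^{Y\cup Z_i}(\cC_i)$ with the $\ell$-tempered hypothesis, and then invoke Proposition \ref{pr:index_smooth}. Your write-up is somewhat more explicit about the support bookkeeping (checking $\cZ_1\cap\cZ_2\subseteq Y$ via $t^*Y\subseteq Y$) and about the source of the functoriality in \cite{Gil81}, whereas the paper records the key step as a commutative square for $t^*$ between $\rH^{2r}_{Y\cup Z_i}(\cX,\dQ_\ell(r))$ and $\rH^{2r}(\cX,\dQ_\ell(r))$; the content is the same.
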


\begin{proof}
For $i=1,2$, put $Z_i\coloneqq\supp(c_i)$, $Z_i^t\coloneqq\supp(t^*c_i)$, and
\[
\beta_i\coloneqq\cl_{\cX,\ell}^{Y\cup Z_i}(\cC_i)\in\rH^{2r}_{Y\cup Z_i}(\cX,\dQ_\ell(r))\otimes_\dQ\dC.
\]
Note that we have the commutative diagram
\[
\xymatrix{
\rH^{2r}_{Y\cup Z_i}(\cX,\dQ_\ell(r))\otimes_\dQ\dL \ar[r] \ar[d]_-{t^*} & \rH^{2r}(\cX,\dQ_\ell(r))\otimes_\dQ\dL \ar[d]^-{t^*} \\
\rH^{2r}_{Y\cup Z_i^t}(\cX,\dQ_\ell(r))\otimes_\dQ\dL \ar[r] & \rH^{2r}(\cX,\dQ_\ell(r))\otimes_\dQ\dL
}
\]
induced by $t$. Since $t$ is $\ell$-tempered, the image of $t^*\beta_i$ in $\rH^{2r}(\cX,\dQ_\ell(r))\otimes_\dQ\dC$ vanishes. Now for $i=1,2$, since $t^*\beta_i=\cl_{\cX,\ell}^{Y\cup Z_i^t}(t^*\cC_i)$, we know that $t^*\cC_i$ is an $\ell$-flat extension of $t^*c_i$. In particular, we have $(t^*c_1,t^*c_2)\in\rZ^{r,r}(X)^{\langle\ell\rangle}$. Finally, the formula for $\langle t^*c_1,t^*c_2\rangle_{X,K}^\ell$ follows from Proposition \ref{pr:index_smooth}.
\end{proof}

\if false

To compute $\langle t^*c_1,t^*c_2\rangle_{X,K}^\ell$, we may assume that $\supp(t^*c_1)\cap\supp(t^*c_2)=\emptyset$. We would like to apply the equivalence between (b) and (c) in \cite{Bei87}*{Lemma-definition~2.1.1}. To do this, we add the following remark for $t^*\beta_2$. Let $\pi\colon\cX\to\Spec O_K$ be the structure morphism, which is proper and flat of pure relative dimension $2r-1$. Thus, we have the trace map $\pi_!\dQ_{\ell,\cX}[2(2r-1)](2r-1)\to\dQ_\ell$, which induces a map
\[
\dQ_{\ell,\cX}\to\pi^!\dQ_\ell[2(1-2r)](1-2r)
\]
of $\dQ_\ell$-sheaves on $\cX$ by adjunction, hence a map
\[
\rH^{2r}_{Y\cup\supp(t^*c_2)}(\cX,\dQ_\ell(r))\to \rH^{2-2r}_{Y\cup\supp(t^*c_2)}(\cX,\pi^!\dQ_\ell(1-r))
\]
on cohomology. We regard $t^*\beta_2$ as a class in $\rH^{2-2r}_{Y\cup\supp(t^*c_2)}(\cX,\pi^!\dQ_\ell(1-r))$, whose restriction to $\rH^{2r}_{\supp(t^*c_2)}(X,\dQ_\ell(r))$ coincides with $\widetilde\cl_{X,\ell}(t^*c_2)$. By \cite{Bei87}*{Lemma-definition~2.1.1}, $\langle t^*c_1,t^*c_2\rangle_{X,K}^\ell$ coincides with the image of $t^*\beta_1\cup t^*\beta_2\in\rH^2_Y(\cX,\pi^!\dQ_\ell(1))$ under the trace map
\[
\rH^2_Y(\cX,\pi^!\dQ_\ell(1))\xrightarrow{\pi_!}\rH^2_{\Spec k}(\Spec O_K,\dQ_\ell(1))=\rH^0(\Spec k,\dQ_\ell)=\dQ_\ell.
\]
From this and Lemma \ref{pr:intersection}, it follows that $\langle t^*c_1,t^*c_2\rangle_{X,K}^\ell$ coincides with the intersection number of $t^*\cC_1$ and $t^*\cC_2$ on $\cX$.

\fi

Now we provide a criterion for an $\dL$-\'{e}tale correspondence to be $\ell$-tempered.

\begin{proposition}\label{pr:tempered}
Put $Y_0\coloneqq Y^{\r{red}}$, the induced reduced subscheme of $Y$. Suppose that we have a finite stratification $Y_0\supset Y_1 \supset \cdots$ of Zariski closed subsets such that $Y_j^\circ\coloneqq Y_j\setminus Y_{j+1}$ is regular and has pure codimension $n_j\geq 1$ in $\cX$ for $j\geq 0$. If $t$ is an $\dL$-\'{e}tale correspondence of $\cX$ stabilizing the stratification $Y_0\supset Y_1 \supset \cdots$ and such that
\begin{enumerate}
  \item $t^*$ annihilates $\rH^{2r}(X,\dQ_\ell(r))\otimes_\dQ\dL$;

  \item $t^*$ annihilates $\rH^i(Y_j^\circ\otimes_k\ol\dF_p,\dQ_\ell)\otimes_\dQ\dL$ for every integer $i\leq 2r-2n_j$ and every $j$,
\end{enumerate}
then some positive power of $t$ annihilates $\rH^{2r}(\cX,\dQ_\ell(r))\otimes_\dQ\dL$.
\end{proposition}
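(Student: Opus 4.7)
The plan is to filter $\rH^{2r}(\cX,\dQ_\ell(r))\otimes_\dQ\dL$ using the stratification and reduce, stratum by stratum, to conditions (1) and (2). The first move is to invoke the localization long exact sequence
\[
\rH^{2r}_Y(\cX,\dQ_\ell(r))\to\rH^{2r}(\cX,\dQ_\ell(r))\to\rH^{2r}(X,\dQ_\ell(r)),
\]
which exhibits $\rH^{2r}(\cX,\dQ_\ell(r))\otimes_\dQ\dL$ as an extension of a submodule of $\rH^{2r}(X,\dQ_\ell(r))\otimes_\dQ\dL$ by a quotient of $\rH^{2r}_Y(\cX,\dQ_\ell(r))\otimes_\dQ\dL$. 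By hypothesis (1), $t^*$ already kills the quotient in degree $2r$ of $X$, and since $Y$ and $Y_0=Y^{\r{red}}$ have the same underlying topological space one has $\rH^{2r}_Y(\cX,\dQ_\ell(r))=\rH^{2r}_{Y_0}(\cX,\dQ_\ell(r))$. So it suffices to show that some power of $t^*$ annihilates $\rH^{2r}_{Y_0}(\cX,\dQ_\ell(r))\otimes_\dQ\dL$.

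Next I would argue by descending induction on $j$, exploiting the finiteness of the stratification. For each $j$, the excision sequence
\[
\rH^{2r}_{Y_{j+1}}(\cX,\dQ_\ell(r))\to\rH^{2r}_{Y_j}(\cX,\dQ_\ell(r))\to\rH^{2r}_{Y_j^\circ}(\cX\setminus Y_{j+1},\dQ_\ell(r))
\]
combined with Fujiwara's absolute purity theorem (\cite{Fuj02}) applied to the regular closed immersion $Y_j^\circ\hookrightarrow\cX\setminus Y_{j+1}$ of pure codimension $n_j$ yields the identification
\[
\rH^{2r}_{Y_j^\circ}(\cX\setminus Y_{j+1},\dQ_\ell(r))\simeq\rH^{2r-2n_j}(Y_j^\circ,\dQ_\ell(r-n_j)).
\]
Thus, once we know that some power of $t^*$ annihilates each $\rH^{2r-2n_j}(Y_j^\circ,\dQ_\ell(r-n_j))\otimes_\dQ\dL$, iterating through the finitely many strata produces a positive power of $t^*$ that kills $\rH^{2r}_{Y_0}(\cX,\dQ_\ell(r))\otimes_\dQ\dL$.

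The key remaining step is therefore the reduction from $Y_j^\circ$ to its base change over $\ol\dF_p$. Since $\Gal(\ol\dF_p/k)$ has cohomological dimension one, the Hochschild--Serre spectral sequence for $Y_j^\circ$ collapses to the short exact sequence
\[
0\to\rH^1\bigl(k,\rH^{2r-2n_j-1}(Y_j^\circ\otimes_k\ol\dF_p,\dQ_\ell(r-n_j))\bigr)\to\rH^{2r-2n_j}(Y_j^\circ,\dQ_\ell(r-n_j))\to\rH^0\bigl(k,\rH^{2r-2n_j}(Y_j^\circ\otimes_k\ol\dF_p,\dQ_\ell(r-n_j))\bigr)\to 0.
\]
Because $n_j\geq 1$, both degrees $2r-2n_j-1$ and $2r-2n_j$ lie within the range $i\leq 2r-2n_j$ covered by hypothesis (2), so after tensoring with $\dL$ the operator $t^*$ annihilates both graded pieces, and hence $(t^*)^2$ annihilates the middle term.

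The main technical point to verify, rather than any deep obstacle, is that $t^*$ is compatible with each of the localization, excision, absolute purity, and Hochschild--Serre maps appearing above; this is where the hypothesis that $t$ stabilizes the stratification enters, ensuring that the restrictions of the finite \'etale legs $p$ and $q$ to the preimages of every $Y_j$ and every $Y_j^\circ$ are again finite \'etale and give rise to the expected functoriality. Collecting the bounds, if the stratification has $J$ strata then a power $(t^*)^N$ with $N$ linear in $J$ annihilates $\rH^{2r}(\cX,\dQ_\ell(r))\otimes_\dQ\dL$, which is the conclusion since $(t^*)^N=(t^N)^*$.
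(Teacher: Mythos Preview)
Your proposal is correct and follows essentially the same route as the paper's proof: reduce to $\rH^{2r}_{Y_0}(\cX,\dQ_\ell(r))$ via the localization sequence and condition (1), then run descending induction on $j$ using excision, absolute purity for the regular closed immersion $Y_j^\circ\hookrightarrow\cX\setminus Y_{j+1}$, and the two-step Hochschild--Serre filtration (killed by $(t^*)^2$ thanks to condition (2)). Your added remark on the compatibility of $t^*$ with the various exact sequences is a welcome clarification that the paper leaves implicit.
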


\begin{proof}
It suffices to prove that $(t^m)^*$ annihilates $\rH^{2r}_Y(\cX,\dQ_\ell(r))\otimes_\dQ\dL$ for some integer $m\geq 1$, since then $t^{m+1}$ is $\ell$-tempered.

We prove by decreasing induction on $j$ that $(t^{m_j})^*$ annihilates $\rH^{2r}_{Y_j}(\cX,\dQ_\ell(r))\otimes_\dQ\dL$ for some integer $m_j\geq 1$. We have
\[
\rH^{2r}_{Y_{j+1}}(\cX,\dQ_\ell(r))\otimes_\dQ\dL\to \rH^{2r}_{Y_j}(\cX,\dQ_\ell(r))\otimes_\dQ\dL
\to\rH^{2r}_{Y_j^\circ}(\cX\setminus Y_{j+1},\dQ_\ell(r))\otimes_\dQ\dL.
\]
As $Y_j^\circ$ is a regular closed subscheme of the regular scheme $\cX\setminus Y_{j+1}$, by the absolute purity theorem \cite{Fuj02}, we have
\[
\rH^{2r}_{Y_j^\circ}(\cX\setminus Y_{j+1},\dQ_\ell(r)) \simeq
\rH^{2r-2n_j}(Y_j^\circ,\dQ_\ell(r-n_j)).
\]
By condition (2) and the Hochschild--Serre spectral sequence, we know that $(t^2)^*$ annihilates $\rH^{2r-2n_j}(Y_j^\circ,\dQ_\ell(r-n_j))\otimes_\dQ\dL$. Thus, we may take $m_j=m_{j+1}+2$. In particular, $(t^{m_0})^*$ annihilates $\rH^{2r}_{Y_0}(\cX,\dQ_\ell(r))\otimes_\dQ\dL$, which is same as $\rH^{2r}_Y(\cX,\dQ_\ell(r))\otimes_\dQ\dL$.
\end{proof}

\begin{corollary}\label{co:tempered}
Let $\cX$ and $\dL$ be as above. Let $\dS$ be a ring of \'{e}tale correspondences of $\cX$, and $\fm$ a maximal ideal of $\dS_\dL$.
\begin{enumerate}
  \item If $(\rH^{2d}(\cX,\dQ_\ell(d))\otimes_\dQ\dL)_\fm=0$, then there exists an $\ell$-tempered element in $\dS_\dL\setminus\fm$.

  \item Suppose that we have a finite stratification $Y_0\supset Y_1 \supset \cdots$ of Zariski closed subsets that is stabilized by the action of $\dS$, such that $Y_j^\circ\coloneqq Y_j\setminus Y_{j+1}$ is regular and has pure codimension $n_j\geq 1$ in $\cX$ for $j\geq 0$. If
     \begin{itemize}
       \item $(\rH^{2r}(X,\dQ_\ell(r))\otimes_\dQ\dL)_\fm=0$ and

       \item $(\rH^i(Y_j^\circ\otimes_k\ol\dF_p,\dQ_\ell)\otimes_\dQ\dL)_\fm=0$ for every integer $i\leq 2r-2n_j$ and every $j$,
     \end{itemize}
      then $(\rH^{2d}(\cX,\dQ_\ell(d))\otimes_\dQ\dL)_\fm=0$.
\end{enumerate}
\end{corollary}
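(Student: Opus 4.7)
The plan is to deduce both parts from elementary commutative algebra over $\dS_\dL$, using that every relevant cohomology group is finite-dimensional over $\dL$ (hence a finitely generated $\dS_\dL$-module), and invoking Proposition \ref{pr:tempered} for part (2). Throughout, I identify the index $d$ appearing in the corollary with $r$, consistent with Definition \ref{de:tempered_integral} and with the way the corollary is applied in the proof of Lemma \ref{le:split_tempered}.

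For part (1), since $\cX$ is projective over $O_K$ in all cases of interest, $\rH^{2r}(\cX,\dQ_\ell(r))\otimes_\dQ\dL$ is finite-dimensional over $\dL$ and in particular a finitely generated $\dS_\dL$-module (the unit of $\dS_\dL$ gives a $\dL$-linear inclusion $\dL\hookrightarrow\dS_\dL$). By the standard translation valid for finitely generated modules, vanishing of its localization at $\fm$ forces the annihilator ideal in $\dS_\dL$ to escape $\fm$; any such annihilator is $\ell$-tempered by Definition \ref{de:tempered_integral}.

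For part (2), the idea is to produce a single element $t\in\dS_\dL\setminus\fm$ meeting both hypotheses of Proposition \ref{pr:tempered}. Each finite-type $\ol\dF_p$-variety $Y_j^\circ$ has finite-dimensional $\ell$-adic cohomology, so each of the $\dS_\dL$-modules $\rH^{2r}(X,\dQ_\ell(r))\otimes_\dQ\dL$ and $\rH^i(Y_j^\circ\otimes_k\ol\dF_p,\dQ_\ell)\otimes_\dQ\dL$ (for the finite range of relevant indices $(i,j)$) is finitely generated over $\dS_\dL$. The hypothesis says each of these finitely many modules has vanishing localization at $\fm$, so each admits an annihilator in $\dS_\dL\setminus\fm$. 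Since $\fm$ is prime, the product $t$ of these finitely many annihilators still lies in $\dS_\dL\setminus\fm$ and simultaneously annihilates all of them; moreover $t$ stabilizes the stratification because $\dS$ does by assumption. Proposition \ref{pr:tempered} then furnishes an integer $m\geq 1$ with $(t^m)^*$ annihilating $\rH^{2r}(\cX,\dQ_\ell(r))\otimes_\dQ\dL$; since $t^m\notin\fm$, this forces $(\rH^{2r}(\cX,\dQ_\ell(r))\otimes_\dQ\dL)_\fm=0$.

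I do not anticipate a serious obstacle: the argument is entirely formal once Proposition \ref{pr:tempered} is available. The only modest technical points are verifying finite-dimensionality over $\dL$ of the cohomology modules (relying on properness of $\cX$ and finite-type-ness of the strata $Y_j^\circ$), ensuring that the product of finitely many annihilators still escapes the prime ideal $\fm$, and observing that the property of stabilizing the stratification is preserved under $\dS_\dL$-linear combinations and products.
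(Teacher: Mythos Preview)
Your proposal is correct and follows essentially the same approach as the paper. Both arguments reduce to the observation that the relevant cohomology groups are finite-dimensional over $\dQ_\ell$ (hence finitely generated $\dS_\dL$-modules), so vanishing localization at $\fm$ yields an annihilator in $\dS_\dL\setminus\fm$; for part (2) one then applies Proposition \ref{pr:tempered} to conclude.
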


\begin{proof}
For (1), since $\rH^{2r}(\cX,\dQ_\ell(r))$ is of finite dimension over $\dQ_\ell$, $\rH^{2r}(\cX,\dQ_\ell(r))\otimes_\dQ\dL$ is a finitely generated $\dS_\dL$-module. Then (1) follows from Definition \ref{de:tempered_integral}.

For (2), since both $\rH^{2r}(X,\dQ_\ell(r))$ and $\bigoplus_{i<2r-1}\bigoplus_j\rH^i(Y_j^\circ\otimes_k\ol\dF_p,\dQ_\ell)$ are of finite dimension over $\dQ_\ell$, both $\rH^{2r}(X,\dQ_\ell(r))\otimes_\dQ\dL$ and $\bigoplus_{i<2r-1}\bigoplus_j\rH^i(Y_j^\circ\otimes_k\ol\dF_p,\dQ_\ell)\otimes_\dQ\dL$ are finitely generated $\dS_\dL$-modules. Then there exists $t\in\dS_\dL\setminus\fm$ satisfying the two conditions in Proposition \ref{pr:tempered}. By the same proposition, some power of $t$ annihilates $\rH^{2d}(\cX,\dQ_\ell(d))\otimes_\dQ\dL$, which implies $(\rH^{2d}(\cX,\dQ_\ell(d))\otimes_\dQ\dL)_\fm=0$. Thus, (2) follows.
\end{proof}


\begin{bibdiv}
\begin{biblist}

\bib{AN04}{article}{
   author={Agranovsky, M. L.},
   author={Narayanan, E. K.},
   title={$L^p$-integrability, supports of Fourier transforms and uniqueness
   for convolution equations},
   journal={J. Fourier Anal. Appl.},
   volume={10},
   date={2004},
   number={3},
   pages={315--324},
   issn={1069-5869},
   review={\MR{2066426}},
   doi={10.1007/s00041-004-0986-4},
}

\bib{Bei87}{article}{
   author={Be\u{\i}linson, A.},
   title={Height pairing between algebraic cycles},
   conference={
      title={Current trends in arithmetical algebraic geometry},
      address={Arcata, Calif.},
      date={1985},
   },
   book={
      series={Contemp. Math.},
      volume={67},
      publisher={Amer. Math. Soc., Providence, RI},
   },
   date={1987},
   pages={1--24},
   review={\MR{902590}},
   doi={10.1090/conm/067/902590},
}

\bib{Blo84}{article}{
   author={Bloch, Spencer},
   title={Algebraic cycles and values of $L$-functions},
   journal={J. Reine Angew. Math.},
   volume={350},
   date={1984},
   pages={94--108},
   issn={0075-4102},
   review={\MR{743535}},
   doi={10.1515/crll.1984.350.94},
}

\bib{BW00}{book}{
   author={Borel, A.},
   author={Wallach, N.},
   title={Continuous cohomology, discrete subgroups, and representations of
   reductive groups},
   series={Mathematical Surveys and Monographs},
   volume={67},
   edition={2},
   publisher={American Mathematical Society, Providence, RI},
   date={2000},
   pages={xviii+260},
   isbn={0-8218-0851-6},
   review={\MR{1721403}},
   doi={10.1090/surv/067},
}

\bib{BW15}{article}{
   author={Bruinier, Jan Hendrik},
   author={Westerholt-Raum, Martin},
   title={Kudla's modularity conjecture and formal Fourier-Jacobi series},
   journal={Forum Math. Pi},
   volume={3},
   date={2015},
   pages={e7, 30},
   review={\MR{3406827}},
   doi={10.1017/fmp.2015.6},
}

\bib{Car12}{article}{
   author={Caraiani, Ana},
   title={Local-global compatibility and the action of monodromy on nearby
   cycles},
   journal={Duke Math. J.},
   volume={161},
   date={2012},
   number={12},
   pages={2311--2413},
   issn={0012-7094},
   review={\MR{2972460}},
   doi={10.1215/00127094-1723706},
}

\bib{CS17}{article}{
   author={Caraiani, Ana},
   author={Scholze, Peter},
   title={On the generic part of the cohomology of compact unitary Shimura
   varieties},
   journal={Ann. of Math. (2)},
   volume={186},
   date={2017},
   number={3},
   pages={649--766},
   issn={0003-486X},
   review={\MR{3702677}},
   doi={10.4007/annals.2017.186.3.1},
}

\bib{Fuj02}{article}{
   author={Fujiwara, Kazuhiro},
   title={A proof of the absolute purity conjecture (after Gabber)},
   conference={
      title={Algebraic geometry 2000, Azumino (Hotaka)},
   },
   book={
      series={Adv. Stud. Pure Math.},
      volume={36},
      publisher={Math. Soc. Japan, Tokyo},
   },
   date={2002},
   pages={153--183},
   review={\MR{1971516}},
   doi={10.2969/aspm/03610153},
}

\bib{GGP12}{article}{
   author={Gan, Wee Teck},
   author={Gross, Benedict H.},
   author={Prasad, Dipendra},
   title={Symplectic local root numbers, central critical $L$ values, and
   restriction problems in the representation theory of classical groups},
   language={English, with English and French summaries},
   note={Sur les conjectures de Gross et Prasad. I},
   journal={Ast\'erisque},
   number={346},
   date={2012},
   pages={1--109},
   issn={0303-1179},
   isbn={978-2-85629-348-5},
   review={\MR{3202556}},
}

\bib{GI16}{article}{
   author={Gan, Wee Teck},
   author={Ichino, Atsushi},
   title={The Gross-Prasad conjecture and local theta correspondence},
   journal={Invent. Math.},
   volume={206},
   date={2016},
   number={3},
   pages={705--799},
   issn={0020-9910},
   review={\MR{3573972}},
   doi={10.1007/s00222-016-0662-8},
}

\bib{GT16}{article}{
   author={Gan, Wee Teck},
   author={Takeda, Shuichiro},
   title={A proof of the Howe duality conjecture},
   journal={J. Amer. Math. Soc.},
   volume={29},
   date={2016},
   number={2},
   pages={473--493},
   issn={0894-0347},
   review={\MR{3454380}},
   doi={10.1090/jams/839},
}

\bib{GS19}{article}{
   author={Garcia, Luis E.},
   author={Sankaran, Siddarth},
   title={Green forms and the arithmetic Siegel-Weil formula},
   journal={Invent. Math.},
   volume={215},
   date={2019},
   number={3},
   pages={863--975},
   issn={0020-9910},
   review={\MR{3935034}},
   doi={10.1007/s00222-018-0839-4},
}

\bib{Gar08}{article}{
   author={Garrett, Paul},
   title={Values of Archimedean zeta integrals for unitary groups},
   conference={
      title={Eisenstein series and applications},
   },
   book={
      series={Progr. Math.},
      volume={258},
      publisher={Birkh\"{a}user Boston, Boston, MA},
   },
   date={2008},
   pages={125--148},
   review={\MR{2402682}},
}

\bib{Gil81}{article}{
   author={Gillet, Henri},
   title={Riemann-Roch theorems for higher algebraic $K$-theory},
   journal={Adv. in Math.},
   volume={40},
   date={1981},
   number={3},
   pages={203--289},
   issn={0001-8708},
   review={\MR{624666}},
   doi={10.1016/S0001-8708(81)80006-0},
}

\bib{GS87}{article}{
   author={Gillet, H.},
   author={Soul\'{e}, C.},
   title={Intersection theory using Adams operations},
   journal={Invent. Math.},
   volume={90},
   date={1987},
   number={2},
   pages={243--277},
   issn={0020-9910},
   review={\MR{910201}},
   doi={10.1007/BF01388705},
}

\bib{GG11}{article}{
   author={Gong, Z.},
   author={Greni\'{e}, L.},
   title={An inequality for local unitary theta correspondence},
   language={English, with English and French summaries},
   journal={Ann. Fac. Sci. Toulouse Math. (6)},
   volume={20},
   date={2011},
   number={1},
   pages={167--202},
   issn={0240-2963},
   review={\MR{2830396}},
}

\bib{Gr97}{article}{
   author={Gross, Benedict H.},
   title={On the motive of a reductive group},
   journal={Invent. Math.},
   volume={130},
   date={1997},
   number={2},
   pages={287--313},
   issn={0020-9910},
   review={\MR{1474159}},
   doi={10.1007/s002220050186},
}

\bib{Gro04}{article}{
   author={Gross, Benedict H.},
   title={Heegner points and representation theory},
   conference={
      title={Heegner points and Rankin $L$-series},
   },
   book={
      series={Math. Sci. Res. Inst. Publ.},
      volume={49},
      publisher={Cambridge Univ. Press, Cambridge},
   },
   date={2004},
   pages={37--65},
   review={\MR{2083210}},
   doi={10.1017/CBO9780511756375.005},
}

\bib{Gro}{article}{
   author={Gross, Benedict H.},
   title={Incoherent definite spaces and Shimura varieties},
   note={\href{https://arxiv.org/abs/2005.05188}{arXiv:2005.05188}},
}

\bib{GZ86}{article}{
   author={Gross, Benedict H.},
   author={Zagier, Don B.},
   title={Heegner points and derivatives of $L$-series},
   journal={Invent. Math.},
   volume={84},
   date={1986},
   number={2},
   pages={225--320},
   issn={0020-9910},
   review={\MR{833192}},
   doi={10.1007/BF01388809},
}

\bib{Har07}{article}{
   author={Harris, Michael},
   title={Cohomological automorphic forms on unitary groups. II. Period relations and values of $L$-functions},
   conference={
      title={Harmonic analysis, group representations, automorphic forms and invariant theory},
   },
   book={
      series={Lect. Notes Ser. Inst. Math. Sci. Natl. Univ. Singap.},
      volume={12},
      publisher={World Sci. Publ., Hackensack, NJ},
   },
   date={2007},
   pages={89--149},
   review={\MR{2401812}},
}

\bib{HKS96}{article}{
   author={Harris, Michael},
   author={Kudla, Stephen S.},
   author={Sweet, William J.},
   title={Theta dichotomy for unitary groups},
   journal={J. Amer. Math. Soc.},
   volume={9},
   date={1996},
   number={4},
   pages={941--1004},
   issn={0894-0347},
   review={\MR{1327161}},
   doi={10.1090/S0894-0347-96-00198-1},
}

\bib{HLS11}{article}{
   author={Harris, Michael},
   author={Li, Jian-Shu},
   author={Sun, Binyong},
   title={Theta correspondences for close unitary groups},
   conference={
      title={Arithmetic geometry and automorphic forms},
   },
   book={
      series={Adv. Lect. Math. (ALM)},
      volume={19},
      publisher={Int. Press, Somerville, MA},
   },
   date={2011},
   pages={265--307},
   review={\MR{2906912}},
}

\bib{HT01}{book}{
   author={Harris, Michael},
   author={Taylor, Richard},
   title={The geometry and cohomology of some simple Shimura varieties},
   series={Annals of Mathematics Studies},
   volume={151},
   note={With an appendix by Vladimir G. Berkovich},
   publisher={Princeton University Press, Princeton, NJ},
   date={2001},
   pages={viii+276},
   isbn={0-691-09090-4},
   review={\MR{1876802}},
}

\bib{How12}{article}{
   author={Howard, Benjamin},
   title={Complex multiplication cycles and Kudla-Rapoport divisors},
   journal={Ann. of Math. (2)},
   volume={176},
   date={2012},
   number={2},
   pages={1097--1171},
   issn={0003-486X},
   review={\MR{2950771}},
   doi={10.4007/annals.2012.176.2.9},
}

\bib{KMSW}{article}{
   author={Kaletha, Tasho},
   author={Minguez, Alberto},
   author={Shin, Sug Woo},
   author={White, Paul-James},
   title={Endoscopic Classification of Representations: Inner Forms of Unitary Groups},
   note={\href{https://arxiv.org/abs/1409.3731}{arXiv:1409.3731}},
}

\bib{KSZ}{article}{
    author={Kisin, Mark},
    author={Shin, Sug Woo},
    author={Zhu, Yihang},
    title={Cohomology of certain Shimura varieties of abelian type (temporary title)},
    note={in preparation},
}

\bib{Kol90}{article}{
   author={Kolyvagin, V. A.},
   title={Euler systems},
   conference={
      title={The Grothendieck Festschrift, Vol.\ II},
   },
   book={
      series={Progr. Math.},
      volume={87},
      publisher={Birkh\"auser Boston},
      place={Boston, MA},
   },
   date={1990},
   pages={435--483},
   review={\MR{1106906 (92g:11109)}},
}

\bib{KK07}{article}{
   author={Konno, Takuya},
   author={Konno, Kazuko},
   title={On doubling construction for real unitary dual pairs},
   journal={Kyushu J. Math.},
   volume={61},
   date={2007},
   number={1},
   pages={35--82},
   issn={1340-6116},
   review={\MR{2317282}},
}

\bib{Kot92}{article}{
   author={Kottwitz, Robert E.},
   title={Points on some Shimura varieties over finite fields},
   journal={J. Amer. Math. Soc.},
   volume={5},
   date={1992},
   number={2},
   pages={373--444},
   issn={0894-0347},
   review={\MR{1124982}},
}

\bib{Kud97}{article}{
   author={Kudla, Stephen S.},
   title={Central derivatives of Eisenstein series and height pairings},
   journal={Ann. of Math. (2)},
   volume={146},
   date={1997},
   number={3},
   pages={545--646},
   issn={0003-486X},
   review={\MR{1491448}},
   doi={10.2307/2952456},
}

\bib{Kud02}{article}{
   author={Kudla, Stephen S.},
   title={Derivatives of Eisenstein series and generating functions for
   arithmetic cycles},
   note={S\'{e}minaire Bourbaki, Vol. 1999/2000},
   journal={Ast\'{e}risque},
   number={276},
   date={2002},
   pages={341--368},
   issn={0303-1179},
   review={\MR{1886765}},
}

\bib{Kud03}{article}{
   author={Kudla, Stephen S.},
   title={Modular forms and arithmetic geometry},
   conference={
      title={Current developments in mathematics, 2002},
   },
   book={
      publisher={Int. Press, Somerville, MA},
   },
   date={2003},
   pages={135--179},
   review={\MR{2062318}},
}

\bib{KM86}{article}{
   author={Kudla, Stephen S.},
   author={Millson, John J.},
   title={The theta correspondence and harmonic forms. I},
   journal={Math. Ann.},
   volume={274},
   date={1986},
   number={3},
   pages={353--378},
   issn={0025-5831},
   review={\MR{842618}},
   doi={10.1007/BF01457221},
}

\bib{KR11}{article}{
   author={Kudla, Stephen},
   author={Rapoport, Michael},
   title={Special cycles on unitary Shimura varieties I. Unramified local
   theory},
   journal={Invent. Math.},
   volume={184},
   date={2011},
   number={3},
   pages={629--682},
   issn={0020-9910},
   review={\MR{2800697}},
   doi={10.1007/s00222-010-0298-z},
}

\bib{KR14}{article}{
   author={Kudla, Stephen},
   author={Rapoport, Michael},
   title={Special cycles on unitary Shimura varieties II: Global theory},
   journal={J. Reine Angew. Math.},
   volume={697},
   date={2014},
   pages={91--157},
   issn={0075-4102},
   review={\MR{3281653}},
   doi={10.1515/crelle-2012-0121},
}

\bib{KRY06}{book}{
   author={Kudla, Stephen S.},
   author={Rapoport, Michael},
   author={Yang, Tonghai},
   title={Modular forms and special cycles on Shimura curves},
   series={Annals of Mathematics Studies},
   volume={161},
   publisher={Princeton University Press, Princeton, NJ},
   date={2006},
   pages={x+373},
   isbn={978-0-691-12551-0},
   isbn={0-691-12551-1},
   review={\MR{2220359}},
   doi={10.1515/9781400837168},
}

\bib{KS97}{article}{
   author={Kudla, Stephen S.},
   author={Sweet, W. Jay, Jr.},
   title={Degenerate principal series representations for ${\r{U}}(n,n)$},
   journal={Israel J. Math.},
   volume={98},
   date={1997},
   pages={253--306},
   issn={0021-2172},
   review={\MR{1459856}},
   doi={10.1007/BF02937337},
}

\bib{LL}{article}{
   author={Li, Chao},
   author={Liu, Yifeng},
   title={Chow groups and L-derivatives of automorphic motives for unitary groups, II},
   note={\href{https://arxiv.org/abs/2101.09485}{arXiv:2101.09485}},
}

\bib{LZ}{article}{
   author={Li, Chao},
   author={Zhang, Wei},
   title={Kudla--Rapoport cycles and derivatives of local densities},
   note={\href{https://arxiv.org/abs/1908.01701}{arXiv:1908.01701}},
}

\bib{Liu11}{article}{
   author={Liu, Yifeng},
   title={Arithmetic theta lifting and $L$-derivatives for unitary groups,
   I},
   journal={Algebra Number Theory},
   volume={5},
   date={2011},
   number={7},
   pages={849--921},
   issn={1937-0652},
   review={\MR{2928563}},
}

\bib{Liu12}{article}{
   author={Liu, Yifeng},
   title={Arithmetic theta lifting and $L$-derivatives for unitary groups,
   II},
   journal={Algebra Number Theory},
   volume={5},
   date={2011},
   number={7},
   pages={923--1000},
   issn={1937-0652},
   review={\MR{2928564}},
}

\bib{Liu19}{article}{
   author={Liu, Yifeng},
   title={Fourier--Jacobi cycles and arithmetic relative trace formula (with an appendix by Chao Li and Yihang Zhu)},
   note={\href{https://arxiv.org/abs/2102.11518}{arXiv:2102.11518}},
}

\bib{Liu20}{article}{
   author={Liu, Yifeng},
   title={Theta correspondence for almost unramified representations of unitary groups},
   note={\url{https://gauss.math.yale.edu/~yl2269/theta_aur.pdf}, preprint},
}

\bib{LTXZZ}{article}{
   label={LTXZZ},
   author={Liu, Yifeng},
   author={Tian, Yichao},
   author={Xiao, Liang},
   author={Zhang, Wei},
   author={Zhu, Xinwen},
   title={On the Beilinson--Bloch--Kato conjecture for Rankin--Selberg motives},
   note={\href{https://arxiv.org/abs/1912.11942}{arXiv:1912.11942}},
}

\bib{Mil85}{article}{
   author={Millson, John J.},
   title={Cycles and harmonic forms on locally symmetric spaces},
   journal={Canad. Math. Bull.},
   volume={28},
   date={1985},
   number={1},
   pages={3--38},
   issn={0008-4395},
   review={\MR{778258}},
   doi={10.4153/CMB-1985-001-x},
}

\bib{Mok15}{article}{
   author={Mok, Chung Pang},
   title={Endoscopic classification of representations of quasi-split
   unitary groups},
   journal={Mem. Amer. Math. Soc.},
   volume={235},
   date={2015},
   number={1108},
   pages={vi+248},
   issn={0065-9266},
   isbn={978-1-4704-1041-4},
   isbn={978-1-4704-2226-4},
   review={\MR{3338302}},
   doi={10.1090/memo/1108},
}

\bib{Nek08}{article}{
   author={Nekov\'{a}\v{r}, Jan},
   title={The Euler system method for CM points on Shimura curves},
   conference={
      title={$L$-functions and Galois representations},
   },
   book={
      series={London Math. Soc. Lecture Note Ser.},
      volume={320},
      publisher={Cambridge Univ. Press, Cambridge},
   },
   date={2007},
   pages={471--547},
   review={\MR{2392363}},
   doi={10.1017/CBO9780511721267.014},
}

\bib{Nek07}{article}{
   author={Nekov\'{a}\v{r}, Jan},
   title={On the parity of ranks of Selmer groups. III},
   journal={Doc. Math.},
   volume={12},
   date={2007},
   pages={243--274},
   issn={1431-0635},
   review={\MR{2350290}},
}

\bib{Ral82}{article}{
   author={Rallis, S.},
   title={Langlands' functoriality and the Weil representation},
   journal={Amer. J. Math.},
   volume={104},
   date={1982},
   number={3},
   pages={469--515},
   issn={0002-9327},
   review={\MR{658543}},
   doi={10.2307/2374151},
}

\bib{Ral84}{article}{
   author={Rallis, S.},
   title={Injectivity properties of liftings associated to Weil
   representations},
   journal={Compositio Math.},
   volume={52},
   date={1984},
   number={2},
   pages={139--169},
   issn={0010-437X},
   review={\MR{750352}},
}

\bib{Ram}{article}{
   author={Ramakrishnan, D.},
   title={A theorem on $\GL(n)$ a la Tchebotarev},
   note={available at \url{http://www.its.caltech.edu/~dinakar/papers/tchebGL(n)-DR.pdf}},
}

\bib{RSZ}{article}{
   author={Rapoport, M.},
   author={Smithling, B.},
   author={Zhang, W.},
   title={Arithmetic diagonal cycles on unitary Shimura varieties},
   journal={Compos. Math.},
   volume={156},
   date={2020},
   number={9},
   pages={1745--1824},
   issn={0010-437X},
   review={\MR{4167594}},
   doi={10.1112/s0010437x20007289},
}

\bib{Sch75}{article}{
   author={Schmid, Wilfried},
   title={Some properties of square-integrable representations of semisimple
   Lie groups},
   journal={Ann. of Math. (2)},
   volume={102},
   date={1975},
   number={3},
   pages={535--564},
   issn={0003-486X},
   review={\MR{579165}},
   doi={10.2307/1971043},
}

\bib{Shi}{article}{
   author={Shin, Sug Woo},
   title={On the cohomological base change for unitary similitude groups},
   note={\url{https://math.berkeley.edu/~swshin/BCofU.pdf}, an appendix to \textit{Galois representations associated to holomorphic limits of discrete series} by W. Goldring},
}

\bib{Tan99}{article}{
   author={Tan, Victor},
   title={Poles of Siegel Eisenstein series on $\rU(n,n)$},
   journal={Canad. J. Math.},
   volume={51},
   date={1999},
   number={1},
   pages={164--175},
   issn={0008-414X},
   review={\MR{1692899}},
   doi={10.4153/CJM-1999-010-4},
}

\bib{Xue19}{article}{
   author={Xue, Hang},
   title={Arithmetic theta lifts and the arithmetic Gan-Gross-Prasad
   conjecture for unitary groups},
   journal={Duke Math. J.},
   volume={168},
   date={2019},
   number={1},
   pages={127--185},
   issn={0012-7094},
   review={\MR{3909895}},
   doi={10.1215/00127094-2018-0039},
}

\bib{Yam14}{article}{
   author={Yamana, Shunsuke},
   title={L-functions and theta correspondence for classical groups},
   journal={Invent. Math.},
   volume={196},
   date={2014},
   number={3},
   pages={651--732},
   issn={0020-9910},
   review={\MR{3211043}},
   doi={10.1007/s00222-013-0476-x},
}

\bib{YZZ}{book}{
   author={Yuan, Xinyi},
   author={Zhang, Shou-Wu},
   author={Zhang, Wei},
   title={The Gross-Zagier formula on Shimura curves},
   series={Annals of Mathematics Studies},
   volume={184},
   publisher={Princeton University Press, Princeton, NJ},
   date={2013},
   pages={x+256},
   isbn={978-0-691-15592-0},
   review={\MR{3237437}},
}

\bib{Zha19}{article}{
   author={Zhang, Shouwu},
   title={Linear forms, algebraic cycles, and derivatives of L-series},
   journal={Sci. China Math.},
   volume={62},
   date={2019},
   number={11},
   pages={2401--2408},
   issn={1674-7283},
   review={\MR{4028281}},
   doi={10.1007/s11425-019-1589-7},
}

\bib{Zha12}{article}{
   author={Zhang, Wei},
   title={Gross-Zagier formula and arithmetic fundamental lemma},
   conference={
      title={Fifth International Congress of Chinese Mathematicians. Part 1,
      2},
   },
   book={
      series={AMS/IP Stud. Adv. Math., 51, pt. 1},
      volume={2},
      publisher={Amer. Math. Soc., Providence, RI},
   },
   date={2012},
   pages={447--459},
   review={\MR{2908086}},
}

\bib{Zha}{article}{
   author={Zhang, Wei},
   title={Weil representation and arithmetic fundamental lemma},
   note={\href{https://arxiv.org/abs/1909.02697}{arXiv:1909.02697}},
}

\end{biblist}
\end{bibdiv}

\end{document}